\newcounter{taggedeq}
\pretocmd{\equation}{\stepcounter{taggedeq}}{}{}
\newcommand{\df}[1]{\textbf{#1}}
\newcommand{\dha}{d_H}
\newtheorem{theorem}{Theorem}[section]
\newtheorem{proposition}[theorem]{Proposition}
\newtheorem{corollary}[theorem]{Corollary}
\newtheorem{lemma}[theorem]{Lemma}
\theoremstyle{definition}
\newtheorem{definition}[theorem]{Definition}
\newtheorem{remark}[theorem]{Remark}
\numberwithin{equation}{section}
\title[Generalized \texorpdfstring{$\varphi$}{varphi}-pullback attractors]{Generalized \texorpdfstring{$\varphi$}{varphi}-pullback attractors for evolution processes and application to a nonautonomous wave equation}
\author[M.C. Bortolan]{Matheus C. Bortolan}
\author[T. Caraballo]{Tom\'as Caraballo}
\author[C. Pecorari Neto]{Carlos Pecorari Neto}
\begin{document}

\makeatletter\def\Hy@Warning#1{}\makeatother
\let\thefootnote\relax\footnotetext{
[M.C. Bortolan] This study was financed in part by the Coordena\c c\~ao de Aperfei\c coamento de Pessoal de N\'ivel Superior - Brasil (CAPES) - Finance Code 001 and by the Ministerio de Ciencia e Innovaci\'on, Agencia Estatal de Investigaci\'on (AEI) and FEDER under project PID2021-122991NB-C21.
\par \quad Address: Departamento de Matem\'atica, Centro de Ci\^encias F\'isicas e Matem\'aticas, Universidade Federal de Santa Catarina (UFSC), Campus Florian\'opolis, CEP 88040-090, Florian\'opolis - SC, Brasil.

e-mail: \texttt{m.bortolan@ufsc.br}, ORCiD: 0000-0002-4838-8847.

[T. Caraballo] This study was financed in part by Ministerio de Ciencia e Innovaci{\'o}n (MCI), Agencia Estatal de Investigaci{\'o}n (AEI) and Fondo Europeo de Desarrollo Regional (FEDER) under the project PID2021-122991NB-C21.
\par \quad Address: Departamento de Ecuaciones Diferenciales y An\'alisis Num\'erico, Universidad de Sevilla, Campus Reina Mercedes, 41012, Seville, Spain. e-mail: \texttt{caraball@us.es}, ORCiD: 0000-0003-4697-898X.

[C. Pecorari Neto] This study was financed in part by the Coordena\c c\~ao de Aperfei\c coamento de Pessoal de N\'ivel Superior - Brasil (CAPES) - Finance Code 001.
\par \quad Address: Departamento de Matem\'atica, Centro de Ci\^encias F\'isicas e Matem\'aticas, Universidade Fe\-de\-ral de Santa Catarina (UFSC), Campus Florian\'opolis, CEP 88040-090, Florian\'opolis - SC, Brasil. 

e-mail: \texttt{carlospecorarineto@gmail.com}, ORCiD: 0009-0007-4218-7781.
}

\maketitle

\begin{abstract}
In this work we define the \textit{generalized $\varphi$-pullback attractors} for evolution processes in complete metric spaces, which are compact and positively invariant families, such that they \textit{pullback attract} bounded sets with a \text{rate} determined by a decreasing function $\varphi$ that vanishes at infinity. We find conditions under which a given evolution process has a generalized $\varphi$-pullback attractor, both in the discrete and in the continuous cases. We present a result for the special case of generalized polynomial pullback attractors, and apply it to obtain such an object for a nonautonomous wave equation.
		
\bigskip \noindent \textbf{Keywords}: Generalized $\varphi$-pullback attractors. $\varphi$-pullback $\kappa$-dissipativity. Nonautonomous wave equation. Evolution processes. Pullback attractors.

\bigskip \noindent \textbf{MSC2020:} 35B41, 35L20, 37L25.
\end{abstract}

\section{Introduction}

The study of nonautonomous problems has been the focus of many researchers in the last decades, such as \cite{Bortolan2014,Caraballo2006,CRLBook,KloRas2011,Sell1967a,Sell1967b}, for example. Nonautonomous equations and systems are of fundamental importance to model and understand real problems in Chemistry, Biology, Physics, Economics and many other areas, since naturally, the independent terms that usually appear in the models are time-dependent forces.

It is also of great importance to deal with the asymptotic behavior of such problems and, for that, the key objects are the \textit{attracting sets} and \textit{families}. The well recognized existing literature deals with attractors that, in general terms, represent the sets of limit states of the trajectories of the equations, and also contain all the bounded solutions defined for all time. Thus, roughly speaking, the attractors are the objects that contain the ``relevant'' solutions of the systems, bearing in mind real world problems. 

To be more precise about our goals in this work, we begin by presenting an overview of the theory of evolution processes and their pullback attractors in metric spaces. In what follows we write $\mathbb{T}$ to denote either the set of the real numbers ($\mathbb{R}$) or the integers ($\mathbb{Z}$). We will also denote $\mathbb{T}^+:=\{t\in \mathbb{T}\colon t\geqslant 0\}$.

By setting $\mathcal{P}=\{(t,s)\in \mathbb{T}^2\colon t\geqslant s\}$ and considering $(X,d)$ a metric space, we say that a two-parameter family $S=\{S(t,s)\colon (t,s)\in \mathcal{P}\}$ of continuous maps from $X$ into itself is an \df{evolution process}\index{evolution process} if
\begin{enumerate}[label=$\circ$]
\item $S(t,t)x=x$ for all $x\in X$ and $t\in \mathbb{T}$;
\item $S(t,r)S(r,s)=S(t,s)$ for all $(t,r), (r,s)\in \mathcal{P}$, that is, $t,r,s\in \mathbb{T}$ and $t\geqslant r \geqslant s$;
\item the map $\mathcal{P}\times X \ni (t,s,x) \mapsto S(t,s)x\in X$ is continuous.
\end{enumerate}
The evolution process $S$ is said to be a \df{continuous} \df{evolution process}\index{evolution process!continuous} if $\mathbb{T}=\mathbb{R}$ and a \df{discrete evolution process}\index{evolution process!discrete} if $\mathbb{T}=\mathbb{Z}$. We point out that, by definition, all evolution processes have continuity properties. The name \textit{continuous evolution process} refers only to the time parameter being taken in $\mathbb{R}$, to separate them from their discrete counterpart.

The setting of evolution processes is what we call the \textit{nonautonomous framework}, due to the explicit dependence of both initial time $s$ and final time $t$, separately. When there exists a family $T=\{T(t)\colon t\in \mathbb{T}^+\}$ such that $S(t,s)=T(t-s)$ for all $(t,s)\in \mathcal{P}$, we say that $T$ is a \df{semigroup}\index{semigroup} and that $S$ is an \textit{autonomous evolution process}. This indicates that $S$ depends only on the elapsed time $t-s$ and not explicitly on $t$ and $s$ separately.

To mathematically introduce the \textit{pullback attraction theory}, which is the one we develop in this paper, we let $\mathfrak{F}$\index{class of families}\index{family} be the class of all families $\hat{D}=\left\{D_t\right\}_{t\in\mathbb{T}}$, where $D_t$ is a nonempty subset of $X$ for each $t\in\mathbb{T}$. A family $\hat{A}\in \mathfrak{F}$ is said to be \df{closed/compact}\index{family!closed}\index{family!compact} if $A_t$ is a closed/compact subset of $X$ for each $t\in \mathbb{T}$. Moreover, given $\hat{A},\hat{B}\in\mathfrak{F}$ we say that $\hat{A}\subset \hat{B}$ if $A_t\subset B_t$ for each $t\in\mathbb{T}$. 

For an evolution process $S$ in $X$, we say that a family $\hat{B}\in \mathfrak{F}$ is \df{pullback attracting} if for each bounded set $D\subset X$ and $t\in \mathbb{T}$ we have
\[
\lim_{s\to  -\infty}\dha (S(t,s)D,B_t)=0,
\]
where
$\dha (U,V)=\sup_{u\in U}\inf_{v\in V}d(u,v)$
denotes the \textbf{Hausdorff semidistance}\index{Hausdorff semidistance} between two nonempty subsets $U$ and $V$ of $X$. The term \textit{pullback attraction} emphasizes that we are fixing a final time $t$ and taking the initial time $s\to -\infty$. Hence, we are \textit{pulling back} to the present time the solutions which are starting further in the past. 

Consider a given evolution process $S$ in $X$. We say that a family $\hat{B}$ is \df{invariant/positively invariant}\index{family!invariant}\index{family!positively invariant} for $S$ if for all $(t,s)\in \mathcal{P}$ we have $S(t,s)B_s = B_t$ / $S(t,s)B_s\subset B_t$, respectively. With these definitions, we can present the notion of a \textit{pullback attractor}:f or an evolution process $S$ in a metric space $X$, we say that $\hat{A}\in\mathfrak{F}$ is a \df{pullback attractor} for $S$ if:
\begin{enumerate}[label=(\roman*)]
\item $\hat{A}$ is compact;
\item $\hat{A}$ is invariant for $S$;
\item \label{ppa3} $\hat{A}$ is a pullback attracting family;
\item \label{ppa4} $\hat{A}$ is the minimal closed set satisfying \ref{ppa3}, that is, if $\hat{C}\in\mathfrak{F}$ is a pullback attracting closed family, then $\hat{A}\subset\hat{C}$.
\end{enumerate}

The minimality condition \ref{ppa4} is there to ensure that when a pullback attractor for $S$ exists, it is unique. This condition could be replaced by a number of other properties such as, for instance, that for each $t\in \mathbb{T}$ the set $\cup_{s\leqslant t}A_s$ is bounded in $X$. When a family satisfies this property, we say that it is a \textbf{backwards bounded family}.\index{family!backwards bounded}

The problem that we face is the following: assuming that a pullback attractor $\hat{A}$ exists, there is no qualitative information regarding the \textit{rate of attraction} of $\hat{A}$. To that end, many authors (for instance, \cite{Sonner1,Sonner2}) have worked with the notion of a  \textit{pullback exponential attractor}, that is, a compact family $\hat{M}\in  \mathfrak{F}$ which is positively invariant for $S$ and \textit{exponentially pullback attracts} bounded subsets of $X$, that is, there exists a positive constant $\omega>0$ for which for all $D\subset X$ bounded we have
\[
\lim_{s\to -\infty} e^{\omega s}\dha(S(t,s)D,M_t) = 0,
\]
and, furthermore, the \textit{fractal dimension}\index{fractal dimension} of $M_t$ is uniformly bounded for $t\in \mathbb{T}$, that is, there exists $c_0\geqslant 0$ such that for all $t\in \mathbb{T}$ we have
\[
\dim_F(M_t):= \lim_{r\to 0^+} \frac{\ln N(r,M_t)}{\ln\frac1r} \leqslant c_0,
\]
where $N(r,M_t)$ denotes the minimum number of balls of radius $r>0$ in $X$ that cover $M_t$. The fractal dimension is an upper bound to both the \textit{Hausdorff dimension} and the \textit{topological dimension} of $K$. One of its main use is the following: if $\dim_F(K)$ is finite, then $K$ can be projected injectively in a finite dimensional space, with dimension larger then $2\dim_F(K)+1$ (see \cite{Mane1980}).

In the work done by Zhao, Zhong and Zhu in \cite{Zhao2022}, the authors introduce the concept of \textit{compact $\varphi$-attracting sets} in the autonomous framework (that is, for semigroups), which is a fixed set $M$ that contains the global attractor of the system and attracts bounded sets with rate of attraction given by a function $\varphi$, which is not necessarily the exponential function and with no necessary bound for its fractal dimension. More recently, in \cite{yan2023long}, the authors studied a nonautonomous version of this theory, using the Shatah–Struwe solutions, but focusing their efforts in the \textit{uniform attractor}, which is a different framework than the one of pullback attractors.

Inspired by those works, and combining the results with the ones of \cite{Zhang2017}, our goal is to expand this theory for the nonautonomous case, in the framework of pullback attraction, and understand the asymptotic behavior of nonautonomous problems (by means of evolution processes) defining the \textit{generalized $\varphi$-pullback attracting families}, which we present in what follows. To begin, we will define the functions that will determine  the \textit{decay rate} of the pullback attraction, and we chose to name them \textit{decay functions}.

\begin{definition}[Decay function]\label{def:DecayFunction}
We say that a function $\varphi\colon [k,\infty) \to [0,\infty)$, where $k\geqslant 0$ is an appropriate constant, is a \df{decay function}\index{decay function} if $\varphi$ is decreasing, $\lim\limits_{t\to \infty}\varphi(t)=0$ and 
\begin{equation}\label{eq:DecayCondition}
\limsup\limits_{t \to \infty}\dfrac{\varphi(\omega t + \eta)}{\varphi(\omega t)}<\infty \quad \hbox{ for every } \omega>0 \hbox{ and } \eta\in \mathbb{R}.
\end{equation}
\end{definition}

With that, given an evolution process $S$ in a metric space $X$, we can define the main object of our study.

\begin{definition}[Generalized \texorpdfstring{$\varphi$}{varphi}-pullback attractor]\label{def:GenPA}
We say that a family $\hat{M}\in \mathfrak{F}$ is a \df{generalized $\varphi$-pullback attractor}\index{generalized $\varphi$-pullback attractor} for $S$ if $\hat{M}$ is compact, positively invariant and \df{$\varphi$-pullback attracting}\index{$\varphi$-pullback attraction}, that is, there exists a constant $\omega>0$ such that for \textit{every} bounded $D\subset X$ and $t\in\mathbb{T}$ there exist $C=C(D,t)\geqslant 0$ and $\tau_0=\tau_0(D,t)\geqslant 0$ such that 
\[
\dha (S(t,t-\tau)D,M_t)\leqslant C\varphi(\omega\tau) \quad \hbox{ for all } \tau\geqslant \tau_0.
\]
\end{definition}
Note that the parameter $\omega>0$ is fixed, and works for all bounded subsets $D$ of $X$ and $t\in \mathbb{T}$, thus, the function $\varphi(\omega \cdot)$ is the one that controls the rate of pullback attraction of the family $\hat{M}$. 
We will work with the notion of generalized $\varphi$-pullback attractors, and whether their fractal dimension are uniformly bounded or not is, so far to our knowledge, an open question.

In Section \ref{Dphitheory}, we study what are the conditions these processes must satisfy to ensure the existence of such families. Inspired by \cite{Sonner1,Sonner2}, we divide our results in the \textit{discrete case} (see Theorem \ref{existfam}) and the \textit{continuous case} (see Theorem \ref{continuouscase}). Also, inspired by the theoretical work in \cite{ZhaoZhong2022}, we present a result specifically designed to prove the existence of a generalized \textit{polynomial} pullback attractor for a continuous evolution process (see Theorem \ref{corMain}). As expected from the usual theory of exponential pullback attractors, we prove that when a backwards bounded generalized $\varphi$-pullback attractor $\hat{M}$ for an evolution process $S$ exists, then $S$ also has a pullback attractor $\hat{A}$, with $\hat{A}\subset \hat{M}$ (see Theorem \ref{theo:GenimpliesPA}).

As our main result (see Theorem \ref{App:PolAtt} in Section \ref{Application}), also inspired by \cite{ZhaoZhong2022}, we apply the abstract theory of Section \ref{Dphitheory} to prove the existence of a generalized $\varphi$-pullback attractor, where $\varphi$ is a function with polynomial decay, for a class of nonautonomous wave equations given by:
\begin{equation}\label{ourproblem}\tag{NWE}
\left\{\begin{aligned}
& u_{tt}(t,x)-\Delta u(t,x)+k(t)\|u_t(t, \cdot)\|^p_{L^2(\Omega)}u_t(t,x)+f(t,u(t,x))\\
& \hspace{130pt} = \int_{\Omega}K(x,y)u_t(t,y)dy+h(x), (t,x)\in [s,\infty)\times \Omega,\\
& u(t,x) = 0, (t,x)\in \left[s,\infty \right) \times \partial\Omega,\\
& u(s,x)=u_0(x), \ u_t(s,x)=u_1(x), x\in \Omega,
\end{aligned} \right.
\end{equation}
where $\Omega\subset \mathbb{R}^3$ is a bounded domain with smooth boundary $\partial\Omega$. Specifically, the decay function will be $\varphi(t)=t^{-\frac1p}$, where $p>0$ is the one in \eqref{ourproblem}.

\section{Generalized \texorpdfstring{$\varphi$}{varphi}-pullback attractors} \label{Dphitheory}

Now we generalize the theory presented in \cite{Zhao2022} to the nonautonomous pullback framework, in a useful manner. At first sight, one might want to work with a map $\varphi(t,s)$ depending on \textbf{both} variables $t$ and $s$. This would imply that for each final time $t$ the map $\varphi(t,s)$ describes the decay of the solution as $s\to -\infty$. Since the behavior of $\varphi(t,s)$ can be different for distinct values of $t$ (for instance, it could be polynomial for one parameter $t$ and exponential for another), there would be no correct answer to the question: what is the rate of attraction? The answer would depend heavily on the final time $t$, and even if we could do that, it would not be practical for the applications. Thus, as defined in the introduction, we will work with single-variable functions, which we call \textit{decay functions}\index{decay function}. To recall the reader (see Definition \ref{def:DecayFunction}), a function $\varphi\colon [k,\infty) \to [0,\infty)$, where $k\geqslant 0$ is an appropriate constant, is called a decay function if $\varphi$ is decreasing, $\lim\limits_{t\to \infty}\varphi(t)=0$ and satisfies \eqref{eq:DecayCondition}. 

Examples of decay functions $\varphi$ are
\[
\varphi(t)=ce^{-\beta t}, \quad \varphi(t)=ct^{-\beta}, \quad \hbox{ and } \quad \varphi(t)=c\ln^{-\beta}(t),
\]
with $c, \beta$ positive constants. Indeed, they are all decreasing and tend to zero at infinity. For \eqref{eq:DecayCondition} we note that
\begin{align*}
    \lim\limits_{t\to\infty}\frac{ce^{-\beta(\omega t+\eta)}}{ce^{-\beta(\omega t)}}=e^{-\beta \eta}, \ \ \lim\limits_{t\to\infty}\frac{c(\omega t + \eta)^{-\beta}}{c(\omega t)^{-\beta}}=1 \hbox{ and } \lim\limits_{t\to \infty}\frac{c\ln^{-\beta}(\omega t+\eta)}{c\ln^{-\beta}(\omega t)}=1.
\end{align*}
Condition \eqref{eq:DecayCondition} always occurs for a decreasing function $\varphi$ when $\eta\geqslant 0$, but that is not always the case when $\eta<0$. Indeed, the function $\varphi(t)=t^{-t}$, although decreasing and $0$ at infinity, does not satisfy \eqref{eq:DecayCondition} for $\eta<0$, since
\begin{align*}
\lim\limits_{t\to\infty}\frac{(\omega t+\eta)^{-\omega t -\eta}}{(\omega t)^{-\omega t}}=\lim\limits_{t\to \infty}\left(\frac{\omega t}{\omega t + \eta}\right)^{\omega t}(\omega t+\eta)^{-\eta} = \infty.
\end{align*}
We added \eqref{eq:DecayCondition} to avoid having to deal with translations of the function $\varphi$, and thus the rate of attraction will depend only on $\varphi$ and $\omega$, but not on $\eta$. We could have chosen to work without this assumption, but since it is satisfied by the usual decay functions (exponential, polynomial and logarithmic) we have decided to use it. It makes the notation a little easier on the eyes.

This definition of a generalized $\varphi$-pullback attractor (see Definition \ref{def:GenPA}), that is, a family $\hat{M}\in \mathfrak{F}$ which is compact, positively invariant and $\varphi$-pullback attracts all bounded subsets of $X$, is inspired by the one of an \textit{exponential attractor}, and the word \textit{generalized} is there to emphasize that we are \textit{not} asking hypotheses on the finitude of the fractal dimension. The question of what are the conditions required to obtain the bound for the fractal dimension of such object is still an open problem.

Firstly, we show that the theory of \textit{generalized} $\varphi$-pullback attractors, that is, without the bound of the fractal dimension, is meaningful only when dealing with problems without properties of compactness on the evolution processes. For a subset $A$ of $X$, we denote by $\overline{A}$ the closure of $A$ in $X$.

\begin{proposition}
Let $X$ be a metric space and $S$ an evolution process on $X$. Assume the following:
\begin{enumerate}[label=$\circ$]
    \item $S$ is an \df{eventually compact}\index{evolution process!eventually compact} evolution process, that is, there exists $\tau>0$ such that $S(t,t-\tau)$ is a compact map for each $t\in \mathbb{T}$;
    \item there exists a backwards bounded \textbf{pullback absorbing}\index{family!pullback absorbing} family $\hat{B}$, that is, given $D\subset X$ bounded and $t\in \mathbb{T}$ there exists $s_0\leqslant t$ such that $S(t,s)D\subset B_t$ for all $s\leqslant s_0$.
\end{enumerate}
Then, given any decay function $\varphi$, $S$ has a generalized $\varphi$-pullback attractor.
\end{proposition}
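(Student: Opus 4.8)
The plan is to produce one family $\hat M\in\mathfrak F$ that simultaneously works for \emph{every} decay function, by making $\varphi$-pullback attraction trivial. The key remark is that if $\hat M$ is compact, positively invariant, and pullback \emph{absorbing} --- i.e. for each bounded $D\subset X$ and $t\in\mathbb T$ there is $s_0\leqslant t$ with $S(t,s)D\subseteq M_t$ for all $s\leqslant s_0$ --- then $\dha(S(t,t-\tau)D,M_t)=0$ for all large $\tau$, so the estimate $\dha(S(t,t-\tau)D,M_t)\leqslant C\varphi(\omega\tau)$ holds with, say, $\omega=1$ and $C=0$, regardless of which decay function $\varphi$ is given. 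Thus the whole task reduces to building a compact, positively invariant, pullback absorbing family out of $\hat B$ and the eventual compactness.

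The candidate I would use is
\[
M_t:=\overline{\bigcup_{\sigma\geqslant\tau}S(t,t-\sigma)B_{t-\sigma}}\qquad(t\in\mathbb T),
\]
where $\tau>0$ is the eventual-compactness parameter. Each $M_t$ is nonempty and closed by construction. Positive invariance is a direct computation: for $t\geqslant s$, continuity of $S(t,s)$ gives $S(t,s)M_s\subseteq\overline{\bigcup_{\sigma\geqslant\tau}S(t,s-\sigma)B_{s-\sigma}}$, and substituting $s-\sigma=t-\rho$ with $\rho=(t-s)+\sigma\geqslant\tau$ exhibits the right-hand side as a subfamily of the union defining $M_t$. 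The absorbing property is just as direct: given $D$ bounded and $t\in\mathbb T$, pullback absorption of $\hat B$ at the instant $t-\tau$ yields $s_0\leqslant t-\tau$ with $S(t-\tau,s)D\subseteq B_{t-\tau}$ for $s\leqslant s_0$, hence $S(t,s)D=S(t,t-\tau)S(t-\tau,s)D\subseteq S(t,t-\tau)B_{t-\tau}\subseteq M_t$ for all $s\leqslant s_0$ (take $\sigma=\tau$ in the union).

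The main obstacle is the compactness of $M_t$, and it is here that eventual compactness enters. Setting $\tilde B:=\overline{\bigcup_{r\leqslant t}B_r}$, which is bounded because $\hat B$ is backwards bounded, pullback absorption of $\hat B$ applied to $\tilde B$ at the instant $t-\tau$ gives $s_1\leqslant t-\tau$ with $S(t-\tau,s)\tilde B\subseteq B_{t-\tau}$ for $s\leqslant s_1$; consequently, for every $\sigma\geqslant t-s_1$,
\[
S(t,t-\sigma)B_{t-\sigma}=S(t,t-\tau)\bigl(S(t-\tau,t-\sigma)B_{t-\sigma}\bigr)\subseteq S(t,t-\tau)B_{t-\tau},
\]
which is relatively compact since $S(t,t-\tau)$ is a compact map and $B_{t-\tau}$ is bounded. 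On the complementary range $\sigma\in[\tau,\,t-s_1]$ one factors $S(t,t-\sigma)=S(t,t-\sigma+\tau)\circ S(t-\sigma+\tau,t-\sigma)$: the inner factor is a time-$\tau$ map, hence compact, so $S(t-\sigma+\tau,t-\sigma)B_{t-\sigma}$ is relatively compact, and so is its image under the continuous map $S(t,t-\sigma+\tau)$. In the discrete setting this range is finite, so a finite union of relatively compact sets settles it; in the continuous setting one additionally invokes the standard property that $S$ carries bounded sets to bounded sets over compact time intervals, so that $\bigcup_{\sigma\in[\tau,\,t-s_1]}S(t-\tau,t-\sigma)B_{t-\sigma}$ is bounded and its image under $S(t,t-\tau)$ is relatively compact. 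Either way $\bigcup_{\sigma\geqslant\tau}S(t,t-\sigma)B_{t-\sigma}$ is relatively compact, so $M_t$ is compact.

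Finally, with $\hat M$ compact, positively invariant and pullback absorbing, I would conclude as announced: fix $\omega:=1$; given bounded $D\subset X$ and $t\in\mathbb T$, an absorbing instant $s_0\leqslant t$ gives $S(t,t-\tau')D\subseteq M_t$, hence $\dha(S(t,t-\tau')D,M_t)=0\leqslant C\varphi(\omega\tau')$ for all $\tau'\geqslant\tau_0:=\max\{0,\,k,\,t-s_0\}$ and any $C\geqslant0$. Thus $\hat M$ is a generalized $\varphi$-pullback attractor, and since the construction and the estimate never referred to $\varphi$, the same $\hat M$ serves every decay function --- which is precisely the point the proposition makes. I expect the only delicate step to be the compactness of $M_t$ in continuous time, where the argument leans on the usual boundedness-on-compact-time-intervals property of the process.
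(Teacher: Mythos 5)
Your overall strategy is the same as the paper's: make the attraction estimate trivial by producing a compact, positively invariant, pullback \emph{absorbing} family, which then serves as a generalized $\varphi$-pullback attractor for every decay function at once. Your positive invariance and absorption arguments for $M_t=\overline{\bigcup_{\sigma\geqslant\tau}S(t,t-\sigma)B_{t-\sigma}}$ are correct, and in the discrete case the compactness argument (finite middle range plus the factorization through a time-$\tau$ map) goes through.

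The gap is in the compactness of $M_t$ in the continuous case. There you invoke ``the standard property that $S$ carries bounded sets to bounded sets over compact time intervals'' to conclude that $\bigcup_{\sigma\in[\tau,\,t-s_1]}S(t-\tau,t-\sigma)B_{t-\sigma}$ is bounded. This property is not among the hypotheses: the paper's definition of an evolution process only asks that $(t,s,x)\mapsto S(t,s)x$ be continuous, and continuity of a nonlinear map on an infinite-dimensional space does not imply it takes bounded sets to bounded sets, let alone uniformly over a time interval. Without it, the union over the continuum $\sigma\in[\tau,\,t-s_1]$ of relatively compact sets need not be relatively compact, so your $M_t$ may fail to be compact. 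The paper sidesteps exactly this issue by not taking the union over all $\sigma\geqslant\tau$: it first uses absorption to build $C_t=\bigcup_{s\leqslant s_0(t)}S(t,s)B_s\subset B_t$ (only the ``far past'' enters, so $C_t$ stays inside the bounded set $B_t$), checks that $\hat C$ is positively invariant and pullback absorbing, and then sets $M_t=\overline{S(t,t-\tau)C_{t-\tau}}$, which is compact as the image of a single bounded set under one compact map. Your proof can be repaired in the same spirit by restricting your union to $\sigma\geqslant t-s_1(t)$ (chosen monotonically in $t$ so that positive invariance survives), but as written the continuous-time compactness step does not follow from the stated hypotheses.
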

\begin{proof}
    Fix $t\in \mathbb{T}$ and set $B_\ast:=\cup_{s\leqslant t}B_s$, which is bounded by hypotheses. Since $\hat{B}$ is pullback absorbing, there exists $s_0:=s_0(t)\leqslant t$ such that $S(t,s)B_\ast \subset B_t$ for all $s\leqslant s_0$, which implies, in particular, that 
    $
        S(t,s)B_s \subset B_t \hbox{ for all } s\leqslant s_0.
    $ 
    It is clear that if $t_1\leqslant t_2$ we can choose $s_0(t_1)\leqslant s_0(t_2)$. We define the family $\hat{C}$ by
    \[
        C_t := \bigcup_{s\leqslant s_0(t)} S(t,s)B_s.
    \]
    Clearly $C_t\subset B_t$ and, hence, it is backwards bounded. Also, since for $s\leqslant t$ we have $s_0(s)\leqslant s_0(t)$, we have
    \[
        S(t,s)C_s = \bigcup_{r\leqslant s_0(s)}S(t,r)B_r \subset \bigcup_{r\leqslant s_0(t)}S(t,r)B_r = C_t,
    \]
    which proves that the family $\hat{C}$ is positively invariant. Furthermore, if $D\subset X$ is bounded then for each $s\in \mathbb{T}$ there exists $s_1=s_1(s)\leqslant s$ such that $S(s,r)D\subset B_s$ for all $r\leqslant s_1$. Thus if $s\leqslant s_1(s_0(t))$ then 
    \[
    S(t,s)D = S(t,s_0(t))S(s_0(t),s)D \subset  S(t,s_0(t)) B_{s_0(t)} \subset C_t, 
    \]
    which proves that $\hat{C}$ is pullback absorbing.

    Now we define the family $\hat{M}$ by
    $
    M_t = \overline{S(t,t-\tau)C_{t-\tau}} \hbox{ for each } t\in \mathbb{T}.
    $
    We claim that for any given decay function $\varphi$, $\hat{M}$ is a generalized $\varphi$-pullback attractor for $S$. Firstly we note that, since $C_{t-\tau}$ is bounded and $S(t,t-\tau)$ is compact, $M_t$ is compact. Now for $s\leqslant t$, since $S(t,s)$ is continuous from $X$ into $X$, we have
    \begin{align*}
    S(t,s)M_s & = S(t,s)\overline{S(s,s-\tau)C_{s-\tau}} \subset \overline{S(t,s-\tau)C_{s-\tau}} \\
    & = \overline{S(t,t-\tau)S(t-\tau,s-\tau)C_{s-\tau}} \subset \overline{S(t,t-\tau)C_{t-\tau}} = M_t,
    \end{align*}
    hence $\hat{M}$ is positively invariant. Lastly, if $D$ is bounded and $t\in \mathbb{R}$, since $\hat{C}$ is pullback absorbing there exists $s_0=s_0(t-\tau)$ such that $S(t-\tau,s)D\subset C_{t-\tau}$ for all $s\leqslant s_0$. Thus for $s\leqslant s_0$ we have
    \[
    S(t,s)D = S(t,t-\tau)S(t-\tau,s)D \subset S(t-\tau)C_{t-\tau} \subset M_t,
    \]
    which proves that $\hat{M}$ is pullback absorbing and, therefore, it is a generalized $\varphi$-pullback attractor for any given decay function $\varphi$. 
\end{proof}

This result shows that for either finite-dimensional problems or problems in infinite-dimensional spaces with compactness properties, the theory of generalized $\varphi$-pullback attractors, as is, is not that difficult, and the existence of a generalized $\varphi$-pullback attractor, for any given decay function $\varphi$, can be achieved by simply showing that $S$ has a backwards bounded pullback absorbing family $\hat{B}^0$.  Of course, if one adds the hypothesis of finitude of the fractal dimension of the family $\hat{M}$, the situation changes and turns the problem into a more difficult one. For now, as we already mentioned, we will not focus on the issue of the finitude of the fractal dimension.

In the literature, see \cite{Zhang2017} for instance, although not always explicit, there is a relationship between the existence of global attractors for semigroups and the decay of the Kuratowski measure of non-compactness for its $\omega$-limits. Hence, the work we present in what follows, inspired by \cite{Zhang2017}, attempts to unify the results of both \cite{Zhang2017} and \cite{Zhao2022} for the nonautonomous pullback setting. 

For what follows, unless clearly stated otherwise $(X,d) \hbox{ denotes a \textit{complete} metric space}$. Recall that for a nonempty bounded subset $C\subset X$, its \df{diameter}\index{diameter} is defined as $\operatorname{diam}(C) := \sup_{x,y\in C}d(x,y)$, and we have $\operatorname{diam}(C)=\operatorname{diam}(\overline{C})$. For $x_0\in X$ and $r>0$, the \df{open ball of radius $r$ centered in $x_0$}\index{ball!open} will be denoted by
$
B_r(x_0):= \{x\in X\colon d(x,x_0)<r\},
$
and the \df{closed ball of radius $r$ centered in $x_0$} \index{ball!closed} will be denoted by
$
\overline{B}_r(x_0):= \{x\in X\colon d(x,x_0)\leqslant r\}.
$
When there is a need to highlight the space $X$ in which the balls are being considered, we will use the notation $B_r^X(x_0)$ for open balls in $X$ and $\overline{B}_r^X(x_0)$ for closed balls in $X$. 

Recall that, for a bounded set $B\subset X$ we define its \df{Kuratowski measure} \df{of non-compactness}\index{measure!Kuratowski} by
\begin{align*}
\kappa(B)=\inf\{\delta > 0\colon B & \hbox{ admits a finite cover by sets of diameter less than or equal to }\delta\}.
\end{align*}
Related to the Kuratowski measure, we have the \df{ball measure} \df{of non-com\-pact\-ness}\index{measure!ball}, defined by
\[
\beta(B)=\inf\left\{r > 0\colon B \hbox{ admits a finite cover by open balls of radius } r\right\}.
\]
For results and properties of these measures, we refer to \cite{deimling2010nonlinear}. In the construction of generalized $\varphi$- pullback attractors, two concepts will play a fundamental role.

\begin{definition}[$\varphi$-pullback $\kappa$-dissipativity]\label{abcde} 
We say that an evolution process $S$ in $X$ is \df{$\varphi$-pullback $\kappa$-dissipative}\index{evolution process!$\varphi$-pullback $\kappa$-dissipative} if there exist $\omega>0$ such that for every bounded $D\subset X$ and $t\in\mathbb{T}$ there exists $C\geqslant 0$ and $\tau_0\geqslant 0$ such that
\[
\kappa\Big(\bigcup_{\sigma \geqslant \tau}S(t,t-\sigma)D\Big)\leqslant C\varphi(\omega\tau) \quad \hbox{ for all } \tau \geqslant \tau_0.
\]
\end{definition}

\begin{definition}[Uniform pullback absorption]\label{def:UnifPA}
Let $S$ be an evolution process on a complete metric space $X$. We say that $\hat{B}\in\mathfrak{F}$ is \textbf{uniformly pullback absorbing}\index{family!uniformly pullback absorbing} if given $D\subset X$ bounded and $t\in\mathbb{T}$, there exists $T>0$ such that $S(s,s-r)D\subset B_s$ for all $s\leqslant t$ and $r\geqslant T$.
\end{definition}

\subsection{The discrete case}

As mentioned in the introduction, inspired by \cite{Sonner1,Sonner2}, we first construct generalized $\varphi$-pullback attractors in the \textit{discrete case}, that is, when $S=\{S(n,m)\colon n\geqslant m \in \mathbb{Z}\}$ is a discrete evolution process on a complete metric space $X$. After that, we use the results of the discrete case to prove the existence of generalized $\varphi$-pullback attractors for the continuous case.

\begin{theorem}[Existence of generalized $\varphi$-pullback attractors for discrete evolution processes]\label{existfam}
Assume that there exists a closed family $\hat{B}=\left\{B_k\right\}_{k\in\mathbb{Z}}$ of bounded sets, which is uniformly pullback absorbing and positively invariant for the process $S$, and that, for a given decay function $\varphi$, $S$ is $\varphi$-pullback $\kappa$-dissipative. Then there exists a generalized $\varphi$-pullback attractor $\hat{M}$ for $S$, with $\hat{M}\subset \hat{B}$.
\end{theorem}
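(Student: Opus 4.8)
The overall strategy is to build the family $\hat M$ explicitly out of the absorbing family $\hat B$ by iterating the process and taking $\omega$-limit-type intersections, then to use the $\varphi$-pullback $\kappa$-dissipativity to control both the compactness and the rate of attraction. Concretely, I would first fix $k\in\mathbb Z$ and, using that $\hat B$ is uniformly pullback absorbing, set $B_\ast^k := \bigcup_{m\leqslant k} B_m$; this need not be bounded in general, but since we only need $\hat B$ itself (which is positively invariant), the cleaner route is to define
\[
M_k := \bigcap_{n\geqslant 0}\ \overline{\bigcup_{m\geqslant n} S(k,k-m)B_{k-m}}.
\]
Because $\hat B$ is positively invariant, $S(k,k-m)B_{k-m}\subset B_k$, so each $M_k\subset B_k$ and thus $\hat M\subset\hat B$; the sets in the intersection are nested closed sets.

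Next I would prove compactness of each $M_k$. This is where $\varphi$-pullback $\kappa$-dissipativity enters: applying Definition \ref{abcde} with $D=B_k$ (or with the bounded set generating the absorbing family) gives $\kappa\big(\bigcup_{\sigma\geqslant\tau}S(k,k-\sigma)B_k\big)\leqslant C\varphi(\omega\tau)\to 0$ as $\tau\to\infty$. Combining the positive invariance of $\hat B$ with the absorption, one shows $\bigcup_{m\geqslant n}S(k,k-m)B_{k-m}$ eventually sits inside $\bigcup_{\sigma\geqslant\tau}S(k,k-\sigma)B_k$ for suitable $\tau=\tau(n)\to\infty$, so its Kuratowski measure tends to $0$; since $X$ is complete, the nested intersection $M_k$ of these closed sets with vanishing $\kappa$ is nonempty and compact (standard: a decreasing sequence of nonempty closed sets whose $\kappa$ goes to zero has nonempty compact intersection). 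Positive invariance of $\hat M$ I would get by the usual $\omega$-limit computation, pushing $S(k+1,k)$ through the closure (using continuity of the process and $S(k+1,k)\overline A\subset\overline{S(k+1,k)A}$) and re-indexing.

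The heart of the matter, and the step I expect to be the main obstacle, is the \emph{rate} of pullback attraction, i.e. verifying $d_H(S(k,k-\tau)D,M_k)\leqslant C\varphi(\omega\tau)$ for $\tau\geqslant\tau_0$ and every bounded $D$. The natural argument: given bounded $D$ and $k$, by uniform pullback absorption there is $T$ with $S(k-m, k-m-r)D\subset B_{k-m}$ for all $m\geqslant 0$ and $r\geqslant T$; hence for $\tau$ large, $S(k,k-\tau)D = S(k,k-m)\big(S(k-m,k-\tau)D\big)\subset S(k,k-m)B_{k-m}\subset \bigcup_{\sigma\geqslant m}S(k,k-\sigma)B_k$ using positive invariance, where $m=m(\tau)$ can be taken roughly $\tau-T$. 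Then one must bound the semidistance of this set to $M_k=\omega$-limit; this is where a Zhang-type lemma is needed, estimating $d_H(U,\bigcap_n \overline{V_n})$ in terms of $\kappa(V_n)$. The clean inequality is: if $U\subset V_n$ then $d_H(U,M_k)\leqslant \kappa(V_n)+d_H(V_n,M_k)$, and one argues $d_H(V_n,M_k)\to 0$ together with $\kappa(V_n)\leqslant C\varphi(\omega\cdot(n))$; getting the residual term $d_H(V_n,M_k)$ to also decay at rate $\varphi$ — rather than merely to $0$ — is the delicate point, and is presumably handled by an iteration/telescoping argument across unit time steps combined with condition \eqref{eq:DecayCondition}, which is exactly what lets one absorb the bounded shifts $\eta$ coming from the "$-T$" and from discretization into the constant $C$ without spoiling the $\varphi(\omega\tau)$ form. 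Once that estimate is in place, compactness, positive invariance and the $\varphi$-rate together say $\hat M$ is a generalized $\varphi$-pullback attractor with $\hat M\subset\hat B$, finishing the proof.
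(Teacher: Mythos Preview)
Your construction has a genuine gap, and it is precisely the step you flag as ``delicate.'' The family
\[
M_k := \bigcap_{n\geqslant 0}\ \overline{\bigcup_{m\geqslant n} S(k,k-m)B_{k-m}}
\]
is the pullback $\omega$-limit of $\hat B$; it is compact, (positively) invariant and pullback attracting, but there is no reason it should attract at rate $\varphi$. Your proposed estimate $d_H(U,M_k)\leqslant \kappa(V_n)+d_H(V_n,M_k)$ for $U\subset V_n$ simply relocates the problem: controlling $d_H(V_n,M_k)$ by $\kappa(V_n)$ fails in general. For instance, if each $V_n$ is already compact then $\kappa(V_n)=0$ for all $n$, yet $d_H(V_n,M_k)$ may decay arbitrarily slowly (take $V_n=\{a_m:m\geqslant n\}\cup\{a_\infty\}$ with $a_m\to a_\infty$ slowly). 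The ``iteration/telescoping'' you allude to does not fix this, because the $\omega$-limit is by construction the \emph{minimal} closed attracting family, and minimal attractors typically do \emph{not} carry explicit rates.

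The paper resolves this by building a different, larger object, in the spirit of exponential attractors. For each $k$ and each $n\geqslant m_0$ it uses the $\kappa$-bound to produce a \emph{finite} set $\mathcal J_k^n\subset S(k,k-(m_0+n))B_{k-(m_0+n)}$ of centers such that $S(k,k-(m_0+n))B_{k-(m_0+n)}$ is covered by balls of radius $2C\varphi(\omega(m_0+n))$ around these centers. One then augments $\mathcal J_k^n$ to $\mathcal K_k^n$ so as to enforce $S(k,k-1)\mathcal K_{k-1}^n\subset \mathcal K_k^{n+1}$, and sets $E_k=\bigcup_{n\geqslant 0}\mathcal K_k^n$, $M_k=\overline{E_k}$. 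The $\varphi$-rate is then immediate from the covering property: since $\mathcal K_k^{n}\subset E_k$, the semidistance from $S(k,k-(m_0+n))B_{k-(m_0+n)}$ to $E_k$ is at most $2C\varphi(\omega(m_0+n))$ by construction, and condition~\eqref{eq:DecayCondition} absorbs the shift by $m_0$ and by the absorption time. Precompactness of $E_k$ follows because for any $m$ the tail $\bigcup_{n>m}\mathcal K_k^n$ sits inside $S(k,k-m)B_{k-m}$, whose $\kappa$-measure is controlled. In short: rather than shrinking down to an intersection and hoping the rate survives, the paper builds $M_k$ as a countable union of finite nets chosen so that the rate holds \emph{by design}; this is the missing idea in your plan.
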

\begin{proof}
Let $\hat{B}$ as in the hypotheses. Since $S$ is $\varphi$-pullback $\kappa$-dissipative, there exists $\omega>0$ such that given $k\in\mathbb{Z}$ there exist $C\geqslant 0$, $m_0\geqslant 1$ and integer, where 
\[
\kappa\Big(\bigcup_{n\geqslant m}S(k,k-n)B_{k-n}\Big)< C\varphi(\omega m) \hbox{ for all }m\geqslant m_0.
\]
For $k\in \mathbb{Z}$ fixed
\begin{align*}
\beta\left(S(k,k-m_0)B_{k-m_0}\right)&\leqslant \kappa\left(S(k,k-m_0)B_{k-m_0}\right)\leqslant \kappa\Big(\bigcup_{n\geqslant m_0}S(k,k-n)B_{k-n}\Big)<C\varphi(\omega m_0),
\end{align*}
which means $S(k,k-m_0)B_{k-m_0}$ can be covered by a finite number of balls of radius $C\varphi(\omega m_0)$. Thus, there exist points $x_i^{(m_0)}\in X$ for $i=1,\ldots,r(m_0)$ such that 
\begin{align*}
S(k,k-m_0)B_{k-m_0}\subset \bigcup_{i=1}^{r(m_0)}B_{C\varphi(\omega m_0)}(x_i^{(m_0)})
\end{align*}
and, consequently, there exists $y_i^{(m_0)}\in B_{k-m_0}$  for $i=1,\ldots,r(m_0)$ such that
\begin{align*}
S(k,k-m_0)B_{k-m_0}\subset \bigcup_{i=1}^{r(m_0)}B_{2C\varphi(\omega m_0)}(z_i^{(m_0)}),
\end{align*}
where $z_i^{(m_0)}=S(k,k-m_0)y_i^{(m_0)}$. Analogously, there exists $y_i^{(m_0+1)}\in B_{k-(m_0+1)}$, for $i=1,\ldots,r(m_0+1)$, in such a manner that 
\begin{align*}
S(k,k-(m_0+1))B_{k-(m_0+1)}\subset \bigcup_{i=1}^{r(m_0+1)}B_{2C\varphi\left(\omega (m_0+1)\right)}(z_i^{(m_0+1)}),
\end{align*}
where $z_i^{(m_0+1)}=S(k,k-(m_0+1))y_i^{(m_0+1)}$, and so on.

Now, for each $k\in\mathbb{Z}$ and $n\in\mathbb{N}$, we define
\begin{align*}\mathcal{J}_k^n=\big\{S(k,k-(m_0+n))y_i^{(m_0+n)}\colon i=1,\ldots,r(m_0+n)\big\},
\end{align*}
recalling that the number $m_0$ and the points $y_j$ depend on the value of $k$. We also define $\mathcal{K}_k^0=\mathcal{J}_k^0$ and $\mathcal{K}_k^n=\mathcal{J}_k^n \cup S(k,k-1)\mathcal{K}_{k-1}^{n-1}$ for each $k\in\mathbb{Z}, n\in\mathbb{N}, n\geqslant 1$. These sets satisfy, for each $k\in\mathbb{Z}$ and $n, p\in\mathbb{N}$, the following properties (see Proposition \ref{technicaldetails} below):

\begin{enumerate}[label=(\roman*)]
\item $\mathcal{J}_k^n \subset S(k,k-(m_0+n))B_{k-(m_0+n)}\subset B_k$,\label{i1}
\item $ S(k,k-(m_0+n))B_{k-(m_0+n)}\subset \bigcup_{z\in \mathcal{J}_k^n}B_{2C\varphi(\omega(m_0+n))}(z)$,\label{i2}
\item $S(k,k-1)\mathcal{K}_{k-1}^n\subset \mathcal{K}_k^{n+1}$,\label{i3}
\item $S(k, k-(m_0+n))B_{k-(m_0+n)}\subset \bigcup_{z\in\mathcal{K}_k^n}B_{2C\varphi(\omega(m_0+n))}(z)$,\label{i4}
\item $\mathcal{K}_k^n\subset S(k,k-n)B_{k-n}\subset B_k$, \label{i5}
\item $S(k+p,k)\mathcal{K}_k^n\subset \mathcal{K}_{k+p}^{n+p}$,\label{i6}
\item $\mathcal{K}_k^n\subset S(k,k-m)B_{k-m}$ for each $m\in\mathbb{N}$ e $n\geqslant m$.\label{i7}
\end{enumerate} 
Lastly, we define the family $\hat{E}=\left\{E_k\right\}_{k\in\mathbb{Z}}$ by 
\[
E_k=\bigcup_{n=0}^{\infty} \mathcal{K}_k^n \quad \hbox{ for each integer } k\in \mathbb{Z}.
\] 
Now we prove that $\hat{E}$ is precompact, positively invariant, and $\varphi$-pullback attracting. For $m\in\mathbb{N}$, using \ref{i7} it follows that $\bigcup_{n=m+1}^{\infty}\mathcal{K}_k^n\subset S(k,k-m)B_{k-m}$. Thus, 
\begin{align*}
E_k=\bigcup_{n=0}^{\infty} \mathcal{K}_k^n = \bigcup_{n=0}^{m} \mathcal{K}_k^n \ \ \cup \bigcup_{n=m+1}^{\infty} \mathcal{K}_k^n \subset \Big(\bigcup_{n=0}^{m} \mathcal{K}_k^n\Big) \cup S(k,k-m)B_{k-m}.
\end{align*}
If $m\geqslant m_0$ we obtain
\begin{align*}
\kappa(E_k)&\leqslant \kappa\left(\Big(\bigcup_{n=0}^{m} \mathcal{K}_k^n\Big) \cup S(k,k-m)B_{k-m}\right) \leqslant \operatorname{max}\left\{\kappa\left(\bigcup_{n=0}^{m} \mathcal{K}_k^n\right), \ \kappa(S(k,k-m)B_{k-m})\right\}\\
&=\kappa(S(k,k-m)B_{k-m})\leqslant \kappa\left(\bigcup_{n\geqslant m}S(k,k-n)B_{k-n}\right)< C\varphi(\omega m ),
\end{align*}
and letting $m\to \infty$ we obtain $\kappa(E_k)=0$, which means that $E_k$ is precompact.

The positive invariance follows directly from \ref{i6}, since for $k\in\mathbb{Z}$ and $p\in\mathbb{N}$, we have
\begin{align*}
S(k+p,k)E_k = S(k+p,k)\bigcup_{n=0}^{\infty}\mathcal{K}_k^n = \bigcup_{n=0}^\infty S(k+p,k)\mathcal{K}_k^n \subset \bigcup_{n=0}^\infty \mathcal{K}_{k+p}^{n+p} \subset  \bigcup_{n=0}^\infty \mathcal{K}_{k+p}^{n}=E_{k+p}.
\end{align*}

For the $\varphi$-pullback attraction, fix $D\subset X$ bounded and $k\in\mathbb{Z}$. From the hypotheses there exists an integer $q\geqslant 1$ such that $S(r,r-n)D\subset B_r$ for all $r\leqslant k$ and $n\geqslant q$. If $n>q+m_0$ there exist $a,b\in\mathbb{N}$ such that $n=a+q$, $n=b+m_0$, $a>m_0$, $b>q$ and $a=m_0+b-q$. Now we have
\begin{align*}
\dha&(S(k,k-n)D, E_k)=\dha(S(k,k-a-q)D,E_k)=\dha\left(S(k,k-a)S(k-a,k-a-q)D,\bigcup_{j=0}^\infty \mathcal{K}_k^j\right)\\
&\leqslant \dha\left(S(k,k-a)B_{k-a},\bigcup_{j=0}^\infty \mathcal{K}_k^j\right)\leqslant \dha\left(S(k,k-a)B_{k-a}, \ \mathcal{K}_k^{b-q}\right)\\
&\leqslant 2C\varphi(\omega(m_0+b-q))=2C\varphi(\omega(n-q))=2C\varphi(\omega n -\omega q).
\end{align*}
where the last inequality follows from \ref{i4}, since
\begin{align*}
S(k,k-a)B_{k-a}&=S(k,k-(m_0+(b-q)))B_{k-(m_0+(b-q))}\subset \bigcup_{z\in\mathcal{K}_k^{b-q}}B_{2C\varphi(\omega (m_0+b-q))}(z).
\end{align*}
By the conditions imposed on $\varphi$ , there exists $C_1>0$ such that
\begin{align*}
    \dha&(S(k,k-n)D, E_k)\leqslant 2C\varphi(\omega n - \omega q)\leqslant C_1\varphi(\omega n)
\end{align*}
for $n$ bigger than a sufficiently large number $n_0$. This proves the $\varphi$-pullback attraction of $\hat{E}$.

Finally, we define the family $\hat{M}=\left\{M_k\right\}_{k\in\mathbb{Z}}$ by $M_k=\overline{E_k}$ for each $k\in \mathbb{Z}$. Since $M_k$ is the closure of a precompact set $E_k$, the compactness of each $M_k$ is obvious. Also, since $E_k\subset B_k$ and $\hat{B}$ is closed, we have $M_k\subset B_k$ for each $k\in \mathbb{Z}$, that is, $\hat{M}\subset \hat{B}$. For the positive invariance, let $k\in\mathbb{Z}$ and $p\in\mathbb{N}$. Then, by the continuity properties of $S$ and the positive invariance of $\hat{E}$,
\begin{align*}
S(k+p,k)M_k&=S(k+p,k)\overline{E_k} \subset \overline{S(k+p,k)E_k}\subset \overline{E_{k+p}}=M_{k+p}.
\end{align*}
Lastly, notice that if $n>n_0$, 
\begin{align*}
\dha(S(k,k-n)D,M_k)&=\dha(S(k,k-n)D,\overline{E_k})\leqslant \dha(S(k,k-n)D, E_k) \leqslant C_1\varphi(\omega n).
\end{align*}
This concludes the proof that $\hat{M}$ is a generalized $\varphi$-pullback attractor for $S$.
\end{proof}

To conclude the proof, we just have to show properties (i)-(vii).

\begin{proposition}\label{technicaldetails}
The sets $\mathcal{J}_k^n$ and $\mathcal{K}_k^n$ defined in the proof of the last theorem satisfy the properties (i)-(vii) therein.
\end{proposition}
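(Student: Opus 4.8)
The plan is to verify the seven inclusions in the order in which they are listed, observing that (i)--(iv) are immediate from the definitions and that only (v)--(vi) require a (short) induction, after which (vii) follows at once from (v). Item (i) merely restates the choice of the points $y_i^{(m_0+n)}\in B_{k-(m_0+n)}$: by construction $\mathcal J_k^n=\{S(k,k-(m_0+n))y_i^{(m_0+n)}\colon i=1,\dots,r(m_0+n)\}\subset S(k,k-(m_0+n))B_{k-(m_0+n)}$, and the second inclusion $S(k,k-(m_0+n))B_{k-(m_0+n)}\subset B_k$ is exactly the positive invariance of $\hat{B}$ for the admissible pair $(k,k-(m_0+n))\in\mathcal P$. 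Item (ii) is precisely the covering property that defined the $z_i^{(m_0+n)}=S(k,k-(m_0+n))y_i^{(m_0+n)}$, since these points are, by definition, the elements of $\mathcal J_k^n$. Item (iii) is built into the recursion: for every $n\geqslant 0$ the set $\mathcal K_k^{n+1}=\mathcal J_k^{n+1}\cup S(k,k-1)\mathcal K_{k-1}^{n}$ contains $S(k,k-1)\mathcal K_{k-1}^n$ as one of its two pieces. The definition of $\mathcal K_k^n$ also gives $\mathcal J_k^n\subset\mathcal K_k^n$ for every $n$ (with equality when $n=0$), so (iv) is obtained from (ii) simply by enlarging the index set of the cover from $\mathcal J_k^n$ to $\mathcal K_k^n$.

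Next I would prove (v) by strong induction on $n$, uniformly in $k$. For $n=0$ we have $\mathcal K_k^0=\mathcal J_k^0\subset B_k=S(k,k)B_k$ by (i) and the identity $S(k,k)=\mathrm{id}$. Assuming $\mathcal K_{k'}^n\subset S(k',k'-n)B_{k'-n}$ for every $k'$, write $\mathcal K_k^{n+1}=\mathcal J_k^{n+1}\cup S(k,k-1)\mathcal K_{k-1}^{n}$ and treat the two pieces separately. For the first, (i) gives $\mathcal J_k^{n+1}\subset S(k,k-(m_0+n+1))B_{k-(m_0+n+1)}$, and since $m_0\geqslant 1$, splitting $S(k,k-(m_0+n+1))=S(k,k-(n+1))\,S(k-(n+1),k-(m_0+n+1))$ by the cocycle law and applying positive invariance of $\hat{B}$ to the inner map yields $S(k,k-(m_0+n+1))B_{k-(m_0+n+1)}\subset S(k,k-(n+1))B_{k-(n+1)}$. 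For the second, the induction hypothesis at $k-1$ together with $S(k,k-1)S(k-1,k-n-1)=S(k,k-n-1)$ gives $S(k,k-1)\mathcal K_{k-1}^n\subset S(k,k-(n+1))B_{k-(n+1)}$; this closes the induction, and the trailing inclusion $S(k,k-n)B_{k-n}\subset B_k$ is again positive invariance of $\hat{B}$. Item (vi) then follows by induction on $p$: the case $p=1$ is (iii) applied at $k+1$ (i.e. $S(k+1,k)\mathcal K_k^n\subset\mathcal K_{k+1}^{n+1}$), and the inductive step factors $S(k+p+1,k)=S(k+p+1,k+p)\,S(k+p,k)$, applies the hypothesis to the inner map and then (iii) at $k+p+1$ to conclude $S(k+p+1,k)\mathcal K_k^n\subset\mathcal K_{k+p+1}^{n+p+1}$.

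Finally, (vii) is read off from (v): for $n\geqslant m$ one has $\mathcal K_k^n\subset S(k,k-n)B_{k-n}=S(k,k-m)\,S(k-m,k-n)B_{k-n}\subset S(k,k-m)B_{k-m}$, the last step being positive invariance of $\hat{B}$ for the pair $(k-m,k-n)\in\mathcal P$. I do not expect any genuine obstacle: the proposition is a bookkeeping exercise, and the only point that needs attention is checking that each invocation of positive invariance of $\hat{B}$ and of the cocycle identity is applied to an admissible pair in $\mathcal P$ — which in every instance reduces to one of $m_0\geqslant 0$, $n\geqslant 0$, or $n\geqslant m$. Note in particular that the continuity of the maps $S(t,s)$ plays no role here; only the evolution-process identities and the positive invariance of $\hat{B}$ are used. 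Once the order (i)\,$\to$\,(ii)\,$\to$\,(iii)\,$\to$\,(iv), then (v)\,$\to$\,(vi)\,$\to$\,(vii) is fixed, each of the seven verifications is essentially a one-line argument.
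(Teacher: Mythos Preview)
Your proposal is correct and follows essentially the same route as the paper's proof: items (i)--(iv) are read off from the construction, (v) is proved by induction on $n$ (using the cocycle identity together with positive invariance of $\hat{B}$ on each of the two pieces of $\mathcal{K}_k^{n+1}$), (vi) by induction on $p$ via (iii), and (vii) is an immediate consequence of (v). The paper's argument is identical in structure and in the individual steps.
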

\begin{proof}
(i). The result follows immediately from the definition of $\mathcal{J}_k^n$ and the positive invariance of $\hat{B}$.

\medskip (ii) By the construction we did previously, there exists $y_i^{(m_0+n)}\in B_{k-(m_0+n)}$, for $i=1,\ldots,r(m_0+n)$, such that 
\[
S(k,k-(m_0+n))B_{k-(m_0+n)}\subset \bigcup_{i=1}^{r(m_0+n)}B_{2C\varphi(\omega(m_0+n))}(z_i^{(m_0+n)}),
\]
where $z_i^{(m_0+n)}=S(k,k-(m_0+n))y_i^{(m_0+n)}$, for $i=1,..., r(m_0+n)$. Noting that $\mathcal{J}_k^n=\{z_i^{(m_0+n)}\colon i=1,\ldots,r(m_0+n)\}$, this item is also obvious.

\medskip (iii) For all $k\in\mathbb{Z}$ and $n\in\mathbb{N}$, it follows from the definition of sets $\mathcal{K}_k^n$ that 
\[
S(k,k-1)\mathcal{K}_{k-1}^n \subset \mathcal{J}_k^{n+1} \cup S(k,k-1)\mathcal{K}_{k-1}^n =\mathcal{K}_{k}^{n+1}.
\]

\medskip (iv) It follows from \ref{i2} and the fact that $\mathcal{J}_k^n\subset \mathcal{K}_k^n$ for all $k\in\mathbb{Z}$ and $n\in\mathbb{N}$.

\medskip (v) For each integer $k$, it follows from \ref{i1} that $\mathcal{K}_k^0=\mathcal{J}_k^0\subset B_k=S(k,k-0)B_{k-0}$. Using the positive invariance of $\hat{B}$ and item \ref{i1} again we also have 
\begin{align*}
\mathcal{K}_k^1&=\mathcal{J}_k^1 \cup S(k,k-1)\mathcal{K}_{k-1}^0=\mathcal{J}_k^1 \cup S(k,k-1)\mathcal{J}_{k-1}^0 \\
&\subset S(k,k-(m_0+1))B_{k-(m_0+1)}\cup S(k,k-1)B_{k-1}\\
&=S(k,k-1)S(k-1,k-m_0-1)B_{k-m_0-1} \cup S(k,k-1)B_{k-1}\subset S(k,k-1)B_{k-1}.
\end{align*}
Now let $k\in\mathbb{Z}$ fixed and suppose $n\in\mathbb{N}, n\geqslant 1$. By property \ref{i1}, positive invariance of $\hat{B}$ and the Principle of Induction on $n$,
\begin{align*}
\mathcal{K}_k^n&=\mathcal{J}_k^n \cup S(k,k-1)\mathcal{K}_{k-1}^{n-1}\\
& \subset S(k,k-m_0-n)B_{k-m_0-n} \cup S(k,k-1)S(k-1,k-1-(n-1))B_{k-1-(n-1)}\\
&=S(k,k-m_0-n)B_{k-m_0-n} \cup S(k,k-n)B_{k-n}\\
&=S(k,k-n)S(k-n,k-n-m_0)B_{k-n-m_0} \cup S(k,k-n)B_{k-n}\subset S(k,k-n)B_{k-n}.
\end{align*}

\medskip (vi) For $p=0$ it is clear that $S(k+0,k)\mathcal{K}_k^n=\mathcal{K}_{k+0}^{n+0}$ and for $p=1$ it is a immediate consequence of property \ref{i3}, indeed $S(k+1,k)\mathcal{K}_k^n\subset \mathcal{K}_{k+1}^{n+1}$. Now, by the Induction Principle on $p$, and using \ref{i3} again, we have
\begin{align*}
S(k+p+1,k)\mathcal{K}_k^n&=S(k+p+1,k+p)S(k+p,k)\mathcal{K}_k^n\subset S(k+p+1,k+p)\mathcal{K}_{k+p}^{n+p}\subset \mathcal{K}_{k+p+1}^{n+p+1}.
\end{align*}

\medskip (vii) Let $m\in\mathbb{N}$ fixed. If $n\geqslant m$,
\begin{align*}
\mathcal{K}_k^n\subset S(k,k-n)B_{k-n}=S(k,k-m)S(k-m,k-n)B_{k-n}\subset S(k,k-m)B_{k-m},
\end{align*}
where we have used property \ref{i5} and the positive invariance of $\hat{B}$.
\end{proof}

\subsection{The continuous case}

Using the proof of the discrete case we are able to the obtain a result for the continuous case.

\begin{theorem}[Existence of generalized $\varphi$-pullback attractors for continuous evolution processes]\label{continuouscase}
Let $S$ be a $\varphi$-pullback $\kappa$-dissipative continuous evolution process on $X$ and assume that there exists a closed family $\hat{B}=\left\{B_t\right\}_{t\in\mathbb{R}}$ of bounded sets, which is uniformly pullback absorbing and positively invariant. Suppose also that there exists $\gamma>0$ such that for $s\in \mathbb{R}$ and $0\leqslant \tau \leqslant \gamma$ there exists a constant $L_{\tau,s}>0$ for which 
\[
d(S(s+\tau,s)x,S(s+\tau,s)y)\leqslant L_{\tau,s}d(x,y) \quad \hbox{ for all } x,y\in B_s.
\]
Then there exists a generalized $\varphi$-pullback attractor $\hat{M}$ for $S$, with $\hat{M}\subset \hat{B}$.
\begin{proof}
Consider the discrete evolution process $S_d$ defined by 
\[
S_d(m,n)=S(\gamma m,\gamma n) \quad \hbox{ for } m,n\in\mathbb{Z} \hbox{ with } m\geqslant n.
\]
Then $S_d$ is a $\varphi$-pullback $\kappa$-dissipative discrete evolution process and there exists a uniformly pullback absorbing and positively invariant family $\hat{H}=\left\{H_k\right\}_{k\in\mathbb{Z}}$ where $H_k=B_{k\gamma}$ for each $k$. In the proof of Theorem \ref{existfam}, we verified that there exists a precompact, positively invariant and $\varphi$-pullback attracting family $\hat{E}=\left\{E_k\right\}_{k\in\mathbb{Z}}$ for the discrete evolution process $S_d$, with $\hat{E}\subset \hat{B}$.

Now, define the family $\hat{G}=\left\{G_t\right\}_{t\in\mathbb{R}}$ by $G_t=S(t,k\gamma)E_k$ for $t\in [k\gamma, (k+1)\gamma)$. Note that $G_{k\gamma}=E_k$ for each $k\in \mathbb{N}$ and $\hat{G}\subset \hat{B}$. We will prove that $\hat{G}$ is precompact, positively invariant and $\varphi$-pullback attracting for the process $S$.

The precompactness of each $G_t$ follows immediately from its definition, since $E_k$ is precompact and $S(t,k\gamma)$ is continuous. Now, let $t,s\in\mathbb{R}$, $t\geqslant s$. We have $s=p\gamma+p_1$, $t=q\gamma+q_1$ where $p,q\in\mathbb{Z}$, $q\geqslant p$ and $p_1,q_1\in [0,\gamma)$. Then,
\begin{align*}
S(t,s)G_s&=S(q\gamma+q_1,p\gamma+p_1)G_{p\gamma+p_1}=S(q\gamma+q_1,p\gamma+p_1)S(p\gamma+p_1,p\gamma)E_{p}\\
&=S(q\gamma+q_1,p\gamma)E_{p}=S(q\gamma+q_1,q\gamma)S(q\gamma,p\gamma)E_{p}\subset S(q\gamma+q_1,q\gamma)E_{q}=G_{q\gamma+q_1}=G_t,
\end{align*}
since $S(q\gamma,p\gamma)E_p=S_d(q,p)E_p\subset E_q$ by the positive invariance of $\left\{E_k\right\}_{k\in\mathbb{Z}}$ related to the discrete process $S_d$.

It remains to prove that $\hat{G}$ is $\varphi$-pullback attracting for $S$. Let $D\subset X$ bounded and $t\in\mathbb{R}$ (we can write $t=q\gamma+t_0$ where $q\in\mathbb{Z}$ and $t_0\in [0,\gamma[$). Taking the time $q\in\mathbb{Z}$, since $S_d$ is $\varphi$-pullback $\kappa$-dissipative, there exist $C\geqslant 0$ and $m_0\in\mathbb{N}_*$ such that $\kappa\left(\bigcup_{n\geqslant m}S_d(q,q-n)H_{q-n}\right)< C\varphi(\omega m)$ for all $m\geqslant m_0$, where $H_k=B_{k\gamma}$. Since $\hat{B}$ is uniformly pullback absorbing, there exists $T>0$ such that $S(\varsigma,\varsigma-r)D\subset B_\varsigma$ for all $\varsigma\leqslant t$ and $r\geqslant T$. Let $s\geqslant (m_0+2)\gamma+T$. It implies that $s\geqslant \gamma+T+t_0$ and, then, there exist $p\in\mathbb{N}$, $t_1\in [0,\gamma[$ such that $s=p\gamma+T+t_0+t_1$ and $p\geqslant m_0$. Now,
\begin{align*}
S(t,t-s)D&=S(t, (q-p)\gamma-T-t_1)D=S(t, q\gamma)S(q\gamma,(q-p)\gamma)S((q-p)\gamma,(q-p)\gamma-(T+t_1))D\\
&\subset S(t,q\gamma)S(q\gamma,(q-p)\gamma)B_{(q-p)\gamma},
\end{align*}
since $(q-p)\gamma \leqslant t$ and $T+t_1\geqslant t$, and thus
\begin{align}\label{qwer1}
\dha(S(t,t-s)D,G_t)&=\dha(S(t,t-s)D, S(t, q\gamma)E_q)\nonumber \leqslant \dha(S(t,q\gamma)S(q\gamma,(q-p)\gamma)B_{(q-p)\gamma},S(t,q\gamma)E_q)\nonumber\\
&\stackrel{(\ast)}{\leqslant} L \dha(S(q\gamma,(q-p)\gamma)B_{(q-p)\gamma}, E_q)=L \dha(S_d(q,q-p)H_{q-p}, E_q),
\end{align}
where in ($\ast$) we used the facts that $S(q\gamma,(q-p)\gamma)B_{(q-p)\gamma}\subset B_{q\gamma}$ and $E_q=\bigcup_{n=0}^\infty \mathcal{K}_q^n \subset H_q=B_{q\gamma}$, which are consequences of the positive invariance of $\hat{B}$ and item \ref{i5} of the proof of Theorem \ref{existfam}. Now, using item \ref{i4} from the proof of Theorem \ref{existfam}, since $p-m_0\geqslant 0$, we have
\begin{align*}
S_d(q,q-p)H_{q-p}&=S_d(q,q-(m_0+(p-m_0)))H_{q-(m_0+(p-m_0))}\subset \bigcup_{z\in \mathcal{K}_q^{p-m_0}}B^X_{2C\varphi(\omega p)}(z),
\end{align*}
where $C$ is an appropriate constant obtained from Theorem \ref{existfam}. This ensures that 
\begin{align}\label{qwer2}
\dha(S_d(q,q-p)H_{q-p}, \mathcal{K}_{q}^{p-m_0})\leqslant 2C\varphi(\omega p).
\end{align}
It follows from \eqref{qwer1} and \eqref{qwer2} that for $s\geqslant (m_0+2)\gamma+T$,
\begin{align*}
\dha& (S(t,t-s)D,G_t) \leqslant L \dha(S_d(q,q-p)H_{q-p}, E_q)=L \dha\Big(S_d(q,q-p)H_{q-p}, \bigcup_{n=0}^{\infty}\mathcal{K}_q^n\Big)\\
& \leqslant L \dha(S_d(q,q-p)H_{q-p},\mathcal{K}_q^{p-m_0})\leqslant 2LC\varphi(\omega p)=2LC\varphi\left(\omega\left(\tfrac{s-T-t_0-t_1}{\gamma}\right)\right)\\
&=2CL\varphi\left(\tfrac{\omega}{\gamma}s-\tfrac{\omega}{\gamma}T-\tfrac{\omega}{\gamma}(t_0+t_1)\right)\leqslant 2CL\varphi\left(\tfrac{\omega}{\gamma}s-\left[\tfrac{\omega}{\gamma}T+2\omega\right]\right).
\end{align*}
By the conditions imposed on $\varphi$, there exists $C_1>0$ such that $\dha(S(t,t-s)D,G_t)\leqslant C_1\varphi\left(\frac{\omega}{\gamma}s\right)$ for $s$ bigger than a sufficiently large number $s_0$.

Finally, define the family $\hat{M}=\left\{M_t\right\}_{t\in\mathbb{R}}$ by $M_t=\overline{G_t}$ for each $t\in \mathbb{R}$. Since $\hat{B}$ is closed and $\hat{G}\subset \hat{B}$, we have $\hat{M}\subset \hat{B}$. Furthermore, we have: $\hat{M}$ is compact, since $\hat{G}$ is precompact; $\hat{M}$ is positively invariant, since for all $t\geqslant s$ we know that 
\[
S(t,s)M_s=S(t,s)\overline{G_s}\subset \overline{S(t,s)G_s}\subset \overline{G_t}=M_t,
\]
and $\hat{M}$ is $\varphi$-pullback attracting for $S$, since for $s\geqslant s_0$,
\begin{align*}
\dha(S(t,t-s)D,M_t)&=\dha(S(t,t-s)D,\overline{G_t})\leqslant \dha(S(t,t-s)D,G_t) \leqslant C_1\varphi\big(\tfrac{\omega}{\gamma}s\big).
\end{align*}
\end{proof}
\end{theorem}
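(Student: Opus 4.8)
The plan is to reduce the continuous case to the discrete case already settled in Theorem \ref{existfam}, by sampling time at step $\gamma$ and then interpolating with the flow. First I would set $S_d(m,n):=S(\gamma m,\gamma n)$ for integers $m\geqslant n$; the identity and cocycle axioms, and the joint continuity, are inherited directly from $S$, so $S_d$ is a discrete evolution process. Taking $H_k:=B_{k\gamma}$, the family $\hat H=\{H_k\}_{k\in\mathbb Z}$ is closed, bounded, positively invariant for $S_d$ (since $S_d(m,n)H_n=S(\gamma m,\gamma n)B_{\gamma n}\subset B_{\gamma m}=H_m$ by positive invariance of $\hat B$), and uniformly pullback absorbing for $S_d$ (given $D$ bounded and $k$, the $T$ coming from uniform pullback absorption of $\hat B$ at final time $k\gamma$ yields $N=\lceil T/\gamma\rceil$ with $S_d(m,m-n)D\subset H_m$ for all $m\leqslant k$, $n\geqslant N$). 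Moreover $S_d$ is $\varphi$-pullback $\kappa$-dissipative: the union $\bigcup_{n\geqslant m}S_d(k,k-n)D$ is contained in $\bigcup_{\sigma\geqslant m\gamma}S(k\gamma,k\gamma-\sigma)D$, so the Kuratowski bound for $S$ transfers with the constant rescaled by $\gamma$. Applying Theorem \ref{existfam} to $S_d$ produces a precompact, positively invariant, $\varphi$-pullback attracting family $\hat E=\{E_k\}_{k\in\mathbb Z}$ with $\hat E\subset\hat H$, and I would retain the auxiliary sets $\mathcal K_k^n$ from that proof, since the attraction estimate there is really an estimate in terms of the $\mathcal K_k^n$ via item \ref{i4}.

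Next I would define $\hat G=\{G_t\}_{t\in\mathbb R}$ by $G_t:=S(t,k\gamma)E_k$ for $t\in[k\gamma,(k+1)\gamma)$, so $G_{k\gamma}=E_k$, and $G_t\subset S(t,k\gamma)B_{k\gamma}\subset B_t$, i.e.\ $\hat G\subset\hat B$. Precompactness of each $G_t$ is immediate as the continuous image of a precompact set. For positive invariance, writing $s=p\gamma+p_1$, $t=q\gamma+q_1$ with $p_1,q_1\in[0,\gamma)$, the cocycle property gives $S(t,s)G_s=S(q\gamma+q_1,p\gamma)E_p=S(q\gamma+q_1,q\gamma)\,S_d(q,p)E_p\subset S(q\gamma+q_1,q\gamma)E_q=G_t$, using positive invariance of $\hat E$ for $S_d$.

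The heart of the proof, and the step I expect to be most delicate, is the $\varphi$-pullback attraction of $\hat G$, essentially because of the discrete/continuous index bookkeeping. Fix $D\subset X$ bounded and $t=q\gamma+t_0$ with $t_0\in[0,\gamma)$. From uniform pullback absorption of $\hat B$ pick $T>0$ with $S(\varsigma,\varsigma-r)D\subset B_\varsigma$ for $\varsigma\leqslant t$, $r\geqslant T$; from $\varphi$-pullback $\kappa$-dissipativity of $S_d$ (as used inside Theorem \ref{existfam}) pick $C\geqslant 0$ and $m_0$. For $s$ large write $s=p\gamma+T+t_0+t_1$ with $t_1\in[0,\gamma)$ and $p\geqslant m_0$; peeling off an initial segment lands the orbit in $B_{(q-p)\gamma}$, so $S(t,t-s)D\subset S(t,q\gamma)S(q\gamma,(q-p)\gamma)B_{(q-p)\gamma}$. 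Now the Lipschitz hypothesis on the window of length $t_0\leqslant\gamma$ based at $q\gamma$ (with constant $L=L_{t_0,q\gamma}$, available since $t$, hence $q$ and $t_0$, is fixed) lets me factor $S(t,q\gamma)$ out of the Hausdorff semidistance at cost $L$, reducing to $\dha(S_d(q,q-p)H_{q-p},E_q)$. Since $\mathcal K_q^{p-m_0}\subset E_q$ and, by item \ref{i4} of the proof of Theorem \ref{existfam}, $S_d(q,q-p)H_{q-p}\subset\bigcup_{z\in\mathcal K_q^{p-m_0}}B_{2C\varphi(\omega p)}(z)$, this semidistance is $\leqslant 2C\varphi(\omega p)$. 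Substituting $p=(s-T-t_0-t_1)/\gamma$ gives a bound of the form $2LC\,\varphi\!\left(\tfrac{\omega}{\gamma}s-b\right)$ for a constant $b>0$, and the decay condition \eqref{eq:DecayCondition} upgrades this to $C_1\varphi\!\left(\tfrac{\omega}{\gamma}s\right)$ for $s$ beyond some $s_0$. The only real care needed is choosing the decomposition of $s$ so that the leftover time $T+t_1$ genuinely exceeds the absorption threshold while keeping $p\geqslant m_0$.

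Finally I would set $M_t:=\overline{G_t}$. Then $\hat M\subset\hat B$ because $\hat B$ is closed; each $M_t$ is compact as the closure of a precompact set; positive invariance passes to closures via continuity of $S$, $S(t,s)M_s=S(t,s)\overline{G_s}\subset\overline{S(t,s)G_s}\subset\overline{G_t}=M_t$; and the attraction estimate is inherited verbatim from that of $\hat G$ since $\dha(\,\cdot\,,\overline{G_t})\leqslant\dha(\,\cdot\,,G_t)$. This exhibits $\hat M$ as a generalized $\varphi$-pullback attractor for $S$ with $\hat M\subset\hat B$.
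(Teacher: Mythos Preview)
Your proposal is correct and follows essentially the same approach as the paper: discretize at step $\gamma$, apply Theorem \ref{existfam} to obtain $\hat E$ and its auxiliary sets $\mathcal K_k^n$, interpolate via $G_t=S(t,k\gamma)E_k$, use the Lipschitz hypothesis on the window $[q\gamma,t]$ together with item \ref{i4} to control the Hausdorff semidistance, absorb the shift via \eqref{eq:DecayCondition}, and take closures. Your write-up even supplies a couple of verifications (e.g.\ that $S_d$ inherits $\varphi$-pullback $\kappa$-dissipativity via the containment $\bigcup_{n\geqslant m}S_d(k,k-n)D\subset\bigcup_{\sigma\geqslant m\gamma}S(k\gamma,k\gamma-\sigma)D$) that the paper leaves implicit.
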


\subsection{Existence of a generalized polynomial pullback attractor}

The purpose of this subsection is to prove the following result, which will be used to prove the existence of a generalized polynomial pullback attractor for the nonautonomous wave equation \eqref{ourproblem} in Section \ref{Application}. To state this result, we state the following definition: we say that a family $\hat{B}$ is \textbf{uniformly bounded}\index{family!uniformly bounded} is $\cup_{t\in \mathbb{T}}B_t$ is a bounded subset of $X$.

We also present two definitions (also used in \cite{ZhaoZhong2022}) that will serve us for what follows. Let $X$ be a complete metric space and $B\subset X$. A function $\psi\colon X\times X \to \mathbb{R}^+$ is called \textbf{contractive}\index{contractive function} on $B$ if for each sequence $\left\{x_n\right\}_{n\in\mathbb{N}}\subset B$ we have 
\[
\liminf_{m,n\to \infty}\psi(x_n,x_m)=0.
\]
We denote the set of such functions by $\operatorname{contr}(B)$. Equivalently, a function $\psi$ is contractive on $B$ is for each sequence $\{x_n\}\subset B$ there exists a subsequence $\{x_{n_k}\}$ such that
\[
\lim_{k,\ell\to \infty}\psi(x_{n_k},x_{n_\ell})=0.
\]

Recall that a \textbf{pseudometric}\index{pseudometric} in a set $X$ is a function $\rho\colon X\times X\to [0,\infty)$ that satisfies:
\begin{enumerate}[label={$\circ$}]
    \item $\rho(x,x)=0$ for all $x\in X$;
    \item $\rho(x,y)=\rho(y,x)$ for all $x,y\in X$;
    \item $\rho(x,z) \leqslant \rho(x,y) + \rho(y,z)$ for all $x,y,z\in X$.
\end{enumerate}
Given $\varnothing \neq B\subset X$ and $\rho$ a pseudometric on $X$, we say that $\rho$ is \textbf{precompact on $B$}\index{pseudometric!precompact} if given $\delta>0$, there exists a finite set of points $\left\{x_1,...,x_r\right\}\subset B$ such that $B\subset \cup_{j=1}^r B_{\delta}^\rho(x_j)$,  where $B_{\delta}^{\rho}(x_j)=\{y\in X\colon \rho(y, x_j)<\delta\}.$
It is easy to see that $\rho$ is precompact on $B$ if and only if any sequence $\{x_n\}_{n\in \mathbb{N}}\subset B$ has a Cauchy subsequence $\{x_{n_j}\}_{j\in \mathbb{N}}$ with respect to $\rho$.

\begin{theorem}[Existence of generalized polynomial pullback attractors for continuous evolution processes]\label{corMain}
Let $X$ be a complete metric space and $S$ be a continuous evolution process in $X$ such that there exists a closed, uniformly bounded, and positively invariant uniformly pullback absorbing family $\hat{B}$ for $S$. Suppose that there exists $\gamma>0$ such that for each $s\in \mathbb{R}$ and $0\leqslant \tau \leqslant \gamma$ there exists a constant $L_{\tau,s}>0$ such that
\[
d(S(s+\tau,s)x,S(s+\tau,s)y)\leqslant L_{\tau,s}d(x,y) \hbox{ for all } x,y\in B_s.
\]
Assume also that there exist $\beta\in \left(0, 1\right)$, $r>0$, $T>0$, $C>0$ satisfying: given $t\in \mathbb{R}$, there exist functions $g_1, g_2\colon (\mathbb{R}^+)^m\to \mathbb{R}^+$, $\psi_1, \psi_2\colon X\times X \to \mathbb{R}^+$ and pseudometrics $\rho_1,\ldots,\rho_m$ on $X$ such that:
\begin{enumerate}[leftmargin=*,label={(\roman*)}]
\item $g_i$ is non-decreasing with respect to each variable, $g_i(0,...,0)=0$ and it is continuous at $(0,...,0)$ for $i=1,2$;
\item For each $n\in\mathbb{N}$, $\rho_1,\ldots,\rho_m$ are precompact on $B_{t-nT}$;
\item $\psi_1,\psi_2\in \operatorname{contr}(B_{t-nT})$ for all $n\in\mathbb{N}$;
\item for each $n\in\mathbb{N}$ and all $x,y\in B_{t-nT}$ we have
\[
d(S_nx,S_ny)^r \leqslant d(x,y)^r+g_1(\rho_1(x,y),...,\rho_m(x,y))+\psi_1(x,y);
\]
and
\begin{align*}
d(&S_nx,S_ny)^r \leqslant C \big[d(x,y)^r  - d(S_nx,S_ny)^r\\
& +g_1(\rho_1(x,y),...,\rho_m(x,y))+\psi_1(x,y)\big]^\beta + g_2(\rho_1(x,y),...,\rho_m(x,y))+\psi_2(x,y),
\end{align*}
where $S_n:=S(t-(n-1)T,t-nT)$ for each $n\in \mathbb{N}$.
\end{enumerate}
Then $S$ is  $\varphi$-pullback $\kappa$-dissipative, with the decay function $\varphi$ given by $\varphi(s)=s^{\frac{\beta}{r(\beta-1)}}$. Also, $S$ has a uniformly bounded generalized $\varphi$-pullback attractor $\hat{M}$.
\end{theorem}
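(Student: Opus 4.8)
The plan is to establish the two assertions in order, the second being an immediate consequence of Theorem~\ref{continuouscase} once the first is proved. First I would check that $\varphi(s)=s^{\beta/(r(\beta-1))}$ is a decay function: since $\beta\in(0,1)$ the exponent is negative, so on $[1,\infty)$ the function $\varphi$ is decreasing and vanishes at infinity, and being a negative power it satisfies \eqref{eq:DecayCondition} exactly as the example $\varphi(t)=ct^{-\beta}$ after Definition~\ref{def:DecayFunction}. Everything then reduces to proving that $S$ is $\varphi$-pullback $\kappa$-dissipative with $\omega:=1/T$; granting this, Theorem~\ref{continuouscase} applies directly --- its hypotheses are precisely ``$S$ is a $\varphi$-pullback $\kappa$-dissipative continuous evolution process'' (to be proved), ``$\hat B$ is a closed family of bounded sets, uniformly pullback absorbing and positively invariant'' (given, since uniformly bounded implies bounded), and the local Lipschitz estimate with horizon $\gamma$ (given) --- and it yields a generalized $\varphi$-pullback attractor $\hat M\subset\hat B$, which is uniformly bounded because $\hat B$ is.

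For the $\varphi$-pullback $\kappa$-dissipativity, fix a bounded $D\subset X$ and $t\in\R$ and use uniform pullback absorption to get $T_D>0$ with $S(s,s-\rho)D\subset B_s$ whenever $s\leqslant t$ and $\rho\geqslant T_D$. For $\tau\geqslant T_D+T$ set $N=N(\tau):=\lfloor(\tau-T_D)/T\rfloor\geqslant 1$; then for every $\sigma\geqslant\tau$ one has $\sigma-NT\geqslant T_D$ and $t-NT\leqslant t$, so $S(t,t-\sigma)D=S(t,t-NT)S(t-NT,t-\sigma)D\subset S(t,t-NT)B_{t-NT}$, whence $\kappa\big(\bigcup_{\sigma\geqslant\tau}S(t,t-\sigma)D\big)\leqslant\kappa\big(S(t,t-NT)B_{t-NT}\big)$ and it suffices to bound the latter. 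Writing $S_n=S(t-(n-1)T,t-nT)$ so that $S(t,t-NT)=S_1\circ\cdots\circ S_N$, for $x,y\in B_{t-NT}$ I would follow the trajectories $\xi_n=S(t-nT,t-NT)x$ and $\eta_n=S(t-nT,t-NT)y$, which lie in $B_{t-nT}$ by positive invariance, with $\xi_N=x$, $\xi_0=S(t,t-NT)x$ and $S_n\xi_n=\xi_{n-1}$. Setting $b_n=d(\xi_n,\eta_n)^r$ and $M_0:=\big(\operatorname{diam}\bigcup_{s}B_s\big)^r<\infty$ (so $0\leqslant b_n\leqslant M_0$), condition~(iv) applied to the pair $(\xi_n,\eta_n)$ gives, for $n=1,\dots,N$, both $b_{n-1}\leqslant b_n+e_n$ and $b_{n-1}\leqslant C(b_n-b_{n-1}+e_n)^\beta+e_n'$, where $e_n,e_n'\geqslant 0$ are formed from $g_1,g_2,\psi_1,\psi_2$ and $\rho_1,\dots,\rho_m$ evaluated at $(\xi_n,\eta_n)$; the first inequality is exactly what makes the bracket in the second nonnegative.

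The next step removes those lower-order terms. Given an arbitrary sequence $\{x^{(j)}\}\subset B_{t-NT}$, since $\rho_1,\dots,\rho_m$ are precompact and $\psi_1,\psi_2$ are contractive on each of the \emph{finitely many} sets $B_{t-T},\dots,B_{t-NT}$, a diagonal extraction produces a subsequence along which, for every $n\in\{1,\dots,N\}$, the sequences $\{\xi_n^{(j)}\}$ are $\rho_i$-Cauchy for all $i$ and both $\psi_1$- and $\psi_2$-Cauchy; since $g_1,g_2$ are nondecreasing, continuous at the origin and vanish there, the numbers $e_n,e_n'$ formed from pairs $(\xi_n^{(j)},\xi_n^{(j')})$ tend to $0$ as $j,j'\to\infty$. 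Choosing pairs $j_i,j_i'\to\infty$ that realise $L:=\limsup_{j,j'\to\infty}b_0^{(j,j')}$ and passing to a further subsequence so that $b_n^{(j_i,j_i')}\to\bar b_n$ for each $n=0,\dots,N$ (possible, as these lie in $[0,M_0]$), the two families of inequalities pass to the limit to give $\bar b_0\leqslant\bar b_1\leqslant\cdots\leqslant\bar b_N\leqslant M_0$ and $\bar b_{n-1}\leqslant C(\bar b_n-\bar b_{n-1})^\beta$ for $n=1,\dots,N$, with $\bar b_0=L$. Reindexing $a_k:=\bar b_{N-k}$, the sequence $(a_k)_{k=0}^N$ is nonnegative, nonincreasing, $a_0\leqslant M_0$, and $a_k\leqslant C(a_{k-1}-a_k)^\beta$; a standard polynomial-decay argument then applies --- using the dichotomy ``either $a_k\leqslant\tfrac12a_{k-1}$, or $a_{k-1}-a_k\geqslant(a_{k-1}/2C)^{1/\beta}$'', which once $a_k$ has fallen below a threshold depending only on $C,\beta$ forces $a_{k-1}-a_k\geqslant c\,a_{k-1}^{1/\beta}$, after which telescoping $a_k^{-(1-\beta)/\beta}$ yields $a_k\leqslant K(1+k)^{-\beta/(1-\beta)}$ with $K=K(M_0,C,\beta)$ (cf.\ the algebraic lemmas in \cite{Zhang2017,ZhaoZhong2022}). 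Hence $L=a_N\leqslant K(1+N)^{-\beta/(1-\beta)}$, so every sequence in $S(t,t-NT)B_{t-NT}$ has a subsequence whose images are, asymptotically, within $K^{1/r}(1+N)^{-\beta/(r(1-\beta))}$ of one another; by the sequential characterization of the Kuratowski measure this forces $\kappa\big(S(t,t-NT)B_{t-NT}\big)\leqslant 2K^{1/r}(1+N)^{-\beta/(r(1-\beta))}=2K^{1/r}\varphi(1+N)$.

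Finally I would assemble the pieces: $1+N(\tau)\geqslant\omega\tau-\omega T_D$ with $\omega=1/T$ and $\varphi$ is decreasing, so $\varphi(1+N(\tau))\leqslant\varphi(\omega\tau-\omega T_D)$, and by \eqref{eq:DecayCondition} there are $\widetilde C\geqslant 1$ and $\tau_0\geqslant T_D+T$ such that $\varphi(\omega\tau-\omega T_D)\leqslant\widetilde C\varphi(\omega\tau)$ for $\tau\geqslant\tau_0$; therefore $\kappa\big(\bigcup_{\sigma\geqslant\tau}S(t,t-\sigma)D\big)\leqslant 2K^{1/r}\widetilde C\,\varphi(\omega\tau)$ for all $\tau\geqslant\tau_0$, with $\omega=1/T$ independent of $D$ and $t$ --- which is precisely the $\varphi$-pullback $\kappa$-dissipativity of $S$, and the theorem then follows via Theorem~\ref{continuouscase} as explained above. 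The main obstacle is the passage from the $\beta$-contraction recursion to the \emph{sharp} rate $-\beta/(r(1-\beta))$: one must control all the nonlinear lower-order terms simultaneously along a single subsequence (hence the diagonal extraction over the finitely many absorbing sets $B_{t-nT}$), and then run the algebraic-decay lemma with the correct exponent --- a naive telescoping of $a_k\leqslant C(a_{k-1}-a_k)^\beta$ only gives the weaker rate $-\beta/r$, and it is the dichotomy (morally, comparison with $\dot a=-c\,a^{1/\beta}$) that recovers the exponent appearing in the statement.
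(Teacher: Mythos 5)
Your proposal is correct, and it reaches the same two-step structure as the paper (prove $\varphi$-pullback $\kappa$-dissipativity, then invoke Theorem~\ref{continuouscase} and inherit uniform boundedness from $\hat B$), but the core dissipativity estimate is obtained by a genuinely different mechanism. The paper routes everything through Proposition~\ref{thmplm} together with Proposition~\ref{lemmaxnz}: the two inequalities in (iv) are combined into the single pointwise bound $D_n(x,y)^r\leqslant v\big(d(x,y)^r+G(x,y)+\psi(x,y)\big)$, where $v$ is the inverse of $u(s)=(3C)^{-1/\beta}s^{1/\beta}+s$; iterating gives $\kappa\big(S(t,t-nT)B_{t-nT}\big)\leqslant 2\,v^n(M^r)^{1/r}$, and the polynomial rate comes from the appendix estimate on $v^n$ (Proposition~\ref{functionsuv}). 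You instead pass to the limit along a diagonally extracted subsequence to reduce matters to a purely numerical nonincreasing sequence satisfying $a_k\leqslant C(a_{k-1}-a_k)^\beta$, and then run the standard dichotomy/telescoping lemma, which produces the same per-step exponent $\beta/(\beta-1)$ and, after the same separation argument ($\kappa\leqslant 2\beta$-type bound, identical factor $2$), the same decay function $\varphi(s)=s^{\beta/(r(\beta-1))}$; your floor-function bookkeeping with \eqref{eq:DecayCondition} plays the role of Proposition~\ref{lemmaxnz}. What your route buys is the elimination of the $u,v,v^n$ machinery and the appendix computation, replaced by a self-contained algebraic-decay lemma; what it costs is that the diagonal extraction needs all the $\rho_i$- and $\psi$-terms at the finitely many levels $B_{t-T},\dots,B_{t-NT}$ to vanish simultaneously along one subsequence, i.e.\ the subsequence (``Cauchy'') form of $\operatorname{contr}$ rather than the bare $\liminf$ in Definition; this is legitimate here because the paper explicitly asserts that form as an equivalent description of contractive functions, and the paper's own proof of Proposition~\ref{thmplm} likewise combines several separate liminfs as if realized along a common subsequence, so you are relying on exactly the same strengthening. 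One small presentational point: when you extract pairs realizing $L=\limsup b_0$ and further refine so that each $b_n$ converges, it is worth saying explicitly that the full double limits $e_n,e'_n\to 0$ (not just liminfs) are what make every choice of pairs admissible — which your diagonal step indeed provides.
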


To prove this theorem, we first present technical results.

\begin{proposition}\label{lemmaxnz}
Let  $\hat{B}\in\mathfrak{F}$ be a uniformly pullback absorbing family. Suppose that there exist a decay function $\varphi$ and $\omega>0$ such that for each $t\in\mathbb{R}$ there exist $C\geqslant 0$, $\tau_0>0$ such that 
\[
\kappa(S(t,t-\tau)B_{t-\tau})\leqslant C\varphi(\omega\tau) \quad \hbox{ for all } \tau\geqslant \tau_0.
\]
Then $S$ is $\varphi$-pullback $\kappa$-dissipative.
\begin{proof}
Let $D \subset X$ bounded and $t\in\mathbb{R}$. Since $\hat{B}$ is uniformly pullback absorbing, there exists $T>0$ such that $S(s,s-r)D\subset B_s$ for all $s\leqslant t$ and $r\geqslant T$. Take $\sigma \geqslant \tau_0+T>0$ and note that if $s\geqslant 2\sigma$, since $t-\sigma\leqslant t$ and $s-\sigma\geqslant T$, we have
\begin{align*}
S(t,t-s)D&=S(t,t-\sigma)S(t-\sigma,t-s)\\
&=S(t,t-\sigma)S(t-\sigma,(t-\sigma)-(s-\sigma))D\subset S(t,t-\sigma)B_{t-\sigma},
\end{align*}
which implies that $\bigcup_{s\geqslant 2\sigma}S(t,t-s)D \subset S(t,t-\sigma)B_{t-\sigma}$. Thus, since $\sigma\geqslant \tau_0$, it follows that
\begin{align*}
\kappa\Big(\bigcup_{s\geqslant 2\sigma}S(t,t-s)D\Big)\leqslant \kappa(S(t,t-\sigma)B_{t-\sigma})\leqslant C\varphi(\omega\sigma)
\end{align*}
for all $\sigma \geqslant T+\tau_0$. This is equivalent to 
\begin{align*}
\kappa\Big(\bigcup_{s\geqslant \tau}S(t,t-s)D\Big)\leqslant C\varphi\big(\tfrac{\omega}{2} \tau\big)
\end{align*}
for all $\tau \geqslant 2\tau_0+2T$, and the proof is complete.
\end{proof}
\end{proposition}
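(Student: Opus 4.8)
The plan is to show that the single-time Kuratowski estimate on the absorbing family $\hat{B}$ propagates to the uniform tail estimate over all sufficiently old starting times demanded by Definition \ref{abcde}. The structural heart of the argument is the observation that, thanks to uniform pullback absorption, the entire tail union $\bigcup_{s\geqslant 2\sigma}S(t,t-s)D$ collapses into a single set of the form $S(t,t-\sigma)B_{t-\sigma}$, whose Kuratowski measure is already controlled by hypothesis. Once that containment is secured, monotonicity of $\kappa$ does all the remaining work, and only a cosmetic reindexing of the decay rate is left.

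First I would fix a bounded $D\subset X$ and a time $t$, and invoke the uniform pullback absorption of $\hat{B}$ to produce a single $T>0$ with $S(s,s-r)D\subset B_s$ for every $s\leqslant t$ and every $r\geqslant T$. The uniformity over $s\leqslant t$ is exactly what will allow me to trigger absorption at the shifted time $t-\sigma$ rather than only at $t$. Next I would choose an intermediate shift $\sigma$ large enough to satisfy two requirements simultaneously: $\sigma\geqslant \tau_0$, so that the hypothesized bound $\kappa(S(t,t-\sigma)B_{t-\sigma})\leqslant C\varphi(\omega\sigma)$ applies, and $\sigma\geqslant T$, so that absorption can be triggered. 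Taking $\sigma\geqslant \tau_0+T$ meets both.

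For any $s\geqslant 2\sigma$ I would split the process by the evolution property, $S(t,t-s)D=S(t,t-\sigma)\,S(t-\sigma,t-s)D$, and note that the inner operator runs over a time interval of length $(t-\sigma)-(t-s)=s-\sigma\geqslant \sigma\geqslant T$, ending at the time $t-\sigma\leqslant t$. Absorption then yields $S(t-\sigma,t-s)D\subset B_{t-\sigma}$, hence $S(t,t-s)D\subset S(t,t-\sigma)B_{t-\sigma}$ for every $s\geqslant 2\sigma$; taking the union gives the crucial containment $\bigcup_{s\geqslant 2\sigma}S(t,t-s)D\subset S(t,t-\sigma)B_{t-\sigma}$. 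Monotonicity of $\kappa$ converts this into $\kappa\big(\bigcup_{s\geqslant 2\sigma}S(t,t-s)D\big)\leqslant \kappa(S(t,t-\sigma)B_{t-\sigma})\leqslant C\varphi(\omega\sigma)$, valid for every $\sigma\geqslant \tau_0+T$. Setting $\tau=2\sigma$ reindexes this as $\kappa\big(\bigcup_{s\geqslant \tau}S(t,t-s)D\big)\leqslant C\varphi\big(\tfrac{\omega}{2}\tau\big)$ for all $\tau\geqslant 2(\tau_0+T)$, which is precisely Definition \ref{abcde} with rate constant $\omega/2$.

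The main obstacle, and really the only delicate point, is the bookkeeping of the time indices so that a single shift $\sigma$ works uniformly across the entire tail $s\geqslant 2\sigma$; the key insight that dissolves it is that absorption lets one replace a union of images of $D$ by a single image of the absorbing set. I would also emphasize that no appeal to the translation condition \eqref{eq:DecayCondition} is required here: passing from $\varphi(\omega\sigma)$ to $\varphi(\tfrac{\omega}{2}\tau)$ only rescales the argument of $\varphi$ by a positive factor, and Definition \ref{abcde} permits any positive rate constant, so $\omega/2$ is admissible as is.
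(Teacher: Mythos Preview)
Your proof is correct and follows essentially the same approach as the paper's own argument: fix $\sigma\geqslant \tau_0+T$, use uniform absorption at the shifted time $t-\sigma$ to collapse the tail union into $S(t,t-\sigma)B_{t-\sigma}$, apply the hypothesized $\kappa$-bound, and reindex via $\tau=2\sigma$. Your additional remark that condition \eqref{eq:DecayCondition} is not invoked here is accurate and worth noting.
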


\begin{proposition}\label{thmplm}
Assume there exist $\beta\in \left(0, 1\right)$, $r>0$, $T>0$, $C>0$ and a positively invariant family $\hat{B}$ satisfying: given $t\in\mathbb{R}$, there exist $T_1>0$, $M>0$, functions $g_1, g_2\colon (\mathbb{R}^+)^m\to \mathbb{R}^+$, $\psi_1, \psi_2\colon X\times X \to \mathbb{R}^+$ and pseudometrics $\rho_1,\ldots,\rho_m$ on $X$ that satisfies the hypotheses (i)-(iv) of Theorem \ref{corMain}, and such that $\kappa(B_{t-s})\leqslant M$ for all $s\geqslant T_1$. Then there exists a constant $\omega=\omega(T,\beta, C)>0$ such that, given $t\in\mathbb{R}$, there exist constants $C_1>0$ and $s_1>0$ 
\[
\kappa\left(S(t,t-s)B_{t-s}\right)\leqslant C_1\left(\omega s\right)^{\frac{\beta}{r(\beta-1)}} \quad \hbox{ for all } s\geqslant s_1.
\]
\end{proposition}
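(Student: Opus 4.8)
The plan is to propagate, through a block of $n$ consecutive steps of length $T$, the smoothing encoded in hypothesis (iv), and then to convert the resulting scalar recursion into polynomial decay. Fix $t\in\R$. Since $\hat B$ is positively invariant, $S(t-nT,t-s)B_{t-s}\subset B_{t-nT}$ whenever $s\geqslant nT$, hence $S(t,t-s)B_{t-s}\subset S(t,t-nT)B_{t-nT}$; thus $s\mapsto\kappa(S(t,t-s)B_{t-s})$ is non-increasing, so it suffices to estimate $\kappa(S(t,t-nT)B_{t-nT})$ for large $n$ and then, since $\tfrac{\beta}{r(\beta-1)}<0$, interpolate via $n\geqslant s/(2T)$ for $s\geqslant 2T$; the rate constant will be $\omega:=\tfrac1{2T}$. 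Write $S(t,t-nT)=S_1\circ\cdots\circ S_n$ with $S_j=S(t-(j-1)T,t-jT)$, and for $x\in B_{t-nT}$ put $x^{(0)}:=x$ and $x^{(j)}:=S_{n-j+1}x^{(j-1)}=S(t-(n-j)T,t-nT)x$, so $x^{(j)}\in B_{t-(n-j)T}$ by positive invariance and $x^{(n)}=S(t,t-nT)x$.

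Fix $n$ with $nT\geqslant T_1$ and let $\{z_k\}\subset S(t,t-nT)B_{t-nT}$ be arbitrary; write $z_k=S(t,t-nT)x_k$ with $x_k\in B_{t-nT}$. For this fixed $n$ I would pass to a subsequence (still denoted $\{x_k\}$) such that: (a) by the pigeonhole principle applied to a finite cover of $B_{t-nT}$ by sets of diameter $\leqslant\kappa(B_{t-nT})+1\leqslant M+1$, all $x_k^{(0)}$ lie in one of these sets, so $\limsup_{k,\ell\to\infty}d(x_k^{(0)},x_\ell^{(0)})^r\leqslant(M+1)^r=:R$; and (b) for every $i=1,\dots,m$ and every $j=0,\dots,n-1$, $\rho_i(x_k^{(j)},x_\ell^{(j)})\to 0$, $\psi_1(x_k^{(j)},x_\ell^{(j)})\to 0$ and $\psi_2(x_k^{(j)},x_\ell^{(j)})\to 0$ as $k,\ell\to\infty$ — a \emph{finite} diagonal extraction that uses precompactness of the $\rho_i$ and contractivity of $\psi_1,\psi_2$ on each of $B_{t-T},\dots,B_{t-nT}$, i.e.\ hypotheses (ii)--(iii). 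Setting $d_j:=\limsup_{k,\ell\to\infty}d(x_k^{(j)},x_\ell^{(j)})^r$, and noting that all $g_1,g_2,\psi_1,\psi_2$ contributions in (iv) tend to $0$ along this subsequence (since $g_1,g_2$ are continuous at the origin and vanish there), taking $\limsup_{k,\ell}$ in the first inequality of (iv) step by step gives $R\geqslant d_0\geqslant d_1\geqslant\cdots\geqslant d_n\geqslant 0$, while taking $\limsup_{k,\ell}$ in the second inequality of (iv) along pairs realizing $d_j$ (the bracket is nonnegative and $t\mapsto t^\beta$ is increasing) gives the error-free recursion
\[
d_j\leqslant C\,(d_{j-1}-d_j)^{\beta},\qquad j=1,\dots,n.
\]

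This last display is a purely scalar matter. From $d_{j-1}-d_j\geqslant C^{-1/\beta}d_j^{1/\beta}\geqslant C^{-1/\beta}d_n^{1/\beta}$, summing over $j$ and using $d_0\leqslant R$, one gets the crude estimate $d_n\leqslant C R^{\beta}n^{-\beta}$; inserting it back into $d_n^{1/\beta}\leqslant C^{1/\beta}(d_{\lfloor n/2\rfloor}-d_n)/\lceil n/2\rceil$ and iterating over dyadic scales raises the exponent to the fixed point $\alpha:=\beta/(1-\beta)$ — the induction closes precisely because $\beta(\alpha+1)=\alpha$ — so $d_n\leqslant A\,n^{-\alpha}$ for all large $n$, with $A=A(M,r,\beta,C)$. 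Since $\{z_k\}$ was arbitrary, every sequence in $S(t,t-nT)B_{t-nT}$ has a subsequence (relabelled $\{z_k\}$) with $\limsup_{k,\ell\to\infty}d(z_k,z_\ell)^r\leqslant A n^{-\alpha}$, and a standard covering argument (a set whose every sequence has a subsequence of asymptotic diameter $\leqslant\rho$ has Kuratowski measure $\leqslant 2\rho$) then gives $\kappa(S(t,t-nT)B_{t-nT})\leqslant 2A^{1/r}n^{-\alpha/r}$. Since $-\alpha/r=\tfrac{\beta}{r(\beta-1)}$, the monotonicity and interpolation of the first paragraph produce constants $C_1=C_1(M,r,\beta,C)$ and $s_1=s_1(T,T_1)$ with $\kappa(S(t,t-s)B_{t-s})\leqslant C_1(\omega s)^{\frac{\beta}{r(\beta-1)}}$ for all $s\geqslant s_1$, where $\omega=\tfrac1{2T}$ depends only on $T$.

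The main obstacle is that $S(t,t-nT)$ is merely a continuous self-map of $X$ — this proposition assumes no global Lipschitz bound — so there is no one-step recursion for $\kappa(S(t,t-(n+1)T)B_{t-(n+1)T})$ in terms of $\kappa(S(t,t-nT)B_{t-nT})$; one must follow a single sequence through all $n$ composition steps at once, which forces the subsequence extraction to be a diagonal argument whose length grows with $n$ (but is finite for each fixed $n$) and the recursion $d_j\leqslant C(d_{j-1}-d_j)^{\beta}$ to be read off in the limit. A secondary, milder difficulty is squeezing the sharp exponent $\beta/(1-\beta)$, rather than merely $\beta$, out of that recursion, which is exactly what the dyadic bootstrap is for.
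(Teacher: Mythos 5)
Your argument is correct in substance and follows the same overall architecture as the paper's proof (track a single sequence through the $n$ composition steps of length $T$, kill the $\rho_i,\psi_1,\psi_2$ error terms along a finitely-nested subsequence using hypotheses (ii)--(iii), convert the resulting asymptotic pairwise-distance bound into a bound on $\kappa$ via the factor-$2$ separation argument, then pass from $nT$ to general $s$ by positive invariance), but it replaces the paper's two key technical devices by different ones. The paper first merges the two inequalities of hypothesis (iv) into a single inequality $D_n(x,y)^r\leqslant v\big(d(x,y)^r+G(x,y)+\psi(x,y)\big)$, where $v$ is the inverse of $u(s)=(3C)^{-1/\beta}s^{1/\beta}+s$, and then estimates the iterates $v^n(M^r)$ through Proposition \ref{functionsuv} (a Mean Value Theorem telescoping of $t_n^{1-1/\beta}$, Lemma \ref{lema.A2}); you instead keep the two inequalities separate, pass to the limit quantities $d_j=\limsup_{k,\ell}d(x_k^{(j)},x_\ell^{(j)})^r$, and solve the resulting scalar recursion $d_j\leqslant C(d_{j-1}-d_j)^\beta$ directly. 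Two steps of yours deserve tightening, though both are completable by standard means. First, since a limsup of a difference is not the difference of limsups, the recursion really does require evaluating along pairs $(k_i,\ell_i)$ realizing $d_j$, as you indicate; this is the price of working with limsups, which the paper avoids by using liminf-type quantities together with the single merged inequality. Second, the dyadic bootstrap as literally described never reaches the exponent $\alpha=\beta/(1-\beta)$ at any finite stage (each pass gives $\gamma_{k+1}=\beta(1+\gamma_k)<\alpha$, with validity thresholds doubling); the clean way to land on $\alpha$ is to run a strong induction \emph{at} the exponent $\alpha$: from $d_{\lfloor j/2\rfloor}-d_j\geqslant \tfrac{j}{2}C^{-1/\beta}d_j^{1/\beta}$ one gets $d_j\leqslant 2^\beta C\, j^{-\beta}d_{\lfloor j/2\rfloor}^{\beta}$, and the induction $d_j\leqslant Aj^{-\alpha}$ closes for $A\geqslant\max\{(M+1)^r2^\alpha,(2^\beta3^{\alpha\beta}C)^{1/(1-\beta)}\}$ precisely because $\beta(\alpha+1)=\alpha$ and $\beta<1$ absorbs the constants. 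With that repair your route gives the same conclusion, with $\omega=\tfrac1{2T}$ (only $T$-dependent, which is fine) in place of the paper's $\omega=\tfrac1T(\tfrac1\beta-1)(1+3C)^{-1/\beta}$, and it dispenses entirely with the auxiliary functions $u,v$ and the appendix lemma.
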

\begin{proof}
Consider the function $u\colon \mathbb{R}^+\to \mathbb{R}^+$ defined by $u(s)=(3C)^{-1/\beta}s^{1/\beta}+s$. Since $u$ is an increasing bijective function, it has an inverse function that we will denote $v\colon \mathbb{R}^+\to \mathbb{R}^+$, which also is increasing. The composite functions $u^n$ and $v^n$ are also increasing for $n\geqslant 2$ and satisfy $u\leqslant u^2\leqslant u^3\leqslant \cdots$ and $v\geqslant v^2\geqslant v^3\geqslant \cdots$ for any $s\geqslant 0$. 

To simplify the notation even further, we set $D_n(x,y)=d(S_nx,S_ny)$, $G_1(x,y)=g_1(\rho_1(x,y),\cdots,$  $\rho_m(x,y))$ e $G_2(x,y)=g_2(\rho_1(x,y),\cdots,\rho_m(x,y))$. Then we have
\[
D_n(x,y)^r\leqslant d(x,y)^r+G_1(x,y)+\psi_1(x,y)
\]
and
\[
D_n(x,y)^r\leqslant C\left[d(x,y)^r-D(x,y)^r+G_1(x,y)+\psi_1(x,y)\right]^\beta+G_2(x,y)+\psi_2(x,y).
\]

Observe that 
\begin{align*}
u(&D_n(x,y)^r)=(3C)^{-\frac{1}{\beta}} D_n(x,y)^{\frac{r}{\beta}}+D_n(x,y)^r\\
&\leqslant(3C)^{-\frac{1}{\beta}}\left\{C\left[d(x,y)^r-D_n(x,y)^r+G_1(x,y)+\psi_1(x,y)\right]^\beta+G_2(x,y)+\psi_2(x,y)\right\}^{\frac{1}{\beta}} + D_n(x,y)^r\\
&\leqslant (3C)^{-\frac{1}{\beta}}2^{\frac{1}{\beta}}\Big\{C^{\frac{1}{\beta}}[d(x,y)^r-D_n(x,y)^r+G_1(x,y)+\psi_1(x,y)] +[G_2(x,y)+\psi_2(x,y)]^{\frac{1}{\beta}}\Big\} + D_n(x,y)^r\\
&=\left(\tfrac{2}{3}\right)^{\frac{1}{\beta}}\left[d(x,y)^r-D_n(x,y)^r+G_1(x,y)+\psi_1(x,y)\right] +\left(\tfrac{2}{3}\right)^{\frac{1}{\beta}}C^{-\frac{1}{\beta}}\left[G_2(x,y)+\psi_2(x,y)\right]^{\frac{1}{\beta}}+ D_n(x,y)^r\\
&\leqslant d(x,y)^r-D_n(x,y)^r+G_1(x,y)+\psi_1(x,y) +\left(\tfrac23\right)^{\frac{1}{\beta}}C^{-\frac{1}{\beta}}3^{\frac{1}{\beta}}\left[G_2(x,y)^{\frac{1}{\beta}}+\psi_2(x,y)^{\frac{1}{\beta}}\right] + D_n(x,y)^r\\
&=d(x,y)^r+G_1(x,y)+\psi_1(x,y)+\left(\tfrac{C}{2}\right)^{-\frac{1}{\beta}}G_2(x,y)^{\frac{1}{\beta}}+\left(\tfrac{C}{2}\right)^{-\frac{1}{\beta}}\psi_2(x,y)^{\frac{1}{\beta}},
\end{align*}
where we have used the inequality $(a+b)^\frac{1}{\beta}\leqslant 2^{\frac1\beta-1}(a^\frac1\beta+b^\frac1\beta)$ for $a,b\geqslant 0$.

Define $g(\alpha_1, \cdots, \alpha_m)=g_1(\alpha_1, \ldots, \alpha_m)+\left(\frac{C}{2}\right)^{-\frac{1}{\beta}}g_2(\alpha_1, \ldots, \alpha_m)^{\frac{1}{\beta}}$, where $g$ is a function from $(\mathbb{R}^+)^m$ to $\mathbb{R}^+$, and $\psi\colon X \times X \to \mathbb{R}^+$ by $\psi(x,y)=\psi_1(x,y)+\left(\frac{C}{2}\right)^{-\frac{1}{\beta}}\psi_2(x,y)^{\frac{1}{\beta}}$. Note that $g(0,...,0)=0$, $g$ is continuous and nondecreasing with respect to each variable. Furthermore, $\psi\in \operatorname{contr}(B_{t-nT})$ for all $n\in\mathbb{N}$. Since $v$ is the inverse function of $u$ and it is increasing, for $G(x,y)=g(\rho_1(x,y),\ldots,\rho_m(x,y))$, it follows that
\begin{equation}\label{infzero}
D_n(x,y)^r \leqslant v\big(d(x,y)^r+G(x,y)+\psi(x,y)\big).
\end{equation}

For $A\subset B_{t-T}$ and $\varepsilon>0$, there exist sets $E_1, \cdots, E_p$ such that
\begin{align*}
A\subset \bigcup_{j=1}^p E_j \quad \hbox{ and } \quad \operatorname{diam}(E_j)<\kappa(A)+\varepsilon \hbox{ for }j=1,\cdots,p.
\end{align*}
If $\{x_i\}\subset A$, there exist $j\in\{1,\cdots,p\}$ and a subsequence $\{x_{i_k}\}\subset E_j$, and thus
\begin{align}\label{infa}
d(x_{i_k},x_{i_\ell})\leqslant \operatorname{diam}(E_j)<\kappa(A)+\varepsilon \hbox{ for all } k,\ell \in\mathbb{N}.
\end{align}
Since $\rho_1,\cdots, \rho_m$ are precompact on $B_{t-T}$ and $\psi\in \operatorname{contr}(B_{t-T})$ we have
\begin{align}\label{infb}
\liminf_{k,\ell\to \infty} G(x_{i_k},x_{i_\ell})=0 \quad \hbox{ and } \quad \liminf_{k,\ell\to \infty} \psi(x_{i_k},x_{i_\ell})=0.
\end{align}
Joining \eqref{infzero}, \eqref{infa} and \eqref{infb}, we obtain 
\begin{align*}
\liminf_{k,\ell\to \infty}D_1(x_{i_k},x_{i_\ell})^r & \leqslant \liminf_{k,\ell\to \infty}v\big(d(x_{i_k},x_{i_\ell})^r+G(x_{i_k},x_{i_\ell})+\psi(x_{i_k},x_{i_\ell})\big). \\
&\leqslant v\left(\left(\kappa(A)+\varepsilon\right)^r\right).
\end{align*}
Since $\varepsilon>0$ is arbitrary, we conclude that for any sequence $\{x_i\}\subset A$ we have
\begin{align*}
\liminf_{m,p\to \infty}D_1(x_m,x_p)^r \leqslant \liminf_{k,\ell\to \infty}D_1(x_{i_k},x_{i_\ell})^r\leqslant v\left(\kappa(A)^r\right).
\end{align*}

Now, let $A\subset B_{t-2T}$, $\varepsilon>0$ and $\{x_i\}\subset A$. As before, there exists a subsequence $\{x_{i_k}\}$ for which $d(x_{i_k},x_{i_\ell})<\kappa(A)+\varepsilon$ for all $k,\ell\in\mathbb{N}$. Since $\rho_1,\cdots, \rho_m$ are precompact on $B_{t-T}$ and $B_{t-2T}$, $\psi\in \operatorname{contr}(B_{t-T})\cap \operatorname{contr}(B_{t-2T})$ and $S_2B_{t-2T}\subset B_{t-T}$, we obtain
\begin{align*}
& \liminf_{k,\ell\to \infty} G(S_2x_{i_k},S_2x_{i_\ell}) = 0, \quad  \liminf_{k,\ell\to \infty}G(x_{i_k},x_{i_\ell}) = 0,\\
& \liminf_{k,\ell\to \infty} \psi(S_2x_{i_k},S_2x_{i_\ell})=0 \quad \hbox{ and } \quad \liminf_{k,\ell\to \infty}\psi(x_{i_k},x_{i_\ell})=0.
\end{align*}
Since for any $x,y\in B_{t-2T}$,
\begin{align*}
d(S(t,t-2T)& x,S(t,t-2T)y)^r =d(S_1S_2x,S_1S_2y)^r = D_1(S_2x,S_2y)^r\\
&\leqslant v\big(d(S_2x,S_2y)^r+G(S_2x,S_2y)+\psi(S_2x,S_2y)\big)\\
& = v\big(D_2(x,y)^r+G(S_2x,S_2y)+\psi(S_2x,S_2y)\big)\\
&\leqslant v\Big(v\big(d(x,y)^r+G(x,y)+\psi(x,y)\big)+G(S_2x,S_2y)+\psi(S_2x,S_2y)\Big),
\end{align*}
we obtain
\begin{align*}
\liminf_{m,p\to \infty}& d(S(t,t-2T)x_m,S(t,t-2T)x_p)^r\\
&\leqslant \liminf_{k,\ell\to \infty}d(S(t,t-2T)x_{i_k},S(t,t-2T)x_{i_\ell})^r \leqslant v^2\left(\left(\kappa(A)+\varepsilon\right)^r\right).
\end{align*}
Since $\varepsilon>0$ is arbitrary we conclude, for any sequence $\{x_n\}\subset A$, that
\begin{align*}
&\liminf_{m,p\to \infty}d(S(t,t-2T)x_m,S(t,t-2T)x_p)^r\leqslant v^2\left(\kappa(A)^r\right).
\end{align*}

Inductively, for any $n\in \mathbb{N}$, $A\subset B_{t-nT}$ and $\{x_i\}\subset A$ we obtain
\begin{align}\label{xcva}
\liminf_{m,p\to \infty}d(S(t,t-nT)x_m,S(t,t-nT)x_p)^r\leqslant v^n\left(\kappa(A)^r\right).
\end{align}

We claim that for $n\in \mathbb{N}$, $n\geqslant \frac{T_1}{T}$ and $A\subset B_{t-nT}$ we have
\begin{align}\label{xcvb}
\kappa\left(S(t,t-nT)A\right)^r\leqslant 2^r v^n\left(\kappa(A)^r\right)\leqslant 2^rv^n(M^r).
\end{align} 
Assume that the first inequality in \eqref{xcvb} fails. Then we can choose $a>0$ such that 
$
2^r v^n(\kappa(A)^r) < a< \kappa(S(t,t-nT)A)^r.
$
This implies that
\begin{align*}
\beta(S(t,t-nT)A)\geqslant \frac12 \kappa(S(t,t-nT)A)>\frac{a^{1/r}}{2},
\end{align*}
that is, $S(t,t-nT)A$ has no finite cover of balls of radius less than or equal to $\frac{a^{1/r}}{2}$. Take an arbitrary $x_1\in A$. Then, there exists $x_2\in A$ such that $d(S(t,t-nT)x_1, S(t,t-nT)x_2)>\frac{a^{1/r}}{2}$, for otherwise  $S(t,t-nT)A\subset \overline{B}_{\frac{a^{1/r}}{2}}(S(t,t-nT)x_1)$. Following this idea, there exists $x_3\in A$ such that $d(S(t,t-nT)x_3,S(t,t-nT)x_i)>\frac{a^{1/r}}{2}$  for $i=1,2$, otherwise $S(t,t-nT)A$ would be contained in the union of two balls of radius $\frac{a^{1/r}}{2}$. This process gives us a sequence $\left\{x_i\right\}_{i\in\mathbb{N}}\subset A$ such that $d(S(t,t-nT)x_i,S(t,t-nT)x_j)>\frac{a^{1/r}}{2}$ for all $i\neq j$. Therefore
\begin{align*}
d(S(t,t-nT)x_i,S(t,t-nT)x_j)^r>\frac{a}{2^r}>v^n(\kappa(A)^r),
\end{align*}
which contradicts \eqref{xcva}. The second inequality of \eqref{xcvb} follows immediately from the fact that $v^n$ is non-decreasing.

From Proposition \ref{functionsuv}, there exists $n_0\in \mathbb{N}$, $n_0\geqslant \frac{T_1}{T}$, such that for $n\geqslant n_0$ we have
\[
v^n(M^r) \leqslant \left[(n-n_0)\left(\tfrac1\beta-1\right)(1+3C)^{-\frac1\beta}+M^\frac{r(\beta-1)}{\beta}\right]^{\frac{\beta}{\beta-1}}.
\]
Hence, if $n\geqslant n_0$ and $A\subset B_{t-nT}$ we have
\begin{align*}
\kappa(S(t,t-nT)A)&\leqslant 2v^n\left(M^r\right)^{1/r}\leqslant 2\left[(n-n_0)\left(\tfrac{1}{\beta}-1\right)(1+3C)^{-\frac{1}{\beta}}+M^{\frac{r(\beta-1)}{\beta}}\right]^{\frac{\beta}{r(\beta-1)}}.
\end{align*} 
In particular, for $n\geqslant n_0$ we obtain
\begin{align*}
\kappa(S(t,t-nT)B_{t-nT})\leqslant 2\left[(n-n_0)\left(\tfrac{1}{\beta}-1\right)(1+3C)^{-\frac{1}{\beta}}+M^{\frac{r(\beta-1)}{\beta}}\right]^{\frac{\beta}{r(\beta-1)}}.
\end{align*} 

If $s\geqslant (n_0+1)T$ and $n\in \mathbb{N}$ is such that $\frac{s}{T}-1<n\leqslant \frac{s}{T}$, we have $n>n_0$ and since $S(t-nT,t-s)B_{t-s}\subset B_{t-nT}$, 
\begin{align*}
\kappa(S(t,t-s)B_{t-s})&=\kappa\left(S\left(t,t-nT\right)S\left(t-nT,t-s\right)B_{t-s}\right)\leqslant \kappa\left(S\left(t,t-nT\right)B_{t-nT}\right)\\
&\leqslant 2\left[\left(n-n_0\right)\left(\tfrac{1}{\beta}-1\right)(1+3C)^{-\frac{1}{\beta}}+M^{\frac{r(\beta-1)}{\beta}}\right]^{\frac{\beta}{r(\beta-1)}}\\
&\stackrel{(\ast)}{\leqslant} 2\left[\left(\tfrac{s}{T}-1-n_0\right)\left(\tfrac{1}{\beta}-1\right)(1+3C)^{-\frac{1}{\beta}}\right]^{\frac{\beta}{r(\beta-1)}}\\
&=2\Big[\tfrac{1}{T}\left(\tfrac{1}{\beta}-1\right)(1+3C)^{-\frac{1}{\beta}}s-(1+n_0)\left(\tfrac{1}{\beta}-1\right)(1+3C)^{-\frac{1}{\beta}}\Big]^{\frac{\beta}{r(\beta-1)}}\\
& =2(\omega s-\eta)^{\frac{\beta}{r(\beta-1)}},
\end{align*}
where 
\[
\omega=\tfrac{1}{T}\left(\tfrac{1}{\beta}-1\right)(1+3C)^{-\frac{1}{\beta}} \hbox{ and } \eta=(1+n_0)\left(\tfrac{1}{\beta}-1\right)(1+3C)^{-\frac{1}{\beta}},
\]
and in $(\ast)$ we used the fact that the exponent $\frac{\beta}{r(\beta-1)}$ is negative. Now, there exist $C_1>0$ and $s_0>0$ such that $2(\omega s-\eta)^{\frac{\beta}{r(\beta-1)}}\leqslant C_1(\omega s)^{\frac{\beta}{r(\beta-1)}}$ for $s\geqslant s_0$.
Then, taking $s\geqslant s_1:=\max\left\{s_0, (n_0+1)T\right\}$, we have
\[
\kappa(S(t,t-s)B_{t-s})\leqslant C_1(\omega s)^{\frac{\beta}{r(\beta-1)}}
\]
for $s\geqslant s_1$.
\end{proof}

Now we are able to prove Theorem \ref{corMain}.

\begin{proof}[Proof of Theorem \ref{corMain}]
It follows from Proposition \ref{thmplm} that $S$ is $\varphi$-pullback $\kappa$-dis\-sip\-a\-tive, with the decay function $\varphi$ given by $\varphi(s)=s^{\frac{\beta}{r(\beta-1)}}$, and therefore, from Theorem \ref{continuouscase} there exists a generalized $\varphi$-pullback attractor $\hat{M}$ for $S$ with $\hat{M}\subset \hat{B}$. Since $\hat{B}$ is uniformly bounded, so is $\hat{M}$. 
\end{proof}

\subsection{Pullback attractors}

In this section we want to relate the notion of generalized $\varphi$-pullback attractor and the one of pullback attractor, just as in the exponential case. For a different and more detailed approach on this subject we refer to \cite{CRLBook}.  More specifically, we state and prove simple the following result:

\begin{theorem}\label{theo:GenimpliesPA}
Let $S$ be a $\varphi$-pullback $\kappa$-dissipative in $X$ with a backwards bounded generalized $\varphi$-pullback attractor $\hat{M}$. Then $S$ has a pullback attractor $\hat{A}$, with $\hat{A}\subset \hat{M}$.
\end{theorem}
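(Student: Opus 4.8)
The plan is to build the pullback attractor as the closure of the union of the pullback $\omega$-limit sets of bounded sets and to squeeze it inside $\hat M$. For a nonempty bounded $D\subset X$ and $t\in\mathbb T$, write $\gamma_\tau(D,t)=\bigcup_{\sigma\geqslant\tau}S(t,t-\sigma)D$ and set
\[
\omega(D,t)=\bigcap_{\tau\geqslant 0}\overline{\gamma_\tau(D,t)}.
\]
First I would note that $\varphi$-pullback $\kappa$-dissipativity together with $\varphi(\omega\tau)\to 0$ gives $\kappa(\gamma_\tau(D,t))\to 0$ as $\tau\to\infty$; since $\{\overline{\gamma_\tau(D,t)}\}_{\tau\geqslant 0}$ is a nested family of nonempty closed sets with Kuratowski measure tending to $0$ in the complete space $X$, Kuratowski's (generalized Cantor) intersection theorem gives that $\omega(D,t)$ is nonempty and compact, that $\dha(S(t,t-\sigma)D,\omega(D,t))\to 0$ as $\sigma\to\infty$, and the usual sequential description: $x\in\omega(D,t)$ if and only if $x=\lim_k S(t,t-\sigma_k)x_k$ for some $\sigma_k\to\infty$ and $x_k\in D$.

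Second, I would check $\omega(D,t)\subset M_t$ for every bounded $D$: if $x=\lim_k S(t,t-\sigma_k)x_k$ with $\sigma_k\to\infty$, then $d(S(t,t-\sigma_k)x_k,M_t)\leqslant\dha(S(t,t-\sigma_k)D,M_t)\leqslant C\varphi(\omega\sigma_k)\to 0$ because $\hat M$ is $\varphi$-pullback attracting, and $M_t$ is closed, so $x\in M_t$. Hence, defining $A_t:=\overline{\bigcup\{\omega(D,t)\colon D\subset X\text{ nonempty, bounded}\}}$, we get $A_t\subset M_t$; in particular $\hat A=\{A_t\}_{t\in\mathbb T}$ is compact, $\hat A\subset\hat M$, and $\hat A$ is backwards bounded since $\hat M$ is. Each $A_t$ is nonempty because it contains $\omega(\{x\},t)$.

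Third, I would verify the attractor axioms. Pullback attraction of $\hat A$ is immediate from $\omega(D,t)\subset A_t$ and $\dha(S(t,t-\sigma)D,\omega(D,t))\to 0$. Minimality: if $\hat C$ is a closed pullback attracting family and $x=\lim_k S(t,t-\sigma_k)x_k\in\omega(D,t)$, then $d(S(t,t-\sigma_k)x_k,C_t)\leqslant\dha(S(t,t-\sigma_k)D,C_t)\to 0$, so $x\in C_t$; taking closures, $A_t\subset C_t$. For invariance, continuity of $S$ and the identity $S(t,s)S(s,s-\sigma)=S(t,t-(\sigma+t-s))$ give $S(t,s)\omega(D,s)\subset\omega(D,t)$, hence $S(t,s)A_s\subset A_t$. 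For the reverse inclusion, take $x=\lim_k S(t,t-\sigma_k)x_k\in\omega(D,t)$ and $(t,s)\in\mathcal P$; for $k$ large, write $S(t,t-\sigma_k)x_k=S(t,s)y_k$ with $y_k=S(s,s-(\sigma_k-(t-s)))x_k$. For each fixed $\tau\geqslant 0$ all but finitely many of the $y_k$ lie in $\gamma_\tau(D,s)$, so $\kappa(\{y_k\colon k\in\mathbb N\})\leqslant\kappa(\gamma_\tau(D,s))$ for every $\tau$, whence $\kappa(\{y_k\})=0$; thus a subsequence $y_{k_j}\to y\in\omega(D,s)\subset A_s$, and continuity of $S(t,s)$ yields $S(t,s)y=x$, i.e. $A_t\subset S(t,s)A_s$. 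Therefore $\hat A$ is invariant, compact, pullback attracting and the minimal closed pullback attracting family, hence the pullback attractor of $S$, with $\hat A\subset\hat M$.

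The main obstacle is the reverse invariance inclusion $A_t\subset S(t,s)A_s$: extracting the convergent subsequence of $\{y_k\}$ is precisely where the asymptotic compactness furnished by $\varphi$-pullback $\kappa$-dissipativity is indispensable — everything else reduces to routine manipulations with the pullback $\omega$-limit characterization and the $\varphi$-pullback attraction estimate.
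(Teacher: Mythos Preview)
Your argument is correct. The $\omega$-limit construction, the use of Kuratowski's intersection theorem via $\kappa(\gamma_\tau(D,t))\to 0$, the containment $\omega(D,t)\subset M_t$, and the verification of invariance (including the delicate reverse inclusion $A_t\subset S(t,s)A_s$ via the precompactness of the intermediate sequence $\{y_k\}$) are all sound. The closure $A_t$ is compact because it sits inside the compact set $M_t$, and $S(t,s)A_s$ is closed because it is a continuous image of a compact set, so taking the closure on the left is harmless.

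Your route, however, is genuinely different from the paper's. The paper argues in three lines: it observes that an $r$-neighborhood $B_t=\mathcal O_r(M_t)$ of the backwards bounded family $\hat M$ is a backwards bounded pullback absorbing family (since $\hat M$ $\varphi$-pullback attracts, orbits eventually enter any fixed neighborhood), invokes the standard existence result (their Proposition~2.7, which cites the general theory of pullback attractors for asymptotically compact processes) to produce $\hat A$, and then gets $\hat A\subset\hat M$ from the minimality axiom because $\hat M$ itself is a closed pullback attracting family. Your approach unpacks what that existence result actually does: you build $\hat A$ explicitly from pullback $\omega$-limits and verify compactness, invariance, attraction and minimality by hand. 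The paper's version is shorter and highlights that the only new ingredient is ``$\hat M$ gives an absorbing family''; your version is self-contained and does not rely on external references, and it makes transparent exactly where the $\varphi$-pullback $\kappa$-dissipativity is used (namely, in the asymptotic compactness needed for nonemptiness of the $\omega$-limits and for the reverse invariance).
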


The next proposition is a consequence of a well known result in the theory of pullback attractors, and can be easily proven using, for instance, the results of \cite{CRLBook}, just noticing that a $\varphi$-pullback $\kappa$-dissipative evolution process is also \textit{asymptotically compact}. 

\begin{proposition}\label{teoprincipal}
Let $S$ a $\varphi$-pullback $\kappa$-dissipative evolution process on $X$ with $\hat{B}\in\mathfrak{F}$ a backwards bounded and pullback absorbing family. Then $S$ has a pullback attractor.
\end{proposition}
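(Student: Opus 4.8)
The plan is to deduce the statement from the classical existence theorem for pullback attractors (see \cite{CRLBook}), whose two hypotheses are the existence of a pullback absorbing family together with \emph{pullback asymptotic compactness} of the process. The first hypothesis is exactly what is assumed: $\hat{B}$ is pullback absorbing, and since it is backwards bounded, each $\bigcup_{s\leqslant t}B_s$ is a bounded subset of $X$, so $\hat{B}$ sits inside the universe of bounded sets with respect to which we attract. Thus the whole content of the proof is to verify that a $\varphi$-pullback $\kappa$-dissipative process is pullback asymptotically compact, in the sense that for each $t\in\mathbb{T}$, each bounded $D\subset X$, each sequence $s_n\to-\infty$ and each choice $x_n\in S(t,s_n)D$, the sequence $\{x_n\}$ admits a convergent subsequence.

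To establish this, fix $t\in\mathbb{T}$ and a bounded set $D\subset X$, and let $\omega>0$, $C\geqslant 0$ and $\tau_0\geqslant 0$ be the constants supplied by Definition \ref{abcde}. For $\tau\geqslant 0$ put
\[
E_\tau := \overline{\bigcup_{\sigma\geqslant \tau} S(t,t-\sigma)D}.
\]
These are nested closed sets, and since $\kappa$ is insensitive to closures we have $\kappa(E_\tau)\leqslant C\varphi(\omega\tau)$ for all $\tau\geqslant\tau_0$; because $\varphi$ is a decay function, $\varphi(\omega\tau)\to 0$ as $\tau\to\infty$, hence $\kappa(E_\tau)\to 0$. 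Now take $s_n=t-\sigma_n\to-\infty$, so $\sigma_n\to\infty$, and $x_n\in S(t,s_n)D$. For each fixed $\tau\geqslant\tau_0$ all but finitely many of the $x_n$ lie in $E_\tau$; since the Kuratowski measure is monotone and unaffected by the addition of finitely many points (see \cite{deimling2010nonlinear}), we obtain $\kappa(\{x_n\colon n\in\mathbb{N}\})\leqslant\kappa(E_\tau)\leqslant C\varphi(\omega\tau)$, and letting $\tau\to\infty$ gives $\kappa(\{x_n\})=0$. As $X$ is complete, $\{x_n\}$ is therefore precompact and admits a convergent subsequence, which is precisely pullback asymptotic compactness. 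The same estimate $\kappa(E_\tau)\to 0$, via the standard intersection theorem for the Kuratowski measure in a complete space, also shows that $\bigcap_{\tau\geqslant 0}E_\tau$ is a nonempty compact set, namely the pullback $\omega$-limit of $D$ at time $t$.

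With both hypotheses in place, the construction is the classical one. Using the absorbing family, define
\[
A_t := \bigcap_{s\leqslant t}\overline{\bigcup_{r\leqslant s} S(t,r)B_r}\quad\text{for each }t\in\mathbb{T}.
\]
Compactness of each $A_t$ follows from asymptotic compactness exactly as above; invariance follows from the evolution property of $S$ together with the continuity of the maps $S(t,s)$; and pullback attraction of every bounded $D$ follows by first pullback-absorbing $D$ into $\hat{B}$ and then using that $\hat{A}$ attracts $\hat{B}$, the minimality among closed pullback attracting families being the usual consequence of the $\omega$-limit characterization. All of these verifications are the content of the existence result in \cite{CRLBook}, to which we appeal. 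Accordingly, the only nonroutine input is the passage from $\varphi$-pullback $\kappa$-dissipativity to asymptotic compactness carried out above; this is the one step to treat with care, as it is where the decay condition $\varphi(\omega\tau)\to 0$ is essential, while everything else is a direct citation to the classical pullback theory.
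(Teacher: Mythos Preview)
Your proof is correct and follows exactly the approach the paper indicates: the paper's own proof is a one-line appeal to the results in \cite{CRLBook} after observing that $\varphi$-pullback $\kappa$-dissipativity implies pullback asymptotic compactness, and you have simply made that observation explicit by the clean $\kappa$-argument on the tails $E_\tau$. The additional sketch of the classical construction is accurate but could be omitted, since the paper is content to cite \cite{CRLBook} directly once asymptotic compactness is established.
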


With that, we are able to present the proof of Theorem \ref{theo:GenimpliesPA}.

\begin{proof}[Proof of Theorem \ref{theo:GenimpliesPA}]
    Since $\hat{M}$ is a backwards bounded generalized $\varphi$-pullback attractor, given $r>0$ we can see that the family $\hat{B}$ defined by $B_t = \mathcal{O}_r(M_t)$, is a backwards bounded absorbing family for $S$. Hence, from Proposition \ref{teoprincipal} it follows that $S$ has a pullback attractor $\hat{A}$. Lastly, since $\hat{M}$ is a closed pullback attracting family, the minimality property of $\hat{A}$ shows that $\hat{A}\subset \hat{M}$, and the proof is complete.
\end{proof}

\section{Application to a nonautonomous wave equation} \label{Application}

Inspired by the works \cite{ZhaoZhao2020}, \cite{ZhaoZhong2022}, and \cite{yan2023long}, and following their main ideas, we study the nonautonomous wave equation with non-local weak damping and anti-damping \eqref{ourproblem}, and prove the existence of a \textit{generalized polynomial pullback attractor}. We recall that \eqref{ourproblem} is given by
\begin{equation*}
\left\{\begin{aligned}
& u_{tt}(t,x)-\Delta u(t,x)+k(t)\|u_t(t, \cdot)\|^p_{L^2(\Omega)}u_t(t,x)+f(t,u(t,x))\\
& \hspace{130pt} = \int_{\Omega}K(x,y)u_t(t,y)dy+h(x), (t,x)\in [s,\infty)\times \Omega,\\
& u(t,x) = 0, (t,x)\in \left[s,\infty \right) \times \partial\Omega,\\
& u(s,x)=u_0(x), \ u_t(s,x)=u_1(x), x\in \Omega.
\end{aligned} \right.
\end{equation*}
where $\Omega\subset \mathbb{R}^3$ is a bounded domain with smooth boundary $\partial\Omega$. Here, $u$ represents the displacement of a wave in $\Omega$, subjected to a non-local damping $k(t)\|u_t(t,\cdot)\|^p_{L^2(\Omega)}$ and an anti-damping $\int_\Omega K(x,y)u_t(t,y)dt$. We assume the following:

\begin{enumerate}[leftmargin=*,label={(\bfseries H$_{\arabic*})$}]
\item \label{cond1} $K\in L^2(\Omega \times \Omega)$ and we set $K_0:=\|K\|_{L^2(\Omega\times \Omega)}$,
\item $k\colon \mathbb{R}\rightarrow (0,\infty)$ is a continuous function with $0<k_0\leqslant k(t)\leqslant k_1$ for all $t\in \mathbb{R}$, where $k_0, \ k_1$ are positive constants, 
\item $p>0$,
\item $h\in L^2(\Omega)$ and we set $h_0:=\|h\|_{L^2(\Omega)}$,
\item \label{condf} $f\in C^1(\mathbb{R}^2, \mathbb{R})$ satisfies
\begin{equation*}
\liminf_{|v|\to \infty}\Big(\inf_{t\in\mathbb{R}}\frac{\partial f}{\partial v}(t,v)\Big) > -\lambda_1, \quad \hbox{ and } \quad  \liminf_{|v|\to \infty}\Big(\inf_{t\in\mathbb{R}}\frac{f(t,v)}{v}\Big) > -\lambda_1,
\end{equation*}
and there exists a constant $c_0>0$ such that for all $t,v\in \mathbb{R}$ we have
\begin{equation*}
|f(t,0)| \leqslant c_0,\quad  \left|\frac{\partial f}{\partial v}(t,v)\right| \leqslant c_0 (1+|v|^2), \quad \hbox{ and } 
\quad \left|\frac{\partial f}{\partial t}(t,v)\right| \leqslant c_0,
\end{equation*}
where  $\lambda_1>0$ is the first eigenvalue of the negative Laplacian operator $-\Delta$ with Dirichlet boundary conditions in $\Omega$, that is, of the operator $A:=-\Delta \colon H^1_0(\Omega)\cap H^2(\Omega)\subset L^2(\Omega)\to L^2(\Omega)$, which is positive and selfadjoint, with compact resolvent.
\item \label{cond6} For $t,v\in \mathbb{R}$ we define the function $F(t,v)=\displaystyle\int_{0}^v f(t,\xi)d\xi$, for which we assume that $c_0>0$ previously defined is such that for all $t\in \mathbb{R}$ we have
\[
\int_{\mathbb{R}}\left|\frac{\partial F}{\partial t}(t,v)\right|dv \leqslant c_0
\]
\end{enumerate}

The main result of this section is the following:

\begin{theorem}\label{App:PolAtt}
Consider the decay function $\varphi(t)=t^{-\frac1p}$ and assume that \ref{cond1}-\ref{cond6} hold true. Then the evolution process $S=\{S(t,s)\colon t\geqslant s\}$ in $X:=H^1_0(\Omega)\times L^2(\Omega)$ associated with \eqref{ourproblem} possesses a bounded generalized $\varphi$-pullback attractor $\hat{M}$ in $X$. Furthermore, $S$ has a pullback attractor $\hat{A}$, with $\hat{A}\subset \hat{M}$.
\end{theorem}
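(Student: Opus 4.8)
The strategy is to check the hypotheses of Theorem \ref{corMain} for the evolution process $S$ generated by \eqref{ourproblem} on $X=H^1_0(\Omega)\times L^2(\Omega)$, with the choices $r=2$ and $\beta=\tfrac{2}{p+2}\in(0,1)$; for these values $\tfrac{\beta}{r(\beta-1)}=-\tfrac1p$, so the decay function produced by Theorem \ref{corMain} is exactly $\varphi(t)=t^{-1/p}$ and the resulting family $\hat{M}$ is uniformly bounded (in particular bounded). Since a uniformly bounded family is backwards bounded, the final assertion — existence of a pullback attractor $\hat{A}\subset\hat{M}$ — then follows from Theorem \ref{theo:GenimpliesPA}.

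I would first settle the functional setting. Because of the cubic growth $|\partial_v f|\leqslant c_0(1+|v|^2)$ in dimension three, I would invoke the Shatah--Struwe/Strichartz theory, as in \cite{yan2023long,ZhaoZhong2022}, to obtain for each $(u_0,u_1)\in X$ and each $s\in\mathbb{R}$ a unique global solution with $(u,u_t)\in C([s,\infty);X)$ and the additional regularity $u\in L^4_{\mathrm{loc}}([s,\infty);L^{12}(\Omega))$; this yields the evolution process $S(t,s)$ and its joint continuity. Next, testing \eqref{ourproblem} with $u_t+\varepsilon u$ and using the structural conditions \ref{cond1}--\ref{cond6} (the two $\liminf$ conditions in \ref{condf} give, via Poincaré and the spectral gap $\lambda_1$, a coercive lower bound for $\tfrac12\|\nabla u\|^2+\int_\Omega F(t,u)\,dx$ up to an additive constant, while $|\partial_t f|\leqslant c_0$ and \ref{cond6} control the explicit time dependence), I would derive a dissipative differential inequality for a perturbed energy $\mathcal V=E+\varepsilon(u,u_t)$; here the crucial point is that the non-local anti-damping is absorbed by the non-local weak damping through the elementary bound $K_0\|u_t\|_{L^2}^2\leqslant \tfrac{k_0}{2}\|u_t\|_{L^2}^{p+2}+C$, which holds for every $p>0$. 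This produces a closed, uniformly bounded, positively invariant, uniformly pullback absorbing family $\hat{B}$, obtained from the uniform absorbing ball in the usual way. Finally, a Gronwall estimate on the difference of two solutions with data in $B_s$, over a time interval of length $\gamma:=T$, gives the Lipschitz bound $d(S(s+\tau,s)x,S(s+\tau,s)y)\leqslant L_{\tau,s}\,d(x,y)$ for $x,y\in B_s$, $0\leqslant\tau\leqslant\gamma$, required by Theorem \ref{corMain}.

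The core of the proof is the quasi-stability condition \ref{corMain}(iv), with $S_n=S(t-(n-1)T,t-nT)$. For $x,y\in B_{t-nT}$ let $u,v$ be the corresponding solutions, $w=u-v$, and $E_w(\sigma)=\tfrac12\|\nabla w(\sigma)\|^2+\tfrac12\|w_t(\sigma)\|^2$ on $[t-nT,t-(n-1)T]$, so $d(x,y)^2=2E_w(t-nT)$ and $d(S_nx,S_ny)^2=2E_w(t-(n-1)T)$. The key algebraic fact is the monotonicity of the non-local weak damping: writing $a=\|u_t\|_{L^2}$, $b=\|v_t\|_{L^2}$,
\[
\big(k(\sigma)(\|u_t\|^p u_t-\|v_t\|^p v_t),\,w_t\big)=k(\sigma)\big[a^{p+2}+b^{p+2}-(a^{p}+b^{p})(u_t,v_t)\big]\geqslant c_p\,k_0\,\|w_t\|_{L^2}^{p+2}\geqslant 0,
\]
where $c_p>0$ depends only on $p$ and the bound of $\hat{B}$. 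Integrating the energy identity for $w$ over an interval of length $T$ expresses the total dissipation as $d(x,y)^2-d(S_nx,S_ny)^2$ plus the contributions of $\int(f(\sigma,u)-f(\sigma,v),w_t)\,d\sigma$ and of the anti-damping $\int(Kw_t,w_t)\,d\sigma$. Using the cubic bound together with the Strichartz regularity, the embedding $H^1_0(\Omega)\hookrightarrow L^6(\Omega)$, an integration by parts in time, and Rellich compactness, the nonlinear contribution is bounded by $g_1(\rho_1(x,y),\dots,\rho_m(x,y))+\psi_1(x,y)$, where the $\rho_i$ are lower-order pseudometrics on $X$ (of the type $\rho_i((u_0,u_1),(v_0,v_1))=\|u_0-v_0\|_{L^2}$), precompact on the bounded sets $B_{t-nT}$ by the Rellich theorem (so \ref{corMain}(ii) holds), and the $\psi_i$ are finite sums of space-time integrals of such lower-order norms along the two trajectories, contractive on $B_{t-nT}$ by the Aubin--Lions lemma applied to the bounded solution families (so \ref{corMain}(iii) holds); the anti-damping is treated the same way after splitting the Hilbert--Schmidt operator $K$ into a finite-rank part (yielding contractive terms, via the smoothness in time of the spectral components of $w$) and an arbitrarily small bounded remainder absorbed by the dissipation. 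This gives the first inequality of \ref{corMain}(iv). For the second, Hölder's inequality in time yields
\begin{align*}
\int_{t-nT}^{t-(n-1)T}\|w_t\|_{L^2}^2\,d\sigma & \leqslant T^{\frac{p}{p+2}}\Big(\int_{t-nT}^{t-(n-1)T}\|w_t\|_{L^2}^{p+2}\,d\sigma\Big)^{\frac{2}{p+2}}\\
& \leqslant C\big[d(x,y)^2-d(S_nx,S_ny)^2+g_1(\rho_1(x,y),\dots,\rho_m(x,y))+\psi_1(x,y)\big]^{\beta},
\end{align*}
with $\beta=\tfrac{2}{p+2}$, since the bracket controls the total dissipation, which by the displayed monotonicity dominates $c_pk_0\int\|w_t\|^{p+2}$; bounding $d(S_nx,S_ny)^2$ by a time-average of $E_w(\sigma)$ — hence by $\int\|w_t\|^2+\int\|\nabla w\|^2$, the first term handled as above and the second via the equation tested with $w$, all remaining lower-order quantities being collected into $g_2$ and $\psi_2$ — produces the second inequality of \ref{corMain}(iv).

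With all hypotheses verified, Theorem \ref{corMain} yields a uniformly bounded generalized $\varphi$-pullback attractor $\hat{M}\subset\hat{B}$ with $\varphi(s)=s^{\beta/(r(\beta-1))}=s^{-1/p}$, and Theorem \ref{theo:GenimpliesPA} then provides the pullback attractor $\hat{A}\subset\hat{M}$. I expect the genuine obstacle to be the verification of \ref{corMain}(iv): because $f$ has cubic growth in $\mathbb{R}^3$ (the critical range for the compactness analysis), controlling $\int(f(\sigma,u)-f(\sigma,v),w_t)\,d\sigma$ and its time integration by parts so that all non-compact pieces cancel against $d(x,y)^2-d(S_nx,S_ny)^2$ while everything left is a genuine $g_i(\rho(x,y))+\psi_i(x,y)$ with $\psi_i$ contractive requires the full Shatah--Struwe/Strichartz machinery and careful bookkeeping; organizing the non-local weak damping and the anti-damping so that Hölder's inequality in time produces exactly the exponent $\beta=\tfrac{2}{p+2}$, hence the rate $t^{-1/p}$, is the other delicate point.
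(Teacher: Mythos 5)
Your overall route is the paper's: the same choices $r=2$ and $\beta=\tfrac{2}{p+2}$ in Theorem \ref{corMain} (so that $\tfrac{\beta}{r(\beta-1)}=-\tfrac1p$), the same monotonicity inequality $\langle \|u_t\|^pu_t-\|v_t\|^pv_t,\,w_t\rangle\geqslant 2^{-p}\|w_t\|_{L^2}^{p+2}$ (Proposition \ref{prop.Cp}), the same H\"older-in-time step producing the exponent $\tfrac{2}{p+2}$, the Gronwall Lipschitz bound on bounded sets, and Theorem \ref{theo:GenimpliesPA} at the end. The genuine gap is your treatment of the anti-damping inside the quasi-stability inequalities: you split $K$ into a finite-rank part plus a remainder of small norm and claim the remainder is ``absorbed by the dissipation''. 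That remainder contributes a term of size $\varepsilon\int_0^T\|w_t\|_{L^2}^2\,d\sigma$ with a \emph{fixed} $\varepsilon>0$, and for $p>0$ a quadratic expression cannot be dominated by the $(p+2)$-power dissipation without an additive constant: Young's inequality gives $\varepsilon\int_0^T\|w_t\|_{L^2}^2\,d\sigma\leqslant \delta\int_0^T\|w_t\|_{L^2}^{p+2}\,d\sigma+C(\varepsilon,\delta,T)$ with $C>0$ fixed. Such a constant is admissible nowhere in Theorem \ref{corMain}(iv): it is not of the form $g_i(\rho_1(x,y),\dots,\rho_m(x,y))$ (which must vanish when the pseudometrics do) and it is not a contractive $\psi_i$ (whose $\liminf$ along subsequences must be zero), so in particular the first inequality of (iv) fails, the iteration in Proposition \ref{thmplm} no longer drives $v^n(\kappa^r)$ to zero, and the $\varphi$-pullback $\kappa$-dissipativity is lost. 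The paper avoids this by never splitting $K$: the whole anti-damping contribution $\int_0^T\big\|\int_\Omega K(x,y)z_t(t,y)\,dy\big\|_{L^2}\,dt$ is promoted to the pseudometric $\rho_1$ and shown precompact on $\overline{B}^X_{r_0}$ (Propositions \ref{prop.ghxxgh}, \ref{rho1rho2}, Corollary \ref{cor:itemi}); there the $\varepsilon$-smallness of the tail of the Hilbert--Schmidt kernel is used only inside a Cauchy-subsequence extraction, where it is harmless. Your argument is repaired by doing the same.

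Two smaller points. The Shatah--Struwe/Strichartz machinery is not needed here: since $|\partial_v f|\leqslant c_0(1+|v|^2)$ and $H^1_0(\Omega)\hookrightarrow L^6(\Omega)$, the map $f(t,\cdot)$ is locally Lipschitz from $H^1_0(\Omega)$ into $L^2(\Omega)$ (Lemma \ref{wvtlema}), so classical semigroup theory yields well-posedness, the process, and Proposition \ref{dcdi}; cubic growth is subcritical for this purpose. Also, the absorbing family is not obtained ``in the usual way'': because the damping $k(t)\|u_t\|_{L^2}^pu_t$ degenerates for small velocities, the perturbed-energy inequality carries the factor $\big(\tfrac12-\delta_0\varepsilon(V_\varepsilon^s+d_0)^{\frac{p}{2(p+1)}}\big)$ in front of the dissipation, forcing a data-dependent choice of $\varepsilon$; the paper needs Lemma \ref{propa407}, Lemma \ref{vale} and the $\Theta$/limsup argument of Propositions \ref{EsC} and \ref{estimate.taumax} to obtain a radius independent of the bounded set of initial data (Theorem \ref{teo.existence.paf}). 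Note the contrast with the previous paragraph: in that absorbing-set estimate an additive constant produced by Young's inequality is harmless (it only enters $\delta_1$), whereas in the quasi-stability inequalities of Theorem \ref{corMain}(iv) it is fatal.
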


Our efforts from now on are dedicated to present the proof of Theorem \ref{App:PolAtt}.

\subsection{Auxiliary estimates}

Now we present a few estimates, regarding the functions $f$ and $F$, that will help us in the sections to come. In what follows, unless stated otherwise, $C$ denotes an independent positive constant for which its value may vary from one result to another, from one line to another, and even in the same line. Also, from now on, we are assuming that conditions \ref{cond1}-\ref{cond6} hold true. To simplify the notation, we will omit the $(\Omega)$ from the subscript of the norms. For instance, we write $\|\cdot\|_{L^2}$ instead of $\|\cdot\|_{L^2(\Omega)}$, $\|\cdot\|_{H^1_0}$ instead of $\|\cdot\|_{H^1_0(\Omega)}$, and henceforth. Also, for a Banach space $Z$ we will denote $\overline{B}^Z_R$ the closed ball with radius $R$ centered at $0$ in $Z$.

The estimates presented here are very useful to us, but we omit their proofs, since they are straightforward.

\begin{proposition}\label{funcFimp}
For all $t,v,w\in \mathbb{R}$ we have:
\begin{enumerate}[label={(\roman*)}]
    \item \label{funcFimp.a} $|f(t,v)|\leqslant 2c_0(1+|v|^3)$;
    \item \label{funcFimp.a2} $|f(t,v)-f(t,w)|\leqslant 2c_0(1+|v|^2+|w|^2)|v-w|$;
    \item \label{funcFimp.b} $|F(t,v)|\leqslant 4c_0(1+|v|^4)$;
    \item \label{funcFimp.b2} $|F(t,v)-F(t,w)|\leqslant 4c_0(1+|v|^3+|w|^3)|v-w|$;
    \item \label{funcFimp.c} $\left|\frac{\partial F}{\partial t}(t,v)-\frac{\partial F}{\partial t}(t,w)\right| \leqslant 2c_0|v-w|$.
\end{enumerate}
\end{proposition}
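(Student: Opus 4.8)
The statement collects five elementary estimates on the nonlinearity $f$ and its primitive $F(t,v)=\int_0^v f(t,\xi)\,d\xi$, all following from the $C^1$-growth bound $\left|\frac{\partial f}{\partial v}(t,v)\right|\leqslant c_0(1+|v|^2)$ and the bound $|f(t,0)|\leqslant c_0$ in \ref{condf}. The plan is to treat the five items in the order listed, since each reuses the previous one, and to rely throughout on the Fundamental Theorem of Calculus to convert pointwise derivative bounds into pointwise bounds on $f$ and $F$.

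For \ref{funcFimp.a}, I would write $f(t,v)=f(t,0)+\int_0^v \frac{\partial f}{\partial \xi}(t,\xi)\,d\xi$ and estimate
\[
|f(t,v)|\leqslant c_0 + \int_0^{|v|} c_0(1+\xi^2)\,d\xi = c_0 + c_0|v| + \tfrac{c_0}{3}|v|^3,
\]
then absorb the lower-order terms $c_0+c_0|v|$ into a constant multiple of $2c_0(1+|v|^3)$ using $|v|\leqslant \tfrac12(1+|v|^2)\leqslant \tfrac12(1+|v|^3)$ type crude bounds; one checks the resulting constant is at most $2c_0$. For \ref{funcFimp.a2}, I would apply the mean value theorem to $f(t,\cdot)$ to get $f(t,v)-f(t,w)=\frac{\partial f}{\partial \xi}(t,\zeta)(v-w)$ for some $\zeta$ between $v$ and $w$, and bound $|\zeta|^2\leqslant |v|^2+|w|^2$ together with the derivative hypothesis, giving the factor $2c_0(1+|v|^2+|w|^2)$ after adjusting the constant.

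Items \ref{funcFimp.b} and \ref{funcFimp.b2} are then obtained by integrating the estimates for $f$. For \ref{funcFimp.b} I would use $|F(t,v)|\leqslant \int_0^{|v|}|f(t,\xi)|\,d\xi\leqslant \int_0^{|v|}2c_0(1+\xi^3)\,d\xi$ from \ref{funcFimp.a} and collapse the result into $4c_0(1+|v|^4)$. For \ref{funcFimp.b2}, the cleanest route is to write $F(t,v)-F(t,w)=\int_w^v f(t,\xi)\,d\xi$ and bound the integrand by \ref{funcFimp.a}, using $|\xi|\leqslant \max\{|v|,|w|\}$ on the integration segment so that $|\xi|^3\leqslant |v|^3+|w|^3$; this yields the Lipschitz-type bound with factor $4c_0(1+|v|^3+|w|^3)$. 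Finally, for \ref{funcFimp.c} I would differentiate $F$ under the integral sign to get $\frac{\partial F}{\partial t}(t,v)=\int_0^v \frac{\partial f}{\partial t}(t,\xi)\,d\xi$ (justified by the $C^1$ regularity of $f$ and the uniform bound $\left|\frac{\partial f}{\partial t}\right|\leqslant c_0$), whence $\frac{\partial F}{\partial t}(t,v)-\frac{\partial F}{\partial t}(t,w)=\int_w^v \frac{\partial f}{\partial t}(t,\xi)\,d\xi$ and the modulus is at most $c_0|v-w|\leqslant 2c_0|v-w|$.

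There is no genuine obstacle here; the only care needed is bookkeeping of constants, namely verifying that the crude term-absorption steps genuinely stay within the stated coefficients $2c_0$ and $4c_0$ rather than producing a larger constant. Since the paper explicitly notes these proofs are omitted as straightforward, the essential content is simply the repeated use of the Fundamental Theorem of Calculus and the mean value theorem, and I would present them compactly rather than optimizing the constants.
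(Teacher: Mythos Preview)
Your proposal is correct and follows precisely the natural route the paper has in mind: the paper omits the proof entirely, remarking that these estimates are straightforward, and your repeated use of the Fundamental Theorem of Calculus, the mean value theorem, and elementary absorption of lower-order terms is exactly the intended argument. The constant checks you flag all go through (in fact items \ref{funcFimp.a2}, \ref{funcFimp.b2}, and \ref{funcFimp.c} hold with $c_0$ or $2c_0$ to spare), so there is nothing to add.
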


In what follows, we use $\hookrightarrow$ to denote the continuous inclusions, and $\hookrightarrow\hookrightarrow$ to denote compact inclusions. Using the continuous inclusion $H^1_0(\Omega)\hookrightarrow L^6(\Omega)$, and Poincar\'e's inequality, we can prove the next result.

\begin{lemma}\label{wvtlema}
There exists a constant $L_0>0$ such that 
\[
\|f(t,v)-f(t,w)\|_{L^2}\leqslant L_0(1+\|v\|^2_{H^1_0}+\|w\|^2_{H^1_0}) \|v-w\|_{H^1_0} \quad \hbox{ for all } v,w\in H^1_0(\Omega).
\]
In particular
\[
\|f(t,v)-f(t,w)\|_{L^2} \leqslant L_0(1+2R^2)\|v-w\|_{H^1_0} \quad \hbox{ for all } v,w\in \overline{B}^{H^1_0(\Omega)}_R.
\]
\end{lemma}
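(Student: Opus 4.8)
\textbf{Proof proposal for Lemma \ref{wvtlema}.}
The plan is to reduce the $L^2(\Omega)$ estimate on $f(t,v)-f(t,w)$ to the pointwise bound from Proposition \ref{funcFimp}\ref{funcFimp.a2}, and then control the resulting integral by Sobolev embeddings and H\"older's inequality. First I would fix $t\in\mathbb{R}$ and $v,w\in H^1_0(\Omega)$ and write, using Proposition \ref{funcFimp}\ref{funcFimp.a2},
\[
\|f(t,v)-f(t,w)\|_{L^2}^2 = \int_\Omega |f(t,v(x))-f(t,w(x))|^2\,dx \leqslant 4c_0^2 \int_\Omega \big(1+|v(x)|^2+|w(x)|^2\big)^2 |v(x)-w(x)|^2\,dx.
\]
Expanding $(1+|v|^2+|w|^2)^2 \leqslant 3(1+|v|^4+|w|^4)$ and applying H\"older's inequality with exponents $3$ and $3/2$ to each of the three resulting terms, one gets
\[
\|f(t,v)-f(t,w)\|_{L^2}^2 \leqslant C\Big(1+\|v\|_{L^6}^4+\|w\|_{L^6}^4\Big)\,\|v-w\|_{L^6}^2,
\]
since $\int_\Omega |v|^4|v-w|^2 \leqslant \||v|^4\|_{L^{3/2}}\,\||v-w|^2\|_{L^3} = \|v\|_{L^6}^4\,\|v-w\|_{L^6}^2$ (here I use that $\Omega\subset\mathbb{R}^3$ so the embedding $H^1_0(\Omega)\hookrightarrow L^6(\Omega)$ is the critical one).

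Next I would invoke the continuous embedding $H^1_0(\Omega)\hookrightarrow L^6(\Omega)$ together with Poincar\'e's inequality: there is a constant $c_\ast>0$ with $\|u\|_{L^6}\leqslant c_\ast\|u\|_{H^1_0}$ for all $u\in H^1_0(\Omega)$. Substituting this into the last display and taking square roots yields
\[
\|f(t,v)-f(t,w)\|_{L^2} \leqslant C\Big(1+\|v\|_{H^1_0}^2+\|w\|_{H^1_0}^2\Big)^{1/2}\,\|v-w\|_{H^1_0}.
\]
Finally, using $\big(1+\|v\|_{H^1_0}^2+\|w\|_{H^1_0}^2\big)^{1/2} \leqslant 1+\|v\|_{H^1_0}^2+\|w\|_{H^1_0}^2$, one obtains the first claimed inequality with a suitable constant $L_0>0$ (absorbing $c_0$, $c_\ast$ and the numerical constants), uniformly in $t$. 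The particular case is then immediate: if $v,w\in\overline{B}^{H^1_0(\Omega)}_R$ then $\|v\|_{H^1_0}^2+\|w\|_{H^1_0}^2\leqslant 2R^2$, so $1+\|v\|_{H^1_0}^2+\|w\|_{H^1_0}^2\leqslant 1+2R^2$, giving $\|f(t,v)-f(t,w)\|_{L^2}\leqslant L_0(1+2R^2)\|v-w\|_{H^1_0}$.

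There is no serious obstacle here; the lemma is essentially bookkeeping with Sobolev embeddings, which is why the paper calls the proof straightforward. The only point requiring mild care is matching the exponents in the H\"older step so that the cubic growth of $f$ together with the dimension $n=3$ lands exactly inside $L^6$, the largest Lebesgue space controlled by $H^1_0(\Omega)$; any slower growth would also work, but the cubic rate is sharp for this embedding, so the computation must be done with the critical exponent rather than a sub-critical one.
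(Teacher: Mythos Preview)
Your proposal is correct and follows exactly the approach the paper indicates (the paper omits the proof, merely noting it uses $H^1_0(\Omega)\hookrightarrow L^6(\Omega)$ and Poincar\'e). One small slip: after taking square roots you should have $(1+\|v\|_{H^1_0}^4+\|w\|_{H^1_0}^4)^{1/2}$ rather than $(1+\|v\|_{H^1_0}^2+\|w\|_{H^1_0}^2)^{1/2}$, but since $(1+a^4+b^4)^{1/2}\leqslant 1+a^2+b^2$ the conclusion follows immediately and the argument is unaffected.
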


From the definition of $\liminf$, the proof of the following proposition is trivial.

\begin{proposition}\label{propax}
Given $\mu_0<\lambda_1$ there exists $M=M(\mu_0)>0$ such that 
\[
\inf_{t\in \mathbb{R}}\frac{\partial f}{\partial v}(t,v)>-\mu_0 \quad \hbox{ and } \quad \inf_{t\in \mathbb{R}}\frac{f(t,v)}{v}>-\mu_0 \quad \hbox{ for all } |v|>M.
\]
\end{proposition}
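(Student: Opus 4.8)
The plan is to simply unwind the definition of $\liminf$ present in hypothesis \ref{condf}. Introduce the two functions $h_1(v):=\inf_{t\in\mathbb{R}}\frac{\partial f}{\partial v}(t,v)$ and $h_2(v):=\inf_{t\in\mathbb{R}}\frac{f(t,v)}{v}$, defined for $|v|$ large, and write $\ell_i:=\liminf_{|v|\to\infty}h_i(v)$ for $i=1,2$. Hypothesis \ref{condf} is then exactly the assertion that $\ell_1>-\lambda_1$ and $\ell_2>-\lambda_1$. The only tool needed is the standard characterization
\[
\liminf_{|v|\to\infty}h(v)=\lim_{M\to\infty}\inf_{|v|>M}h(v)=\sup_{M>0}\inf_{|v|>M}h(v),
\]
from which one reads off that, for a real constant $c$, the inequality $\liminf_{|v|\to\infty}h(v)>c$ holds if and only if there exists $M>0$ with $h(v)>c$ for all $|v|>M$.

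First I would locate the admissible range for $\mu_0$. Since $\ell_i>-\lambda_1$ gives $-\ell_i<\lambda_1$ for $i=1,2$, the number $\max\{-\ell_1,-\ell_2\}$ is strictly below $\lambda_1$, so the interval $\big(\max\{-\ell_1,-\ell_2\},\lambda_1\big)$ is nonempty. The content of the proposition is precisely that $\mu_0$ can be chosen in this interval: any such $\mu_0$ satisfies simultaneously $\mu_0<\lambda_1$ (the property invoked later for Poincaré-type coercivity) and $-\mu_0<\ell_i$ for both $i$. This placing of $-\mu_0$ strictly below the two liminfs is the single point where the hypothesis $\ell_i>-\lambda_1$ is actually used.

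Then, fixing $\mu_0$ with $-\mu_0<\ell_i$ for $i=1,2$, I apply the threshold characterization above with $c=-\mu_0$ to each $h_i$: this produces numbers $M_i=M_i(\mu_0)>0$ such that $h_i(v)>-\mu_0$ for all $|v|>M_i$. Setting $M:=\max\{M_1,M_2\}$ gives a single threshold for which both $\inf_{t\in\mathbb{R}}\frac{\partial f}{\partial v}(t,v)>-\mu_0$ and $\inf_{t\in\mathbb{R}}\frac{f(t,v)}{v}>-\mu_0$ hold for all $|v|>M$, which is exactly the claimed conclusion.

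There is essentially no obstacle, which is why the statement is recorded as trivial; the only two steps requiring any care are (a) converting the strict inequality on the $\liminf$ into an \emph{eventual} pointwise strict inequality, via the if-and-only-if characterization recalled above, and (b) taking the maximum of the two thresholds $M_1,M_2$ so that one value of $M$ serves both estimates at once.
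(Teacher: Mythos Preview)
Your proposal is correct and matches the paper's approach exactly: the paper merely records that the proof is trivial from the definition of $\liminf$, and you carry that out explicitly. Your added remark that the argument actually needs $\mu_0>\max\{-\ell_1,-\ell_2\}$, not just $\mu_0<\lambda_1$, is a valid refinement the paper leaves implicit; since $\max\{-\ell_1,-\ell_2\}<\lambda_1$ by \ref{condf} and the applications only fix one $\mu_0\in(0,\lambda_1)$, nothing downstream is affected.
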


Furthermore, we have

\begin{proposition}\label{propxnh}
For each $M>0$ we have $\int_{-M}^M|f(t,v)|dv \leqslant 8c_0(1+M^4)$ for all $t\in \mathbb{R}$.
\end{proposition}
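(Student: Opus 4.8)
The plan is to reduce the statement to the elementary pointwise bound on $f$ already recorded in Proposition \ref{funcFimp}. Since $t$ enters only as a fixed parameter and every constant in that estimate is uniform in $t$, the bound will hold for all $t\in\mathbb{R}$ automatically, so I may fix $t$ throughout and simply control the integral.

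First I would invoke Proposition \ref{funcFimp}\ref{funcFimp.a}, which gives $|f(t,v)|\leqslant 2c_0(1+|v|^3)$ for all $v\in\mathbb{R}$. Integrating this over the symmetric interval $[-M,M]$ and using $\int_{-M}^M 1\,dv = 2M$ together with $\int_{-M}^M |v|^3\,dv = 2\int_0^M v^3\,dv = \tfrac{M^4}{2}$, I obtain
\[
\int_{-M}^M |f(t,v)|\,dv \leqslant 2c_0\Big(2M+\tfrac{M^4}{2}\Big) = 4c_0 M + c_0 M^4.
\]
It then remains only to absorb the lower-order term $4c_0 M$ into the desired right-hand side $8c_0(1+M^4)$.

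For this I would use the elementary inequality $M\leqslant 1+M^4$, valid for all $M>0$ (checking the cases $0<M\leqslant 1$ and $M\geqslant 1$ separately), so that $4c_0 M \leqslant 4c_0(1+M^4)$, while trivially $c_0 M^4 \leqslant c_0(1+M^4)$; adding these gives
\[
\int_{-M}^M |f(t,v)|\,dv \leqslant 5c_0(1+M^4)\leqslant 8c_0(1+M^4).
\]
There is no genuine obstacle here: the only point requiring the slightest care is the constant bookkeeping, which the inequality $M\leqslant 1+M^4$ handles cleanly and yields the stated constant $8c_0$ with room to spare.
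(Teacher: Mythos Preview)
Your argument is correct: you use the pointwise bound from Proposition \ref{funcFimp}\ref{funcFimp.a}, integrate explicitly, and absorb the linear term via $M\leqslant 1+M^4$. The paper omits the proof as straightforward, and your computation is exactly the intended direct verification, even yielding the slightly sharper constant $5c_0$.
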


\begin{proposition}\label{propF}
Fixing $0<\mu_0<\lambda_1$ and considering $M=M(\mu_0)>0$ given by Proposition \ref{propax}, we have
\[
F(t,v)\geqslant -\tfrac{\mu_0+\lambda_1}{4}v^2-8c_0(1+M^4) \quad \hbox{ for } |v|>M \hbox{ and } t\in \mathbb{R},
\]
and 
\[
|F(t,v)|\leqslant 8c_0(1+M^4) \quad \hbox{ for } |v|\leqslant M \hbox{ and } t\in \mathbb{R}.
\]
\end{proposition}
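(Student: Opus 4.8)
The plan is to establish both estimates by returning to the definition $F(t,v)=\int_0^v f(t,\xi)\,d\xi$ and combining the crude control of $f$ on the compact interval $[-M,M]$ (Proposition \ref{propxnh}) with the one-sided bound $\tfrac{f(t,\xi)}{\xi}>-\mu_0$ valid for $|\xi|>M$, furnished by Proposition \ref{propax}. Both claims are uniform in $t$ precisely because the constants $c_0$, $M$ and the threshold $-\mu_0$ produced by those propositions do not depend on $t$.

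First I would dispose of the second inequality, for $|v|\leqslant M$. Here the path of integration from $0$ to $v$ stays inside $[-M,M]$, so $|F(t,v)|\leqslant \int_{-M}^{M}|f(t,\xi)|\,d\xi$, and Proposition \ref{propxnh} immediately gives $|F(t,v)|\leqslant 8c_0(1+M^4)$.

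For the lower bound when $|v|>M$, I would split the integral at the threshold $\pm M$. Taking $v>M$ first, write $F(t,v)=\int_0^{M}f(t,\xi)\,d\xi+\int_M^v f(t,\xi)\,d\xi$. The first piece is bounded in absolute value by $\int_{-M}^M|f(t,\xi)|\,d\xi\leqslant 8c_0(1+M^4)$, hence is $\geqslant -8c_0(1+M^4)$. On the tail $\xi\in(M,v)$ one has $\xi>0$ and $\tfrac{f(t,\xi)}{\xi}>-\mu_0$, so $f(t,\xi)>-\mu_0\xi$; integrating yields $\int_M^v f(t,\xi)\,d\xi\geqslant -\tfrac{\mu_0}{2}(v^2-M^2)\geqslant -\tfrac{\mu_0}{2}v^2$. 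Adding the two contributions gives $F(t,v)\geqslant -\tfrac{\mu_0}{2}v^2-8c_0(1+M^4)$. The case $v<-M$ is symmetric: I split at $-M$, bound the integral over $[-M,0]$ by the same argument, and on the tail $\xi\in(v,-M)$ use that $\xi<0$ together with $\tfrac{f(t,\xi)}{\xi}>-\mu_0$ to obtain $f(t,\xi)<-\mu_0\xi$, which after accounting for the orientation of the integral again produces a contribution $\geqslant -\tfrac{\mu_0}{2}v^2$.

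Finally, to match the stated constant I would simply observe that $\mu_0<\lambda_1$ forces $\tfrac{\mu_0}{2}\leqslant\tfrac{\mu_0+\lambda_1}{4}$, whence $-\tfrac{\mu_0}{2}v^2\geqslant -\tfrac{\mu_0+\lambda_1}{4}v^2$, and the asserted bound $F(t,v)\geqslant -\tfrac{\mu_0+\lambda_1}{4}v^2-8c_0(1+M^4)$ follows. There is no substantial obstacle in this argument; the only point demanding care is the sign bookkeeping on the negative tail $v<-M$, where multiplying the hypothesis $\tfrac{f(t,\xi)}{\xi}>-\mu_0$ through by the negative quantity $\xi$ reverses the inequality, so one must track the orientation of integration correctly to recover the same lower bound.
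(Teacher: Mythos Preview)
Your argument is correct and is precisely the natural route: the paper in fact omits the proof of this proposition as ``straightforward,'' and what you have written is exactly the elementary computation one expects---split $F(t,v)=\int_0^v f(t,\xi)\,d\xi$ at $\pm M$, control the inner piece by Proposition~\ref{propxnh}, and use $f(t,\xi)/\xi>-\mu_0$ on the tail. Your final weakening $\tfrac{\mu_0}{2}\leqslant\tfrac{\mu_0+\lambda_1}{4}$ is also the right way to match the stated constant.
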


\begin{proposition}\label{proppp}
Fixing $0<\mu_0<\lambda_1$ and considering $M=M(\mu_0)>0$ given by Proposition \ref{propax}, there exists a constant $e_0>0$ such that
\[
F(t,v)\leqslant vf(t,v)+\frac{\mu_0}{2}v^2+e_0 \quad \hbox{ for } |v|>M \hbox{ and } t\in\mathbb{R}.
\]
\end{proposition}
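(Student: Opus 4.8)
\textbf{Proof plan for Proposition \ref{proppp}.}

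The plan is to exploit the growth/sign condition on $f$ for large $|v|$ together with the fundamental theorem of calculus that relates $F$ to $f$. Fix $0<\mu_0<\lambda_1$ and let $M=M(\mu_0)>0$ be the constant supplied by Proposition \ref{propax}, so that $\frac{\partial f}{\partial v}(t,v)>-\mu_0$ whenever $|v|>M$, uniformly in $t$. The quantity we wish to control is
\[
vf(t,v)-F(t,v) = vf(t,v)-\int_0^v f(t,\xi)\,d\xi = \int_0^v \big(f(t,v)-f(t,\xi)\big)\,d\xi,
\]
and by the mean value theorem $f(t,v)-f(t,\xi)=\frac{\partial f}{\partial v}(t,\theta)(v-\xi)$ for some $\theta$ between $\xi$ and $v$. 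First I would split the integral as $\int_0^v = \int_0^{\operatorname{sgn}(v)M} + \int_{\operatorname{sgn}(v)M}^v$ (assume $v>M$; the case $v<-M$ is symmetric). On the outer piece $[M,v]$ both $\xi$ and $v$ exceed $M$, so $\theta>M$ and hence $\frac{\partial f}{\partial v}(t,\theta)>-\mu_0$; since $v-\xi\geqslant 0$ there, this gives $f(t,v)-f(t,\xi)\geqslant -\mu_0(v-\xi)$, and integrating yields
\[
\int_M^v \big(f(t,v)-f(t,\xi)\big)\,d\xi \leqslant \mu_0 \int_M^v (v-\xi)\,d\xi = \tfrac{\mu_0}{2}(v-M)^2 \leqslant \tfrac{\mu_0}{2}v^2.
\]

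Wait — the inequality I want is an \emph{upper} bound on $vf(t,v)-F(t,v)$, whereas the step above produced a \emph{lower} bound on the integrand; let me reorganize. I should instead bound $vf(t,v)-F(t,v)=\int_0^v(f(t,v)-f(t,\xi))\,d\xi$ from above directly. On $[M,v]$ write $f(t,v)-f(t,\xi)=\int_\xi^v\frac{\partial f}{\partial v}(t,s)\,ds$. This is not immediately sign-definite above, so the cleaner route is: add and subtract a convexifying term. Define $g(t,v):=f(t,v)+\mu_0 v$; by Proposition \ref{propax}, $\frac{\partial g}{\partial v}(t,v)=\frac{\partial f}{\partial v}(t,v)+\mu_0>0$ for $|v|>M$, so $g(t,\cdot)$ is increasing on $(M,\infty)$. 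Then $v g(t,v) - \int_0^v g(t,\xi)\,d\xi$ can be estimated: split at $M$, on $[0,M]$ use Proposition \ref{propxnh} and the bound $|f(t,v)|\leqslant 2c_0(1+|v|^3)$ from Proposition \ref{funcFimp}(i) to bound $\int_0^M|g(t,\xi)|\,d\xi$ and $|g(t,v)|\cdot$(nothing, as $v$ is large) — actually on $[0,M]$, $\int_0^M (g(t,v)-g(t,\xi))d\xi \leqslant M g(t,v) + \int_0^M|g(t,\xi)|\,d\xi$, and the first term $Mg(t,v)$ is $O(v^3)$, not absorbable into a constant. So I must be more careful: I do \emph{not} want $Mg(t,v)$ floating around.

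The correct organization, which I would carry out in the final proof, is this. Write, for $v>M$,
\[
vf(t,v)-F(t,v) = \int_0^v\big(f(t,v)-f(t,\xi)\big)\,d\xi,
\]
and on the whole interval $[0,v]$ estimate the integrand using monotonicity of $g(t,\cdot)=f(t,\cdot)+\mu_0(\cdot)$ on $(M,\infty)$ \emph{after} isolating the near-origin contribution. Precisely: $f(t,v)-f(t,\xi) = \big(g(t,v)-g(t,\xi)\big)-\mu_0(v-\xi)$; for $\xi\in(M,v]$ we have $g(t,v)-g(t,\xi)\leqslant 0$ is false — $g$ increasing means $g(t,v)\geqslant g(t,\xi)$, so $g(t,v)-g(t,\xi)\geqslant 0$, which is the wrong direction again. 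The resolution is that the \emph{right} auxiliary function is the one making $f$ \emph{decreasing}, which does not exist globally; instead one should bound $f(t,v)-f(t,\xi)$ for $\xi<v$ by writing $f(t,v)-f(t,\xi)=\int_\xi^v \frac{\partial f}{\partial v}(t,s)\,ds$ and splitting the $s$-integral at $M$: for $s>M$, $\frac{\partial f}{\partial v}(t,s)$ has no upper bound from the hypotheses, so this too fails. The honest conclusion is that the inequality must come from the \emph{second} $\liminf$ condition, $\liminf_{|v|\to\infty}\inf_t\frac{f(t,v)}{v}>-\lambda_1$, \emph{not} from the derivative condition. So the key step is: from Proposition \ref{propax}, $\frac{f(t,v)}{v}>-\mu_0$ for $|v|>M$, i.e. $vf(t,v)>-\mu_0 v^2$ for $|v|>M$. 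Combined with the two-sided bound $|F(t,v)|\leqslant 8c_0(1+M^4)+\tfrac{\mu_0+\lambda_1}{4}v^2$ from Proposition \ref{propF} — which gives $-F(t,v)\leqslant \tfrac{\mu_0+\lambda_1}{4}v^2 + 8c_0(1+M^4)$ — this is still not quite of the stated form. The cleanest path: show $F(t,v)\leqslant \tfrac12 vf(t,v)+\tfrac{\mu_0}{2}v^2+e_0$ via the convexity-type argument $F(t,v)=\int_0^v f(t,\xi)d\xi$ and $\xi f(t,\xi)\geqslant -\mu_0\xi^2$, integrating to bound $\int$ — I would carry out the bookkeeping carefully in the final version. \textbf{The main obstacle} is organizing these elementary one-variable estimates so that the bad terms (the $O(v^3)$ pieces near the endpoints, and the lack of an upper bound on $\partial_v f$) cancel or get absorbed; the correct split is $\int_0^v = \int_0^M + \int_M^v$ with the derivative condition $\partial_v f(t,s)+\mu_0>0$ applied only on $(M,v)$ to compare $f(t,v)-f(t,\xi)$ with $-\mu_0(v-\xi)$ \emph{from below}, yielding a lower bound on $vf-F$, and then reading the stated upper bound on $F$ off the complementary pairing — I expect the final proof to use exactly the constant $e_0 = Mc_0(\text{const})(1+M^3) + 8c_0(1+M^4)$ or similar, absorbing all $M$-dependent junk.
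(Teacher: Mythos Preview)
The paper omits the proof of this proposition (it is listed among the ``straightforward'' auxiliary estimates), so there is no paper argument to compare against. Your approach is the natural one and works, but you tripped over a sign at the first display and then spent the rest of the proposal chasing your tail.

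The error: you want $F(t,v)\leqslant vf(t,v)+\tfrac{\mu_0}{2}v^2+e_0$, i.e.\ a \emph{lower} bound on $vf(t,v)-F(t,v)=\int_0^v(f(t,v)-f(t,\xi))\,d\xi$, not an upper bound. Your first step correctly derived $f(t,v)-f(t,\xi)\geqslant-\mu_0(v-\xi)$ for $\xi\in[M,v]$ from $\partial_vf>-\mu_0$; integrating gives
\[
\int_M^v\big(f(t,v)-f(t,\xi)\big)\,d\xi\;\geqslant\;-\tfrac{\mu_0}{2}(v-M)^2,
\]
which is exactly the direction you need (your displayed inequality is reversed). Everything after your ``Wait'' is unnecessary: there is no missing upper bound on $\partial_v f$, no need for the second $\liminf$ condition, and no $O(v^3)$ obstruction.

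For the inner piece $[0,M]$ (the part you never completed): take $\xi=M$ in the derivative estimate to get $f(t,v)\geqslant f(t,M)-\mu_0(v-M)\geqslant -2c_0(1+M^3)-\mu_0(v-M)$ via Proposition~\ref{funcFimp}, and bound $\int_0^M|f(t,\xi)|\,d\xi$ by Proposition~\ref{propxnh}. This gives
\[
\int_0^M\big(f(t,v)-f(t,\xi)\big)\,d\xi \;\geqslant\; -M\mu_0(v-M)-2Mc_0(1+M^3)-8c_0(1+M^4).
\]
Add the two pieces and observe $\tfrac{\mu_0}{2}(v-M)^2+M\mu_0(v-M)=\tfrac{\mu_0}{2}(v^2-M^2)\leqslant\tfrac{\mu_0}{2}v^2$; this finishes $v>M$ with $e_0=2Mc_0(1+M^3)+8c_0(1+M^4)$. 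The case $v<-M$ is symmetric.
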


\medskip \noindent \textbf{Translation of the problem} \medskip

In order to use our knowledge of the autonomous problem, presented in \cite{ZhaoZhao2020,ZhaoZhong2022}, we use the translations of the original nonautonomous problem, in order to obtain a problem defined in $[0,\infty)$ rather than on $[s,\infty)$. To be more specific, fixing $s\in \mathbb{R}$ and setting $v(t,x):=u(t+s,x)$ for $t\geqslant 0$ and $x\in\Omega$, we formally have
\[
\begin{aligned}
&v_{tt}(t,x)-\Delta v(t,x)+k_s(t)\left\|v_t(t, \cdot)\right\|_{L^2}^pv_t(t,x)+f_s(t, v(t,x)) -\int_{\Omega}K(x,y)v_t(t,y)dy-h( x)\\
&=u_{tt}(t+s, x)-\Delta u(t+s, x)+k_s(t)\left\|u_t(t+s, \cdot)\right\|_{L^2}^pu_t(t+s, x)\\
&\qquad+f_s(t,u(t+s,x))-\int_{\Omega}K(x,y)u_t(t+s,y)dy-h(x)=0,
\end{aligned}
\]
where the boundary and initial conditions become
\begin{align*}
& v(t,x) = u(t+s,x) = 0 \quad \hbox{ for } (t,x)\in \left[0,\infty \right) \times \partial\Omega,\\
& v(0,x)=u(s,x)=u_0(x), \ v_t(0,x)=u_t(s,x)=u_1(x) \quad \hbox { for } x\in \Omega.
\end{align*}
Thus, we will study the boundary and initial conditions problem
\begin{equation}\label{eqondap}\tag{tNWE}
\left\{\begin{aligned}
&v_{tt}(t,x)-\Delta v(t,x)+k_s(t)\left\|v_t(t, \cdot)\right\|_{L^2}^pv_t(t,x)+f_s(t,v(t,x))\\
& \hspace{130pt} = \int_{\Omega}K(x,y)v_t(t,y)dy + h(x), \ (t,x)\in [0,\infty)\times \Omega,\\
& v(t,x) = 0, \ (t,x)\in [0,\infty)\times \partial \Omega,\\
& v(0,x)=u_0(x), \ v_t(0,x) = u_1(x), \ x\in \Omega,
\end{aligned}\right.
\end{equation}
instead of \eqref{ourproblem}. This problem is equivalent to the initial one, but with the nonautonomous terms being $k_s(\cdot)=k(\cdot+s)$ and $f_s(\cdot,\cdot)=f(\cdot+s,\cdot)$ instead of $k$ and $f$. Additionally, we denote $F_s(\cdot , \cdot)=F(\cdot + s, \cdot)$.

\subsection{Well-posedness}

We will use the classical Semigroups Theory to obtain the existence of local weak solutions, continuous dependence on initial data and a result regarding the continuation of solution. To that end, we transform \eqref{eqondap} into an abstract Cauchy problem in an appropriate phase space.

\newcommand{\vetor}[2]{\left[\begin{smallmatrix} #1 \\ #2 \end{smallmatrix}\right]}

Taking $w=v_t$ in \eqref{eqondap}, we obtain 
\[
w_t=v_{tt}=\Delta v-k_s(t)\|v_t\|^p_{L^2}v_t-f_s(t,v)+\int_{\Omega}K(x,y)v_t(t,y)dy+h(x),
\]
and thus
\begin{equation*}
\begin{aligned}
\frac{d}{dt}\begin{bmatrix}
    v\\w
\end{bmatrix}&=\begin{bmatrix}
    v_t\\w_t
\end{bmatrix}=
\begin{bmatrix} w \\ \Delta v-k_s(t)\|v_t(t,\cdot)\|^p_{L^2}v_t -f_s(t,v)+\int_{\Omega}K(x,y)v_t(t,y)dy+h(x)\end{bmatrix}\\
&=\begin{bmatrix} 0 & I \\ \Delta & 0 \end{bmatrix}\begin{bmatrix}
    v\\w
\end{bmatrix}+\begin{bmatrix} 0 \\ \int_{\Omega}K(x,y)w(t,y)dy+h(x)-f_s(t,v)-k_s(t)\left\|w\right\|^p_{L^2}w \end{bmatrix}.
\end{aligned}
\end{equation*}
Setting $X:=H^1_0(\Omega)\times L^2(\Omega)$, $V=\vetor{v}{w}$, $V_0=\vetor{u_0}{u_1}$, $\mathcal{A}=\left[\begin{smallmatrix} 0 & I \\ \Delta & 0 \end{smallmatrix}\right]$ and $\mathcal{G}(t,V)=\vetor{0}{G(t,V)}$, where 
\[
G(t,V)=\int_{\Omega}K(x,y)w(t,y)dy+h(x)-f_s(t,v)-k_s(t)\left\|w\right\|^p_{L^2(\Omega)}w,
\]
we can represent \eqref{eqondap} by an abstract Cauchy problem
\begin{equation}\label{Abs.Ev.Equation}\tag{ACP}
\left\{\begin{aligned}
    &\frac{dV}{dt}=\mathcal{A}V+\mathcal{G}(t,V), \ t>0\\
    &V(0)=V_0\in X.
\end{aligned} \right.
\end{equation}
Clearly $X$ is a Hilbert space with the inner product defined by          
\begin{equation*}
\left\langle \vetor{v_1}{w_1},\vetor{v_2}{w_2} \right \rangle_X=\langle v_1,v_2\rangle_{H^1_0(\Omega)}+\langle w_1, w_2 \rangle_{L^2(\Omega)},
\end{equation*}
with associate norm
\begin{equation*}
\left\|\vetor{v}{w}\right\|_X^2= \|v\|^2_{H^1_0} + \| w\|^2_{L^2}.
\end{equation*}
The operator $\mathcal{A}\colon D(\mathcal{A})\subset X\to X$ has its usual domain $D(\mathcal{A})=[H^1_0(\Omega)\cap H^2(\Omega)]\times H^1_0(\Omega)$. This operator is closed, densely defined and $m$-dissipative. From the Lumer-Philips Theorem, $\mathcal{A}$ generates a $C^0-$semigroup of contractions $\{e^{\mathcal{A}t}\colon t\geqslant 0\}\subset \mathcal{L}(X)$.

To deal with problem \eqref{Abs.Ev.Equation}, we look into the nonlinear term $\mathcal{G}(t,V)$.

\begin{proposition}
Given $R>0$ there exists $L_R \geqslant 0$ such that
\[
\|\mathcal{G}(t,V_1)-\mathcal{G}(t,V_2)\|_X\leqslant L_R\|V_1-V_2\|_X \quad \hbox{ for all } V_1,V_2\in \overline{B}^X_R \hbox{ and } t\in \mathbb{R},
\]
that is $\mathcal{G}\colon \mathbb{R}\times X\to X$ is locally Lipschitz in the second variable, uniformly for $t\in \mathbb{R}$. Furthermore, $\mathcal{G}$ takes bounded subsets of $\mathbb{R}\times X$ into bounded subsets of $X$.
\end{proposition}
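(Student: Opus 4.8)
The plan is to use the block structure $\mathcal{G}(t,V)=\vetor{0}{G(t,V)}$, which reduces everything to the $L^2$-norm of the difference of the four summands of $G(t,V)$. Writing $V_i=\vetor{v_i}{w_i}$ and recalling $\|V\|_X^2=\|v\|_{H^1_0}^2+\|w\|_{L^2}^2$, I would first observe that $V_1,V_2\in\overline{B}^X_R$ forces $v_1,v_2\in\overline{B}^{H^1_0(\Omega)}_R$ and $w_1,w_2\in\overline{B}^{L^2(\Omega)}_R$, with $\|v_1-v_2\|_{H^1_0}\leqslant\|V_1-V_2\|_X$ and $\|w_1-w_2\|_{L^2}\leqslant\|V_1-V_2\|_X$. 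Thus it suffices to bound each summand of $G(t,V_1)-G(t,V_2)$ by an $R$-dependent constant times $\|v_1-v_2\|_{H^1_0}$ or $\|w_1-w_2\|_{L^2}$, with the constant independent of $t$.

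Next I would dispatch the three routine terms. The constant term $h(x)$ cancels. For the integral term, Cauchy--Schwarz together with \ref{cond1} gives $\Bigl\|\,\int_\Omega K(\cdot,y)(w_1-w_2)(y)\,dy\,\Bigr\|_{L^2}\leqslant K_0\|w_1-w_2\|_{L^2}$, i.e.\ it is globally Lipschitz in $w$. For the reaction term I would invoke Lemma~\ref{wvtlema}: since $f_s(t,\cdot)=f(t+s,\cdot)$ and the estimate there is uniform in the time slot, $\|f_s(t,v_1)-f_s(t,v_2)\|_{L^2}\leqslant L_0(1+2R^2)\|v_1-v_2\|_{H^1_0}$ for all $t,s\in\mathbb{R}$, which is exactly the required uniformity in $t$.

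The hard part is the nonlocal damping term $k_s(t)\|w\|_{L^2}^p\,w$. Using $0<k_0\leqslant k_s(t)\leqslant k_1$, this reduces to showing that $w\mapsto\|w\|_{L^2}^p\,w$ is Lipschitz on bounded balls of $L^2$; here is the only genuine subtlety, because for $0<p<1$ a careless splitting produces merely H\"older continuity. I would establish the elementary inequality
\[
\big\|\,\|a\|^p a-\|b\|^p b\,\big\|\leqslant(1+p)\,\max\{\|a\|,\|b\|\}^p\,\|a-b\|,
\]
valid in any inner product space: writing $\|a\|^p a-\|b\|^p b=\|a\|^p(a-b)+(\|a\|^p-\|b\|^p)b$ and assuming without loss $\|a\|\geqslant\|b\|$, one controls $(\|a\|^p-\|b\|^p)\|b\|$ via the mean value theorem applied to $t\mapsto t^p$, treating $p\geqslant1$ and $0<p<1$ separately (in the second case the potentially large factor $\xi^{p-1}$ is absorbed by the factor $\|b\|\leqslant\xi$). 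This yields Lipschitz constant $k_1(1+p)R^p$ for the damping term on $\overline{B}^{L^2(\Omega)}_R$, and collecting the four contributions gives the claim with, say, $L_R=K_0+L_0(1+2R^2)+k_1(1+p)R^p$.

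For the final assertion I would simply estimate, for $\|V\|_X\leqslant R$,
\[
\|\mathcal{G}(t,V)\|_X=\|G(t,V)\|_{L^2}\leqslant K_0\|w\|_{L^2}+h_0+\|f_s(t,v)\|_{L^2}+k_1\|w\|_{L^2}^{p+1},
\]
and bound $\|f_s(t,v)\|_{L^2}\leqslant C(1+\|v\|_{H^1_0}^3)$ uniformly in $t,s$ using Proposition~\ref{funcFimp}\ref{funcFimp.a} and the Sobolev embedding $H^1_0(\Omega)\hookrightarrow L^6(\Omega)$; hence $\|\mathcal{G}(t,V)\|_X\leqslant C(1+R^3+R^{p+1})$ for all $t\in\mathbb{R}$, so bounded sets of $\mathbb{R}\times X$ go to bounded sets of $X$. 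Every step except the Lipschitz bound for $w\mapsto\|w\|^p_{L^2}w$ is a direct application of Lemma~\ref{wvtlema}, the Hilbert--Schmidt property of $K$, and Sobolev embeddings; that damping estimate is the only place requiring care.
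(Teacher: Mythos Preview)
Your proposal is correct and follows essentially the same approach as the paper: decompose $G(t,V_1)-G(t,V_2)$ into the integral, reaction, and damping contributions and bound each separately using the Hilbert--Schmidt bound for $K$, Lemma~\ref{wvtlema}, and the Lipschitz estimate for $w\mapsto\|w\|_{L^2}^p w$. The only cosmetic differences are that the paper packages your damping inequality $\big\|\|a\|^pa-\|b\|^pb\big\|\leqslant(1+p)R^p\|a-b\|$ as Proposition~\ref{upLipschitz} (proved in the appendix by a case analysis on $\langle a,b\rangle$ rather than your MVT splitting), and that the paper deduces the final boundedness assertion from the Lipschitz estimate together with the uniform bound on $\|\mathcal{G}(t,0)\|_X$ instead of estimating $\|G(t,V)\|_{L^2}$ directly.
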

\begin{proof}
Let $V_1=\vetor{v_1}{w_1}$ and $V_2=\vetor{v_2}{w_2}$ taken in $\overline{B}_R^X$. We can assume, without loss of generality, that $\|w_1\|_{L^2}\geqslant \|w_2\|_{L^2}$. Note that
\[
\begin{aligned}
&\|G(t,V_1)-G(t,V_2)\|_{L^2}\leqslant \left\|\int_{\Omega}K(x,y)(w_1-w_2)(t,y) dy\right\|_{L^2} + \|f_s(t,v_1)-f_s(t,v_2)\|_{L^2}\\
&\qquad+k_s(t)\left\|\|w_1\|^p_{L^2}w_1-\|w_2\|^p_{L^2}w_2\right\|_{L^2}.
    \end{aligned}
\]
The first term on the right hand side is estimated by:
\[
\left\|\int_{\Omega}K(x,y)(w_1-w_2)(t,y) dy\right\|_{L^2}\leqslant K_0\|w_1-w_2\|_{L^2}.
\]
Applying Lemma \ref{wvtlema}, we estimate the second term:
\[
\|f_s(t,v_1)-f_s(t,v_2)\|_{L^2}\leqslant L_0(1+2R^{2}) \|v_1-v_2\|_{H^1_0},
\]
since $v_1,v_2\in \overline{B}^{H^1_0(\Omega)}_R$.  For the third one, it follows from Proposition \ref{upLipschitz} that 
\[
k_s(t) \big\|\|w_1\|^p_{L^2}w_1-\|w_2\|^p_{L^2}w_2\big\|_{L^2}
\leqslant  k_1R^p(1+p)\|w_1-w_2\|_{L^2}.
\]
Thus, we obtain
\begin{equation*}
\begin{aligned}
\|G(t,V_1)-G(t,V_2)\|_{L^2} & \leqslant L_0(1+2R^{2})\|v_1-v_2\|_{H^1_0} +(K_0 + k_1R^p(1+p))\|w_1-w_2\|_{L^2}.
\end{aligned}
\end{equation*}
Setting
\[
L_R:= (2\max\{L_0^2(1+2R^{2})^2,\ (K_0+k_1R^p(1+p))^2\})^\frac12,
\]
we obtain
\[
\|G(t,V_1)-G(t,V_2)\|_{L^2}^2 \leqslant L_R^2 (\|v_1-v_2\|_{H^1_0}^2 + \|w_1-w_2\|_{L^2}^2) = L_R^2\|V_1-V_2\|_X^2,
\]
and the first claim follows easily, since $\|\mathcal{G}(t,V_1)-\mathcal{G}(t,V_2)\|_X = \|G(t,V_1)-G(t,V_2)\|_{L^2}$. The second claim is also simple to prove, given that $\|\mathcal{G}(t,0)\|_X$ is bounded, uniformly for $t\in \mathbb{R}$.
\end{proof}

Thanks to all these properties and the results of \cite{Pazy}, we deduce the following:

\begin{proposition}\label{solutions.Ex}
For any given $V_0\in X$ there exists a unique maximal weak solution $V(\cdot,V_0)\colon [0,\tau_{max})$ $\to X$ of \eqref{Abs.Ev.Equation}, that is, a continuous function such that
\[
V(t,V_0) = e^{\mathcal{A}t}V_0 + \int_0^t e^{\mathcal{A}(t-\tau)} \mathcal{G}(\tau,V(\tau,V_0))d\tau \quad \hbox{ for } t\in [0,\tau_{max}),
\]
such that either $\tau_{max}=\infty$ or $\tau_{max}<\infty$ and 
$
\limsup\limits_{t\to \tau_{max}^-}\|V(t,V_0)\|_X = \infty.
$
\end{proposition}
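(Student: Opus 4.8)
The plan is to apply the classical local existence theory for semilinear Cauchy problems with a locally Lipschitz nonlinearity, essentially transcribing the arguments of \cite{Pazy} to the present time-uniform nonautonomous setting. Fix $V_0\in X$, pick $R>\|V_0\|_X$, and let $L_R\geqslant 0$ be the Lipschitz constant of $\mathcal{G}(t,\cdot)$ on $\overline{B}^X_R$ (uniform in $t$) furnished by the preceding proposition; combining that Lipschitz bound with the uniform bound on $\|\mathcal{G}(t,0)\|_X$, there is also $N_R>0$ with $\|\mathcal{G}(t,V)\|_X\leqslant N_R$ for all $t\in\mathbb{R}$ and $V\in\overline{B}^X_R$. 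Since $\{e^{\mathcal{A}t}\colon t\geqslant 0\}$ is a semigroup of contractions, for $\delta>0$ small enough the map
\[
(\Phi V)(t):= e^{\mathcal{A}t}V_0+\int_0^t e^{\mathcal{A}(t-\tau)}\mathcal{G}(\tau,V(\tau))\,d\tau
\]
sends the closed set $\{V\in C([0,\delta];X)\colon V(t)\in\overline{B}^X_R \hbox{ for all }t\}$ into itself and is a contraction there: one uses $N_R$ to bound $\|(\Phi V)(t)-e^{\mathcal{A}t}V_0\|_X\leqslant \delta N_R$ (choosing $\delta$ so that $\|e^{\mathcal{A}t}V_0\|_X+\delta N_R\leqslant R$) and $L_R$ to get $\|\Phi V_1-\Phi V_2\|_{C([0,\delta];X)}\leqslant \delta L_R\|V_1-V_2\|_{C([0,\delta];X)}$. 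The Banach fixed point theorem then yields a unique weak solution on $[0,\delta]$.

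Uniqueness on any common interval of existence follows by subtracting the Duhamel identities for two solutions $V_1,V_2$, estimating the difference with $L_R$ on a ball containing both trajectories, and applying Gronwall's inequality. The maximal solution is then obtained in the usual way: let $\tau_{max}$ be the supremum of the lengths of intervals on which a weak solution starting at $V_0$ exists, and glue the locally defined solutions consistently using uniqueness to get a well-defined weak solution on $[0,\tau_{max})$. For the blow-up alternative, argue by contradiction: if $\tau_{max}<\infty$ and $\limsup_{t\to\tau_{max}^-}\|V(t,V_0)\|_X<\infty$, then $V(\cdot,V_0)$ stays in some ball $\overline{B}^X_R$ on all of $[0,\tau_{max})$, and the integral representation together with the strong continuity of $t\mapsto e^{\mathcal{A}t}$ and boundedness of the integrand shows that $V(\cdot,V_0)$ extends continuously to $t=\tau_{max}$; restarting the fixed point argument from $V(\tau_{max},V_0)\in\overline{B}^X_R$ — whose local existence time depends only on $R$, not on the base time, since $\mathcal{G}(t,\cdot)$ is Lipschitz uniformly in $t$ and $\{e^{\mathcal{A}t}\}$ is autonomous — produces an extension of $V$ beyond $\tau_{max}$, contradicting maximality.

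The only delicate point, and it is bookkeeping rather than a genuine difficulty, is ensuring that the local existence time $\delta$ in the fixed point step can be taken to depend on the initial datum only through the radius of a ball containing it; this is exactly what powers the continuation and blow-up step, and it is guaranteed by the two facts established in the preceding proposition, namely that $\mathcal{G}$ is locally Lipschitz in $V$ with a constant independent of $t$ and that $\mathcal{G}$ maps bounded sets into bounded sets. With that in hand, the statement is a direct application of the mild-solution theory of \cite{Pazy}.
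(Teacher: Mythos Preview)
Your proposal is correct and follows exactly the approach the paper takes: the paper does not give a proof of this proposition but simply records it as a consequence of the preceding Lipschitz estimate on $\mathcal{G}$ together with the results of \cite{Pazy}, and what you have written is precisely the standard Pazy-type contraction mapping argument being invoked there.
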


Writing $V(t,V_0)=\vetor{v(t)}{w(t)}$ we can see that
$
v \in C([0,\tau_{max}),H^1_0(\Omega)) \hbox{ and } w\in C([0,\tau_{max}),L^2(\Omega)).
$
Since $w=v_t$, we conclude that
$
v\in C([0,\tau_{max}),H^1_0(\Omega))\cap C^1([0,\tau_{max}),L^2(\Omega))
$
is the unique maximal weak solution to \eqref{eqondap}. This solution satisfies 
\begin{align}
\frac{d}{dt} \int_\Omega v_t(t)\psi dx &+ \int_\Omega \nabla v(t) \nabla \psi dx + k_s(t)\|v_t(t)\|^p_{L^2}\int_\Omega v_t(t)\psi dx + \int_\Omega f_s(t,v(t))\psi dx  \nonumber \\
& = \int_{\Omega \times \Omega} K(x,y)v_t(t,y)\psi(x)dydx + \int_\Omega h\psi dx, \nonumber
\end{align}
for all $\psi\in H^1_0(\Omega)$. Also, we have the map $v_{tt}\colon (0,\tau_{\max})\to H^{-1}(\Omega)$ well-defined and it satisfies
\begin{equation}\label{eq:normaHm1}
\begin{aligned}
\|v_{tt}(t)\|_{H^{-1}} & \leqslant \|v(t)\|_{H^1_0} + k_s(t) \lambda_1^{-\frac12}\|v_t(t)\|^{p+1}_{L^2} + \lambda_1^{-\frac12}\|f_s(t,v(t))\|_{L^2}\\
& \qquad \qquad +K_0\lambda_1^{-\frac12}\|v_t(t)\|_{L^2} + \lambda_1^{-\frac12}h_0.
\end{aligned}
\end{equation}

\bigskip \noindent \textbf{Non-explosion in finite time} \medskip

In order to show that $\tau_{max}=\infty$ for all $V_0\in X$, we need to show that the second condition cannot occur. This will be a consequence of some results we will present later (see Remark \ref{remark.Global}) and we know that for each given $V_0\in X$ the solution of \eqref{Abs.Ev.Equation} is defined for all $t\geqslant 0$.  

Fix $s\in \mathbb{R}$ and $V_0:=(u_0,u_1)\in X$. If $V(\cdot,V_0)\colon [0,\infty)\to X$ is the solution of \eqref{Abs.Ev.Equation}, we define for $t\geqslant s$
\begin{equation*}
S(t,s)(u_0,u_1) = V(t-s, V_0).
\end{equation*}
Note that if, for $t\geqslant s$, we set $(u(t),y(t)) = S(t,s)(u_0,u_1)$, then $u(t)=v(t-s)$ and $y(t)=v_t(t-s)=u_t(t)$. Thus $(u(t),u_t(t))$ is the weak solution of \eqref{ourproblem}, and $\{S(t,s)\colon t\geqslant s\}$ defines an evolution process in $X$ associated with \eqref{ourproblem}, provided we have continuity with respect with initial data (which will be proved next).

\bigskip \noindent \textbf{Continuous dependence on the initial data: a Lipschitz condition} \medskip

Let $V_1,V_2\in X$ be the initial data for \eqref{Abs.Ev.Equation}, taken in the closed ball $\overline{B}_{R}^{X}$, for a given $R>0$. Consider $v, w$ the corresponding solutions for \eqref{eqondap} related respectively to these initial data. Later, see Proposition \ref{EsC}, we will show that there exists a constant $c_R\geqslant 0$ such that for all $t\geqslant 0$ we have
\begin{equation}\label{uhbijnx}
\|V(t,V_1)\|_X\leqslant c_R \quad  \hbox{ and } \quad \|V(t,V_2)\|_X\leqslant c_R.
\end{equation}
If $V(t,V_1)=(v(t),v_t(t))$ and $V(t,V_2)=(w(t),w_t(t))$, setting $z:=v-w$, formally we obtain 
\begin{equation}\label{fghjmmm} 
z_{tt}-\Delta z+k_s(t)(\|v_t\|^p_{L^2}v_t-\|w_t\|^p_{L^2}w_t) + f_s(t,v)-f_s(t,w)=\int_{\Omega}K(x,y)z_t(t,y)dy.
\end{equation}

Direct estimates and Proposition \ref{prop.Cp} give us the following result.

\begin{proposition}\label{dcdi}
Given $R>0$, there exists a constant $\gamma_0=\gamma_0(R) \geqslant 0$ such that for $t\geqslant 0$ and $V_1,V_2\in \overline{B}^X_R$, we have
\[
\|V(t,V_1)-V(t,V_2)\|_X \leqslant \|V_1-V_2\|_Xe^{\gamma_0 t}.
\]
\end{proposition}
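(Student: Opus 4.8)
The statement to prove is Proposition \ref{dcdi}: a Lipschitz-type continuous dependence estimate for the difference of two solutions of the abstract Cauchy problem, with initial data in a ball of radius $R$.

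The plan is to work with the difference $z = v - w$, which satisfies equation \eqref{fghjmmm}, and to estimate the energy $\mathcal{E}(t) := \tfrac12\|z_t(t)\|_{L^2}^2 + \tfrac12\|z(t)\|_{H^1_0}^2 = \tfrac12\|V(t,V_1) - V(t,V_2)\|_X^2$ via a Gronwall argument. First I would multiply \eqref{fghjmmm} by $z_t$ in $L^2(\Omega)$ and integrate over $\Omega$, which gives $\frac{d}{dt}\mathcal{E}(t)$ equal to the sum of three terms coming from the damping difference, the nonlinearity difference, and the anti-damping kernel. The key point is to bound each of these three terms by $C\,\mathcal{E}(t)$ for a constant $C = C(R)$, using crucially that by \eqref{uhbijnx} (Proposition \ref{EsC}) both solutions remain in a ball $\overline{B}^X_{c_R}$ for all $t\geqslant 0$, so all nonlinear quantities are evaluated at bounded arguments.

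For the nonlinearity term, $\left|\int_\Omega (f_s(t,v) - f_s(t,w)) z_t\,dx\right| \leqslant \|f_s(t,v) - f_s(t,w)\|_{L^2}\|z_t\|_{L^2} \leqslant L_0(1 + 2c_R^2)\|z\|_{H^1_0}\|z_t\|_{L^2} \leqslant C(R)\mathcal{E}(t)$ by Lemma \ref{wvtlema} (applied with radius $c_R$) and Young's inequality. For the anti-damping term, $\left|\int_{\Omega\times\Omega} K(x,y) z_t(t,y) z_t(t,x)\,dy\,dx\right| \leqslant K_0 \|z_t\|_{L^2}^2 \leqslant 2K_0\,\mathcal{E}(t)$ by \ref{cond1} and Cauchy--Schwarz. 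For the damping term, $k_s(t)\int_\Omega (\|v_t\|^p_{L^2}v_t - \|w_t\|^p_{L^2}w_t) z_t\,dx$: here I would invoke Proposition \ref{prop.Cp} (the auxiliary result cited just before the statement, which must give a local Lipschitz bound for the map $\xi \mapsto \|\xi\|^p_{L^2}\xi$ on balls of $L^2(\Omega)$), so that this term is bounded in absolute value by $k_1 c_R^p(1+p)\|z_t\|_{L^2}^2 \leqslant C(R)\mathcal{E}(t)$ — noting also that the ``good sign'' part of this term could just be discarded since we only need an upper bound. Combining, $\frac{d}{dt}\mathcal{E}(t) \leqslant 2\gamma_0\,\mathcal{E}(t)$ for a suitable $\gamma_0 = \gamma_0(R)$, and Gronwall's inequality yields $\mathcal{E}(t) \leqslant \mathcal{E}(0)e^{2\gamma_0 t}$, i.e. $\|V(t,V_1) - V(t,V_2)\|_X \leqslant \|V_1 - V_2\|_X e^{\gamma_0 t}$.

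The main obstacle, and the reason the computation is only ``formal'' at the level of \eqref{fghjmmm}, is the usual regularity issue: $z_t$ is only in $L^2(\Omega)$, not in $H^1_0(\Omega)$, so pairing the equation with $z_t$ is not literally justified and one should argue by a density/Galerkin approximation or by a Faedo--Galerkin scheme and pass to the limit, or else appeal to the variational identity already recorded after Proposition \ref{solutions.Ex} together with the $H^{-1}$-bound \eqref{eq:normaHm1} on $v_{tt}$ to make the energy identity rigorous (the duality pairing $\langle z_{tt}, z_t\rangle_{H^{-1}, H^1_0}$ is the delicate object). Since the paper explicitly flags this estimate as following from ``direct estimates and Proposition \ref{prop.Cp}'' and defers \eqref{uhbijnx} to Proposition \ref{EsC}, I would present the energy computation formally, remark that it is made rigorous by the standard approximation argument, and emphasize that the only nonautonomous-specific input is the uniform-in-$t$ Lipschitz bound on $f_s$ from Lemma \ref{wvtlema}, which holds because the constants in \ref{condf} are independent of $t$.
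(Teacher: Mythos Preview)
Your proposal is correct and matches the paper's intended argument: the paper itself only writes ``Direct estimates and Proposition~\ref{prop.Cp} give us the following result,'' so the standard energy--Gronwall computation you outline is exactly what is meant. One small correction: Proposition~\ref{prop.Cp} is not a Lipschitz bound but the monotonicity inequality $\langle \|x\|^p x-\|y\|^p y,\,x-y\rangle\geqslant 2^{-p}\|x-y\|^{p+2}\geqslant 0$, so its role is precisely the ``good sign'' observation you mention as an aside --- the damping term $-k_s(t)\langle \|v_t\|^p v_t-\|w_t\|^p w_t,\,z_t\rangle$ is nonpositive and is simply dropped (the Lipschitz estimate you wrote is Proposition~\ref{upLipschitz}, which would also work but is not what the paper cites here).
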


\subsection{Properties of uniformly pullback absorption}\label{absorbingfamily}

In this section we prove the existence of a closed, positively invariant and bounded pullback absorbing family. This fact, together with the $\varphi$-pullback $\kappa$-dissipativity of the evolution process associated with \eqref{ourproblem} is essential to show the existence of the pullback attractor and of a generalized polynomial pullback attractor.

We fix $0<\mu_0<\lambda_1$ and consider $M=M(\mu_0)>0$ given by Proposition \ref{propax}. For a function $v\in L^2(\Omega)$ we set
\begin{equation}\label{Omega12}
\Omega_1=\{x\in\Omega\colon |v(x)|>M\} \quad \hbox{ and } \quad \Omega_2=\{x\in\Omega\colon  |v(x)|\leqslant M\}.
\end{equation}
From Proposition \ref{propF}, for each fixed $t\in \mathbb{R}$ we obtain
\[
\begin{aligned}
\int_{\Omega}& F(t,v)dx = \int_{\Omega_1}F(t,v)dx + \int_{\Omega_2}F(t,v)dx\\
&\geqslant -\int_{\Omega_1} \left[\tfrac{\mu_0+\lambda_1}{4} |v|^2+ 8c_0(1+M^{4})\right]dx - \int_{\Omega_2} 8c_0(1+M^{4}) dx \geqslant -\tfrac{\mu_0+\lambda_1}{4}\|v\|^2_{L^2} - C_0, 
\end{aligned}
\]
where $C_0:=8c_0(1+M^{4})|\Omega|>0$. That is, we obtained
\begin{equation}\label{observaa}
\int_{\Omega} F(t,v)dx + \tfrac{\mu_0+\lambda_1}{4}\|v\|^2_{L^2}+C_0\geqslant 0.
\end{equation}

For each initial data $V_0=(u_0,u_1)\in X$, define the function $E_s(\cdot,V_0)\colon \mathbb{R}_+\to \mathbb{R}$ given by
\begin{equation}\label{energyy}
\begin{aligned}
 E_s(t,V_0)&=\tfrac{1}{2}(\|v_t\|_{L^2}^2+\|v\|_{H^1_0}^2) + \int_{\Omega}F_s(t,v)dx+\tfrac{\lambda_1+\mu_0}{4}\|v\|^2_{L^2}+C_0,
\end{aligned}
\end{equation}
where $C_0$ is the constant obtained in \eqref{observaa}, and $V(t,V_0)=(v(t),v_t(t))$ for $t\geqslant 0$. For simplicity, we write $E_s(t)$ instead of $E_s(t,V_0)$, but we must keep in mind that this function depends on the initial data $V_0\in X$, since it depends on the solution $V(t,V_0)$.

\bigskip \noindent \textbf{Existence of a uniformly pullback absorbing family}\label{absorbinfamily}\medskip

In this section, we will show that there exists a constant $r_0>0$ such that the family $\hat{B}=\{B_t\}_{t\in \mathbb{R}}$ given by
\begin{equation}\label{def.Babs}
B_t = \overline{B}^X_{r_0} \quad \hbox{ for each } t\in \mathbb{R}
\end{equation}
is a uniformly pullback absorbing family for $S$. More precisely, we will prove the following theorem:

\begin{theorem}[Existence of a pullback absorbing family]\label{teo.existence.paf}
There exists $r_0>0$ such that for each $R>0$ there exists $\tau_0=\tau_0(R)\geqslant 0$ with
\[
\|S(t,t-\tau)(u_0,u_1)\|_X \leqslant r_0,
\]
for all $\tau\geqslant \tau_0$, $t\in \mathbb{R}$ and $(u_0,u_1)\in X$ with $\|(u_0,u_1)\|_X\leqslant R$.
\end{theorem}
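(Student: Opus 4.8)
The plan is to use the standard energy method for wave equations with weak damping, exploiting the sign conditions \ref{condf} on $f$ to control the nonlinearity. Working with the translated problem \eqref{eqondap}, it suffices to prove that the energy functional $E_s(t,V_0)$ defined in \eqref{energyy} satisfies a differential inequality of the form $\frac{d}{dt}E_s(t) \leqslant -\delta\, E_s(t) + C$ (or at least an inequality from which exponential-type absorption follows), uniformly in $s\in\mathbb{R}$ and for initial data in a fixed ball. From such an inequality, Gronwall's lemma yields $E_s(t) \leqslant E_s(0)e^{-\delta t} + \frac{C}{\delta}$, and since $E_s$ is comparable to $\|V(t,V_0)\|_X^2$ (up to the harmless additive constant $C_0$ and lower-order $\|v\|_{L^2}^2$ terms, controlled using \eqref{observaa} and Poincar\'e's inequality), this gives a time $\tau_0(R)$ after which $\|S(t,t-\tau)(u_0,u_1)\|_X$ is bounded by a radius $r_0$ independent of $t$, $s$, and the initial ball. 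The translation invariance of the estimates — the constants $k_0,k_1,c_0,\lambda_1$ do not depend on $s$ — is precisely what makes the absorbing radius $r_0$ uniform in $t$, as required by Definition \ref{def:UnifPA}.

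First I would compute $\frac{d}{dt}E_s(t)$ formally by testing \eqref{eqondap} with $v_t$ (this is rigorous on the Galerkin approximations and passes to the limit). The damping term contributes $-k_s(t)\|v_t\|_{L^2}^{p+2} \leqslant 0$; the anti-damping term $\int_{\Omega\times\Omega}K(x,y)v_t(t,y)v_t(t,x)\,dy\,dx$ is bounded by $K_0\|v_t\|_{L^2}^2$; the forcing term $\int_\Omega h v_t\,dx$ is bounded by $\frac{\varepsilon}{2}\|v_t\|_{L^2}^2 + \frac{1}{2\varepsilon}h_0^2$; and the time derivative of $\int_\Omega F_s(t,v)\,dx$ produces, besides the $\int_\Omega f_s(t,v)v_t\,dx$ term that cancels against the nonlinearity in the equation, a residual $\int_\Omega \frac{\partial F_s}{\partial t}(t,v)\,dx$ which is bounded uniformly by $c_0$ thanks to \ref{cond6}. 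The issue is that these estimates only give $\frac{d}{dt}E_s(t) \leqslant C$, not the crucial negative feedback $-\delta E_s(t)$: the weak damping is too weak (it degenerates when $\|v_t\|_{L^2}$ is small).

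The standard remedy, which I would carry out next, is to introduce a perturbed energy $\mathcal{E}_s(t) = E_s(t) + \eta\, \Phi_s(t)$ where $\Phi_s(t) = \int_\Omega v\, v_t\,dx$ and $\eta>0$ is small. Computing $\frac{d}{dt}\Phi_s(t) = \|v_t\|_{L^2}^2 - \|v\|_{H^1_0}^2 - k_s(t)\|v_t\|_{L^2}^p\int_\Omega v v_t\,dx - \int_\Omega f_s(t,v)v\,dx + (\text{anti-damping}) + \int_\Omega hv\,dx$, the term $-\|v\|_{H^1_0}^2$ provides the missing coercivity in the $v$-variable, while $-\int_\Omega f_s(t,v)v\,dx$ is handled by Proposition \ref{proppp} and \eqref{observaa}: on $\Omega_1$ one has $F_s(t,v)\leqslant vf_s(t,v) + \frac{\mu_0}{2}v^2 + e_0$, and since $\mu_0<\lambda_1$, Poincar\'e's inequality absorbs the $\frac{\mu_0}{2}\|v\|_{L^2}^2$ into $\|v\|_{H^1_0}^2$ with room to spare; on $\Omega_2$ everything is bounded. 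The cross term $k_s(t)\|v_t\|_{L^2}^p\int_\Omega v v_t\,dx$ and the degenerate sign of the damping require choosing $\eta$ small in terms of $k_0,k_1,\lambda_1,\mu_0$, and one must verify $\frac12 E_s(t) \leqslant \mathcal{E}_s(t) \leqslant 2E_s(t)$ so that the two functionals are equivalent. Assembling, one obtains $\frac{d}{dt}\mathcal{E}_s(t) \leqslant -\delta\,\mathcal{E}_s(t) + C$ with $\delta, C$ depending only on the structural constants, Gronwall closes the argument, and setting $r_0$ from $\frac{C}{\delta}$ (translated back through the equivalence of $E_s$ with the norm) completes the proof.

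The main obstacle is the degeneracy of the weak damping: because the damping coefficient $k(t)\|v_t\|_{L^2}^p$ vanishes as $\|v_t\|_{L^2}\to 0$, the naive energy estimate gives no decay, and the perturbed-energy multiplier $\Phi_s$ must be calibrated carefully — in particular the term $k_s(t)\|v_t\|_{L^2}^p\int_\Omega v v_t\,dx$ is superlinear in $\|v_t\|_{L^2}$ and must be dominated using the already-established a priori bound \eqref{uhbijnx}/Proposition \ref{EsC} on the norm of the solution (so the argument is slightly circular and should be run first on bounded time intervals, or the bound should be bootstrapped). I expect this calibration, rather than any individual estimate, to be where the real work lies.
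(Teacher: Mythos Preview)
Your perturbed-energy strategy is exactly what the paper does: the functional $V_\varepsilon^s$ in \eqref{def.Veps} is your $\mathcal{E}_s = E_s + \eta\Phi_s$ up to harmless lower-order terms, and Proposition~\ref{propa401} is your equivalence $\tfrac12 E_s \leqslant \mathcal{E}_s \leqslant 2E_s$. You also correctly identify the cross term $\eta\, k_s(t)\|v_t\|_{L^2}^p\!\int_\Omega v v_t\,dx$ as the crux.

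The gap is your claim that one can choose $\eta$ (and hence $\delta,C$) depending only on the structural constants $k_0,k_1,\lambda_1,\mu_0$. When you estimate the cross term, the damping gets multiplied by a factor like $1 - c\,\eta\,\|v\|_{H^1_0}^{p/(p+1)}$ (this is made precise in the inequality preceding Lemma~\ref{propa407}); keeping this nonnegative forces $\eta \lesssim \|v\|_{H^1_0}^{-p/(p+1)}$, which is \emph{not} a structural constant. Your bootstrapping suggestion---use the a priori bound $c_R$ from Proposition~\ref{EsC}---does let you choose $\eta = \eta(R)$ and run Gronwall, but then the absorbing level $C/\delta \sim \delta_1/\eta(R)$ blows up as $R\to\infty$, and you do not get a radius $r_0$ independent of $R$.

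The paper closes this with an extra step you are missing (Proposition~\ref{estimate.taumax}): having the $R$-dependent bound $c_R$, one revisits the inequality of Lemma~\ref{propa407} at an arbitrary time $r\geqslant 0$, now choosing $\varepsilon=\Theta(E_s(r))$ for a suitable decreasing function $\Theta$. Gronwall from time $r$ gives $E_s(t)\lesssim E_s(r)e^{-c\Theta(c_R)(t-r)} + C[\Theta(E_s(r))]^{-1}$. Setting $w=\limsup_{t\to\infty}\sup_{\|V_0\|\leqslant R}E_s(t)$, taking $t\to\infty$ and then $r\to\infty$ yields the implicit inequality $w\lesssim C[\Theta(w)]^{-1}+C'$, i.e.\ $G(w)\geqslant c_\ast$ for a function $G$ with $G(z)\to 0$ as $z\to\infty$. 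This forces $w\leqslant R_0$ with $R_0$ depending only on $G$ and $c_\ast$---hence only on structural constants---and \emph{that} is what gives the uniform $r_0$. Your outline reaches Proposition~\ref{EsC} but not this final absorption argument.
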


Note that for $t\geqslant s$ and $V_0=(u_0,u_1)\in X$, from \eqref{energyy} and \eqref{observaa} we have
\begin{align*}
\|S(t,s)(u_0,u_1)\|_X^2&=\|V(t-s,V_0)\|_X^2 \leqslant 2E_s(t-s).
\end{align*}
Hence, the study of the function $E_s$ is paramount, since this function bounds the norm of $S(t,s)(u_0,u_1)$ for $t\geqslant s$. The proof of Theorem \ref{teo.existence.paf} is not trivial, and to facilitate its comprehension, we present a scheme that will help us with this goal. \medskip

\noindent \textbf{Scheme.}

\par \noindent (I) Define an auxiliary function $V_{\varepsilon}^s$ and prove that it is equivalent to $E_s$, which is done in Proposition \ref{propa401}; \smallskip
\par \noindent (II) Using the function $V_\varepsilon^s$ obtained in (I), prove that the function $E_s$ remains bounded for all times $t\geqslant 0$, uniformly for initial data in bounded subsets of $X$. This result is achieved in Proposition \ref{EsC}; \smallskip
\par \noindent (III) Improve the result obtained in (II), and in Proposition \ref{estimate.taumax} show that there exists a fixed bounded subset of $X$ such that $E_s(t)$ enters this subset if $t$ is sufficiently large, uniformly for initial data in bounded subsets of $X$;
\par \noindent (IV) Using the result from (III) to prove Theorem \ref{teo.existence.paf}.

\medskip \noindent \textsc{Development of (I).} \medskip

For a fixed $\varepsilon>0$ and $(v,v_t)=V(t,V_0)$ a solution to \eqref{Abs.Ev.Equation} we define $V_{\varepsilon}^s(\cdot,V_0)\colon \mathbb{R}_+\to \mathbb{R}$ by
\begin{equation}\label{def.Veps}
V^s_{\varepsilon}(t,V_0)=\tfrac{1}{2}\big(\|v\|^2_{H^1_0} + \|v_t\|^2_{L^2}\big)+\int_{\Omega}F_s(t,v)dx-\int_{\Omega}hvdx+\varepsilon\int_{\Omega}v_tvdx.
\end{equation}
As we did for the function $E_s$, to simplify the notation we simply write $V_\varepsilon^s(t)$ instead of $V_\varepsilon^s(t,V_0)$. However, we must keep in mind that this function depends on the initial data $V_0\in X$, and all the comparisons that are done are using the same initial data $V_0$.

Proceeding as in \cite{ZhaoZhong2022} we obtain the following result.

\begin{proposition}\label{propa401} 
There exist $\varepsilon_0>0$ and a constant $d_0>0$ such that for all $0<\varepsilon \leqslant\varepsilon_0$, $s\in \mathbb{R}$ and $t\geqslant 0$ we have
\[
\tfrac14\left(1-\tfrac{\mu_0}{\lambda_1}\right) E_s(t) - d_0 \leqslant V_\varepsilon^s(t) \leqslant \tfrac54 E_s(t) + d_0.
\]
\end{proposition}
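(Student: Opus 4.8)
The plan is to read off the comparison directly from the definitions \eqref{energyy} and \eqref{def.Veps}, which share the terms $\tfrac12(\|v\|_{H^1_0}^2+\|v_t\|_{L^2}^2)$ and $\int_\Omega F_s(t,v)\,dx$; subtracting them yields the identity
\[
V_\varepsilon^s(t)=E_s(t)-\tfrac{\lambda_1+\mu_0}{4}\|v\|_{L^2}^2-C_0-\int_\Omega hv\,dx+\varepsilon\int_\Omega v_t v\,dx .
\]
Everything then reduces to controlling the three lower-order terms by a small multiple of $E_s(t)$ plus a constant. The one structural input, beyond elementary inequalities, is \eqref{observaa} (which holds with $F_s$ in place of $F$, uniformly in $s$): it says the bracket $\int_\Omega F_s(t,v)\,dx+\tfrac{\lambda_1+\mu_0}{4}\|v\|_{L^2}^2+C_0$ is nonnegative, so $E_s(t)\geqslant\tfrac12(\|v_t\|_{L^2}^2+\|v\|_{H^1_0}^2)\geqslant 0$, and in particular $\|v_t\|_{L^2}^2\leqslant 2E_s(t)$, $\|v\|_{H^1_0}^2\leqslant 2E_s(t)$, and by Poincaré's inequality $\|v\|_{L^2}^2\leqslant\lambda_1^{-1}\|v\|_{H^1_0}^2\leqslant 2\lambda_1^{-1}E_s(t)$.

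For the upper bound I would discard the nonpositive terms $-\tfrac{\lambda_1+\mu_0}{4}\|v\|_{L^2}^2-C_0$ and estimate, via Poincaré and Young's inequality, $\big|\int_\Omega hv\,dx\big|\leqslant h_0\|v\|_{L^2}\leqslant\delta\|v\|_{L^2}^2+C_\delta h_0^2$ and $\varepsilon\big|\int_\Omega v_t v\,dx\big|\leqslant\tfrac\varepsilon2(\|v_t\|_{L^2}^2+\|v\|_{L^2}^2)$; by the bounds above, choosing $\delta$ and then $\varepsilon_0$ small (depending only on $\lambda_1$) makes these together at most $\tfrac14 E_s(t)$, giving $V_\varepsilon^s(t)\leqslant\tfrac54 E_s(t)+C_\delta h_0^2$. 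For the lower bound I would use the identity to write $V_\varepsilon^s(t)\geqslant E_s(t)-\tfrac{\lambda_1+\mu_0}{4}\|v\|_{L^2}^2-C_0-h_0\|v\|_{L^2}-\varepsilon\|v_t\|_{L^2}\|v\|_{L^2}$, then bound $\tfrac{\lambda_1+\mu_0}{4}\|v\|_{L^2}^2\leqslant\tfrac12\big(1+\tfrac{\mu_0}{\lambda_1}\big)E_s(t)$ and control $h_0\|v\|_{L^2}$ and $\varepsilon\|v_t\|_{L^2}\|v\|_{L^2}$ each by $\eta E_s(t)+C_\eta$ with $\eta$ as small as desired. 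Since $1-\tfrac12\big(1+\tfrac{\mu_0}{\lambda_1}\big)=\tfrac12\big(1-\tfrac{\mu_0}{\lambda_1}\big)$, fixing the small parameters so that the leftover error coefficients sum to at most $\tfrac14\big(1-\tfrac{\mu_0}{\lambda_1}\big)$ gives $V_\varepsilon^s(t)\geqslant\tfrac14\big(1-\tfrac{\mu_0}{\lambda_1}\big)E_s(t)-d_0$, after taking $d_0$ to be the larger of the two constants produced.

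All constants $\varepsilon_0,d_0$ obtained this way depend only on $\lambda_1,\mu_0,h_0,C_0$ and not on $s$ or $t$, so the estimate is uniform as stated; this parallels the autonomous computation of \cite{ZhaoZhong2022}. There is no genuine obstacle here beyond bookkeeping: the only care needed is the order in which the small parameters are fixed — first the Young parameter $\delta$ multiplying $\|v\|_{L^2}^2$, independently of $\varepsilon$, and only afterwards $\varepsilon_0$ — so that the two smallness requirements do not interfere.
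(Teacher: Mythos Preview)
Your proposal is correct and follows essentially the same route the paper intends: the paper does not write out a proof but defers to the autonomous computation in \cite{ZhaoZhong2022}, which proceeds exactly as you describe---subtract the two definitions, use \eqref{observaa} to bound $\|v\|_{H^1_0}^2$, $\|v_t\|_{L^2}^2$ and (via Poincar\'e) $\|v\|_{L^2}^2$ by $2E_s(t)$, and then absorb the cross terms by Young's inequality with $\varepsilon_0$ chosen small. Your remark about fixing the Young parameter before $\varepsilon_0$ is the right bookkeeping point, and all constants are indeed uniform in $s$ and $t$.
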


\medskip \noindent \textsc{Development of (II).} \medskip

Following our scheme, we want to prove the following result:

\begin{proposition}\label{EsC}
Given $R>0$ there exists a constant $c_R \geqslant 0$ such that 
\[
E_s(t)\leqslant c_R \quad \hbox{ for all } V_0\in \overline{B}^X_R \hbox{ and } t\geqslant 0.
\]
\end{proposition}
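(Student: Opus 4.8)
\textbf{Plan for the proof of Proposition \ref{EsC}.} The goal is to show that the energy functional $E_s(t)$ stays uniformly bounded for $t\geqslant 0$ whenever the initial datum lies in a fixed ball $\overline{B}^X_R$. By Proposition \ref{propa401}, it suffices to obtain a uniform bound for the auxiliary functional $V_\varepsilon^s(t)$ with $\varepsilon\in(0,\varepsilon_0]$ fixed, since $E_s(t)\leqslant \tfrac{4}{1-\mu_0/\lambda_1}\big(V_\varepsilon^s(t)+d_0\big)$. The strategy is the classical energy-estimate / Gronwall argument: differentiate $V_\varepsilon^s$ along the solution of \eqref{eqondap}, use the structural hypotheses \ref{cond1}--\ref{cond6} (in particular the dissipative sign of the nonlocal damping term $k_s(t)\|v_t\|^p_{L^2}v_t$, the lower bounds on $f$ and $F$ from Propositions \ref{propax}--\ref{proppp}, and the bound $|\partial_t F|\leqslant c_0$ integrated in $v$ from \ref{cond6}), and absorb the antidamping term $\int_\Omega K(x,y)v_t(t,y)\,dy$ and the forcing $h$ into the good terms using Cauchy--Schwarz, Young's inequality and Poincaré.

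The key steps, in order, are: (1) compute $\tfrac{d}{dt}V_\varepsilon^s(t)$ explicitly, obtaining terms $-k_s(t)\|v_t\|^{p+2}_{L^2}$, $+\varepsilon\|v_t\|^2_{L^2}$, $-\varepsilon\|v\|^2_{H^1_0}$, $-\varepsilon\int_\Omega v f_s(t,v)\,dx$, the nonlocal-kernel contributions, $+\int_\Omega \partial_t F_s(t,v)\,dx$, and lower-order cross terms; (2) using Proposition \ref{proppp} to bound $-\varepsilon\int_\Omega vf_s(t,v)\,dx$ from above in terms of $-\varepsilon\int_\Omega F_s(t,v)\,dx$ plus $\tfrac{\varepsilon\mu_0}{2}\|v\|^2_{L^2}+\varepsilon e_0|\Omega|$, and using \ref{cond6} (via $|\int_\Omega\partial_t F_s(t,v)\,dx|\leqslant c_0|\Omega|$) to control the time-derivative-of-potential term by a constant; (3) estimating the kernel terms by $\|K\|_{L^2(\Omega\times\Omega)}\|v_t\|_{L^2}\|v_t\|_{L^2}\leqslant K_0\|v_t\|^2_{L^2}$ (or splitting with Young if a cross term appears), and the $h$-terms by Young's inequality so they contribute only constants and small multiples of $\|v\|^2_{H^1_0}$; (4) choosing $\varepsilon\in(0,\varepsilon_0]$ small enough that the coefficients of the "good" negative terms dominate, yielding a differential inequality of the form
\[
\frac{d}{dt}V_\varepsilon^s(t)\leqslant -\delta\, V_\varepsilon^s(t) + C_\ast
\]
for constants $\delta>0$ and $C_\ast\geqslant 0$ independent of $s$ and of $V_0$ (here one also uses Proposition \ref{propa401} again to bound $V_\varepsilon^s$ from below by a multiple of the sum of the energy terms, so that the coercive negative terms in $\tfrac{d}{dt}V_\varepsilon^s$ can be compared to $V_\varepsilon^s$ itself); (5) integrating this inequality via Gronwall to get $V_\varepsilon^s(t)\leqslant V_\varepsilon^s(0)e^{-\delta t}+\tfrac{C_\ast}{\delta}\leqslant V_\varepsilon^s(0)+\tfrac{C_\ast}{\delta}$, and finally bounding $V_\varepsilon^s(0)$ in terms of $R$ using \eqref{def.Veps}, the growth bound $|F_s(0,v)|\leqslant 4c_0(1+|v|^4)$ from Proposition \ref{funcFimp}\ref{funcFimp.b} together with the embedding $H^1_0(\Omega)\hookrightarrow L^4(\Omega)$, Cauchy--Schwarz for the $h$ and $\varepsilon v_t v$ terms, and $\|V_0\|_X\leqslant R$.

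The main obstacle is step (4): after collecting all estimates, the sign and the dependence on $\varepsilon$ of each term must be tracked carefully so that, for $\varepsilon$ small, the term $-k_s(t)\|v_t\|^{p+2}_{L^2}$ (which has a higher power than $\|v_t\|^2_{L^2}$) together with $-\varepsilon\|v\|^2_{H^1_0}$ and the potential-energy terms genuinely control $+\varepsilon\|v_t\|^2_{L^2}+K_0\|v_t\|^2_{L^2}$; since $p>0$, the damping power $p+2$ exceeds $2$, so for large $\|v_t\|_{L^2}$ the damping wins but for small $\|v_t\|_{L^2}$ one must instead rely on the $\varepsilon v_t v$ cross-term having produced $-\varepsilon\|v\|^2_{H^1_0}$ and on the positivity structure of $V_\varepsilon^s$. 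A clean way to handle this is to split the estimate according to whether $\|v_t(t)\|_{L^2}$ is large or small, or to simply bound $\varepsilon\|v_t\|^2_{L^2}\leqslant \tfrac{k_0}{2}\|v_t\|^{p+2}_{L^2}+C(\varepsilon)$ by Young's inequality with exponents $\tfrac{p+2}{2}$ and $\tfrac{p+2}{p}$, thereby trading the bad low-order term for a constant plus part of the damping; the analogous trick absorbs the $K_0\|v_t\|^2_{L^2}$ term. Once this is arranged, the differential inequality and Gronwall step are routine, and the uniformity in $s$ is automatic because all constants arising from hypotheses \ref{cond1}--\ref{cond6} are $s$-independent (the translated data $k_s,f_s,F_s$ satisfy the same bounds as $k,f,F$).
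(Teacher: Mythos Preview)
Your plan contains a genuine gap at step~(4). Among the ``lower-order cross terms'' produced when you differentiate $V_\varepsilon^s$ there is one that is \emph{not} lower order and that you do not address explicitly: the term
\[
-\varepsilon\, k_s(t)\,\|v_t\|_{L^2}^{p}\int_\Omega v_t\,v\,dx,
\]
which arises from pairing the nonlocal damping with the $\varepsilon v$ part of the test function. After Cauchy--Schwarz this is bounded by $\varepsilon k_1\|v_t\|_{L^2}^{p+1}\|v\|_{L^2}$. Any Young splitting you apply to this either leaves a factor $\|v\|_{H^1_0}^{p/(p+1)}$ multiplying $\|v_t\|_{L^2}^{p+2}$ (so the sign of the damping coefficient depends on the size of the solution) or produces a positive term of order $\|v\|_{L^2}^{p+2}$, which for $p>0$ grows faster than the only available negative gradient term $-\varepsilon\|v\|_{H^1_0}^2$ and hence cannot be absorbed uniformly. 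Either way, the clean inequality $\tfrac{d}{dt}V_\varepsilon^s\leqslant -\delta V_\varepsilon^s+C_\ast$ with $\varepsilon$ fixed and $C_\ast$ independent of $V_0$ does \emph{not} follow; the obstruction is a genuine circularity (you need $\|v\|_{H^1_0}$ bounded to control this cross-term, which is exactly what you are trying to prove).

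The paper deals with this by making $\varepsilon$ depend on $R$. Lemma~\ref{propa407} gives the differential inequality
\[
\frac{d}{dt}V_\varepsilon^s(t)\leqslant -k_0\|v_t\|_{L^2}^{p+2}\Big(\tfrac12-\delta_0\varepsilon\big(V_\varepsilon^s(t)+d_0\big)^{\frac{p}{2(p+1)}}\Big)-\tfrac{4}{5}\big(1-\tfrac{\mu_0}{\lambda_1}\big)\varepsilon\,V_\varepsilon^s(t)+\delta_1,
\]
in which the damping coefficient is nonnegative only while $V_\varepsilon^s(t)+d_0\leqslant (2\delta_0\varepsilon)^{-2(p+1)/p}$. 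One then bounds $V_\varepsilon^s(0)\leqslant \gamma_R$ (your step~(5), which is correct), chooses $\varepsilon=\alpha_R:=\min\{\varepsilon_0,\Phi^{-1}(\gamma_R)\}$ so that the threshold exceeds the initial value, and runs a continuation argument (Lemma~\ref{vale}): on the maximal interval where $V_\varepsilon^s\leqslant\Psi(\varepsilon)$ the damping coefficient is nonnegative, Gronwall then gives a strict improvement, and continuity forces the interval to be all of $[0,\infty)$. The missing ingredient in your outline is precisely this bootstrap: the $\varepsilon$ must be tied to $R$, and the differential inequality holds only conditionally and must be propagated by a continuity argument rather than asserted globally.
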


As we did above, we will prove auxiliary lemmas that will come together to prove Proposition \ref{EsC}. However, before that, we will present a remark to justify the existence of global solutions.

\begin{remark}[Solutions to \eqref{Abs.Ev.Equation} are global] \label{remark.Global}
Assume that $V_0\in X$ is given and consider the unique maximal weak solution $V(\cdot,V_0)\colon [0,\tau_{max})\to X$ of \eqref{Abs.Ev.Equation}. The  arguments up until the end of the proof of Proposition \ref{EsC}, remain true for $V(t,V_0)$, or for $E_s(t,V_0)$, when $0\leqslant t<\tau_{max}$. Hence, Proposition \ref{EsC} shows that for some constant $c>0$, we have $E_s(t,V_0)\leqslant c$ for all $0\leqslant t<\tau_{max}$. Therefore, $\|V(t,V_0)\|_X \leqslant c$ for all $0\leqslant t<\tau_{max}$ and shows that if we assume $\tau_{max}<\infty$ we obtain a contradiction with Proposition \ref{solutions.Ex}. Hence $\tau_{max}=\infty$ for all initial data $V_0\in X$.
\end{remark}

Taking the inner product of \eqref{eqondap} by $v_t+\varepsilon v$ in $L^2(\Omega)$ we obtain
\begin{equation*}\label{vepsilont}
\begin{aligned}
\frac{d}{dt}& V^s_{\varepsilon}(t)=-k_s(t)\|v_t\|^{p+2}_{L^2}-\varepsilon k_s(t)\|v_t\|^p_{L^2}\int_{\Omega}v_tv dx +\int_{\Omega}\frac{\partial F_s}{\partial t}(t, v) dx \\ 
&\qquad +\int_{\Omega \times \Omega}K(x,y)v_t(t,y)v_t(t,x)dydx  -\varepsilon\|v\|^2_{H^1_0}+\varepsilon\|v_t\|^2_{L^2}-\varepsilon\int_{\Omega}f_s(t, v)vdx \\
&\qquad +\varepsilon \int_{\Omega \times \Omega} K(x,y)v_t(t,y)v(t,x)dydx+\varepsilon \int_{\Omega}hvdx.
\end{aligned}
\end{equation*}

Reasoning as in \cite{ZhaoZhong2022} we can show that there exists a constant $K^\ast>0$ such that
\[
-k_s(t)\|v_t\|^{p+2}_{L^2} - \varepsilon k_s(t)\|v_t\|^p_{L^2}\int_\Omega v_tvdx \leqslant -k_0\|v_t\|^{p+2}_{L^2} \Big(1-\tfrac{\varepsilon K^\ast k_1}{k_0}\|v\|_{H^1_0}^{\frac{p}{p+1}}\Big) + \tfrac{\varepsilon}{12}\Big(1-\tfrac{\mu_0}{\lambda_1}\Big)\|v\|^2_{H^1_0},
\]
and we have the following:

\begin{lemma}\label{propa407}
If $\varepsilon_0:=\frac{\sqrt{\lambda_1}}{8}\left(1-\frac{\mu_0}{\lambda_1}\right)>0$, there exist constants $\delta_0,\delta_1>0$ such that for all $0<\varepsilon\leqslant \varepsilon_0$, $s\in \mathbb{R}$ and $t\geqslant 0$, we have   
\[
\frac{d}{dt}V_{\varepsilon}^s(t)\leqslant  -k_0\|v_t\|_{L^2}^{p+2}\left(\tfrac{1}{2}-
\delta_0\varepsilon(V_{\varepsilon}^s(t)+d_0)^{\frac{p}{2(p+1)}}\right) -\tfrac{4}{5}\left(1-\tfrac{\mu_0}{\lambda_1}\right)\varepsilon V_{\varepsilon}^s(t)+\delta_1.
\]
\end{lemma}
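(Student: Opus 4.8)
The plan is to differentiate $V_\varepsilon^s$ along the solution and estimate term by term the expression for $\frac{d}{dt}V^s_\varepsilon(t)$ displayed just before the statement, which comes from testing \eqref{eqondap} against $v_t+\varepsilon v$ in $L^2(\Omega)$. First I would feed in the displayed bound for the two damping terms, which already contributes $-k_0\|v_t\|_{L^2}^{p+2}(1-\frac{\varepsilon K^\ast k_1}{k_0}\|v\|_{H^1_0}^{p/(p+1)})+\frac{\varepsilon}{12}(1-\frac{\mu_0}{\lambda_1})\|v\|_{H^1_0}^2$. To rewrite $\|v\|_{H^1_0}^{p/(p+1)}=(\|v\|_{H^1_0}^2)^{p/(2(p+1))}$ as a power of $V^s_\varepsilon+d_0$, observe that $E_s(t)\geqslant\frac12\|v\|_{H^1_0}^2\geqslant 0$ by \eqref{observaa} and \eqref{energyy}, so Proposition~\ref{propa401} gives $\|v\|_{H^1_0}^2\leqslant\frac{8}{1-\mu_0/\lambda_1}(V^s_\varepsilon(t)+d_0)$ with $V^s_\varepsilon(t)+d_0\geqslant 0$; hence the damping contribution becomes $-k_0\|v_t\|_{L^2}^{p+2}(1-\delta_0\varepsilon(V^s_\varepsilon(t)+d_0)^{p/(2(p+1))})+\frac{\varepsilon}{12}(1-\frac{\mu_0}{\lambda_1})\|v\|_{H^1_0}^2$ with $\delta_0:=\frac{K^\ast k_1}{k_0}(\frac{8}{1-\mu_0/\lambda_1})^{p/(2(p+1))}$, and splitting $-k_0\|v_t\|_{L^2}^{p+2}$ into two equal halves isolates both the target term $-k_0\|v_t\|_{L^2}^{p+2}(\frac12-\delta_0\varepsilon(V^s_\varepsilon(t)+d_0)^{p/(2(p+1))})$ and a spare $-\frac12 k_0\|v_t\|_{L^2}^{p+2}$.

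Next I would estimate the remaining summands. The term $\int_\Omega\frac{\partial F_s}{\partial t}(t,v)\,dx$ is dominated by a fixed constant: by Proposition~\ref{funcFimp}\ref{funcFimp.c} the function $w\mapsto\frac{\partial F}{\partial t}(\tau,w)$ is $2c_0$-Lipschitz, and by \ref{cond6} it lies in $L^1(\mathbb{R})$ with norm at most $c_0$; since a Lipschitz $L^1$ function on $\mathbb{R}$ is bounded, $|\int_\Omega\frac{\partial F_s}{\partial t}(t,v)\,dx|\leqslant c_0\sqrt2\,|\Omega|$. For the kernel terms, Cauchy--Schwarz together with \ref{cond1} gives $\int_{\Omega\times\Omega}K(x,y)v_t(t,y)v_t(t,x)\,dy\,dx\leqslant K_0\|v_t\|_{L^2}^2$ and $\varepsilon\int_{\Omega\times\Omega}K(x,y)v_t(t,y)v(t,x)\,dy\,dx\leqslant\varepsilon K_0\lambda_1^{-1/2}\|v_t\|_{L^2}\|v\|_{H^1_0}$, and Young's inequality splits the latter (and likewise $\varepsilon\int_\Omega hv\,dx\leqslant\varepsilon h_0\lambda_1^{-1/2}\|v\|_{H^1_0}$) into an arbitrarily small multiple of $\varepsilon\|v\|_{H^1_0}^2$, a bounded multiple of $\varepsilon\|v_t\|_{L^2}^2$, and constants. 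The crucial term $-\varepsilon\int_\Omega f_s(t,v)v\,dx$ I would treat by splitting $\Omega=\Omega_1\cup\Omega_2$ as in \eqref{Omega12}: on $\Omega_1$ Proposition~\ref{proppp} gives $-vf_s(t,v)\leqslant -F_s(t,v)+\frac{\mu_0}{2}v^2+e_0$, while on $\Omega_2$ Proposition~\ref{funcFimp}\ref{funcFimp.a} and Proposition~\ref{propF} bound $|vf_s(t,v)|$ and $|F_s(t,v)|$ by constants; summing and using Poincar\'e's inequality yields $-\varepsilon\int_\Omega f_s(t,v)v\,dx\leqslant -\varepsilon\int_\Omega F_s(t,v)\,dx+\frac{\mu_0}{2\lambda_1}\varepsilon\|v\|_{H^1_0}^2+C\varepsilon$.

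It is then cleanest to prove the equivalent inequality $\frac{d}{dt}V^s_\varepsilon(t)+k_0\|v_t\|_{L^2}^{p+2}(\frac12-\delta_0\varepsilon(V^s_\varepsilon(t)+d_0)^{p/(2(p+1))})+\frac45(1-\frac{\mu_0}{\lambda_1})\varepsilon V^s_\varepsilon(t)\leqslant\delta_1$, so that the sign of $V^s_\varepsilon$ is immaterial. Substituting \eqref{def.Veps} in the last summand, its $\frac12\|v_t\|_{L^2}^2$-part is a bounded multiple of $\|v_t\|_{L^2}^2$ and its cross terms $\int_\Omega hv\,dx$ and $\varepsilon\int_\Omega v_tv\,dx$ are absorbed by Young--Poincar\'e into a small multiple of $\varepsilon\|v\|_{H^1_0}^2$, a bounded multiple of $\varepsilon\|v_t\|_{L^2}^2$, and constants; one then combines the remaining $-\frac45(1-\frac{\mu_0}{\lambda_1})\varepsilon\int_\Omega F_s\,dx$ with the $-\varepsilon\int_\Omega F_s\,dx$ obtained above and shrinks the coefficient $1$ to $\frac45(1-\frac{\mu_0}{\lambda_1})$ by writing the excess as $-(1-\frac45(1-\frac{\mu_0}{\lambda_1}))\varepsilon\int_\Omega F_s\,dx$ and using $\int_\Omega F_s(t,v)\,dx\geqslant -\frac{\mu_0+\lambda_1}{4\lambda_1}\|v\|_{H^1_0}^2-C_0$, which follows from \eqref{observaa} and Poincar\'e's inequality. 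A short computation shows that, once all Young parameters are taken small, the net coefficient of $\varepsilon\|v\|_{H^1_0}^2$ equals $-\frac23(1-\frac{\mu_0}{\lambda_1})+\frac15(1-\frac{\mu_0}{\lambda_1})^2$ up to an arbitrarily small error, which is strictly negative because $0<1-\frac{\mu_0}{\lambda_1}<1$; hence all the $\|v\|_{H^1_0}^2$ terms may be dropped. Finally, the total (positive) multiple of $\|v_t\|_{L^2}^2$ that has accumulated carries a bounded coefficient, so since $p+2>2$ a last application of Young's inequality absorbs it into the spare $-\frac12 k_0\|v_t\|_{L^2}^{p+2}$ plus a constant, and collecting every constant into a single $\delta_1>0$ finishes the proof.

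The main obstacle is precisely this constant bookkeeping, needed to land on the coefficients $\frac12$ and $\frac45(1-\frac{\mu_0}{\lambda_1})$. The delicate point is the balance for $\varepsilon\|v\|_{H^1_0}^2$: after $-\varepsilon\int_\Omega f_s(t,v)v\,dx$ has been traded for $-\varepsilon\int_\Omega F_s\,dx$ and that coefficient shrunk via \eqref{observaa}, one must check that the several extra multiples of $\varepsilon\|v\|_{H^1_0}^2$ produced in the process do not exceed the negative budget $(\frac{1}{12}(1-\frac{\mu_0}{\lambda_1})-1)\varepsilon\|v\|_{H^1_0}^2$ supplied by $\frac{\varepsilon}{12}(1-\frac{\mu_0}{\lambda_1})\|v\|_{H^1_0}^2-\varepsilon\|v\|_{H^1_0}^2$; this reduces to the elementary inequality $\frac15(1-\frac{\mu_0}{\lambda_1})<\frac23$, which holds since $1-\frac{\mu_0}{\lambda_1}<1$.
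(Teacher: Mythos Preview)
Your argument is correct and follows essentially the same route as the paper: both feed in the displayed damping bound, invoke Proposition~\ref{propa401} to replace $\|v\|_{H^1_0}^{p/(p+1)}$ by $(V_\varepsilon^s+d_0)^{p/(2(p+1))}$ (yielding the same $\delta_0$), use Proposition~\ref{proppp} on $\Omega_1\cup\Omega_2$ to pass from $-\varepsilon\int_\Omega f_s(t,v)v$ to $-\varepsilon\int_\Omega F_s$, apply \eqref{observaa} to control the residual $F_s$-integral, and absorb the accumulated $\|v_t\|_{L^2}^2$ into the spare $-\tfrac{k_0}{2}\|v_t\|_{L^2}^{p+2}$ via Young. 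The only organizational difference is that the paper first assembles the combination $-\tfrac{\varepsilon}{2}(1-\tfrac{\mu_0}{\lambda_1})(\|v_t\|^2_{L^2}+\|v\|^2_{H^1_0})-\varepsilon(\int_\Omega F_s+\tfrac{\lambda_1+\mu_0}{4}\|v\|^2_{L^2}+C_0)$, bounds it above by $-(1-\tfrac{\mu_0}{\lambda_1})\varepsilon E_s(t)$, and then uses $E_s\geqslant\tfrac45(V_\varepsilon^s-d_0)$ from Proposition~\ref{propa401}, whereas you substitute the definition of $V_\varepsilon^s$ directly and track the $\varepsilon\|v\|_{H^1_0}^2$-coefficient; your computed net coefficient $-\tfrac23\alpha+\tfrac15\alpha^2$ (with $\alpha=1-\tfrac{\mu_0}{\lambda_1}$) is indeed negative, so both bookkeepings close.
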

\begin{proof}
Since, from \ref{cond6}, $\int_\Omega \frac{\partial F_s}{\partial t}(t,x)dx \leqslant c_0$, for $0< \varepsilon \leqslant \varepsilon_0$, we obtain
\begin{equation}
\begin{aligned}\label{rimp0}
& \frac{d}{dt} V_{\varepsilon}^s(t)\leqslant -k_0\|v_t\|_{L^2}^{p+2}\big(1-\tfrac{\varepsilon K^\ast k_1}{k_0} \|v\|_{H^1_0}^{\frac{p}{p+1}}\big)+\tfrac{\varepsilon}{12}\left(1-\tfrac{\mu_0}{\lambda_1}\right)\|v\|_{H^1_0}^2+c_0\\ 
&\quad +K_0\|v_t\|^2_{L^2}-\varepsilon\|v\|^2_{H^1_0} +\varepsilon\|v_t\|^2_{L^2}-\varepsilon\int_{\Omega}F_s(t, v)dx - \varepsilon\tfrac{\lambda_1+\mu_0}{4}\|v\|^2_{L^2} -\varepsilon C_0  \\
&\quad +\tfrac{\varepsilon}{4}\left(\tfrac{3\mu_0}{\lambda_1}+1\right)\|v\|^2_{H^1_0} +\varepsilon g_0 + \tfrac{\varepsilon}{12}\left(1-\tfrac{\mu_0}{\lambda_1}\right)\|v\|_{H^1_0}^2 +\tfrac{3\varepsilon K_0^2}{\lambda_1-\mu_0}\|v_t\|^2_{L^2}\\
& \quad +\tfrac{\varepsilon}{16}\left(1-\tfrac{\mu_0}{\lambda_1}\right)\|v\|^2_{H^1_0}+\tfrac{4\varepsilon}{\lambda_1-\mu_0}h_0^2  \\
&\stackrel{(1)}{\leqslant} -k_0\|v_t\|_{L^2}^{p+2}\big(1-\tfrac{\varepsilon K^\ast k_1}{k_0}\|v\|_{H^1_0}^{\frac{p}{p+1}}\big)+C_1\|v_t\|^2_{L^2} - \tfrac{25\varepsilon}{48}\left(1-\tfrac{\mu_0}{\lambda_1}\right)\|v\|^2_{H^1_0} \\
&\quad -\tfrac{\varepsilon}{2}\left(1-\tfrac{\mu_0}{\lambda_1}\right)\|v_t\|^2_{L^2}-\varepsilon\int_{\Omega}F_s(t,v)dx-\varepsilon\tfrac{\lambda_1+\mu_0}{4}\|v\|^2_{L^2} +c_0 \\
& \quad -\varepsilon C_0 + \varepsilon_0 g_0 + \tfrac{4\varepsilon_0}{\lambda_1-\mu_0}h_0^2\\
&\stackrel{(2)}{\leqslant} -k_0\|v_t\|_{L^2}^{p+2}\big(1-\tfrac{\varepsilon K^\ast k_1}{k_0}\|v \|_{H^1_0}^{\frac{p}{p+1}}\big)+\tfrac{k_0}{2}\|v_t\|^{p+2}_{L^2}+C_2\\
&\quad -\tfrac{\varepsilon}{2}\left(1-\tfrac{\mu_0}{\lambda_1}\right)(\|v_t \|^2_{L^2} + \|v\|^2_{H^1_0})-\varepsilon\big[\int_{\Omega}F_s(t,v)dx+\tfrac{\lambda_1+\mu_0}{4}\|v\|_{L^2}^2 + C_0\big]\\
& \quad +c_0+\varepsilon_0g_0 + \tfrac{4\varepsilon_0}{\lambda_1-\mu_0}h_0^2 + C_2,
\end{aligned}
\end{equation}
where in $(1)$ we added and subtracted the term $\tfrac{\varepsilon}{2}(1-\tfrac{\mu_0}{\lambda_1})\|v_t\|^2_{L^2}$, used the estimate
\[
K_0\|v_t\|^2_{L^2}+\varepsilon\|v_t\|^2_{L^2}+\tfrac{3\varepsilon K_0^2}{\lambda_1-\mu_0}\|v_t\|^2_{L^2}+\tfrac{\varepsilon}{2}\left(1-\tfrac{\mu_0}{\lambda_1}\right)\|v_t\|^2_{L^2}\leqslant C_1\|v_t\|^2_{L^2},
\]
for $C_1:=K_0+\varepsilon_0+\tfrac{3\varepsilon_0 K_0^2}{\lambda_1-\mu_0}+\tfrac{\varepsilon_0}{2}\left(1-\tfrac{\mu_0}{\lambda_1}\right)$, and Poincar\'e's inequality. In $(2)$ we applied Young's inequality $ab\leqslant \frac{a^{q_1}}{q_1}+\frac{b^{q_2}}{q_2}$ with $q_1=\frac{p+2}{p}$ and $q_2=\frac{p+2}{p}$, noting that $\frac1{q_1}+\frac1{q_2}=1$, in order to obtain
\begin{align*}    
C_1\|v_t\|^2_{L^2}&=\left(\tfrac{k_0(p+2)}{4}\right)^{\frac{2}{p+2}}\|v_t\|^2_{L^2}\left(\tfrac{4}{k_0(p+2)}\right)^{\frac{2}{p+2}}C_1\\
&\leqslant \tfrac{k_0}{2}\|v_t\|^{p+2}_{L^2}+\tfrac{p}{p+2}\left(\tfrac{4}{k_0(p+2)}\right)^{\frac{2}{p}}C_1^{\frac{p+2}{p}} = \tfrac{k_0}{2}\|v_t\|^{p+2}_{L^2}+C_2,
\end{align*}
for $C_2:=\tfrac{p}{p+2}\left(\tfrac{4}{k_0(p+2)}\right)^{\frac{2}{p}}C_1^{\frac{p+2}{p}}$. From Proposition \ref{propa401} we have 
\[
2E_s(t) \leqslant \left(\tfrac{8\lambda_1}{\lambda_1-\mu_0}\right)(V^s_\varepsilon(t)+d_0),
\]
which implies 
\[
(2E_s(t))^{\frac{p}{2(p+1)}}\leqslant \left(\tfrac{8\lambda_1}{\lambda_1-\mu_0}\right)^{\frac{p}{2(p+1)}}(V_{\varepsilon}^s(t)+d_0)^{\frac{p}{2(p+1)}}.
\]
Hence
\begin{align*}
-k_0\|v_t\|^{p+2}_{L^2}&\big(\tfrac{1}{2}-\tfrac{\varepsilon K^\ast k_1}{k_0}(2E_s(t))^{\frac{p}{2(p+1)}}\big)\\
& \leqslant -k_0\|v_t\|_{L^2}^{p+2}\big(\tfrac{1}{2}-\tfrac{\varepsilon K^\ast k_1}{k_0}\left(\tfrac{8\lambda_1}{\lambda_1-\mu_0}\right)^{\frac{p}{2(p+1)}}(V_{\varepsilon}^s(t)+d_0)^{\frac{p}{2(p+1)}}\big),
\end{align*}
which, since \eqref{energyy} holds, we have that $\|v\|_{H^1_0}\leqslant (2E_s(t))^{\frac12}$, and then 
\begin{equation}
\begin{aligned}\label{rimp1}   
-k_0\|v_t\|_{L^2}^{p+2}& \left(\tfrac{1}{2}-\tfrac{\varepsilon K^\ast k_1}{k_0} \|v \|_{H^1_0}^{\frac{p}{p+1}}\right) \leqslant -k_0\|v_t\|_{L^2}^{p+2}\big(\tfrac{1}{2}-\tfrac{\varepsilon K^\ast k_1}{k_0}(2E_s(t))^{\frac{p}{2(p+1)}}\big)\\
&\leqslant -k_0\|v_t\|_{L^2}^{p+2}\big(\tfrac{1}{2}-\tfrac{\varepsilon K^\ast k_1}{k_0}\left(\tfrac{8\lambda_1}{\lambda_1-\mu_0}\right)^{\frac{p}{2(p+1)}}(V_{\varepsilon}^s(t)+d_0)^{\frac{p}{2(p+1)}}\big).
\end{aligned}
\end{equation}
Also observe that 
\begin{align*}
&\tfrac{1}{2}\left(1-\tfrac{\mu_0}{\lambda_1}\right)(\left\|v_t \right\|^2_{L^2}+\|v\|^2_{H^1_0})+\int_{\Omega}F_s(t,v)dx+\tfrac{\lambda_1+\mu_0}{4}\|v\|^2_{L^2} + C_0\\
&= \tfrac{1}{2}\left(1-\tfrac{\mu_0}{\lambda_1}\right)(\|v_t\|^2_{L^2}+\|v\|^2_{H^1_0})+E_s(t)-\tfrac{1}{2}(\|v_t\|^2_{L^2}+\|v\|^2_{H^1_0})\\
&=-\tfrac{\mu_0}{2\lambda_1}(\|v_t\|^2_{L^2}+\|v\|^2_{H^1_0})+E_s(t)\geqslant -\tfrac{\mu_0}{\lambda_1}E_s(t)+E_s(t)=E_s(t)\left(1-\tfrac{\mu_0}{\lambda_1}\right),
\end{align*}
implying that
\begin{equation}
\begin{aligned}\label{rimp2}
&-\tfrac{\varepsilon}{2}\left(1-\tfrac{\mu_0}{\lambda_1}\right)\left(\left\|v_t \right\|^2_{L^2}+\|v\|^2_{H^1_0}\right)-\varepsilon\left(\int_{\Omega}F_s(t,v)dx+\tfrac{\lambda_1+\mu_0}{4}\|v\|_{L^2}^2 + C_0\right)\\
&\leqslant -\left(1-\tfrac{\mu_0}{\lambda_1}\right)\varepsilon E_s(t)\stackrel{(1)}{\leqslant} -\tfrac{4}{5}\left(1-\tfrac{\mu_0}{\lambda_1}\right)\varepsilon V_{\varepsilon}^s(t) + \tfrac45\varepsilon_0 (1-\tfrac{\mu_0}{\lambda_1})d_0,
\end{aligned}
\end{equation}
where in $(1)$ we used Proposition \ref{propa401}. From \eqref{rimp0}, \eqref{rimp1} and \eqref{rimp2} we obtain
\begin{align*}
    \frac{d}{dt}V_{\varepsilon}^s(t)\leqslant -k_0& \|v_t\|_{L^2}^{p+2}\big(\tfrac{1}{2}-\tfrac{\varepsilon K^\ast k_1}{k_0}\left(\tfrac{8\lambda_1}{\lambda_1-\mu_0}\right)^{\frac{p}{2(p+1)}}(V_{\varepsilon}^s(t)+d_0)^{\frac{p}{2(p+1)}}\big)\\
    & -\tfrac{4}{5}\left(1-\tfrac{\mu_0}{\lambda_1}\right)\varepsilon V_{\varepsilon}^s(t)+ \tfrac45\varepsilon_0 (1-\tfrac{\mu_0}{\lambda_1})d_0 + c_0+\varepsilon_0g_0 + \tfrac{8\varepsilon_0}{\lambda_1-\mu_0}h_0 + C_2,
\end{align*}
and the result is proved, choosing
\[
\delta_0:= \tfrac{K^\ast k_1}{k_0}\left(\tfrac{8\lambda_1}{\lambda_1-\mu_0}\right)^{\frac{p}{2(p+1)}}
\]
and
\[
\delta_1:=\tfrac45\varepsilon_0 (1-\tfrac{\mu_0}{\lambda_1})d_0 + c_0+ \varepsilon_0g_0 + \tfrac{8\varepsilon_0}{\lambda_1-\mu_0}h_0 + C_2.
\]
\end{proof}

As a consequence of the previous result, we have the following lemma, which tell us in general terms that a suitable estimate for the function $V_{\varepsilon}^s$ at a time $\tau$ gives us a similar estimate for $V_{\varepsilon}^s$ for all $t\geqslant \tau$. 

\begin{lemma}\label{vale}
Assume that $0<\varepsilon \leqslant \varepsilon_0$ and $C>0$. If for $\tau\geqslant 0$ we have
\[
V_{\varepsilon}^s(\tau)\leqslant (2\delta_0\varepsilon)^{-\frac{2(p+1)}{p}}-\tfrac{5}{4}\tfrac{\lambda_1}{\lambda_1-\mu_0}\delta_1\varepsilon^{-1}-d_0,
\]
then for all $t\geqslant \tau$ we have
\[
V_{\varepsilon}^s(t)\leqslant (2\delta_0\varepsilon)^{-\frac{2(p+1)}{p}}-d_0.
\]
\end{lemma}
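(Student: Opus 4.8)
The plan is to read off from Lemma~\ref{propa407} a differential inequality for $V_\varepsilon^s$ and then run a first–exit–time (barrier) argument. The crucial observation is that the threshold $R_\ast:=(2\delta_0\varepsilon)^{-\frac{2(p+1)}{p}}$ is exactly the value at which the coefficient $\tfrac12-\delta_0\varepsilon\big(V_\varepsilon^s(t)+d_0\big)^{\frac{p}{2(p+1)}}$ appearing in Lemma~\ref{propa407} changes sign: while $V_\varepsilon^s(t)\leqslant R_\ast-d_0$ this coefficient is $\geqslant 0$, so the (nonpositive) first term of the inequality drops out and one is left with a \emph{linear} differential inequality whose equilibrium is precisely the margin $y_\ast:=\tfrac54\tfrac{\lambda_1}{\lambda_1-\mu_0}\delta_1\varepsilon^{-1}$ subtracted in the hypothesis. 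From there the conclusion is a short continuity argument.

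First I would record that $V_\varepsilon^s(t)+d_0\geqslant 0$ for every $t\geqslant 0$: indeed $E_s(t)\geqslant 0$ by \eqref{energyy} and \eqref{observaa}, whence Proposition~\ref{propa401} gives $V_\varepsilon^s(t)\geqslant \tfrac14\big(1-\tfrac{\mu_0}{\lambda_1}\big)E_s(t)-d_0\geqslant -d_0$; this makes all the powers below meaningful. Next, since $R_\ast^{\frac{p}{2(p+1)}}=(2\delta_0\varepsilon)^{-1}$, for any $t$ with $V_\varepsilon^s(t)\leqslant R_\ast-d_0$ one has $\delta_0\varepsilon\big(V_\varepsilon^s(t)+d_0\big)^{\frac{p}{2(p+1)}}\leqslant \delta_0\varepsilon\,R_\ast^{\frac{p}{2(p+1)}}=\tfrac12$, so the first summand on the right–hand side of Lemma~\ref{propa407} is $\leqslant 0$, and on the sub–level set $\{V_\varepsilon^s\leqslant R_\ast-d_0\}$ that inequality reduces to
\[
\frac{d}{dt}V_\varepsilon^s(t)\leqslant -\tfrac45\Big(1-\tfrac{\mu_0}{\lambda_1}\Big)\varepsilon\,V_\varepsilon^s(t)+\delta_1 = -a\big(V_\varepsilon^s(t)-y_\ast\big),
\]
with $a:=\tfrac45\big(1-\tfrac{\mu_0}{\lambda_1}\big)\varepsilon>0$ and $y_\ast=\delta_1/a=\tfrac54\tfrac{\lambda_1}{\lambda_1-\mu_0}\delta_1\varepsilon^{-1}$.

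Then I would argue by contradiction: suppose $V_\varepsilon^s(t_0)>R_\ast-d_0$ for some $t_0>\tau$. Since $V_\varepsilon^s$ is continuous and, by hypothesis, $V_\varepsilon^s(\tau)\leqslant R_\ast-d_0-y_\ast<R_\ast-d_0$, the first–exit time $t_1:=\inf\{t\in(\tau,t_0]:V_\varepsilon^s(t)=R_\ast-d_0\}$ satisfies $\tau<t_1$, $V_\varepsilon^s(t_1)=R_\ast-d_0$ and $V_\varepsilon^s(t)\leqslant R_\ast-d_0$ on $[\tau,t_1]$. Applying the displayed linear inequality on $[\tau,t_1]$ to $\phi(t):=V_\varepsilon^s(t)-y_\ast$ gives $\frac{d}{dt}\big(e^{at}\phi(t)\big)\leqslant 0$, hence $\phi(t_1)\leqslant e^{-a(t_1-\tau)}\phi(\tau)$, i.e. $\phi(\tau)\geqslant e^{a(t_1-\tau)}\phi(t_1)$. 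Now $\phi(t_1)=R_\ast-d_0-y_\ast\geqslant 0$ — here one uses that $R_\ast-d_0\geqslant y_\ast$, which is the range in which the hypothesis is informative and which holds in particular for all sufficiently small $\varepsilon$, the regime relevant for the application — so $\phi(\tau)\geqslant e^{a(t_1-\tau)}\phi(t_1)\geqslant \phi(t_1)=R_\ast-d_0-y_\ast$. On the other hand, the hypothesis yields $\phi(\tau)=V_\varepsilon^s(\tau)-y_\ast\leqslant R_\ast-d_0-2y_\ast<R_\ast-d_0-y_\ast$, a contradiction. Therefore $V_\varepsilon^s(t)\leqslant R_\ast-d_0=(2\delta_0\varepsilon)^{-\frac{2(p+1)}{p}}-d_0$ for all $t\geqslant\tau$.

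The only real subtlety — and the main obstacle — is that the inequality of Lemma~\ref{propa407} is not globally useful: when $V_\varepsilon^s$ is large the term $\big(V_\varepsilon^s+d_0\big)^{\frac{p}{2(p+1)}}$ carries the unfavourable sign, so one cannot simply integrate on $[\tau,\infty)$. The whole argument hinges on recognizing that the threshold $(2\delta_0\varepsilon)^{-\frac{2(p+1)}{p}}$ is precisely where that term's contribution vanishes, which linearizes Lemma~\ref{propa407} on the relevant sub–level set and forces the first–exit time not to exist. A minor technical caveat: as the weak solution $v$ need not be twice differentiable in time, the inequality of Lemma~\ref{propa407}, and hence the comparison above, is to be read in the integrated (absolutely continuous) sense, which is all Gronwall's lemma requires.
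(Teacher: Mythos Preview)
Your argument is correct and is essentially the same barrier/first-exit-time argument the paper gives: both recognize that on the sub-level set $\{V_\varepsilon^s\leqslant (2\delta_0\varepsilon)^{-\frac{2(p+1)}{p}}-d_0\}$ the nonlinear term in Lemma~\ref{propa407} carries a favourable sign, reducing the estimate to a linear differential inequality, and then derive a contradiction at the first exit time via Gronwall. Your explicit flagging of the side condition $R_\ast-d_0\geqslant y_\ast$ (equivalently $\Phi(\varepsilon)\geqslant 0$) is honest and in fact sharpens the paper's own argument, where the step $V_\varepsilon^s(\tau)e^{-a(T_0-\tau)}<V_\varepsilon^s(\tau)$ tacitly uses $V_\varepsilon^s(\tau)>0$; in both cases the lemma is only ever applied in the regime where $\Phi(\varepsilon)\geqslant 0$, so nothing is lost.
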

\begin{proof}
We fix $0<\varepsilon \leqslant \varepsilon_0$ and set
\begin{align}
& \Psi(\varepsilon):=(2\delta_0\varepsilon)^{-\frac{2(p+1)}{p}}-d_0, \quad \hbox{ and } \label{def.Psi} \\
& \Phi(\varepsilon):=(2\delta_0\varepsilon)^{-\frac{2(p+1)}{p}}-\tfrac{5}{4}\tfrac{\lambda_1}{\lambda_1-\mu_0}\delta_1\varepsilon^{-1}-d_0. \label{def.Phi}
\end{align}
Note that $V_\varepsilon^s(\tau)\leqslant \Phi(\varepsilon) < \Psi(\varepsilon)$. By the continuity of $[0,\infty)\ni t \mapsto V_{\varepsilon}^s(t)$, there exists $T>\tau$ such that $V_{\varepsilon}^s(t)\leqslant \Psi(\varepsilon)$ for $\tau\leqslant t < T$. Let $T_0=\inf\{t \colon t\geqslant \tau \hbox{ and } V_{\varepsilon}^s(t)> \Psi\}$ and assume that $T_0<\infty$. It is clear that $T_0\geqslant T> \tau$, $V_{\varepsilon}^s(t)\leqslant \Psi(\varepsilon)$ for $\tau\leqslant t \leqslant T_0$ and $V_{\varepsilon}^s(T_0)=\Psi(\varepsilon)$.

From Proposition \ref{propa401} we know that $V_{\varepsilon}^s(t)+d_0\geqslant \frac{1}{4}\left(1-\frac{\mu_0}{\lambda_1}\right)E_s(t)\geqslant 0$. Then, for $\tau \leqslant t\leqslant T_0$ we have
\[
(V_{\varepsilon}^s(t)+d_0)^{\frac{p}{2(p+1)}} \leqslant (\Phi(\varepsilon)+d_0)^{\frac{p}{2(p+1)}} \leqslant \frac{1}{2\delta_0\varepsilon},
\]
which implies that
\begin{equation}\label{niceresult}
 \tfrac{1}{2}-\delta_0\varepsilon(V_{\varepsilon}^s(t)+d_0)^{\frac{p}{2(p+1)}}\geqslant 0.
\end{equation}
Using Lemma \ref{propa407}, for $\tau\leqslant t\leqslant T_0$ and $0<\varepsilon\leqslant \varepsilon_0$, we obtain 
\[
\frac{d}{dt}V_{\varepsilon}^s(t)+\tfrac{4}{5}\left(1-\tfrac{\mu_0}{\lambda_1}\right)\varepsilon V_{\varepsilon}^s(t)\leqslant \delta_1.
\]
Using Gronwall's inequality, for $\tau \leqslant t\leqslant T_0$ it follows that
\begin{align*}
V_{\varepsilon}^s(t) & \leqslant V_{\varepsilon}^s(\tau)e^{-\frac{4}{5}\left(1-\frac{\mu_0}{\lambda_1}\right)\varepsilon(t-\tau)}+\tfrac{5}{4}\tfrac{\lambda_1}{\lambda_1-\mu_0}\delta_1\varepsilon^{-1} < V_\varepsilon^s(\tau) + \tfrac{5}{4}\tfrac{\lambda_1}{\lambda_1-\mu_0}\delta_1\varepsilon^{-1}\\
& \leqslant \Phi(\varepsilon) + \tfrac{5}{4}\tfrac{\lambda_1}{\lambda_1-\mu_0}\delta_1\varepsilon^{-1} = \Psi(\varepsilon).
\end{align*}
Taking $t=T_0$ we obtain $V_\varepsilon^s(T_0)<\Psi(\varepsilon)$, which is a contradiction. Thus $T_0=\infty$ and the proof is complete.
\end{proof}

\begin{remark}\label{rem.niceresult}
One aspect that we want to draw attention to is that we concluded that if $V_{\varepsilon}^s(\tau)\leqslant \Phi(\varepsilon)$ then \eqref{niceresult} holds for all for all $t\geqslant \tau$. 
\end{remark}

Now we can prove Proposition \ref{EsC}.
 
\begin{proof}[Proof of Proposition \ref{EsC}]
From Proposition \ref{funcFimp} and the continuous inclusion $H^1_0(\Omega)\hookrightarrow L^{4}(\Omega)$, with constant $c>0$, we obtain 
\begin{align*}
\left|\int_{\Omega}F(s, v) dx\right|& \leqslant \int_{\Omega}|F(s, v)| dx \leqslant \int_{\Omega}4c_0(1+|v|^{4}) dx\\
& =4c_0|\Omega| + 4c_0\|v\|_{L^{4}}^{4} \leqslant 4c_0|\Omega|+4c_0c^{4}\|v\|_{H^1_0}^{4} \leqslant \alpha_0(1+\left\|v\right\|^{4}_{H^1_0}),    
\end{align*}
where $\alpha_0=4c_0\max\{|\Omega|,c^4\}$. Fix $R>0$ and let $V_0=(u_0,u_1)\in X$ be such that $V_0\in \overline{B}^X_R$. From the inequality above and Poincar\'e's inequality, we deduce 
\begin{align*}
E_s(0)&=\tfrac{1}{2}\left(\left\|u_1\right\|_{L^2}^2+\| u_0\|_{H^1_0}^2\right)+\int_{\Omega}F(s, u_0)dx+\tfrac{\lambda_1+\mu_0}{4}\|u_0\|^2_{L^2} +C_0\\
&\leqslant \tfrac{1}{2}\|V_0\|^2_X+\alpha_0(1+\| u_0\|^{4}_{H^1_0})+\tfrac{\lambda_1+\mu_0}{4\lambda_1}\|u_0\|^2_{H^1_0}+C_0\\
&\leqslant \tfrac{1}{2}R^2+\alpha_0R^{4}+\tfrac{\lambda_1+\mu_0}{4\lambda_1}R^2+\alpha_0+C_0.
\end{align*}
By Proposition \ref{propa401} we obtain
\begin{align}\label{cr2cr1}
    V_{\varepsilon}^s(0)\leqslant \tfrac{5}{4}E_s(0)+d_0\leqslant \tfrac{5}{4}\left(\tfrac{1}{2}R^2+\alpha_0 R^{4}+\tfrac{\lambda_1+\mu_0}{4\lambda_1}R^2+\alpha_0+C_0\right)+d_0:=\gamma_R.
\end{align}
Consider the maps $\Psi,\Phi\colon (0,\infty)\to \mathbb{R}$ defined in \eqref{def.Psi} and \eqref{def.Phi}, respectively. Note that
\begin{align}\label{derivativeofphi}
\Phi'(\varepsilon)=-\frac{1}{\varepsilon^2}\left(\tfrac{2(p+1)}{p}(2\delta_0)^{\frac{-2(p+1)}{p}}\varepsilon^{-\frac{p+2}{p}}-\tfrac{5}{4}\tfrac{\lambda_1}{\lambda_1-\mu_0}\delta_1\right).
\end{align}
Setting 
\[
\varepsilon_1 : = \left(\tfrac{5p}{8(p+1)}\tfrac{\lambda_1}{\lambda_1-\mu_0}\delta_1(2\delta_0)^{\frac{2(p+1)}{p}}\right)^{-\frac{p}{p+2}},
\]
we can see that $\varepsilon_1>0$, that $\Phi'(\varepsilon)<0$ for $0<\varepsilon<\varepsilon_1$ and $\Phi'(\varepsilon)>0$ for $\varepsilon>\varepsilon_1$. Hence $\Phi$ is strictly decreasing in the interval $(0,\varepsilon_1]$ and strictly increasing in $[\varepsilon_1,\infty)$. It is clear that $\lim\limits_{\varepsilon\to\infty}\Phi(\varepsilon)=-d_0$. On the other hand, since $\frac{2(p+1)}{p}>1$, we have $\lim\limits_{\varepsilon\to 0^+}\Phi(\varepsilon)=\infty$. Hence $\Phi(\varepsilon_1)<-d_0<0$ and, from the Intermediate Value Theorem and the monotonicity of $\Phi$, there exists a unique point $0<\varepsilon_2<\varepsilon_1$ such that $\Phi(\varepsilon_2)=0$ and $\Phi(\varepsilon)>0$ for all $0<\varepsilon <\varepsilon_2$. Observe that $\Phi\colon \left(0, \varepsilon_2\right]\rightarrow \left[0, \infty\right)$ is bijective and take $\alpha_R := \min\{\varepsilon_0,  \Phi^{-1}(\gamma_R)\}\leqslant \varepsilon_2$. Since $\alpha_R\leqslant \Phi^{-1}(\gamma_R)$ and $\Phi$ is decreasing in $(0, \varepsilon_2]$, it follows from \eqref{cr2cr1} that $V_\varepsilon^s(0)\leqslant \gamma_R \leqslant \Phi(\alpha_R)$. Thus, from Lemma \ref{vale} we obtain $V_{\varepsilon}^s(t)\leqslant \Psi(\alpha_R)$ for all $t\geqslant 0$. From this fact and Proposition \ref{propa401}, for all $t\geqslant 0$ we obtain 
\begin{align*}
E_s(t)&\leqslant \tfrac{4\lambda_1}{\lambda_1-\mu_0}(V_{\varepsilon}^s(t)+d_0)\leqslant \tfrac{4\lambda_1}{\lambda_1-\mu_0}(\Psi(\alpha_R)+d_0),
\end{align*}
which completes the proof, defining $c_R:= \tfrac{4\lambda_1}{\lambda_1-\mu_0}(\Psi(\alpha_R)+d_0)$.
\end{proof}

\medskip \noindent \textsc{Development of (III).}

Continuing our scheme, our goal for now is to prove the following result.

\begin{proposition}\label{estimate.taumax}
There exists a constant $R_0>0$ such that for any $R>0$ and $s\in \mathbb{R}$,
\[  \limsup\limits_{\tau\rightarrow\infty}\Big(\sup_{\|V_0\|_X\leqslant R}E_s(\tau)\Big)\leqslant R_0.
\]
\end{proposition}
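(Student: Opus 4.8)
The plan is to upgrade the \emph{a priori} bound of Proposition~\ref{EsC}, whose constant $c_R$ blows up as $R\to\infty$, into an \emph{eventual} bound that is independent of $R$, by iterating the estimate forward along the process. Everything rests on first extracting from the proof of Proposition~\ref{EsC} a quantitative decay rate uniform in the initial time. For $\rho>0$ let $\gamma_\rho$ be the right‑hand side of \eqref{cr2cr1} with $R$ replaced by $\rho$, put $\alpha_\rho:=\min\{\varepsilon_0,\Phi^{-1}(\gamma_\rho)\}$ (well defined since $\Phi\colon(0,\varepsilon_2]\to[0,\infty)$ is bijective) and $a_\rho:=\tfrac45\big(1-\tfrac{\mu_0}{\lambda_1}\big)\alpha_\rho$. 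As in the proof of Proposition~\ref{EsC}, for every $s\in\mathbb{R}$ and every $V_0\in\overline{B}^X_\rho$ one has $V_{\alpha_\rho}^s(0)\leqslant\gamma_\rho\leqslant\Phi(\alpha_\rho)$, so by Remark~\ref{rem.niceresult} (a consequence of Lemma~\ref{vale}) the inequality \eqref{niceresult} holds for all $t\geqslant0$; hence Lemma~\ref{propa407} reduces to $\tfrac{d}{dt}V_{\alpha_\rho}^s(t)\leqslant-a_\rho V_{\alpha_\rho}^s(t)+\delta_1$, and Gronwall's inequality together with Proposition~\ref{propa401} gives
\[
E_s(t)\ \leqslant\ \tfrac{4\lambda_1}{\lambda_1-\mu_0}\Big(\gamma_\rho\,e^{-a_\rho t}+\tfrac{\delta_1}{a_\rho}+d_0\Big)\qquad(t\geqslant0),
\]
for all $s\in\mathbb{R}$ and $V_0\in\overline{B}^X_\rho$. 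Writing $\Theta(\rho):=\tfrac{4\lambda_1}{\lambda_1-\mu_0}\big(\tfrac{\delta_1}{a_\rho}+d_0\big)$ and using $\|S(s+t,s)V_0\|_X^2\leqslant2E_s(t)$ from \eqref{energyy}--\eqref{observaa}, this yields, uniformly in $s$: a global bound $\|S(\sigma,s)V_0\|_X\leqslant\tilde\rho(\rho)$ for all $\sigma\geqslant s$, $V_0\in\overline{B}^X_\rho$, and the eventual bound $\limsup_{t\to\infty}\sup_{\|V_0\|_X\leqslant\rho}E_s(t)\leqslant\Theta(\rho)$.

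Next I quantify $\Theta$. Since $\tfrac{2(p+1)}{p}>1$ we have $\Phi(\varepsilon)\sim(2\delta_0\varepsilon)^{-2(p+1)/p}$ as $\varepsilon\to0^+$, hence $\Phi^{-1}(y)\sim(2\delta_0)^{-1}y^{-p/(2(p+1))}$ as $y\to\infty$; since $\gamma_\rho\sim\tfrac54\alpha_0\rho^4$, this forces $\alpha_\rho^{-1}=O\big(\rho^{2p/(p+1)}\big)$, so $\Theta(\rho)=O\big(\rho^{2p/(p+1)}\big)$. Because $\tfrac{2p}{p+1}<2$, the function $g(\rho):=\big(2\Theta(\rho)+2\big)^{1/2}$ is continuous, increasing, strictly positive and satisfies $g(\rho)/\rho\to0$ as $\rho\to\infty$; it therefore has a largest fixed point $\rho_*\in(0,\infty)$, with $g(\rho_*)=\rho_*$ and $g(\rho)<\rho$ for all $\rho>\rho_*$.

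Now I iterate along the process. It suffices to prove the claim for the radius $\rho_0:=\max\{\tilde\rho(R),\rho_*\}$, whose ball contains $\overline{B}^X_R$. Put $\rho_{n+1}:=g(\rho_n)$; by monotonicity of $g$ and $\rho_0\geqslant\rho_*$ one gets $\rho_n\downarrow\rho_*$. By induction on $n$ there is $T_n\geqslant0$, \emph{independent of $s$}, with $\|S(\sigma,s)V_0\|_X\leqslant\rho_n$ for all $s\in\mathbb{R}$, all $\sigma\geqslant s+T_n$ and all $\|V_0\|_X\leqslant R$: the case $n=0$ is the global bound of the first paragraph, and the step $n\mapsto n+1$ follows by applying that estimate with starting time $s+T_n$ to the datum $S(s+T_n,s)V_0\in\overline{B}^X_{\rho_n}$ and waiting an $s$‑independent time for the right‑hand side to fall below $\tfrac12\rho_{n+1}^2$. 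Finally, fix $n$ and apply the same estimate once more with starting time $s+T_n$; using the identity $E_s(\tau)=E_{s+T_n}(\tau-T_n)$ for $\tau\geqslant T_n$ — valid because both sides equal the energy functional evaluated at the state $S(s+\tau,s)V_0$ at the physical time $s+\tau$ — one obtains $\limsup_{\tau\to\infty}\sup_{\|V_0\|_X\leqslant R}E_s(\tau)\leqslant\Theta(\rho_n)$ for every $n$. Letting $n\to\infty$ and using continuity of $\Theta$ gives the statement with $R_0:=\Theta(\rho_*)$, a constant depending neither on $R$ nor on $s$.

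The main obstacle is the second step: one must establish that the eventual energy level $\Theta(\rho)$ grows strictly slower than $\rho^2$, for only then is $g$ eventually a strict contraction toward a fixed ball and does the iteration converge to an $R$‑free bound. This is precisely where the hypothesis $p>0$ enters (through $\tfrac{2p}{p+1}<2$), and it requires carefully tracking the asymptotics of $\alpha_\rho=\Phi^{-1}(\gamma_\rho)$ as $\rho\to\infty$. A secondary technical point is keeping all waiting times $T_n$ uniform in the initial time $s$, so that the outer $\limsup$ is genuinely uniform and the conclusion remains truly nonautonomous.
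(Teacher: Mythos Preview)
Your argument is correct, but it reaches the conclusion by a different route than the paper. The paper avoids your explicit iteration: it chooses, at an \emph{arbitrary} time $r\geqslant0$, the parameter $\varepsilon=\Theta(E_s(r))$ with $\Theta(\sigma)=\min\{\varepsilon_0,\Phi^{-1}(\tfrac54\sigma+d_0)\}$, runs Gronwall from $r$ to $t$, takes the sup over $V_0$ and then $\limsup_{t\to\infty}$ followed by $\limsup_{r\to\infty}$, and lands on a \emph{self-consistent} inequality for $w_{R,s}:=\limsup_{\tau\to\infty}\sup_{\|V_0\|\leqslant R}E_s(\tau)$ of the form $G(w_{R,s})\geqslant\tfrac{1}{4d_0}(1-\tfrac{\mu_0}{\lambda_1})$ with $G(z)=\tfrac1z\big(\tfrac{5}{4d_0}\tfrac{\lambda_1}{\lambda_1-\mu_0}[\Theta(z)]^{-1}+1\big)$; since $G(z)\to0$ as $z\to\infty$ (by the asymptotics of $\Phi^{-1}$, essentially the same computation you carry out), this forces $w_{R,s}\leqslant R_0$ with $R_0$ independent of $R$ and $s$.

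Conceptually both proofs exploit the same scaling: the eventual energy level behaves like $\rho^{2p/(p+1)}=o(\rho^2)$ in the initial radius, which is exactly what makes the paper's $G$ vanish at infinity and what makes your map $g$ an eventual contraction with a finite largest fixed point. The paper's one-shot limsup trick is shorter and sidesteps the bookkeeping of the sequence $(\rho_n,T_n)$; your iteration is more constructive, makes the dynamical mechanism (absorption into successively smaller balls) transparent, and pinpoints $R_0=\Theta(\rho_*)$ as the fixed-point level. Both arguments keep the waiting times uniform in $s$, which is the essential nonautonomous point.
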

\begin{proof}
Consider the function $\Theta\colon [0,\infty)\to (0,\infty)$ defined by
\[
\Theta(\sigma)=\min\left\{\varepsilon_0, \Phi^{-1}\left(\tfrac{5}{4}\sigma+d_0\right)\right\} \quad \hbox{ for } \sigma \geqslant 0.
\]
It is clear that $\Theta$ is a continuous and strictly decreasing function. Now fix $r\geqslant 0$ and define $\varepsilon:=\Theta(E_s(r))$. Note that $0<\varepsilon \leqslant \varepsilon_0$, $\varepsilon<\varepsilon_2$ and
$
\varepsilon \leqslant \Phi^{-1}\left(\tfrac{5}{4}E_s(r)+d_0\right).
$
Since $\Phi$ is decreasing in $(0,\varepsilon_2]$, we have $\Phi(\varepsilon)\geqslant \frac{5}{4}E_s(r)+d_0\geqslant V_{\varepsilon}^s(r)$. From Remark \ref{rem.niceresult}, for all $t\geqslant r$ we obtain
\[
\tfrac{1}{2}-\delta_0\varepsilon(V_{\varepsilon}^s(t)+d_0)^{\frac{p}{2(p+1)}}\geqslant 0.
\]
This information and Lemma \ref{propa407}, for $t\geqslant r$, yield
\begin{align*}
\frac{d}{dt}V_{\varepsilon}^s(t)&\leqslant  -k_0\|v_t\|_{L^2}^{p+2}\left(\tfrac{1}{2}-\varepsilon\delta_0(V_{\varepsilon}^s(t)+d_0)^{\frac{p}{2(p+1)}}\right) -\tfrac{4}{5}\left(1-\tfrac{\mu_0}{\lambda_1}\right)\varepsilon V_{\varepsilon}^s(t)+\delta_1\\
&\leqslant -\tfrac{4}{5}\left(1-\tfrac{\mu_0}{\lambda_1}\right)\Theta(E_s(r)) V_{\varepsilon}^s(t)+\delta_1.
\end{align*}
Applying Gr\"onwall's inequality we obtain
\begin{align*}
V_{\varepsilon}^s(t)\leqslant V_{\varepsilon}^s(r)e^{-\frac{4}{5}\left(1-\frac{\mu_0}{\lambda_1}\right)\Theta(E_s(r))(t-r)}+\tfrac{5}{4}\tfrac{\lambda_1\delta_1}{\lambda_1-\mu_0}\left[\Theta(E_s(r))\right]^{-1},
\end{align*}
for $t\geqslant r$. Using Proposition \ref{propa401} we obtain
\begin{align*}
\tfrac{1}{4}& \left(1-\tfrac{\mu_0}{\lambda_1}\right)E_s(t)-d_0 \leqslant  V_{\varepsilon}^s(r)
e^{-\frac{4}{5}\left(1-\frac{\mu_0}{\lambda_1}\right)\Theta(E_s(r))(t-r)}+\tfrac{5}{4}\tfrac{\lambda_1\delta_1}{\lambda_1-\mu_0}\left[\Theta(E_s(r))\right]^{-1}\\
&\leqslant \left(\tfrac{5}{4}E_s(r)+d_0\right)e^{-\frac{4}{5}\left(1-\frac{\mu_0}{\lambda_1}\right)\Theta(c_R)(t-r)}+\tfrac{5}{4}\tfrac{\lambda_1\delta_1}{\lambda_1-\mu_0}\left[\Theta(E_s(r))\right]^{-1},
\end{align*}
where we have used that, for a fixed $R>0$, since $E_s(r)\leqslant c_R$ for all $r\geqslant 0$ and $\Theta$ is decreasing, $\Theta(E_s(r))\geqslant \Theta(c_R)>0$. Hence, since $\Theta$ is decreasing, for this given $R>0$, setting $\gamma_{R,s}(t):=\sup\limits_{\|V_0\|_X\leqslant R}E_s(t)$ we obtain
\begin{align*}
 \tfrac{1}{4}\left(1\!-\!\tfrac{\mu_0}{\lambda_1}\right)\gamma_{R,s}(t)-d_0 \leqslant \left(\tfrac{5}{4}\gamma_{R,s}(r)+d_0\right)e^{-\frac{4}{5}\left(1-\frac{\mu_0}{\lambda_1}\right)\Theta(c_R)(t-r)} +\tfrac{5}{4}\tfrac{\lambda_1\delta_1}{\lambda_1-\mu_0}\left[\Theta\left(\gamma_{R,s}(r)\right)\right]^{-1}.   
\end{align*}
Consequently, if $w_{R,s}:=\limsup\limits_{t\to \infty}\gamma_{R,s}(t)$, we obtain
\[
\tfrac{1}{4}\left(1-\tfrac{\mu_0}{\lambda_1}\right) w_{R,s}
\leqslant \tfrac{5}{4}\tfrac{\lambda_1\delta_1}{\lambda_1-\mu_0}\left[\Theta\left(w_{R,s}\right)\right]^{-1}+d_0,
\]
that is,
\begin{align}\label{inn54}
\frac{1}{w_{R,s}}\left(\tfrac{5}{4d_0}\tfrac{\lambda_1}{\lambda_1-\mu_0}\left[\Theta(w_{R,s})\right]^{-1}+1\right)\geqslant \tfrac{1}{4d_0}\left(1-\tfrac{\mu_0}{\lambda_1}\right).
\end{align}
Now, for $z>0$, consider the function
\begin{align*}
    G(z)=\frac{1}{z}\left(\tfrac{5}{4d_0}\tfrac{\lambda_1}{\lambda_1-\mu_0}\left[\Theta(z)\right]^{-1}+1\right).
\end{align*}
Since $\Phi^{-1}$ is a positive decreasing function approaching to zero at infinity, we have $\lim\limits_{z\rightarrow \infty}\Theta(z)=\lim\limits_{z\rightarrow \infty}\min\left\{\varepsilon_0, \Phi^{-1}(\frac{5}{4}z+d_0)\right\}=\lim\limits_{z\rightarrow \infty}\Phi^{-1}(\frac{5}{4}z+d_0)=0$, and thus
\begin{align*}
\lim\limits_{z\rightarrow \infty}G(z)&=\lim\limits_{z\rightarrow \infty}\frac{\frac{5}{4d_0}\left(1-\frac{\mu_0}{\lambda_1}\right)^{-1}\left[\Phi^{-1}(\frac{5}{4}z+d_0)\right]^{-1}+1}{z}\\
&=\lim\limits_{\sigma\rightarrow \infty}\frac{\frac{5}{4d_0}\left(1-\frac{\mu_0}{\lambda_1}\right)^{-1}\sigma+1}{\frac{4}{5}\left(\Phi\left(\frac{1}{\sigma}\right)-d_0\right)} \stackrel{(1)}{=}\lim\limits_{\sigma\rightarrow \infty}\frac{\frac{5}{4d_0}\left(1-\frac{\mu_0}{\lambda_1}\right)^{-1}}{-\frac{4}{5}\Phi '\left(\frac{1}{\sigma}\right)\frac{1}{\sigma^2}}\stackrel{(2)}{=}0,
\end{align*}
where in (1) we applied the L'Hospital Rule and for (2) we note that
\begin{align*}    
\lim\limits_{\sigma\rightarrow\infty}-\tfrac{4}{5}\Phi'\left(\tfrac{1}{\sigma}\right)\tfrac{1}{\sigma^2}=\lim\limits_{\sigma\rightarrow\infty}\tfrac{4}{5}\left[\tfrac{2(p+1)}{p}(2\delta_0)^{\frac{-2(p+1)}{p}}\sigma^{\frac{p+2}{p}}-\tfrac{5}{4}\tfrac{\lambda_1\delta_1}{\lambda_1-\mu_0}\right]=\infty.
\end{align*}
 Since $G(w_{R,s})\geqslant \frac{1}{4d_0}\left(1-\frac{\mu_0}{\lambda_1}\right)$, this implies that there exists $R_0>0$ such that $w_{R,s}\leqslant R_0$ for all $R>0$, since the constant $\frac{1}{4d_0}(1-\frac{\mu_0}{\lambda_1})$ does not depend on $R$.
\end{proof}

\medskip \noindent \textsc{Development of (IV).}

With all the work done in items (I), (II) and (III), now the proof of Theorem \ref{teo.existence.paf} follows easily.

\begin{proof}[Proof of Theorem \ref{teo.existence.paf}]
Let $R>0$. From Proposition \ref{estimate.taumax} there exists $\tau_0=\tau_0(R)\geqslant 0$ such that for $\tau\geqslant \tau_0$ we have $\sup\limits_{\|(u_0,u_1)\|_X\leqslant R}E_{t-\tau}(\tau)\leqslant 2R_0$ for all $t\in \mathbb{R}$. Therefore, if $\tau\geqslant \tau_0$ and $V_0=(u_0,u_1)$, it follows that
\begin{align*}
\left\|S(t,t-\tau)(u_0,u_1)\right\|_X^2&=\|V(\tau,V_0)\|^2_X \leqslant 2E_{t-\tau}(\tau) \leqslant 4R_0,
\end{align*}
and the proof is complete, taking $r_0:=2R_0^\frac12$.
\end{proof}

This implies, in particular, that the family $\hat{B}=\{B_t\}_{t\in \mathbb{R}}$, with $B_t=\overline{B}_{r_0}^X$ for all $t\in \mathbb{R}$, is a uniformly pullback absorbing family for $S$.

\bigskip \noindent \textbf{Existence of a closed, uniformly bounded and positively invariant uniformly pullback absorbing family}\label{obtainingC}\medskip

Using the family $\hat{B}$ presented above, we will construct a family $\hat{C}$ that is closed, uniformly bounded, positively invariant and uniformly pullback absorbing for $S$. Since $\overline{B}^X_{r_0}$ is bounded, there exists $\tau_1\geqslant 0$ such that $S(t,t-\tau)\overline{B}^X_{r_0}\subset \overline{B}^X_{r_0}$ for all $\tau\geqslant \tau_1$ and $t\in\mathbb{R}$. Consider the family $\hat{C}=\left\{C_t\right\}_{t\in\mathbb{R}}$ defined by 
\begin{align*}
    C_t=\overline{\bigcup_{\tau\geqslant \tau_1}S(t,t-\tau)\overline{B}_{r_0}^X} \quad \hbox{ for each } t\in \mathbb{R}.
\end{align*}

\begin{theorem}
    The family $\hat{C}$ is closed, bounded, positively invariant and uniformly pullback absorbing for $S$.
\end{theorem}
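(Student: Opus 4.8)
The plan is to verify the four properties of $\hat{C}$ one at a time, directly from its definition; the argument is the concrete counterpart of the construction used in the first proposition of Section~\ref{Dphitheory}. Closedness is immediate, since each $C_t$ is a closure. For the uniform boundedness, the choice of $\tau_1$ gives $S(t,t-\tau)\overline{B}^X_{r_0}\subset\overline{B}^X_{r_0}$ for all $\tau\geqslant\tau_1$ and $t\in\mathbb{R}$, hence $\bigcup_{\tau\geqslant\tau_1}S(t,t-\tau)\overline{B}^X_{r_0}\subset\overline{B}^X_{r_0}$, and since the closed ball is closed this yields $C_t\subset\overline{B}^X_{r_0}$ for every $t$; therefore $\bigcup_{t\in\mathbb{R}}C_t\subset\overline{B}^X_{r_0}$ is bounded.

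For the positive invariance, fix $(t,s)\in\mathcal{P}$. Using the continuity of $S(t,s)$ (so that $S(t,s)\overline{A}\subset\overline{S(t,s)A}$), the evolution identity $S(t,s)S(s,s-\tau)=S(t,s-\tau)$, and the reparametrization $s-\tau=t-\sigma$ with $\sigma=(t-s)+\tau$, I would estimate
\[
S(t,s)C_s\subset\overline{\bigcup_{\tau\geqslant\tau_1}S(t,s-\tau)\overline{B}^X_{r_0}}
=\overline{\bigcup_{\sigma\geqslant\tau_1+(t-s)}S(t,t-\sigma)\overline{B}^X_{r_0}}\subset C_t,
\]
the last inclusion holding because $t\geqslant s$ forces $\tau_1+(t-s)\geqslant\tau_1$, so the union on the right is taken over a sub-family of the indices defining $C_t$.

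Finally, for the uniform pullback absorption (Definition~\ref{def:UnifPA}) I would first note that Theorem~\ref{teo.existence.paf} says precisely that the family $\hat{B}=\{\overline{B}^X_{r_0}\}_{t\in\mathbb{R}}$ is uniformly pullback absorbing: if $D\subset\overline{B}^X_R$, then $S(\sigma,\sigma-\rho)D\subset\overline{B}^X_{r_0}$ for every $\sigma\in\mathbb{R}$ and every $\rho\geqslant\tau_0(R)$. Given a bounded $D$ and $t\in\mathbb{R}$, set $T:=\tau_0(R)+\tau_1$; then for $s\leqslant t$ and $r\geqslant T$, writing $r=\tau_1+(r-\tau_1)$ with $r-\tau_1\geqslant\tau_0(R)$ and using the evolution identity,
\[
S(s,s-r)D=S(s,s-\tau_1)\,S\big(s-\tau_1,(s-\tau_1)-(r-\tau_1)\big)D\subset S(s,s-\tau_1)\overline{B}^X_{r_0}\subset C_s,
\]
which is the required absorption. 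All of this is pure bookkeeping with the process identities, so I do not expect a genuine obstacle; the only points needing a little care are the shift $\tau\mapsto\tau+(t-s)$ in the invariance step, and the observation that the absorbing time $\tau_0(R)$ in Theorem~\ref{teo.existence.paf} is independent of the final time, which is exactly what upgrades the absorption to a \emph{uniform} one.
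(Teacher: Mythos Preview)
Your proof is correct and follows essentially the same route as the paper: both arguments handle closedness and boundedness trivially, prove positive invariance via continuity of $S(t,s)$ together with the shift $\sigma=(t-s)+\tau$ in the union index, and obtain uniform pullback absorption by splitting $r=\tau_1+(r-\tau_1)$ and invoking the uniform absorbing time $\tau_0(R)$ from Theorem~\ref{teo.existence.paf}.
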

\begin{proof}
Clearly $\hat{C}$ is closed, and $C_t \subset \overline{B}_{r_0}^X$ for all $t\in\mathbb{R}$, which implies $\bigcup_{t\in \mathbb{R}}C_t \subset \overline{B}_{r_0}^X$, that is, $\hat{C}$ is uniformly bounded. Now, let $t\geqslant s$. Then we can write $s=t-\sigma$ for some $\sigma\geqslant 0$. Note that
\begin{align*}   
S(t,s)C_s&=S(t,t-\sigma)C_{t-\sigma}=S(t,t-\sigma)\overline{\bigcup_{\tau\geqslant \tau_1}S(t-\sigma,t-\sigma-\tau)\overline{B}_{r_0}^X} \\
&\subset \overline{S(t,t-\sigma)\bigcup_{\tau\geqslant \tau_1}S(t-\sigma,t-\sigma-\tau)\overline{B}_{r_0}^X}=\overline{\bigcup_{\tau\geqslant \tau_1}S(t,t-\sigma)S(t-\sigma,t-\sigma-\tau)\overline{B}_{r_0}^X}\\
&=\overline{\bigcup_{\tau\geqslant t_1}S(t,t-(\sigma+\tau))\overline{B}_{r_0}^X} \subset \overline{\bigcup_{\tau\geqslant \sigma+\tau_1}S(t,t-\tau)\overline{B}_{r_0}^X}\subset \overline{\bigcup_{\tau\geqslant t_1}S(t,t-\tau)\overline{B}_{r_0}^X}=C_t,
\end{align*}
which proves the positively invariance of the family $\hat{C}$.

To see that $\hat{C}$ is uniformly pullback absorbing, let $D\subset H^1_0(\Omega)\times L^2(\Omega)$ be a bounded set. We know that there exist $\tau_0, \tau_1\geqslant 0$ such that $S(t,t-\tau)D\subset \overline{B}_{r_0}^X$ and $S(t,t-\tau)\overline{B}_{r_0}^X\subset \overline{B}_{r_0}^X$ for every $t\in\mathbb{R}$ and $\tau\geqslant \max\{\tau_0,\tau_1\}$. If $t\in \mathbb{R}$ and $\tau \geqslant \tau_0+\tau_1$ then $\tau-\tau_1\geqslant \tau_0$ and we obtain
\begin{align*}
S(t,t-\tau)D &= S(t,t-\tau_1)S(t-\tau_1,t-\tau)D\\
&=S(t,t-\tau_1)S(t-\tau_1,t-\tau_1-(\tau-\tau_1))D \subset S(t,t-\tau_1)\overline{B}_{r_0}^X\subset C_t,
\end{align*}
and the proof is complete.
\end{proof}

\subsection{Existence of a generalized polynomial pullback attractor}

Consider the closed, uniformly bounded, positively invariant and uniformly pullback absorbing family $\hat{C}$ obtained in Subsection \ref{obtainingC}. If $s\in \mathbb{R}$ and $V_1,V_2\in C_s\subset \overline{B}^X_{r_0}$, from Proposition \ref{EsC} there exists a constant $c_{r_0}>0$ such that 
\[
\|V(t,V_1)\|_X \leqslant c_{r_0} \quad \hbox{ and } \quad \|V(t,V_2)\|_X \leqslant c_{r_0} \quad \hbox{ for all } t\geqslant 0.
\]
If $V(t,V_1)=(v(t),v_t(t))$ and $V(t,V_2)=(w(t),w_t(t))$, Setting $Z(t)=(z(t),z_t(t))=V(t,V_1)-V(t,V_2)$ we have $z=v-w$, and \eqref{fghjmmm} holds. Defining 
\[
\mathcal{E}_s(t)=\frac12\|Z(t)\|^2_X = \frac12\left(\|z\|^2_{H^1_0}+\|z_t\|^2_{L^2}\right),
\]
for $T>0$ we have
\begin{equation}\label{centralx}
\begin{aligned}
T&\mathcal{E}_s(T)=-\frac12 \langle z_t,z\rangle\Big|_0^T+\int_0^T\|z_t\|^2_{L^2}dt-\frac12 \int_0^T k_s(t)\langle \|v_t\|^p_{L^2} v_t-\|w_t\|^p_{L^2} w_t, z\rangle dt\\
&-\frac12 \int_0^T\langle f_s(t,v)-f_s(t,w), z\rangle dt
+\frac12 \int_0^T\int_{\Omega\times\Omega} K(x,y)z_t(t,y)z(t,x) dydxdt\\
&-\int_0^T\int_t^Tk_s(\tau)\langle \|v_t\|^p_{L^2} v_t-\|w_t\|^p_{L^2} w_t, z_t \rangle d\tau dt -\int_0^T\int_t^T \langle f_s(\tau, v)-f_s(\tau, w), z_t\rangle d\tau dt\\
&+\int_0^T\int_t^T\int_{\Omega\times\Omega}K(x,y)z_t(\tau,y)z_t(\tau,x) dydxd\tau dt.
\end{aligned}
\end{equation}    

\begin{proposition}\label{prop.ghxxgh}
For $T>0$ there exist constants $\Gamma_{T,1},\Gamma_{T,2}>0$ such that
\begin{equation}\label{ghxxgh}
\begin{aligned}
&\mathcal{E}_s(T)\leqslant  \Gamma_{T,1}\sup_{t\in [0,T]}\|z(t)\|_{L^2}+ \Gamma_{T,2}\Big(\mathcal{E}_s(0)-\mathcal{E}_s(T)\\
&\qquad +\left|\int_0^T \langle f_s(\tau, v)-f_s(\tau, w), z_t\rangle d\tau\right|+ 2c_{r_0}\int_0^T \left\|\int_{\Omega}K(x,y)z_t(t,y) dy\right\|_{L^2} dt\Big)^{\frac{2}{p+2}}\\   
&\qquad +\tfrac1T\left|\int_0^T\int_t^T \langle f_s(\tau, w)-f_s(\tau, v), z_t\rangle d\tau dt\right|+2c_{r_0}\int_0^T \left\|\int_{\Omega}K(x,y)z_t(t,y) dy\right\|_{L^2} dt.
\end{aligned}
\end{equation}
\end{proposition}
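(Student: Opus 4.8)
The plan is to estimate, one by one, the eight terms on the right of the identity~\eqref{centralx} after dividing it by $T$. Throughout we use the uniform a priori bound $\|Z(t)\|_X=\|V(t,V_1)-V(t,V_2)\|_X\leqslant 2c_{r_0}$ for all $t\geqslant 0$ (a consequence of the two bounds stated just before the proposition, i.e.\ of Proposition~\ref{EsC}), hence $\|z(t)\|_{H^1_0}\leqslant 2c_{r_0}$ and $\|z_t(t)\|_{L^2}\leqslant 2c_{r_0}$; we also use Lemma~\ref{wvtlema} and Proposition~\ref{upLipschitz} with $R=c_{r_0}$, hypothesis~\ref{cond1} (giving $\big\|\int_\Omega K(\cdot,y)z_t(t,y)\,dy\big\|_{L^2}\leqslant K_0\|z_t(t)\|_{L^2}$), Poincar\'e's inequality, and Cauchy--Schwarz. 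With these, the four terms of~\eqref{centralx} that are linear in $z$ --- the boundary term $-\tfrac12\langle z_t,z\rangle\big|_0^T$, $-\tfrac12\int_0^T k_s(t)\langle\|v_t\|^p_{L^2}v_t-\|w_t\|^p_{L^2}w_t,z\rangle\,dt$, $-\tfrac12\int_0^T\langle f_s(t,v)-f_s(t,w),z\rangle\,dt$ and $\tfrac12\int_0^T\!\int_{\Omega\times\Omega}K(x,y)z_t(t,y)z(t,x)\,dy\,dx\,dt$ --- are, after dividing by $T$, each bounded by a constant (depending only on $T,c_{r_0},k_1,K_0,L_0,p$) times $\sup_{t\in[0,T]}\|z(t)\|_{L^2}$: pair the factor involving $z$ against its companion in $L^2(\Omega)$, bound $\big\|\,\|v_t\|^p_{L^2}v_t-\|w_t\|^p_{L^2}w_t\,\big\|_{L^2}$ via Proposition~\ref{upLipschitz} and $\|f_s(t,v)-f_s(t,w)\|_{L^2}$ via Lemma~\ref{wvtlema}, then use $\|z_t(t)\|_{L^2},\|z(t)\|_{H^1_0}\leqslant 2c_{r_0}$ and $\|z(t)\|_{L^2}\leqslant\sup_{[0,T]}\|z\|_{L^2}$. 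These contributions make up the $\Gamma_{T,1}\sup_{t\in[0,T]}\|z(t)\|_{L^2}$ term of~\eqref{ghxxgh}.

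For the three iterated integrals one uses Fubini, $\int_0^T\!\int_t^T g(\tau)\,d\tau\,dt=\int_0^T\tau\,g(\tau)\,d\tau$. Since $u\mapsto\|u\|^p_{L^2}u$ is monotone, $\langle\|v_t\|^p_{L^2}v_t-\|w_t\|^p_{L^2}w_t,z_t\rangle\geqslant 0$, so $-\tfrac1T\int_0^T\!\int_t^T k_s(\tau)\langle\cdots,z_t\rangle\,d\tau\,dt=-\tfrac1T\int_0^T\tau\,k_s(\tau)\langle\cdots,z_t\rangle\,d\tau\leqslant 0$ and is discarded. The iterated $f_s$-term is kept as $-\tfrac1T\int_0^T\!\int_t^T\langle f_s(\tau,v)-f_s(\tau,w),z_t\rangle\,d\tau\,dt\leqslant\tfrac1T\big|\int_0^T\!\int_t^T\langle f_s(\tau,w)-f_s(\tau,v),z_t\rangle\,d\tau\,dt\big|$, the third term on the right of~\eqref{ghxxgh}. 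Lastly $\tfrac1T\int_0^T\!\int_t^T\int_{\Omega\times\Omega}K z_t(\tau)z_t(\tau)\,d\tau\,dt=\tfrac1T\int_0^T\tau\int_{\Omega\times\Omega}K z_t(\tau)z_t(\tau)\,d\tau\leqslant\int_0^T\big\|\int_\Omega K(\cdot,y)z_t(\tau,y)\,dy\big\|_{L^2}\|z_t(\tau)\|_{L^2}\,d\tau\leqslant 2c_{r_0}\int_0^T\big\|\int_\Omega K(x,y)z_t(t,y)\,dy\big\|_{L^2}\,dt$, the fourth term on the right of~\eqref{ghxxgh}.

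It remains to treat $\tfrac1T\int_0^T\|z_t(t)\|^2_{L^2}\,dt$; this step produces the exponent $\tfrac{2}{p+2}$. The key fact is that, for any $a,b$ in a Hilbert space and any $p>0$,
\begin{align*}
\langle\|a\|^p a-\|b\|^p b,\,a-b\rangle&=\tfrac12(\|a\|^p-\|b\|^p)(\|a\|^2-\|b\|^2)+\tfrac12(\|a\|^p+\|b\|^p)\|a-b\|^2\\
&\geqslant 2^{-(p+1)}\|a-b\|^{p+2},
\end{align*}
the first summand being nonnegative and $\|a\|^p+\|b\|^p\geqslant\max\{\|a\|,\|b\|\}^p\geqslant 2^{-p}\|a-b\|^p$. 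Taking $a=v_t(t)$, $b=w_t(t)$ and using $k_s\geqslant k_0$ gives $\int_0^T k_s(t)\langle\|v_t\|^p_{L^2}v_t-\|w_t\|^p_{L^2}w_t,z_t\rangle\,dt\geqslant\tfrac{k_0}{2^{p+1}}\int_0^T\|z_t(t)\|^{p+2}_{L^2}\,dt$. On the other hand, testing~\eqref{fghjmmm} with $z_t$ in $L^2(\Omega)$ and integrating over $[0,T]$ gives the energy balance $\int_0^T k_s(t)\langle\cdots,z_t\rangle\,dt=\mathcal{E}_s(0)-\mathcal{E}_s(T)-\int_0^T\langle f_s(\tau,v)-f_s(\tau,w),z_t\rangle\,d\tau+\int_0^T\!\int_{\Omega\times\Omega}K z_t z_t$; bounding the last integral by $2c_{r_0}\int_0^T\big\|\int_\Omega K(x,y)z_t(t,y)\,dy\big\|_{L^2}\,dt$ shows that this left-hand side is $\leqslant Q$, where $Q\geqslant 0$ is precisely the quantity inside the large parentheses of~\eqref{ghxxgh}. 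Combining the two inequalities, $\int_0^T\|z_t(t)\|^{p+2}_{L^2}\,dt\leqslant(2^{p+1}/k_0)\,Q$, and H\"older's inequality with exponents $\tfrac{p+2}{2}$ and $\tfrac{p+2}{p}$ gives $\int_0^T\|z_t(t)\|^2_{L^2}\,dt\leqslant T^{\frac{p}{p+2}}(2^{p+1}/k_0)^{\frac{2}{p+2}}Q^{\frac{2}{p+2}}$; dividing by $T$ produces $\Gamma_{T,2}\,Q^{\frac{2}{p+2}}$ with $\Gamma_{T,2}=T^{-\frac{2}{p+2}}(2^{p+1}/k_0)^{\frac{2}{p+2}}$.

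Adding the estimates of the last three paragraphs (applied to~\eqref{centralx} divided by $T$) yields exactly~\eqref{ghxxgh}. The genuinely non-routine ingredients are the coercivity inequality for the nonlocal damping operator $u\mapsto\|u\|^p_{L^2}u$ and the H\"older interpolation turning $\int_0^T\|z_t\|^2_{L^2}$ into a $\tfrac{2}{p+2}$-power of $\int_0^T\|z_t\|^{p+2}_{L^2}$; this is where the exponent $\tfrac{2}{p+2}$ originates, and it is what eventually yields --- through Theorem~\ref{corMain} with $r=2$ and $\beta=\tfrac{2}{p+2}$ --- the polynomial decay rate $\varphi(t)=t^{-1/p}$ of Theorem~\ref{App:PolAtt}. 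All the remaining work is Cauchy--Schwarz, Fubini, monotonicity of the damping, and the uniform bound $\|Z(t)\|_X\leqslant 2c_{r_0}$; I expect it to be straightforward but somewhat lengthy bookkeeping.
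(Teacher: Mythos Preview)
Your proof is correct and follows essentially the same route as the paper's: estimate the ``linear in $z$'' terms of \eqref{centralx} against $\sup_{[0,T]}\|z\|_{L^2}$, discard the iterated damping term by monotonicity, keep the iterated $f_s$ and $K$ terms as they are, and control $\int_0^T\|z_t\|_{L^2}^2$ via the coercivity of $u\mapsto\|u\|^p_{L^2}u$ together with the energy balance obtained by testing \eqref{fghjmmm} with $z_t$. The only cosmetic differences are that the paper cites Proposition~\ref{prop.Cp} (constant $2^{-p}$) rather than your direct algebraic identity (constant $2^{-(p+1)}$), and it passes from the pointwise bound $\|z_t\|_{L^2}^2\leqslant C\langle\cdots\rangle^{2/(p+2)}$ to the integral via concavity/Jensen rather than first integrating and then applying H\"older; both variants yield the same structure with slightly different but equally admissible values of $\Gamma_{T,2}$.
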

\begin{proof}
Using Proposition \ref{prop.Cp} we have
\begin{align}\label{cpneto}
    \langle \|v_t\|^p_{L^2}v_t-\|w_t\|^p_{L^2}w_t, z_t\rangle \geqslant 2^{-p}\|z_t\|_{L^2}^{p+2},
\end{align}
which implies that
$
\|z_t\|_{L^2}^2\leqslant 2^{\frac{2p}{p+2}}\langle\|v_t\|^p_{L^2}v_t-\|w_t\|^p_{L^2}w_t, z_t\rangle^{\frac{2}{p+2}}.
$
Using the fact that $[0,\infty)\ni r\mapsto r^{\frac{2}{2+p}}$ is a continuous concave function, \cite[Proposition 1.2.11]{Tuomas} implies that
\begin{equation}\label{ineqzt2}
\begin{aligned}    
\int_0^T\|z_t\|_{L^2}^2 dt & \leqslant 2^{\frac{2p}{p+2}}\int_0^T \langle \|v_t\|^p_{L^2}v_t-\|w_t\|^p_{L^2}w_t, z_t\rangle^{\frac{2}{p+2}} dt\\
&\leqslant (4T)^{\frac{p}{p+2}}\left(\int_0^T \langle \|v_t\|^p_{L^2}v_t-\|w_t\|^p_{L^2}w_t, z_t\rangle dt\right)^{\frac{2}{p+2}} \\
&\stackrel{(\ast)}{\leqslant} \left(\tfrac{4T}{k_0}\right)^{\frac{p}{p+2}}\left(\int_0^Tk_s(t) \langle \|v_t\|^p_{L^2}v_t-\|w_t\|^p_{L^2}w_t, z_t\rangle dt\right)^{\frac{2}{p+2}},
\end{aligned}
\end{equation}
where in $(\ast)$ we used the fact that $\frac{k_s(t)}{k_0}\geqslant 1$ for all $t,s\in \mathbb{R}$.

Now, using \eqref{cpneto} , we obtain
\begin{equation}\label{putinto}  
\begin{aligned}  
0\leqslant & \int_0^Tk_s(t)\langle \|v_t\|^p_{L^2}v_t-\|w_t\|^p_{L^2}w_t,z_t \rangle dt =\mathcal{E}_s(0)-\mathcal{E}_s(T)\\
& \quad -\int_0^T \langle f_s(\tau, v)-f_s(\tau, w), z_t\rangle d\tau +  \int_0^T\int_{\Omega\times\Omega} K(x,y)z_t(\tau,y)z_t(\tau,x) dydx dt\\
&\leqslant \mathcal{E}_s(0)-\mathcal{E}_s(T)+\left|\int_0^T \langle f_s(\tau, v)-f_s(\tau,w), z_t\rangle d\tau\right|\\
& \qquad + \int_0^T\int_{\Omega\times\Omega} K(x,y)z_t(\tau,y)z_t(\tau,x) dydx dt.
\end{aligned}
\end{equation}
Since $\|z_t\|_{L^2}\leqslant 2c_{r_0}$ for all $t\geqslant 0$, using H\"older's inequality we have
\begin{align*}
\int_0^T\int_{\Omega\times\Omega}K(x,y)z_t(\tau,y)z_t(\tau,x) dydx d\tau & \leqslant \int_0^T \left\|\int_{\Omega}K(x,y)z_t(\tau,y) dy\right\|_{L^2}\|z_t(\tau,x)\|_{L^2} d\tau\\
&\leqslant 2c_{r_0}\int_0^T \left\|\int_{\Omega}K(x,y)z_t(t,y) dy\right\|_{L^2} dt,
\end{align*}
plugging \eqref{putinto} into \eqref{ineqzt2} we obtain
\begin{equation}\label{inx0}
\begin{aligned}
\int_0^T\left\|z_t\right\|_{L^2}^2 dt \leqslant \left(\tfrac{4T}{k_0}\right)^{\frac{p}{p+2}}\Big( & \mathcal{E}_s(0)  -\mathcal{E}_s(T) + \left|\int_0^T \langle f_s(\tau, v)-f_s(\tau, w), z_t\rangle d\tau\right|\\
& + 2c_{r_0}\int_0^T \left\|\int_{\Omega}K(x,y)z_t(t,y) dy\right\|_{L^2} dt\Big)^{\frac{2}{p+2}}
\end{aligned}
\end{equation}
Furthermore
\begin{equation*}
-\tfrac12 \langle z_t,z\rangle\Big|_0^T\leqslant  2c_{r_0}\sup_{t\in [0,T]}\left\|z(t)\right\|_{L^2},
\end{equation*}
and
\begin{equation*}
\begin{aligned}
-\tfrac12&\int_0^T k_s(t)\langle \|v_t\|^p_{L^2}v_t-\|w_t\|^p_{L^2}w_t, z\rangle dt \leqslant \tfrac{k_1}{2}\int_0^T | \|v_t\|^p_{L^2}v_t-\|w_t\|^p{L^2}w_t, z\rangle| dt\\
&\leqslant \tfrac{k_1}{2}\int_0^T \big\|\|v_t\|^p_{L^2}v_t-\|w_t\|^p_{L^2}w_t\big\|_{L^2}\|z(t)\|_{L^2} dt \leqslant k_1c_{r_0}^{p+2}T\sup_{t\in[0,T]}\left\|z(t)\right\|_{L^2}.
\end{aligned}
\end{equation*}
Using Lemma \ref{wvtlema} we have
\begin{equation*}
\begin{aligned}
-\tfrac12 & \int_0^T\langle f_s(t,v)-f_s(t,w), z\rangle dt \leqslant \tfrac12 \int_0^T \|f_s(t,v)-f_s(t,w)\|_{L^2}\|z(t)\|_{L^2} dt\\
&\leqslant \tfrac{L_0(1+2c_{r_{0}}^{2})}{2} \int_0^T \|z(t)\|_{H^1_0}\|z(t)\|_{L^2} dt \leqslant  \tfrac{L_0c_{r_0}(1+2c_{r_{0}}^{2})T}{2} \sup_{t\in[0,T]}\|z(t)\|_{L^2}.
\end{aligned}
\end{equation*}
Using H\"older's inequality we obtain
\begin{equation*}
\begin{aligned}
\tfrac12 \int_0^T\int_{\Omega\times\Omega}& K(x,y)z_t(t,y)z(t,x) dydxdt \leqslant \tfrac{K_0}{2} \int_0^T \|z_t\|_{L^2}\|z\|_{L^2}dt \leqslant K_0c_{r_0}T\sup_{t\in [0,T]}\|z(t)\|_{L^2},
\end{aligned}
\end{equation*}
and
\begin{align*}
\int_0^T\int_t^T &  \int_{\Omega\times\Omega}K(x,y)z_t(\tau,y)z_t(\tau,x) dydx d\tau dt\\
&\leqslant \int_0^T\int_t^T \left\|\int_{\Omega}K(x,y)z_t(\tau,y) dy\right\|_{L^2}\left\|z_t(\tau,y)\right\|_{L^2}d\tau dt\leqslant 2c_{r_0}T\int_0^T \left\|\int_{\Omega}K(x,y)z_t(t,y) dy\right\|_{L^2} dt
\end{align*}
Using all these inequalities in \eqref{centralx}, we conclude that
\begin{align*}
&T\mathcal{E}_s(T)\leqslant 2c_{r_0}\sup_{t\in [0,T]}\left\|z(t)\right\|_{L^2}+\left(\tfrac{4T}{k_0}\right)^{\frac{p}{p+2}}\Big(\mathcal{E}_s(0)  -\mathcal{E}_s(T) \\
& \qquad + \left|\int_0^T \langle f_s(\tau, v)-f_s(\tau, w), z_t\rangle d\tau\right| + 2c_{r_0}\int_0^T \left\|\int_{\Omega}K(x,y)z_t(t,y) dy\right\|_{L^2} dt\Big)^{\frac{2}{p+2}} \\
&\quad + k_1c_{r_0}^{p+2}T\sup_{t\in[0,T]}\|z(t)\|_{L^2} +\tfrac{L_0c_{r_0}(1+2cr_{0}^2)T}{2}\sup_{t\in[0,T]}\|z(t)\|_{L^2}+K_0c_{r_0}T \sup_{t\in [0,T]}\|z(t)\|_{L^2}\\
&\quad  +\left|\int_0^T\int_t^T \langle f_s(\tau, v)-f_s(\tau, w), z_t\rangle d\tau dt\right|+2c_{r_0}T\int_0^T \left\|\int_{\Omega}K(x,y)z_t(t,y) dy\right\|_{L^2} dt,
\end{align*}
which, naming 
\[
\Gamma_{T,1}=\tfrac1T\left(2c_{r_0}+k_1c_{r_0}^{p+2}T+\tfrac{L_0c_{r_0}(1+2cr_0^2)T}{2}+K_0c_{r_0}T\right)
\]
and $\Gamma_{T,2}=(\frac{4}{k_0})^{\frac{p}{p+2}}T^{-\frac{2}{p+2}}$, gives us \eqref{ghxxgh}.
\end{proof}

\begin{corollary}\label{cor:itemi}
We have
\begin{align*}
\mathcal{E}_s(T)& \leqslant \mathcal{E}_s(0) + \left|\int_0^T \langle f_s(\tau, v)-f_s(\tau,w), z_t\rangle d\tau\right|+ 2c_{r_0}\int_0^T\left\|\int_{\Omega} K(x,y)z_t(\tau,y)dy\right\|_{L^2}dt.
\end{align*}
\end{corollary}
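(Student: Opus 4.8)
The plan is to read the inequality straight off the energy identity already derived inside the proof of Proposition~\ref{prop.ghxxgh}. Recall that pairing \eqref{fghjmmm} with $z_t$ in $L^2(\Omega)$ and integrating over $[0,T]$ produced the equality on the first line of \eqref{putinto}, namely
\begin{align*}
\int_0^T k_s(t)\langle \|v_t\|^p_{L^2}v_t-\|w_t\|^p_{L^2}w_t, z_t\rangle\,dt &= \mathcal{E}_s(0)-\mathcal{E}_s(T)-\int_0^T\langle f_s(\tau,v)-f_s(\tau,w), z_t\rangle\,d\tau\\
&\quad + \int_0^T\int_{\Omega\times\Omega}K(x,y)z_t(\tau,y)z_t(\tau,x)\,dy\,dx\,d\tau.
\end{align*}

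First I would invoke \eqref{cpneto} (a consequence of Proposition~\ref{prop.Cp}), which shows that the integrand on the left-hand side is pointwise nonnegative; hence the whole left-hand side is $\geqslant 0$ and may simply be discarded. Solving the identity for $\mathcal{E}_s(T)$ and bounding the $f_s$-term by its absolute value then yields
\begin{align*}
\mathcal{E}_s(T) &\leqslant \mathcal{E}_s(0)+\Big|\int_0^T\langle f_s(\tau,v)-f_s(\tau,w), z_t\rangle\,d\tau\Big|\\
&\quad + \int_0^T\int_{\Omega\times\Omega}K(x,y)z_t(\tau,y)z_t(\tau,x)\,dy\,dx\,d\tau,
\end{align*}
which already contains the first two terms of the claimed bound.

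Second, I would estimate the remaining anti-damping double integral by the Cauchy--Schwarz inequality in the $x$-variable, followed by the uniform bound $\|z_t(\tau,\cdot)\|_{L^2}\leqslant 2c_{r_0}$ (valid because $V_1,V_2\in C_s\subset\overline{B}^X_{r_0}$, together with Proposition~\ref{EsC}) --- exactly the estimate already carried out inside the proof of Proposition~\ref{prop.ghxxgh}. This replaces the double integral by $2c_{r_0}\int_0^T\big\|\int_{\Omega}K(x,y)z_t(t,y)\,dy\big\|_{L^2}\,dt$ and finishes the proof. There is no genuine obstacle here: the corollary is merely an intermediate by-product of the computation already performed --- it will later serve to verify the first inequality in hypothesis~(iv) of Theorem~\ref{corMain} for \eqref{ourproblem}, with $r=2$ --- and the only point requiring care is to record that the damping integral is truly nonnegative, so that it may be discarded rather than merely estimated.
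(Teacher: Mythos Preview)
Your proposal is correct and follows essentially the same route as the paper: the corollary is read off from the energy identity \eqref{putinto} (nonnegativity of the damping integral via \eqref{cpneto}, absolute value on the $f_s$-term, and the Cauchy--Schwarz/$\|z_t\|_{L^2}\leqslant 2c_{r_0}$ estimate on the $K$-term), all of which are already established inside the proof of Proposition~\ref{prop.ghxxgh}. The paper's one-line proof cites \eqref{inx0} together with $\int_0^T\|z_t\|_{L^2}^2\,dt\geqslant 0$, but this amounts to the same observation, since the bracket in \eqref{inx0} is precisely the quantity bounded below by $0$ in \eqref{putinto} after the $K$-estimate.
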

\begin{proof}
It follows directly from \eqref{inx0} and the fact that $\int_0^T\|z_t\|^2_{L^2}dt \geqslant 0$.
\end{proof}

Our goal for now is to use Theorem \ref{thmplm} to prove that the evolution process $S$ associated with \eqref{ourproblem} is polynomially pullback $\kappa$-dissipative. So, from this point on, we verify that we can use Theorem \ref{thmplm}.

\medskip \noindent \textbf{The pseudometrics $\rho_1$ and $\rho_2$.} \medskip

For $V_1,V_2\in X$, setting $Z(t)=(z(t),z_t(t))=V(t,V_1)-V(t,V_2)$, define the maps $\rho_1,\rho_2\colon X\times X\to \mathbb{R}^+$ as
\begin{equation*}
\rho_1(V_1,V_2)=4c_{r_0}\int_0^T \left\|\int_{\Omega}K(x,y)z_t(t,y) dy\right\|_{L^2} dt,
\end{equation*}
and 
\begin{equation*}\label{def.rho2}
\rho_2(V_1,V_2)=2\Gamma_{T,1}\sup_{t\in [0,T]} \|z(t)\|_{L^2}.
\end{equation*}

Following as in \cite{ZhaoZhong2022} we obtain the following result.

\begin{proposition}\label{rho1rho2}
The functions $\rho_1,\rho_2$ are pseudometrics in $X$, which are precompact in $\overline{B}^X_{r_0}$.
\end{proposition}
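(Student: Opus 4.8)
The plan is to verify the three pseudometric axioms for each of $\rho_1$ and $\rho_2$ and then to establish precompactness on $\overline{B}^X_{r_0}$ by extracting suitable convergent subsequences. For the pseudometric axioms: writing $Z(t)=(z(t),z_t(t))=V(t,V_1)-V(t,V_2)$, note that $Z$ depends linearly on the pair $(V_1,V_2)$ only through the solution map, which is \emph{not} linear; so I would not argue via linearity but directly. Reflexivity $\rho_i(V,V)=0$ is immediate since $V(t,V)-V(t,V)\equiv 0$ makes $z\equiv 0$. Symmetry $\rho_i(V_1,V_2)=\rho_i(V_2,V_1)$ follows because swapping $V_1$ and $V_2$ replaces $z$ by $-z$ (hence $z_t$ by $-z_t$), and both defining expressions involve only the $L^2$-norms $\|z(t)\|_{L^2}$ and $\big\|\int_\Omega K(x,y)z_t(t,y)\,dy\big\|_{L^2}$, which are unchanged under $z\mapsto -z$. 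For the triangle inequality I would introduce a third point $V_3$ with $\tilde Z=(\tilde z,\tilde z_t)=V(t,V_1)-V(t,V_3)$ and $\hat Z=(\hat z,\hat z_t)=V(t,V_3)-V(t,V_2)$, so that $z=\tilde z+\hat z$ and $z_t=\tilde z_t+\hat z_t$ pointwise; then subadditivity of $v\mapsto\big\|\int_\Omega K(x,y)v(y)\,dy\big\|_{L^2}$ (it is a seminorm on $L^2(\Omega)$, being the $L^2$-norm of a linear integral operator applied to $v$), subadditivity of $t\mapsto\|z(t)\|_{L^2}$, monotonicity of $\sup$ and of $\int_0^T$, all combine to give $\rho_i(V_1,V_2)\leqslant \rho_i(V_1,V_3)+\rho_i(V_3,V_2)$.

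For precompactness on $\overline{B}^X_{r_0}$, I would use the characterization recalled in the excerpt: $\rho_i$ is precompact on $B$ iff every sequence in $B$ has a $\rho_i$-Cauchy subsequence. So let $\{V_1^{(n)}\}_{n}\subset\overline{B}^X_{r_0}$; I must produce a subsequence along which both $\rho_1$ and $\rho_2$ (of pairs of its elements) tend to $0$. The key analytic input is compactness of the solution operator in weaker topologies: by Proposition \ref{EsC} the solutions $V(\cdot,V_1^{(n)})=(v^{(n)},v^{(n)}_t)$ are bounded in $X$ uniformly in $n$ and $t\in[0,T]$, so $\{v^{(n)}\}$ is bounded in $L^\infty(0,T;H^1_0(\Omega))$ and $\{v^{(n)}_t\}$ is bounded in $L^\infty(0,T;L^2(\Omega))$; moreover, from \eqref{eq:normaHm1} together with the uniform bounds, $\{v^{(n)}_{tt}\}$ is bounded in, say, $L^\infty(0,T;H^{-1}(\Omega))$ (here one uses $p>0$ and the polynomial growth of $f$ via Proposition \ref{funcFimp}). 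By Aubin--Lions--Simon, $\{v^{(n)}\}$ is relatively compact in $C([0,T];L^2(\Omega))$, yielding (after passing to a subsequence) $v^{(n)}\to v$ in $C([0,T];L^2(\Omega))$; this controls $\rho_2$, since $\rho_2(V_1^{(n)},V_1^{(m)})=2\Gamma_{T,1}\sup_{t\in[0,T]}\|v^{(n)}(t)-v^{(m)}(t)\|_{L^2}\to 0$ as $n,m\to\infty$. For $\rho_1$ the decisive observation is that $w\mapsto\int_\Omega K(\cdot,y)w(y)\,dy$ is a \emph{compact} operator from $L^2(\Omega)$ into $L^2(\Omega)$ because $K\in L^2(\Omega\times\Omega)$ is a Hilbert--Schmidt kernel (hypothesis \ref{cond1}); combined with the uniform $L^2$-bound on $\{v^{(n)}_t\}$ and a further subsequence extraction (e.g.\ using that $t\mapsto v^{(n)}_t(t)$ is equi-bounded in $L^2$ and, via the $H^{-1}$ bound on $v^{(n)}_{tt}$, equi-continuous into $H^{-1}$, so $v^{(n)}_t\to v_t$ in $C([0,T];H^{-1}(\Omega))$, while the Hilbert--Schmidt operator maps this together with the $L^2$-bound into strong convergence in $L^2(\Omega)$ uniformly in $t$), one gets $\big\|\int_\Omega K(x,y)(v^{(n)}_t-v^{(m)}_t)(t,y)\,dy\big\|_{L^2}\to 0$ uniformly in $t\in[0,T]$, hence $\rho_1(V_1^{(n)},V_1^{(m)})\to 0$. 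Passing to a common subsequence handles both simultaneously.

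The main obstacle I anticipate is the $\rho_1$ precompactness: one cannot simply say "$v^{(n)}_t$ has a convergent subsequence in $L^2$" because there is no gain of spatial regularity for $v_t$ in a wave equation. The resolution is precisely that we only need convergence \emph{after} applying the compact Hilbert--Schmidt operator $w\mapsto\int_\Omega K(\cdot,y)w(y)\,dy$, so it suffices to have weak convergence $v^{(n)}_t(t)\rightharpoonup v_t(t)$ in $L^2(\Omega)$ for each $t$ together with enough uniformity in $t$ to upgrade the resulting pointwise strong convergence to uniform strong convergence on $[0,T]$; the $H^{-1}$-equicontinuity coming from \eqref{eq:normaHm1} supplies that uniformity via an Arzelà--Ascoli argument in $C([0,T];H^{-1}(\Omega))$, after which compactness of the kernel operator from $H^{-1}(\Omega)$ (or from $L^2$-bounded, $H^{-1}$-convergent families) into $L^2(\Omega)$ closes the gap. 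I would also remark that, since the statement asks for precompactness on $\overline{B}^X_{r_0}$ rather than on each $B_{t-nT}$, and $B_{t-nT}\subset\overline{B}^X_{r_0}$ for all $n$ by the construction of $\hat C$ in Subsection \ref{obtainingC}, precompactness on $\overline{B}^X_{r_0}$ immediately gives precompactness on every $B_{t-nT}$, which is the form needed when invoking Theorem \ref{corMain}.
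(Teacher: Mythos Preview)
Your proposal is correct and aligns with the approach the paper implicitly adopts: the paper does not spell out a proof here but simply refers to \cite{ZhaoZhong2022}, and the compactness ingredients you invoke (Aubin--Lions--Simon for $v^{(n)}\to v$ in $C([0,T];L^2(\Omega))$, and the Hilbert--Schmidt property of the kernel operator from \ref{cond1} to handle $v^{(n)}_t$) are exactly those the paper marshals in Lemma~\ref{convhs} and its surrounding discussion. One minor remark: for $\rho_1$ you only need $L^1(0,T)$ convergence of $t\mapsto\big\|\int_\Omega K(x,y)(v^{(n)}_t-v_t)(t,y)\,dy\big\|_{L^2}$, so pointwise-in-$t$ weak $L^2$ convergence of $v^{(n)}_t(t)$ (which follows from the $C([0,T];H^{-1})$ convergence plus the uniform $L^2$ bound) together with compactness of the kernel operator and dominated convergence already suffices---your uniform-in-$t$ Arzel\`a--Ascoli argument works but is a bit stronger than required.
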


\medskip \noindent \textbf{The contractive maps $\psi_1$ and $\psi_2$.} \medskip

As in the definition of $\rho_1$ and $\rho_2$, for $V_1,V_2\in \overline{B}_{r_0}^X$, if $V(t,V_1)=(v(t),v_t(t))$, $V(t,V_2)=(w(t),w_t(t))$, setting $Z(t)=(z(t),z_t(t))=V(t,V_1)-V(t,V_2)$ then $z=v-w$, $z_t=v_t-w_t$, and we define $\psi_1,\psi_2\colon X\times X\to \mathbb{R}^+$ by
\begin{align*}
& \psi_1(V_1,V_2)= 2\left|\int_0^T \langle f_s(\tau,v)-f_s(\tau, w), z_t\rangle d\tau\right|,\\
& \psi_2(V_1,V_2) = \frac{2}{T}\left|\int_0^T\int_t^T \langle f_s(\tau, w)-f_s(\tau, v), w_t-v_t\rangle d\tau dt\right|. 
\end{align*}

Our goal for now is to prove the following result.

\begin{proposition}\label{prop.psi12}
$\psi_1,\psi_2\in \operatorname{contr}(\overline{B}^X_{r_0})$.
\end{proposition}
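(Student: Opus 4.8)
The plan is to show that $\psi_1$ and $\psi_2$ are contractive on $\overline{B}^X_{r_0}$ by exploiting the compact Sobolev embeddings available in dimension three, together with the boundedness of trajectories originating in $\overline{B}^X_{r_0}$ guaranteed by Proposition~\ref{EsC}. Fix a sequence $\{V_0^n\}\subset \overline{B}^X_{r_0}$; by Proposition~\ref{EsC} the corresponding solutions $V(\cdot,V_0^n)=(v^n,v_t^n)$ remain in $\overline{B}^X_{c_{r_0}}$ for all $t\geqslant 0$, so $\{v^n\}$ is bounded in $C([0,T],H^1_0(\Omega))$ and $\{v_t^n\}$ is bounded in $C([0,T],L^2(\Omega))$. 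Using the equation together with the estimate \eqref{eq:normaHm1} one sees that $\{v_t^n\}$ is also bounded in (say) $L^\infty(0,T;L^2)$ with $\{v_{tt}^n\}$ bounded in $L^\infty(0,T;H^{-1})$. By Aubin--Lions (with $H^1_0\hookrightarrow\hookrightarrow L^2\hookrightarrow H^{-1}$ and $H^1_0\hookrightarrow\hookrightarrow L^4 \hookrightarrow H^{-1}$), after passing to a subsequence $\{n_k\}$ we get strong convergence $v^{n_k}\to v$ in $C([0,T],L^2(\Omega))$ and in $L^2(0,T;L^4(\Omega))$, and $v_t^{n_k}\to v_t$ strongly in $L^2(0,T;H^{-1}(\Omega))$ while $v_t^{n_k}\rightharpoonup v_t$ weakly in $L^2(0,T;L^2(\Omega))$.

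For $\psi_1$, I would fix such a subsequence and estimate, for indices $k,\ell$,
\[
\left|\int_0^T \langle f_s(\tau,v^{n_k})-f_s(\tau,v^{n_\ell}),\, v_t^{n_k}-v_t^{n_\ell}\rangle\, d\tau\right|.
\]
The idea is to write $f_s(\tau,v^{n_k})-f_s(\tau,v^{n_\ell}) = [f_s(\tau,v^{n_k})-f_s(\tau,v)] - [f_s(\tau,v^{n_\ell})-f_s(\tau,v)]$ and pair against $v_t^{n_k}-v_t^{n_\ell}$, which is bounded in $L^2(0,T;L^2)$. By Proposition~\ref{funcFimp}\ref{funcFimp.a2} and the embedding $H^1_0\hookrightarrow L^6$ in dimension three, $\|f_s(\tau,v^{n_k})-f_s(\tau,v)\|_{L^2}\leqslant C(1+\|v^{n_k}\|_{H^1_0}^2+\|v\|_{H^1_0}^2)\|v^{n_k}-v\|_{L^6}^{\theta}\|v^{n_k}-v\|_{H^1_0}^{1-\theta}$ for a suitable interpolation, but more simply one uses $\|f_s(\tau,v^{n_k})-f_s(\tau,v)\|_{L^2}\leqslant C\|v^{n_k}-v\|_{L^6}$ (absorbing the uniformly bounded quadratic factor), and $\|v^{n_k}-v\|_{L^6}$ is controlled by interpolation between $\|v^{n_k}-v\|_{L^2}\to 0$ and $\|v^{n_k}-v\|_{H^1_0}\leqslant 2c_{r_0}$, giving strong convergence in $L^p(0,T;L^6)$ for appropriate $p$; pairing with the $L^2(0,T;L^2)$-bounded factor $v_t^{n_k}-v_t^{n_\ell}$ and using Hölder in time then forces $\lim_{k,\ell\to\infty}\psi_1(V_0^{n_k},V_0^{n_\ell})=0$. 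The same Aubin--Lions subsequence works simultaneously for $\psi_2$, since $\psi_2$ involves the double time integral $\int_0^T\int_t^T \langle f_s(\tau,w)-f_s(\tau,v),w_t-v_t\rangle\,d\tau\,dt$, which is bounded by $T\sup_{t}\big|\int_t^T\langle\cdots\rangle d\tau\big|$ and hence controlled by the same quantity that makes $\psi_1$ vanish.

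An alternative, perhaps cleaner, route for $\psi_1$ is an integration-by-parts in time: $\int_0^T\langle f_s(\tau,v^{n_k})-f_s(\tau,v^{n_\ell}),\partial_\tau(v^{n_k}-v^{n_\ell})\rangle d\tau$ can be rewritten using $\frac{d}{d\tau}\int_\Omega [F_s(\tau,v^{n_k})-F_s(\tau,v^{n_\ell})]dx$ plus cross terms and a $\partial_\tau F_s$ term, reducing everything to boundary values $\int_\Omega[F_s(T,\cdot)-F_s(0,\cdot)]$ and integrals of $\partial_t F_s$, all of which converge along the strongly-convergent-in-$C([0,T],L^2)$ subsequence by Proposition~\ref{funcFimp}\ref{funcFimp.b2} and \ref{funcFimp.c}. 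I expect the main obstacle to be the rigorous justification of the compactness step — verifying that the solutions indeed have enough regularity ($v_{tt}\in L^\infty(0,T;H^{-1})$ uniformly) to invoke Aubin--Lions, which rests on \eqref{eq:normaHm1} together with the uniform bound $\|V(t,V_0^n)\|_X\leqslant c_{r_0}$ and the polynomial control of $\|v_t^n\|_{L^2}^{p+1}$; once that is in place, the convergence of the nonlinear terms is routine given Proposition~\ref{funcFimp} and the subcritical growth of $f$ in dimension three.
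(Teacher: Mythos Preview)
Your first route has a real gap at precisely the place where the argument is delicate: the nonlinearity is \emph{critical} in dimension three ($|f_v|\leqslant c_0(1+|v|^2)$ means cubic growth of $f$), so the estimate $\|f_s(\tau,v^{n_k})-f_s(\tau,v)\|_{L^2}\leqslant C\|v^{n_k}-v\|_{L^6}$ is sharp and cannot be weakened to an $L^q$ with $q<6$. Your claim that interpolation between the $L^2$-strong limit and the $H^1_0$-bound yields strong $L^6$-convergence is false: interpolation gives $H^\gamma$ for $\gamma<1$, which embeds into $L^q$ only for $q<6$; the endpoint $H^1_0\hookrightarrow L^6$ is exactly the non-compact Sobolev embedding. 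Consequently you do \emph{not} obtain strong convergence of $f_s(\tau,v^{n_k})$ in $L^2(\Omega)$, and pairing against $v_t^{n_k}-v_t^{n_\ell}\in L^2$ (only) gives nothing. Any weaker norm in which $f_s(\tau,v^{n_k})$ does converge ($L^{6/5}$, $H^{-1}$, $H^{-3}$) requires a dual regularity on $v_t$ that you do not have.

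Your alternative route---integration by parts in time via $F_s$---is what the paper actually does, but your sketch misses the essential mechanism. The diagonal terms $\int_0^T\langle f_s(\tau,v^{n_k}),v_t^{n_k}\rangle\,d\tau$ indeed rewrite as $\int_\Omega F_s\big|_0^T\,dx-\int_0^T\int_\Omega\partial_t F_s\,dx\,d\tau$, and these converge by Corollary~\ref{proppart0}. But the \emph{cross terms} $\int_0^T\langle f_s(\tau,v^{n_k}),v_t^{n_\ell}\rangle\,d\tau$ with $k\neq\ell$ do not integrate by parts, and your proposal says nothing about them. The paper's device is to take \emph{iterated} limits $\lim_n\lim_m$ rather than a simultaneous double limit: for fixed $n$, one uses $v_t^{(m)}\stackrel{\ast}{\rightharpoonup}v_t$ in $L^\infty(0,T;L^2)$ against $f_s(\tau,v^{(n)})\in L^1(0,T;L^2)$ (Lemma~\ref{proppart1}); then, with $v_t$ now fixed, one uses the weak-$\ast$ convergence $f_s(\tau,v^{(n)})\stackrel{\ast}{\rightharpoonup}f_s(\tau,v)$ established via the density argument of Lemma~\ref{mnbvcxzas} and Corollary~\ref{ckkk}. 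All four pieces in the expansion of $\langle f_s(v^{(n)})-f_s(v^{(m)}),v_t^{(n)}-v_t^{(m)}\rangle$ then share the same iterated limit (Lemma~\ref{propt2023x}), so their sum has iterated limit zero (Corollary~\ref{corphi1}), which suffices for $\liminf_{m,n}\psi_1=0$. For $\psi_2$ the paper needs, in addition, Fubini and dominated convergence to pass the limit through the outer $\int_0^T\,dt$; your estimate $\psi_2\leqslant T\sup_t|\int_t^T\langle\cdots\rangle\,d\tau|$ pushes the problem to a $\sup$ over $t$, which is strictly harder, not easier.
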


As we proceeded in previous sections, we will state a sequence of auxiliary lemmas to help us prove Proposition \ref{prop.psi12}. The next one is a convergence result, which uses Alaoglu's Theorem, the inclusions $H^1_0(\Omega)\hookrightarrow \hookrightarrow H^\gamma(\Omega)\hookrightarrow L^2(\Omega)$ and \cite[Corollary 4]{jsimon}.

\begin{lemma}\label{convhs}
Let $\{V_n\}_{n\in\mathbb{N}}\subset \overline{B}^X_{r_0}$ and denote $V(t,V_n)=(v^{(n)}(t),v_t^{(n)}(t))$. For a fixed $\gamma\in (0,1)$, up to a subsequence, we have
\begin{equation*}
\left\{\begin{aligned}
    &(v^{(n)},v_t^{(n)})\overset{\ast}{\rightharpoonup} (v,v_t) \hbox{ in }L^{\infty}(0,T; X),\\
    & v^{(n)}\rightarrow v \hbox{ in } C([0,T]; H^\gamma(\Omega)).
    \end{aligned}\right.
\end{equation*}
\end{lemma}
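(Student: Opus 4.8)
The plan is to establish the two convergences as consequences of standard compactness arguments applied to the bounded sequence $\{(v^{(n)}, v_t^{(n)})\}$ in $L^\infty(0,T;X)$. First I would observe that, since each $V_n \in \overline{B}^X_{r_0}$, Proposition \ref{EsC} (together with the relation $\|V(t,V_n)\|_X^2 \leqslant 2E_s(t)$) gives a uniform bound $\|(v^{(n)}(t), v_t^{(n)}(t))\|_X \leqslant c_{r_0}$ for all $t \in [0,T]$ and all $n$. Hence $\{v^{(n)}\}$ is bounded in $L^\infty(0,T;H^1_0(\Omega))$ and $\{v_t^{(n)}\}$ is bounded in $L^\infty(0,T;L^2(\Omega))$. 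Since $L^\infty(0,T;X)$ is the dual of the separable space $L^1(0,T;X)$ (identifying $X$ with its dual via the Hilbert structure), the Banach--Alaoglu theorem yields a subsequence, not relabeled, and a limit $(v,\tilde v) \in L^\infty(0,T;X)$ with $(v^{(n)}, v_t^{(n)}) \overset{\ast}{\rightharpoonup} (v, \tilde v)$; a routine argument with test functions identifies $\tilde v = v_t$ in the distributional sense, which gives the first line of the claim.

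Next I would prove the strong convergence $v^{(n)} \to v$ in $C([0,T];H^\gamma(\Omega))$ for a fixed $\gamma \in (0,1)$. The idea is to apply the Aubin--Lions--Simon compactness lemma, in the form of \cite[Corollary 4]{jsimon}, with the chain of inclusions $H^1_0(\Omega) \hookrightarrow\hookrightarrow H^\gamma(\Omega) \hookrightarrow L^2(\Omega)$, the first being compact by the Rellich--Kondrachov theorem. For this I need: (i) $\{v^{(n)}\}$ bounded in $L^\infty(0,T;H^1_0(\Omega))$, which we already have; and (ii) a uniform bound on $\{v_t^{(n)}\}$ in some $L^q(0,T;H^{-1}(\Omega))$ ensuring equi-continuity in the weaker topology. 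For (ii) I would invoke the estimate \eqref{eq:normaHm1} on $\|v_{tt}^{(n)}(t)\|_{H^{-1}}$: using the uniform bound $\|v^{(n)}(t)\|_{H^1_0}, \|v_t^{(n)}(t)\|_{L^2} \leqslant c_{r_0}$, the bound \ref{cond1} on $K$, the bound $k_0 \leqslant k_s(t) \leqslant k_1$, and the growth estimate on $f$ from Proposition \ref{funcFimp}\ref{funcFimp.a} combined with $H^1_0(\Omega) \hookrightarrow L^6(\Omega)$ (so $\|f_s(t,v^{(n)}(t))\|_{L^2} \leqslant C(1 + \|v^{(n)}(t)\|_{L^6}^3) \leqslant C(1+c_{r_0}^3)$), one gets $\|v_{tt}^{(n)}\|_{L^\infty(0,T;H^{-1}(\Omega))} \leqslant C$ uniformly in $n$. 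Thus $\{v_t^{(n)}\}$ is bounded in $L^\infty(0,T;L^2(\Omega))$ with derivative bounded in $L^\infty(0,T;H^{-1}(\Omega))$, which by Simon's criterion gives (after passing to a further subsequence) $v_t^{(n)} \to v_t$ strongly in $C([0,T];H^{-1}(\Omega))$; and similarly, $\{v^{(n)}\}$ bounded in $L^\infty(0,T;H^1_0(\Omega))$ with $\{v_t^{(n)}\}$ bounded in $L^\infty(0,T;L^2(\Omega))$ gives $v^{(n)} \to v$ strongly in $C([0,T];H^\gamma(\Omega))$ for every $\gamma < 1$, since $H^1_0 \hookrightarrow\hookrightarrow H^\gamma \hookrightarrow L^2$.

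I expect the main technical obstacle to be verifying carefully that the nonlinear terms do not destroy the uniform $H^{-1}$-bound on $v_{tt}^{(n)}$, in particular controlling $\|f_s(t,v^{(n)})\|_{L^2}$; but this is exactly where the subcritical growth of $f$ in dimension three (the exponent $3$ in $|f(t,v)| \leqslant 2c_0(1+|v|^3)$, matched against $H^1_0(\Omega) \hookrightarrow L^6(\Omega)$) saves the argument, so no surprises are anticipated. A minor point to handle cleanly is the identification of the weak-$*$ limit's time-derivative and the compatibility of the two subsequence extractions — one should extract once for the Alaoglu step and then refine for the Simon step, and note the limit $v$ is the same in both (by uniqueness of distributional limits). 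With these convergences in hand, the lemma follows. I would close by remarking that since $\{V_n\} \subset \overline{B}^X_{r_0}$ was arbitrary and $\overline{B}^X_{r_0} \supset B_{t-nT}$ in the application (the absorbing balls are sub-balls of $\overline{B}^X_{r_0}$), the same conclusion holds for sequences drawn from any $B_{t-nT}$, which is what is needed downstream for Proposition \ref{prop.psi12}.
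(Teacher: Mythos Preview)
Your proposal is correct and follows essentially the same route as the paper: the paper states that the lemma is a consequence of Alaoglu's Theorem together with the chain $H^1_0(\Omega)\hookrightarrow\hookrightarrow H^\gamma(\Omega)\hookrightarrow L^2(\Omega)$ and \cite[Corollary 4]{jsimon}, which is exactly your argument. The detour through the uniform bound on $\|v_{tt}^{(n)}\|_{L^\infty(0,T;H^{-1})}$ is not needed for the statement as written --- Simon's criterion applied with $v^{(n)}$ bounded in $L^\infty(0,T;H^1_0)$ and $v_t^{(n)}$ bounded in $L^\infty(0,T;L^2)$ already gives $v^{(n)}\to v$ in $C([0,T];H^\gamma)$ directly --- but it does no harm and yields the extra convergence $v_t^{(n)}\to v_t$ in $C([0,T];H^{-1})$, which is a reasonable bonus.
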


The next result is a consequence of Proposition \ref{funcFimp}, H\"older's inequality and the continuous inclusions $H^1_0(\Omega)\hookrightarrow L^4(\Omega)$ and $H^\gamma(\Omega)\hookrightarrow L^2(\Omega)$.

\begin{lemma}\label{convhs2}
There exists a constant $C>0$ such that for all $t\geqslant 0$ and $n\in \mathbb{N}$ we have
\[
\left|\int_{\Omega}F_s(t,v^{(n)}(t)) dx-\int_{\Omega} F_s(t,v(t))dx\right|\leqslant C\|v^{(n)}(t)-v(t)\|_{H^\gamma},
\]
and
\[
\left|\int_{\Omega}\frac{\partial F_s}{\partial t}(t,v^{(n)}(t)) dx-\int_{\Omega} \frac{\partial F_s}{\partial t}t,v(t))dx\right|\leqslant C\|v^{(n)}(t)-v(t)\|_{H^\gamma}.
\]
\end{lemma}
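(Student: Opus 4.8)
The plan is to obtain both estimates from the pointwise Lipschitz-type bounds for $F$ and $\partial_t F$ in the third slot supplied by Proposition \ref{funcFimp}, combined with H\"older's inequality, the relevant Sobolev embeddings, and the uniform bound
\[
\|v^{(n)}(t)\|_{H^1_0}\leqslant c_{r_0},\qquad \|v(t)\|_{H^1_0}\leqslant c_{r_0}\qquad\hbox{for all } t\geqslant 0,\ n\in\mathbb{N}.
\]
Here the bound for $v^{(n)}$ comes from Proposition \ref{EsC} applied to the initial data $V_n\in\overline{B}^X_{r_0}$, while the bound for the limit $v$ follows by passing to the limit in Lemma \ref{convhs}: for each fixed $t$ the sequence $v^{(n)}(t)$ is bounded in $H^1_0(\Omega)$ and converges to $v(t)$ in $H^\gamma(\Omega)$, hence $v(t)\in H^1_0(\Omega)$ with $\|v(t)\|_{H^1_0}\leqslant\liminf_n\|v^{(n)}(t)\|_{H^1_0}$. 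Throughout I use that $F_s(t,\cdot)=F(t+s,\cdot)$ and $\tfrac{\partial F_s}{\partial t}(t,\cdot)=\tfrac{\partial F}{\partial t}(t+s,\cdot)$, so the bounds of Proposition \ref{funcFimp} apply verbatim with the same constant $c_0$, and the constants produced below will not depend on $t$ or $n$.

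For the first inequality I would start from item \ref{funcFimp.b2} of Proposition \ref{funcFimp}, which gives, pointwise in $\Omega$,
\[
|F_s(t,v^{(n)}(t))-F_s(t,v(t))|\leqslant 4c_0\bigl(1+|v^{(n)}(t)|^3+|v(t)|^3\bigr)\,|v^{(n)}(t)-v(t)|.
\]
Integrating over $\Omega$ and applying H\"older's inequality with exponents $2$ and $2$ bounds the left-hand side of the claimed estimate by
\[
4c_0\,\bigl\|1+|v^{(n)}(t)|^3+|v(t)|^3\bigr\|_{L^2}\,\|v^{(n)}(t)-v(t)\|_{L^2}.
\]
The first factor is at most $|\Omega|^{1/2}+\|v^{(n)}(t)\|_{L^6}^3+\|v(t)\|_{L^6}^3$, which by the Sobolev embedding $H^1_0(\Omega)\hookrightarrow L^6(\Omega)$ (valid since $\Omega\subset\mathbb{R}^3$) together with the uniform $H^1_0$-bound above is bounded by a constant; the second factor is at most $C\|v^{(n)}(t)-v(t)\|_{H^\gamma}$ by the embedding $H^\gamma(\Omega)\hookrightarrow L^2(\Omega)$. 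Multiplying yields the first estimate.

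The second inequality is even more direct: by item \ref{funcFimp.c} of Proposition \ref{funcFimp} one has $\bigl|\tfrac{\partial F_s}{\partial t}(t,v^{(n)}(t))-\tfrac{\partial F_s}{\partial t}(t,v(t))\bigr|\leqslant 2c_0|v^{(n)}(t)-v(t)|$ pointwise in $\Omega$, so
\[
\Bigl|\int_\Omega\tfrac{\partial F_s}{\partial t}(t,v^{(n)}(t))\,dx-\int_\Omega\tfrac{\partial F_s}{\partial t}(t,v(t))\,dx\Bigr|\leqslant 2c_0\int_\Omega|v^{(n)}(t)-v(t)|\,dx\leqslant 2c_0|\Omega|^{1/2}\,\|v^{(n)}(t)-v(t)\|_{L^2},
\]
and one further application of $H^\gamma(\Omega)\hookrightarrow L^2(\Omega)$ finishes it. Taking the larger of the two constants $C$ proves the lemma.

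There is no real obstacle here: the only points meriting care are the uniformity of $C$ in $t\geqslant 0$ and $n\in\mathbb{N}$ — guaranteed because the Sobolev constants are absolute and the $H^1_0$-bounds are uniform by Proposition \ref{EsC} — and the fact that the weak-$\ast$ limit $v$ from Lemma \ref{convhs} inherits the same $H^1_0$-bound, which follows from weak lower semicontinuity of the norm. (If one insists on using only the embedding $H^1_0(\Omega)\hookrightarrow L^4(\Omega)$, the same argument works provided $\gamma\in[\tfrac34,1)$, by pairing the cubic terms in $L^{4/3}$ against $\|v^{(n)}(t)-v(t)\|_{L^4}\leqslant C\|v^{(n)}(t)-v(t)\|_{H^\gamma}$.)
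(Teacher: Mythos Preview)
Your proof is correct and follows essentially the same route the paper indicates (Proposition \ref{funcFimp} plus H\"older and the Sobolev embeddings), the only cosmetic difference being that you use $H^1_0(\Omega)\hookrightarrow L^6(\Omega)$ for the cubic factor whereas the paper cites $H^1_0(\Omega)\hookrightarrow L^4(\Omega)$; your choice is in fact cleaner since it works for every $\gamma\in(0,1)$ without the extra restriction you mention in your final parenthetical remark.
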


From Lemmas \ref{convhs} and \ref{convhs2} we obtain the following result.

\begin{corollary}\label{proppart0}
We have
\[
\int_{\Omega}F_s(t, v^{(n)}(t)) dx \stackrel{n\to \infty}{\longrightarrow} \int_{\Omega} F_s(t, v(t)) dx,
\quad \hbox{ and } \quad 
\int_{\Omega}\frac{\partial F_s}{\partial t}(t, v^{(n)}(t)) dx \stackrel{n\to \infty}{\longrightarrow} \int_{\Omega} \frac{\partial F_s}{\partial t}(t, v(t)) dx,
\]
uniformly for $t\in [0,T]$.
\end{corollary}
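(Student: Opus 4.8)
The plan is to obtain the claim as an immediate consequence of Lemmas \ref{convhs} and \ref{convhs2}. First I would fix the subsequence produced by Lemma \ref{convhs}, which gives $v^{(n)}\to v$ in $C([0,T];H^\gamma(\Omega))$, that is, $\sup_{t\in[0,T]}\|v^{(n)}(t)-v(t)\|_{H^\gamma}\to 0$ as $n\to\infty$ along that subsequence. All subsequent statements, including the conclusion of the corollary, are understood along this same subsequence, which is exactly what is needed for the contractiveness arguments in which this corollary will later be used.

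Next I would invoke the estimate of Lemma \ref{convhs2}: there is a constant $C>0$, independent of both $t\geqslant 0$ and $n\in\mathbb{N}$, such that
\[
\left|\int_{\Omega}F_s(t,v^{(n)}(t)) dx-\int_{\Omega} F_s(t,v(t))dx\right|\leqslant C\|v^{(n)}(t)-v(t)\|_{H^\gamma},
\]
together with the analogous bound for $\partial F_s/\partial t$. Taking the supremum over $t\in[0,T]$ on both sides, and using that $C$ does not depend on $t$, one gets
\[
\sup_{t\in[0,T]}\left|\int_{\Omega}F_s(t,v^{(n)}(t)) dx-\int_{\Omega} F_s(t,v(t))dx\right|\leqslant C\sup_{t\in[0,T]}\|v^{(n)}(t)-v(t)\|_{H^\gamma},
\]
and similarly for the term involving $\partial F_s/\partial t$. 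Letting $n\to\infty$ along the chosen subsequence, the right-hand side tends to $0$ by the first step, which yields the uniform (in $t\in[0,T]$) convergence asserted in the corollary.

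As for the main obstacle: there is essentially none at this stage, since the genuine work (the compactness producing strong convergence in $C([0,T];H^\gamma(\Omega))$, and the Lipschitz-type control of the $F_s$-integrals by the $H^\gamma$-norm) has already been carried out in Lemmas \ref{convhs} and \ref{convhs2}. The only points requiring a little care are to make sure that the constant $C$ in Lemma \ref{convhs2} is genuinely uniform in $t$ and $n$ — otherwise one could only conclude pointwise convergence in $t$ rather than uniform convergence — and that passing to the subsequence is harmless for the intended application of the result.
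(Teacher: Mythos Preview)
Your proposal is correct and follows exactly the approach intended in the paper: the corollary is stated there as an immediate consequence of Lemmas \ref{convhs} and \ref{convhs2}, combining the uniform $C([0,T];H^\gamma(\Omega))$ convergence with the $t$- and $n$-uniform bound to obtain uniform convergence on $[0,T]$. Your remarks about the subsequence and the uniformity of the constant $C$ are precisely the points that make the argument go through.
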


\begin{lemma}\label{mnbvcxzas}
For each $t\in [0,T]$ and $\xi \in L^1(0, T; H^3(\Omega)\cap H^1_0(\Omega))$ it holds that
\[
\int_t^T \langle f_s(\tau, v^{(n)}(\tau))-f_s(\tau, v(\tau)), \xi(\tau)\rangle d\tau \stackrel{n\to \infty}{\longrightarrow} 0.
\]
\end{lemma}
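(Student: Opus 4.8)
The plan is to exploit the two convergences furnished by Lemma \ref{convhs}: the weak-$\ast$ convergence $(v^{(n)},v_t^{(n)})\overset{\ast}{\rightharpoonup}(v,v_t)$ in $L^\infty(0,T;X)$ and, more importantly, the strong convergence $v^{(n)}\to v$ in $C([0,T];H^\gamma(\Omega))$. First I would use Proposition \ref{funcFimp}\ref{funcFimp.a2}, which gives $|f(t,v)-f(t,w)|\leqslant 2c_0(1+|v|^2+|w|^2)|v-w|$ pointwise, to estimate $\|f_s(\tau,v^{(n)}(\tau))-f_s(\tau,v(\tau))\|_{L^{6/5}}$ (or another suitable $L^q$): by Hölder, $\|f_s(\tau,v^{(n)})-f_s(\tau,v)\|_{L^{6/5}}\leqslant C(1+\|v^{(n)}\|_{L^6}^2+\|v\|_{L^6}^2)\|v^{(n)}-v\|_{L^6}$, and since the $V_n$ lie in $\overline{B}^X_{r_0}$, Proposition \ref{EsC} bounds $\|v^{(n)}(\tau)\|_{H^1_0}$ uniformly, hence $\|v^{(n)}(\tau)\|_{L^6}$ is uniformly bounded via $H^1_0\hookrightarrow L^6$.

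Next, for $\xi(\tau)\in H^3(\Omega)\cap H^1_0(\Omega)$ we have $\xi(\tau)\in L^6(\Omega)$ (indeed $\xi(\tau)\in L^\infty$), so by duality $|\langle f_s(\tau,v^{(n)})-f_s(\tau,v),\xi(\tau)\rangle|\leqslant \|f_s(\tau,v^{(n)})-f_s(\tau,v)\|_{L^{6/5}}\|\xi(\tau)\|_{L^6}\leqslant C\|v^{(n)}(\tau)-v(\tau)\|_{L^6}\|\xi(\tau)\|_{H^3\cap H^1_0}$. The issue is that the strong convergence in Lemma \ref{convhs} is in $C([0,T];H^\gamma)$ with $\gamma<1$, not in $L^6$; so instead of controlling the integrand uniformly in the obvious way, I would split: write the $L^6$-norm of $z^{(n)}:=v^{(n)}-v$ interpolated between $H^\gamma$ and $H^1_0$, using $H^1_0\hookrightarrow L^6$ in dimension $3$ and the uniform $H^1_0$-bound. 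Concretely, by interpolation $\|z^{(n)}(\tau)\|_{L^6}\leqslant C\|z^{(n)}(\tau)\|_{H^1_0}^{1-\theta}\|z^{(n)}(\tau)\|_{H^\gamma}^{\theta}$ for an appropriate $\theta\in(0,1)$ depending on $\gamma$; the first factor is uniformly bounded by $2c_{r_0}$-type constants via Proposition \ref{EsC}, and the second tends to $0$ uniformly in $\tau\in[0,T]$ by Lemma \ref{convhs}. Hence $\sup_{\tau\in[0,T]}\|z^{(n)}(\tau)\|_{L^6}\to 0$.

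Combining, for each fixed $t\in[0,T]$,
\[
\Big|\int_t^T \langle f_s(\tau,v^{(n)}(\tau))-f_s(\tau,v(\tau)),\xi(\tau)\rangle\, d\tau\Big| \leqslant C\Big(\sup_{\tau\in[0,T]}\|z^{(n)}(\tau)\|_{L^6}\Big)\int_0^T \|\xi(\tau)\|_{H^3\cap H^1_0}\, d\tau,
\]
and since $\xi\in L^1(0,T;H^3(\Omega)\cap H^1_0(\Omega))$ the integral on the right is finite, so the whole expression tends to $0$ as $n\to\infty$, uniformly in $t$. The main obstacle is the mismatch between the topology of convergence ($H^\gamma$ with $\gamma<1$) and the superlinear (cubic-type) growth of $f$: one must not try to pass to the limit directly in $\langle f_s(\tau,v^{(n)}),\xi\rangle$, but rather control the difference using the uniform $H^1_0$-bound from Proposition \ref{EsC} together with an interpolation inequality, trading a little regularity for the strong convergence. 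Measurability and the use of dominated convergence for the $\tau$-integral are routine once the uniform-in-$\tau$ decay of $\|z^{(n)}(\tau)\|_{L^6}$ is in hand.
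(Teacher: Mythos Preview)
Your overall strategy---estimate the difference $f_s(\tau,v^{(n)})-f_s(\tau,v)$ pointwise via Proposition~\ref{funcFimp}\ref{funcFimp.a2}, absorb the quadratic weight using the uniform $H^1_0$-bound from Proposition~\ref{EsC}, and then use the strong convergence of $v^{(n)}$ from Lemma~\ref{convhs}---is exactly the right idea and is essentially the paper's approach. However, the specific interpolation step you invoke is false.

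In dimension $3$ the embedding $H^1_0(\Omega)\hookrightarrow L^6(\Omega)$ is \emph{critical}, so there is \emph{no} $\theta\in(0,1)$ for which
\[
\|z\|_{L^6}\leqslant C\,\|z\|_{H^1_0}^{1-\theta}\|z\|_{H^{\gamma}}^{\theta},\qquad \gamma<1,
\]
holds. A concentrating profile $z_n(x)=n^{1/2}\phi(n(x-x_0))$ (with $\phi\in C_c^\infty$) has $\|z_n\|_{H^1_0}$ and $\|z_n\|_{L^6}$ bounded away from zero while $\|z_n\|_{H^{\gamma}}\sim n^{\gamma-1}\to 0$; this kills the inequality. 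Hence your conclusion $\sup_{\tau}\|z^{(n)}(\tau)\|_{L^6}\to 0$ does not follow from the available information, and the argument as written has a genuine gap precisely at the place you flagged as ``the main obstacle''.

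The paper sidesteps this by never asking for $L^6$-smallness of $z^{(n)}$. Instead it exploits the extra regularity of the test function: since $H^3(\Omega)\hookrightarrow L^\infty(\Omega)$, one has $L^1(\Omega)\hookrightarrow H^{-3}(\Omega)$, and the pointwise estimate together with H\"older (pairing $(1+|v^{(n)}|^2+|v|^2)\in L^2$ with $|z^{(n)}|\in L^2$) gives
\[
\|f_s(\tau,v^{(n)})-f_s(\tau,v)\|_{H^{-3}}\leqslant C\big(1+\|v^{(n)}\|_{L^4}^2+\|v\|_{L^4}^2\big)\|v^{(n)}-v\|_{L^2}\leqslant C\|v^{(n)}-v\|_{H^{\gamma}}.
\]
Now only $H^{\gamma}\hookrightarrow L^2$ is needed, which is trivially true for any $\gamma>0$, and the uniform convergence in $C([0,T];H^{\gamma})$ finishes the proof. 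Your plan is easily repaired along the same lines: in your H\"older step, place $z^{(n)}$ in $L^2$ (and $\xi$ in $L^6$, or use $\xi\in L^\infty$ and $z^{(n)}\in L^{3/2}$), so that no interpolation beyond the subcritical embedding $H^{\gamma}\hookrightarrow L^2$ is required.
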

\begin{proof}
Since $H^3(\Omega)\hookrightarrow L^\infty(\Omega)$, we have $L^1(\Omega)\hookrightarrow (L^{\infty}(\Omega))^{\star}\hookrightarrow H^{-3}(\Omega)$. Using Proposition \ref{funcFimp} we obtain
\begin{align*}
&\|f_s(t, v^{(n)}(t))-f_s(t, v(t))\|_{H^{-3}} \leqslant C\|f_s(t, v^{(n)}(t))-f_s(t, v(t))\|_{L^1}\\
&\quad\leqslant C \int_{\Omega}(1+|v^{(n)}(t)|^2+|v(t)|^2)|v^{(n)}(t)-v(t)| dx\\
&\quad\leqslant C \left(\int_{\Omega}(1+|v^{(n)}(t)|^4+|v(t)|^4)dx\right)^\frac12\|v^{(n)}(t)-v(t)\|_{L^2}\\
&\quad \leqslant C (1+\|v^{(n)}(t)\|_{L^4}^2 + \|v(t)\|_{L^4}^2)\|v^{(n)}(t)-v(t)\|_{L^2}\\
&\quad \leqslant C (1+\|v^{(n)}(t)\|_{H^1_0}^2 + \|v(t)\|_{H^1_0}^2)\|v^{(n)}(t)-v(t)\|_{H^\gamma} \leqslant C\|v^{(n)}(t)-v(t)\|_{H^\gamma},
\end{align*}
since $H^\gamma(\Omega)\hookrightarrow L^2(\Omega)$, $H^1_0(\Omega)\hookrightarrow L^4(\Omega)$ and the fact that $\|v^{(n)}(t)\|_{H^1_0},\|v(t)\|_{H^1_0}\leqslant c_{r_0}$ for all $t\geqslant 0$.

From Lemma \ref{convhs} we conclude that
\[
    \sup_{t\in [0,T]}\|(f_s(t, w^{(n)}(t))-f_s(t, w(t)))\|_{H^{-3}}\stackrel{n\to \infty}{\longrightarrow} 0.
\]
Hence, if $\xi \in L^1(0, T, H^3(\Omega)\cap H^1_0(\Omega))$ we have
\begin{align*}
&\left|\int_t^T \langle f_s(\tau, v^{(n)}(\tau))-f_s(\tau, v(\tau)), \xi(\tau) \rangle d\tau\right| \leqslant \sup_{t\in [0,T]}\|f_s(\tau,v^{(n)}(\tau))-f_s(\tau,v(\tau))\|_{H^{-3}}\int_0^T\|\xi(\tau)\|_{H^3}d\tau,
\end{align*}
and the result is proven.  
\end{proof}

\begin{corollary}\label{ckkk}
For each $t\in [0,T]$ we have
\[
f_s(\tau, v^{(n)}(\tau))\stackrel{\ast}{\rightharpoonup} f_s(\tau, v(\tau)) \hbox{ in } L^{\infty}(t, T; L^2(\Omega)) \hbox{ as } n\to \infty.
\]
\end{corollary}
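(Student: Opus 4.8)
The plan is to deduce this weak-$\ast$ convergence from the convergence against smooth test functions already obtained in Lemma \ref{mnbvcxzas}, together with a uniform $L^2$-bound, by a standard density argument; all indices below run along the subsequence extracted in Lemma \ref{convhs}. Since $L^2(\Omega)$ is separable and reflexive, $L^\infty(t,T;L^2(\Omega))$ is the dual of $L^1(t,T;L^2(\Omega))$, so what must be shown is that
\[
\int_t^T \big\langle f_s(\tau,v^{(n)}(\tau))-f_s(\tau,v(\tau)),\,\xi(\tau)\big\rangle\, d\tau \stackrel{n\to\infty}{\longrightarrow} 0 \qquad \hbox{for every } \xi\in L^1(t,T;L^2(\Omega)).
\]

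First I would establish the uniform bound. From the pointwise estimate $|f(t,v)|\leqslant 2c_0(1+|v|^3)$ of Proposition \ref{funcFimp}, the Sobolev embedding $H^1_0(\Omega)\hookrightarrow L^6(\Omega)$ (available since $\Omega\subset\mathbb{R}^3$), and the bound $\|v^{(n)}(\tau)\|_{H^1_0}\leqslant c_{r_0}$ for all $\tau\geqslant 0$ (Proposition \ref{EsC}, since $V_n\in\overline{B}^X_{r_0}$), one gets
\[
\|f_s(\tau,v^{(n)}(\tau))\|_{L^2}\leqslant C\big(1+\|v^{(n)}(\tau)\|_{L^6}^3\big)\leqslant C\big(1+\|v^{(n)}(\tau)\|_{H^1_0}^3\big)\leqslant C(1+c_{r_0}^3)=:M,
\]
uniformly in $\tau\in[0,T]$ and $n\in\mathbb{N}$, with the same bound for $v$ in place of $v^{(n)}$; thus $\{f_s(\cdot,v^{(n)})-f_s(\cdot,v)\}_n$ is bounded by $2M$ in $L^\infty(t,T;L^2(\Omega))$.

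Next I would use density. The functions $\xi$ lying in $L^1(t,T;H^3(\Omega)\cap H^1_0(\Omega))$ form a dense subspace of $L^1(t,T;L^2(\Omega))$, because $C_c^\infty(\Omega)\subset H^3(\Omega)\cap H^1_0(\Omega)$ is dense in $L^2(\Omega)$ and one approximates a general $\xi$ first by simple functions in time and then replaces their values by smooth ones. Given $\xi\in L^1(t,T;L^2(\Omega))$ and $\varepsilon>0$, choose such a $\tilde\xi$ with $\|\xi-\tilde\xi\|_{L^1(t,T;L^2(\Omega))}<\varepsilon/(4M)$; extending $\tilde\xi$ by zero on $[0,t]$ (which leaves the integral over $[t,T]$ unchanged), Lemma \ref{mnbvcxzas} provides $n_0$ with $\big|\int_t^T \langle f_s(\tau,v^{(n)}(\tau))-f_s(\tau,v(\tau)),\tilde\xi(\tau)\rangle\,d\tau\big|<\varepsilon/2$ for $n\geqslant n_0$. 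Then, for $n\geqslant n_0$,
\[
\Big|\int_t^T \big\langle f_s(\tau,v^{(n)}(\tau))-f_s(\tau,v(\tau)),\,\xi(\tau)\big\rangle\, d\tau\Big| \leqslant 2M\,\|\xi-\tilde\xi\|_{L^1(t,T;L^2)}+\tfrac{\varepsilon}{2}<\varepsilon,
\]
and since $\varepsilon$ and $\xi$ are arbitrary, the claimed weak-$\ast$ convergence follows.

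I do not expect a genuine obstacle here; the points needing a little care are identifying $L^\infty(t,T;L^2(\Omega))$ as the dual of $L^1(t,T;L^2(\Omega))$, matching the test-function class of Lemma \ref{mnbvcxzas} (functions on $[0,T]$ with values in $H^3(\Omega)\cap H^1_0(\Omega)$) with the full predual via zero-extension and density, and recording that the uniform $L^2_x$-bound on $f_s(\cdot,v^{(n)})$ rests precisely on the subcritical cubic growth of $f$ combined with $H^1_0(\Omega)\hookrightarrow L^6(\Omega)$ in three space dimensions.
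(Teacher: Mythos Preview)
Your proof is correct and follows the same approach as the paper, which simply invokes the dense inclusion $L^1(t,T;H^3(\Omega)\cap H^1_0(\Omega))\hookrightarrow L^1(t,T;L^2(\Omega))$ and says ``the result follows'' from Lemma \ref{mnbvcxzas}. You have made explicit the two ingredients the paper leaves implicit---the uniform $L^\infty(t,T;L^2(\Omega))$ bound on $f_s(\cdot,v^{(n)})$ coming from the cubic growth and $H^1_0\hookrightarrow L^6$, and the details of the $\varepsilon/2$-density argument---so your write-up is in fact more complete.
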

\begin{proof}
    Since $L^1(t,T;H^3(\Omega)\cap H^1_0(\Omega))\hookrightarrow L^1(t,T;L^2(\Omega)$ with dense inclusion (see \cite[Lemma 1.2.19]{Tuomas}, for instance), the result follows.
\end{proof}

These convergences imply the following.

\begin{lemma}\label{proppart1}
For each $t\in [0,T]$ and $n\in \mathbb{N}$ we have
\[
\int_t^T\langle f_s(\tau, v^{(n)}(\tau)),v_t^{(m)}(\tau)\rangle d\tau \stackrel{m\to \infty}{\longrightarrow} \int_t^T \langle f_s(\tau, v^{(n)}(\tau)),v_t(\tau)\rangle d\tau,
\]
and
\[
\int_t^T\langle f_s(\tau, v^{(m)}(\tau)),v_t^{(n)}(\tau) \rangle d\tau \stackrel{m\to \infty}{\longrightarrow} \int_t^T \langle f_s(\tau, v(\tau)),v_t^{(n)}(\tau)\rangle d\tau.
\]
\end{lemma}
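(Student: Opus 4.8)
The statement asserts two weak-convergence facts for the sequences $\{v^{(n)}\}$, $\{v_t^{(n)}\}$ furnished by Lemma \ref{convhs}. The plan is to treat each limit as a pairing of a weakly-$\ast$ convergent sequence in $L^\infty(t,T;L^2(\Omega))$ against a fixed element of its predual $L^1(t,T;L^2(\Omega))$, or vice versa, and to supply the missing integrability of the fixed factor.

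For the first convergence, I would fix $n\in\mathbb{N}$ and observe that, by Proposition \ref{funcFimp}\ref{funcFimp.a} together with the uniform bound $\|v^{(n)}(\tau)\|_{H^1_0}\leqslant c_{r_0}$ and the continuous inclusion $H^1_0(\Omega)\hookrightarrow L^6(\Omega)$, the function $\tau\mapsto f_s(\tau,v^{(n)}(\tau))$ belongs to $L^\infty(t,T;L^2(\Omega))\subset L^1(t,T;L^2(\Omega))$; in fact its $L^2$-norm is bounded uniformly in $\tau$ by $C(1+c_{r_0}^3)$. On the other hand, from Lemma \ref{convhs} we have $v_t^{(m)}\overset{\ast}{\rightharpoonup} v_t$ in $L^\infty(0,T;L^2(\Omega))$, hence also $v_t^{(m)}\overset{\ast}{\rightharpoonup} v_t$ in $L^\infty(t,T;L^2(\Omega))$ by restriction. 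Since $L^\infty(t,T;L^2(\Omega))=\big(L^1(t,T;L^2(\Omega))\big)^{\star}$, testing this weak-$\ast$ convergence against the fixed function $f_s(\cdot,v^{(n)}(\cdot))\in L^1(t,T;L^2(\Omega))$ yields exactly
\[
\int_t^T\langle f_s(\tau, v^{(n)}(\tau)),v_t^{(m)}(\tau)\rangle\, d\tau \stackrel{m\to\infty}{\longrightarrow} \int_t^T\langle f_s(\tau, v^{(n)}(\tau)),v_t(\tau)\rangle\, d\tau.
\]

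For the second convergence, I would instead keep $v_t^{(n)}$ fixed and let the nonlinear term vary. Here $v_t^{(n)}\in L^\infty(0,T;L^2(\Omega))\subset L^1(t,T;L^2(\Omega))$ is the fixed test function, and I need $f_s(\cdot,v^{(m)}(\cdot))\overset{\ast}{\rightharpoonup} f_s(\cdot,v(\cdot))$ in $L^\infty(t,T;L^2(\Omega))$, which is precisely the content of Corollary \ref{ckkk}. Pairing this weak-$\ast$ convergence against $v_t^{(n)}\in L^1(t,T;L^2(\Omega))$ gives the claimed limit. The only point requiring a little care is that Corollary \ref{ckkk} is stated for the original (diagonalized) subsequence, so all indices $n,m$ here refer to that common subsequence already extracted in Lemma \ref{convhs}; I would state this explicitly. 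The main (mild) obstacle is bookkeeping: making sure the roles of "fixed factor" and "convergent factor" are assigned so that the fixed factor always lands in $L^1(t,T;L^2(\Omega))$ — which it does, thanks in the first case to the uniform $L^2$-bound on $f_s(\tau,v^{(n)}(\tau))$ from Proposition \ref{funcFimp}, and in the second case trivially from the uniform bound $\|v_t^{(n)}(\tau)\|_{L^2}\leqslant 2c_{r_0}$ — after which both statements are immediate from the definition of weak-$\ast$ convergence.
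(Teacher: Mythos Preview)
Your proposal is correct and matches the paper's approach, which simply records that ``these convergences imply the following'' without further detail. You have made explicit exactly what the paper leaves implicit: the first limit is the weak-$\ast$ convergence $v_t^{(m)}\overset{\ast}{\rightharpoonup} v_t$ in $L^\infty(t,T;L^2(\Omega))$ from Lemma~\ref{convhs} tested against the fixed $L^1$ element $f_s(\cdot,v^{(n)}(\cdot))$, and the second is Corollary~\ref{ckkk} tested against the fixed $L^1$ element $v_t^{(n)}$. (One cosmetic slip: for a single solution the bound is $\|v_t^{(n)}(\tau)\|_{L^2}\leqslant c_{r_0}$, not $2c_{r_0}$; the factor $2$ appears only for the difference $z_t$.)
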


Using  Corollary \ref{ckkk}, we obtain the next result.

\begin{lemma}\label{proppart2}
For each $t\in [0,T]$ we have
\[
\int_t^T \langle f_s(\tau, v^{(n)}(\tau)),v_t(\tau)\rangle d\tau \stackrel{n\to \infty}{\longrightarrow} \int_t^T \langle f_s(\tau, v(\tau)),v_t(\tau)\rangle d\tau,
\]
and
\[
\int_t^T \langle f_s(\tau, v(\tau)),v^{(n)}_t(\tau)\rangle d\tau \stackrel{n\to \infty}{\longrightarrow} \int_t^T \langle f_s(\tau, v(\tau)),v_t(\tau)\rangle d\tau.
\]
\end{lemma}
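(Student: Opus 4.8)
The plan is to prove the two convergences by exhibiting the integrands as pairings between a weak-$\ast$ convergent sequence and a fixed function, then invoking the density/duality facts already established. First, consider the second convergence. We have $v_t^{(n)}\overset{\ast}{\rightharpoonup}v_t$ in $L^\infty(0,T;L^2(\Omega))$ by Lemma \ref{convhs}, hence in particular in $L^\infty(t,T;L^2(\Omega))$ for each fixed $t\in[0,T]$. The function $\tau\mapsto f_s(\tau,v(\tau))$ is fixed (it does not depend on $n$) and lies in $L^1(t,T;L^2(\Omega))$: indeed, using Proposition \ref{funcFimp}\ref{funcFimp.a} and the continuous inclusion $H^1_0(\Omega)\hookrightarrow L^6(\Omega)$ together with the uniform bound $\|v(\tau)\|_{H^1_0}\leqslant c_{r_0}$, one gets $\|f_s(\tau,v(\tau))\|_{L^2}\leqslant C(1+\|v(\tau)\|_{L^6}^3)\leqslant C$ for $\tau\in[0,T]$, so the map is bounded, hence integrable on the finite interval $[t,T]$. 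Since $L^1(t,T;L^2(\Omega))$ is (isometrically) the predual of $L^\infty(t,T;L^2(\Omega))$, testing the weak-$\ast$ convergence of $v_t^{(n)}$ against this fixed $L^1$-function gives
\[
\int_t^T\langle f_s(\tau,v(\tau)),v_t^{(n)}(\tau)\rangle\,d\tau\;\stackrel{n\to\infty}{\longrightarrow}\;\int_t^T\langle f_s(\tau,v(\tau)),v_t(\tau)\rangle\,d\tau,
\]
which is the second claim.

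For the first convergence the roles are essentially reversed. Here the sequence that moves is $f_s(\tau,v^{(n)}(\tau))$, which by Corollary \ref{ckkk} converges weak-$\ast$ to $f_s(\tau,v(\tau))$ in $L^\infty(t,T;L^2(\Omega))$, while the function to be tested against is the fixed $\tau\mapsto v_t(\tau)$, which belongs to $L^\infty(0,T;L^2(\Omega))\subset L^1(t,T;L^2(\Omega))$ on the finite interval. Pairing the weak-$\ast$ convergent sequence against this fixed $L^1$ element yields
\[
\int_t^T\langle f_s(\tau,v^{(n)}(\tau)),v_t(\tau)\rangle\,d\tau\;\stackrel{n\to\infty}{\longrightarrow}\;\int_t^T\langle f_s(\tau,v(\tau)),v_t(\tau)\rangle\,d\tau,
\]
which is the first claim. (One should keep the indices straight: in Lemma \ref{proppart1} a second index $m$ was varied with $n$ frozen; here the single index $n$ is sent to infinity, and these statements are precisely the ``diagonal'' limits one needs to combine with Lemma \ref{proppart1}.)

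The only point requiring genuine care — and the main obstacle — is verifying that the fixed functions against which we test are honestly in $L^1$ of a finite interval with values in $L^2(\Omega)$, so that the weak-$\ast$/predual duality applies cleanly; this is where the uniform-in-time a priori bounds $\|v^{(n)}(\tau)\|_{H^1_0},\|v(\tau)\|_{H^1_0}\leqslant c_{r_0}$ from Proposition \ref{EsC} and the growth estimate of Proposition \ref{funcFimp} do the work, exactly as in the proof of Lemma \ref{mnbvcxzas}. Once the membership and the predual identification $L^1(t,T;L^2(\Omega))^\ast\cong L^\infty(t,T;L^2(\Omega))$ (see, e.g., \cite{Tuomas}) are in place, both limits are immediate from the definition of weak-$\ast$ convergence.
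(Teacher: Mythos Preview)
Your proof is correct and follows essentially the same approach the paper intends: the paper simply writes ``Using Corollary \ref{ckkk}, we obtain the next result'' and leaves the details implicit, while you spell out that the first limit comes from testing the weak-$\ast$ convergence of $f_s(\cdot,v^{(n)})$ (Corollary \ref{ckkk}) against the fixed $L^1$ element $v_t$, and the second from testing the weak-$\ast$ convergence of $v_t^{(n)}$ (Lemma \ref{convhs}) against the fixed $L^1$ element $f_s(\cdot,v)$. Your verification that the test functions lie in $L^1(t,T;L^2(\Omega))$ via the uniform $H^1_0$-bounds and Proposition \ref{funcFimp} is exactly the check the paper suppresses.
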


We can now state the following.

\begin{lemma}\label{propt2023x}
For each $t\in [0,T]$ 
\begin{align*}
&\lim_{n\to \infty}\lim_{m\to \infty}\int_t^T \langle f_s(\tau, v^{(n)}(\tau)),v_t^{(m)}(\tau)\rangle d\tau\\
&\quad=\int_{\Omega}F_s(T,v(T))dx-\int_{\Omega}F_s(t,v(t))dx-\int_{\Omega}\int_t^T \frac{\partial}{\partial t}F_s(\tau, v(\tau)) d\tau dx,
\end{align*}
and
\begin{align*}
&\lim_{n\to \infty}\lim_{m\to \infty}\int_t^T \langle f_s(\tau, v^{(m)}(\tau)),v_t^{(n)}(\tau)\rangle d\tau\\
&\quad=\int_{\Omega}F_s(T,v(T))dx-\int_{\Omega}F_s(t,v(t))dx-\int_{\Omega}\int_t^T \frac{\partial}{\partial t}F_s(\tau, v(\tau)) d\tau dx.
\end{align*}
\end{lemma}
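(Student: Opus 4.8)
The plan is to compute the limit $\lim_{n\to\infty}\lim_{m\to\infty}\int_t^T\langle f_s(\tau,v^{(n)}(\tau)),v_t^{(m)}(\tau)\rangle\,d\tau$ in two stages, exploiting the two convergences already established. First I would take the inner limit $m\to\infty$: for fixed $n$, the function $\tau\mapsto f_s(\tau,v^{(n)}(\tau))$ lies in $L^\infty(t,T;L^2(\Omega))\subset L^1(t,T;L^2(\Omega))$ (by Proposition~\ref{funcFimp}\ref{funcFimp.a} and the uniform bound $\|v^{(n)}(\tau)\|_{H^1_0}\leqslant c_{r_0}$), while $v_t^{(m)}\overset{\ast}{\rightharpoonup} v_t$ in $L^\infty(t,T;L^2(\Omega))$ by Lemma~\ref{convhs}; hence Lemma~\ref{proppart1} (first display) gives
\[
\lim_{m\to\infty}\int_t^T\langle f_s(\tau,v^{(n)}(\tau)),v_t^{(m)}(\tau)\rangle\,d\tau=\int_t^T\langle f_s(\tau,v^{(n)}(\tau)),v_t(\tau)\rangle\,d\tau.
\]
Then I would take the outer limit $n\to\infty$: since $v_t\in L^1(t,T;L^2(\Omega))$ is fixed and $f_s(\tau,v^{(n)}(\tau))\overset{\ast}{\rightharpoonup} f_s(\tau,v(\tau))$ in $L^\infty(t,T;L^2(\Omega))$ (Corollary~\ref{ckkk}), or equivalently by Lemma~\ref{proppart2} (first display), the limit is $\int_t^T\langle f_s(\tau,v(\tau)),v_t(\tau)\rangle\,d\tau$.

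The second stage is to identify $\int_t^T\langle f_s(\tau,v(\tau)),v_t(\tau)\rangle\,d\tau$ with the right-hand side of the claimed formula. Formally, $\frac{d}{d\tau}\int_\Omega F_s(\tau,v(\tau))\,dx=\int_\Omega \frac{\partial F_s}{\partial\tau}(\tau,v(\tau))\,dx+\langle f_s(\tau,v(\tau)),v_t(\tau)\rangle$, so integrating over $[t,T]$ yields
\[
\int_t^T\langle f_s(\tau,v(\tau)),v_t(\tau)\rangle\,d\tau=\int_\Omega F_s(T,v(T))\,dx-\int_\Omega F_s(t,v(t))\,dx-\int_\Omega\int_t^T\frac{\partial}{\partial\tau}F_s(\tau,v(\tau))\,d\tau\,dx,
\]
which is exactly the asserted identity. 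To justify this rigorously for the weak solution one argues by a density/approximation procedure: the chain rule $\frac{d}{d\tau}\int_\Omega F_s(\tau,v(\tau))\,dx = \int_\Omega \partial_\tau F_s(\tau,v(\tau))\,dx + \langle f_s(\tau,v(\tau)),v_t(\tau)\rangle$ holds in the sense of distributions on $(0,T)$ whenever $v\in C([0,T];H^1_0)$, $v_t\in L^2(0,T;L^2)$ and the growth conditions of Proposition~\ref{funcFimp} hold; this is a standard fact (e.g. via the Lions--Magenes lemma applied after regularizing, or directly from the variational formulation tested against $v_t$), and one then integrates in $\tau$. The finiteness of all terms is guaranteed by Proposition~\ref{funcFimp}\ref{funcFimp.b} together with $v(T),v(t)\in H^1_0(\Omega)\hookrightarrow L^4(\Omega)$ and by hypothesis \ref{cond6}.

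For the second display of the lemma, the computation is symmetric: I would first take $m\to\infty$ using the second display of Lemma~\ref{proppart1} (now $f_s(\tau,v^{(m)}(\tau))\overset{\ast}{\rightharpoonup} f_s(\tau,v(\tau))$ and $v_t^{(n)}$ is fixed in $L^1(t,T;L^2)$), obtaining $\int_t^T\langle f_s(\tau,v(\tau)),v_t^{(n)}(\tau)\rangle\,d\tau$, and then $n\to\infty$ using the second display of Lemma~\ref{proppart2} (where $f_s(\tau,v(\tau))$ is fixed in $L^1(t,T;L^2)$ and $v_t^{(n)}\overset{\ast}{\rightharpoonup} v_t$), again landing on $\int_t^T\langle f_s(\tau,v(\tau)),v_t(\tau)\rangle\,d\tau$ and hence on the same right-hand side. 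I expect the main obstacle to be the rigorous justification of the integration-by-parts/chain-rule identity for $\int_\Omega F_s(\tau,v(\tau))\,dx$ at the level of regularity of weak solutions — in particular handling the fact that $v_t\in L^\infty(0,T;L^2)$ rather than in a better space and that $f_s(\cdot,v(\cdot))\in L^\infty(0,T;L^2)$ only, so that the pairing $\langle f_s,v_t\rangle$ must be interpreted in $L^2$ and the regularization must be chosen compatibly; everything else is a routine passage to the limit in the two stated convergences.
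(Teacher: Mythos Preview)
Your proposal is correct and follows the same two-stage strategy as the paper: first pass to the limit in $m$ via Lemma~\ref{proppart1}, then in $n$ via Lemma~\ref{proppart2}, landing on $\int_t^T\langle f_s(\tau,v(\tau)),v_t(\tau)\rangle\,d\tau$, and finally identify this with the right-hand side by the chain rule for $\tau\mapsto\int_\Omega F_s(\tau,v(\tau))\,dx$. The only cosmetic difference is that the paper condenses the last step to an appeal to Fubini's Theorem (i.e.\ swap $\int_\Omega$ and $\int_t^T$, use the scalar chain rule a.e.\ in $x$, and swap back), whereas you invoke a Lions--Magenes/regularization argument; both routes yield the same identity at the available regularity.
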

\begin{proof}
    The result is a consequence of Fubini's Theorem and Lemmas \ref{proppart1} and \ref{proppart2}.
\end{proof}

Using Fubini's Theorem and Lebesgue's Dominated Convergence Theorem, we obtain the properties that will give us the contractiveness of the maps $\psi_1$ and $\psi_2$.

\begin{corollary}\label{corphi1}
We have
\[
\lim_{n\to \infty}\lim_{m\to \infty}\int_0^T\langle f_s(\tau,v^{(n)}(\tau))-f_s(\tau,v^{(m)}(\tau)),v_t^{(n)}(\tau)-v_t^{(m)}(\tau)\rangle d\tau =0,
\]
and for each $t\in [0,T]$ we have
\[
\lim_{n\to \infty}\lim_{m\to \infty} \int_0^T\int_t^T\langle f_s(\tau,v^{(n)}(\tau))-f_s(\tau,v^{(m)}(\tau)),v_t^{(n)}(\tau)-v_t^{(m)}(\tau)\rangle d\tau dt = 0.
\]
\end{corollary}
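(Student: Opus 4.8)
The plan is to deduce Corollary~\ref{corphi1} from Lemma~\ref{propt2023x} by expanding the bilinear pairing and applying the already-established limits term by term. First I would write, for fixed $t\in[0,T]$,
\[
\langle f_s(\tau,v^{(n)})-f_s(\tau,v^{(m)}),\,v_t^{(n)}-v_t^{(m)}\rangle = \langle f_s(\tau,v^{(n)}),v_t^{(n)}\rangle - \langle f_s(\tau,v^{(n)}),v_t^{(m)}\rangle - \langle f_s(\tau,v^{(m)}),v_t^{(n)}\rangle + \langle f_s(\tau,v^{(m)}),v_t^{(m)}\rangle,
\]
so that the integral over $[t,T]$ (or the double integral over the triangle $\{0\leqslant t\leqslant \tau\leqslant T\}$) splits into four pieces. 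The diagonal pieces $\int_t^T\langle f_s(\tau,v^{(n)}),v_t^{(n)}\rangle\,d\tau$ are handled by the standard energy-type identity: taking the pairing of $v_{tt}^{(n)}-\Delta v^{(n)}+\cdots$ with $v_t^{(n)}$ and integrating gives
\[
\int_t^T\langle f_s(\tau,v^{(n)}(\tau)),v_t^{(n)}(\tau)\rangle\,d\tau = \int_\Omega F_s(T,v^{(n)}(T))\,dx - \int_\Omega F_s(t,v^{(n)}(t))\,dx - \int_\Omega\int_t^T \tfrac{\partial}{\partial t}F_s(\tau,v^{(n)}(\tau))\,d\tau\,dx,
\]
and by Corollary~\ref{proppart0} each of the three terms on the right converges, uniformly in $t\in[0,T]$, to the corresponding term with $v$ in place of $v^{(n)}$; the same limit is obtained from the two mixed pieces by Lemma~\ref{propt2023x} upon sending $m\to\infty$ first and then $n\to\infty$. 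Adding the four contributions with signs $+,-,-,+$, the four identical limits cancel in pairs, giving
\[
\lim_{n\to\infty}\lim_{m\to\infty}\int_t^T\langle f_s(\tau,v^{(n)})-f_s(\tau,v^{(m)}),v_t^{(n)}-v_t^{(m)}\rangle\,d\tau = 0.
\]

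For the first assertion one takes $t=0$ directly. For the second assertion I would first note that the bound
\[
\Big|\int_t^T\langle f_s(\tau,v^{(n)})-f_s(\tau,v^{(m)}),v_t^{(n)}-v_t^{(m)}\rangle\,d\tau\Big| \leqslant 2\cdot\sup_{\tau\in[0,T]}\|f_s(\tau,v^{(n)}(\tau))-f_s(\tau,v^{(m)}(\tau))\|_{L^2}\cdot\!\!\int_0^T\!\!\|v_t^{(n)}-v_t^{(m)}\|_{L^2}\,d\tau
\]
is uniform in $t\in[0,T]$ and bounded by a constant independent of $n,m$ (using $\|v_t^{(n)}\|_{L^2},\|v^{(n)}\|_{H^1_0}\leqslant c_{r_0}$ and Proposition~\ref{funcFimp}); hence the integrand in $\int_0^T(\cdots)\,dt$ is dominated, and by Lebesgue's Dominated Convergence Theorem the $t$-integral of the pointwise limit (which is $0$ by the first part applied with that $t$) is again $0$ after taking $\lim_{n}\lim_{m}$. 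Alternatively, Fubini's Theorem rewrites the double integral as $\int_0^T\tau\langle f_s(\tau,v^{(n)})-f_s(\tau,v^{(m)}),v_t^{(n)}-v_t^{(m)}\rangle\,d\tau$, to which the same energy-identity argument applies verbatim.

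The main obstacle is justifying the interchange of the double limit $\lim_{n}\lim_{m}$ with the integral over $t$ in the second statement, and more subtly making sure that in the mixed terms one may pass to the limit in the order $m$-then-$n$: the inner limit in $m$ is supplied by Lemma~\ref{proppart1} (weak-$\ast$ convergence of $v_t^{(m)}$ tested against the fixed $L^1(t,T;L^2)$ function $f_s(\cdot,v^{(n)})$, and strong $H^{-3}$ convergence of $f_s(\cdot,v^{(m)})$ tested against the fixed function $v_t^{(n)}\in L^1(t,T;H^3\cap H^1_0)$—here one must check $v_t^{(n)}$ has the regularity to be a legitimate test function, which follows from the a priori bounds), and the outer limit in $n$ is then Lemma~\ref{proppart2}. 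Once these convergences are in place the cancellation is purely algebraic, so the only real work is the domination estimate for the Dominated Convergence step, which is routine given the uniform bound $c_{r_0}$ and Proposition~\ref{funcFimp}.
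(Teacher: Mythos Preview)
Your approach is correct and is essentially the same as the paper's: expand the pairing into four terms, use Lemma~\ref{propt2023x} for the two mixed terms, use the chain-rule identity together with Corollary~\ref{proppart0} for the two diagonal terms, and then pass the $t$-integral through the iterated limit via Dominated Convergence (the paper names Fubini and Lebesgue's Dominated Convergence Theorem as the ingredients, without writing out the four-term expansion explicitly).

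One small correction to a side remark: you write that the inner limit in the term $\int_t^T\langle f_s(\tau,v^{(m)}),v_t^{(n)}\rangle\,d\tau$ uses ``strong $H^{-3}$ convergence of $f_s(\cdot,v^{(m)})$ tested against the fixed function $v_t^{(n)}\in L^1(t,T;H^3\cap H^1_0)$''. This is not how the paper argues, and in fact $v_t^{(n)}$ is only known to lie in $L^\infty(0,T;L^2(\Omega))$, not in $H^3\cap H^1_0$. The correct justification (already contained in the paper as Corollary~\ref{ckkk} and Lemma~\ref{proppart1}) is that $f_s(\cdot,v^{(m)}) \stackrel{\ast}{\rightharpoonup} f_s(\cdot,v)$ in $L^\infty(t,T;L^2(\Omega))$, and $v_t^{(n)}\in L^1(t,T;L^2(\Omega))$ is a legitimate test function for that weak-$\ast$ convergence. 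Since you are invoking Lemma~\ref{proppart1} as an already-established black box, this mis-explanation does not damage your proof of Corollary~\ref{corphi1}, but you should not rely on the regularity claim $v_t^{(n)}\in H^3\cap H^1_0$ elsewhere.
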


With Corollary \ref{corphi1}, the proof of Proposition \ref{prop.psi12} is trivial.

\bigskip \noindent \textbf{The generalized polynomial pullback attractor: the proof of Theorem \ref{App:PolAtt}.} \medskip

\begin{proof}
Consider the functions $g_1, g_2\colon \mathbb{R^+}\times \mathbb{R}^+$ given by $g_1(\alpha, \beta)= \alpha$ and $g_2(\alpha, \beta)=\alpha+\beta$. It is clear that $g_1, g_2$ are non-decreasing with respect to each variable, $g_1(0,0)=g_2(0,0)=0$ and they are continuous at $(0,0)$.  Also , we observe that for any $s\in \mathbb{R}$ we have 
\begin{align*}    
2\mathcal{E}_s(\tau)&=\|Z(\tau)\|^2_X = \|V(\tau,V_1)-V(\tau,V_2)\|^2_X = \|S(\tau+s,S)V_1-S(\tau+s,s)V_2\|^2_X,
\end{align*}
From Corollary \ref{cor:itemi} it follows that
\[
2\mathcal{E}_s(T) \leqslant 2\mathcal{E}_s(0) + \psi_1(V_1,V_2) + \rho_1(V_1,V_2) =  2\mathcal{E}_s(0) + \psi_1(V_1,V_2) + g_1(\rho_1(V_1,V_2),\rho_2(V_1,V_2)),
\]
and from Proposition \ref{prop.ghxxgh} it follows that 
\begin{align*}
2\mathcal{E}_s(T) & \leqslant 2^{\frac{2}{p+2}}\Gamma_{T,2}\Big(2\mathcal{E}_s(0) - 2\mathcal{E}_s(T) + \rho_1(V_1,V_2) + \psi_1(V_1,V_2)\Big)^{\frac{2}{p+2}} \\
& \qquad + \rho_1(V_1,V_2) + \rho_2(V_1,V_2) + \psi_2(V_1,V_2) \\
& \leqslant 2^{\frac{2}{p+2}}\Gamma_{T,2}\Big(2\mathcal{E}_s(0) - 2\mathcal{E}_s(T) + g_1(\rho_1(V_1,V_2),\rho_2(V_1,V_2)) \psi_1(V_1,V_2)\Big)^{\frac{2}{p+2}} \\
& \qquad  + g_2(\rho_1(V_1,V_2), \rho_2(V_1,V_2)) + \psi_2(V_1,V_2).
\end{align*}
Finally, Proposition \ref{dcdi} gives the last hypotheses for us to apply Theorem \ref{corMain} and guarantee that the evolution process $S$, associated with \eqref{ourproblem}, is $\varphi$-pullback $\kappa$-dissipative, with decay function $\varphi(t)=t^{-\frac1p}$, and that $S$ has a bounded generalized $\varphi$-pullback attractor $\hat{M}$. Furthermore, from Theorem \ref{theo:GenimpliesPA}, $S$ has a pullback attractor $\hat{A}$, with $\hat{A}\subset \hat{M}$. 
\end{proof}

\appendix

\section{Auxiliary results}\label{appendix}
The following appendix contains technical results that are complementary to the main theory presented in this work.

First we note that from the Mean Value Theorem applied to the function $f(t)=t^{1-\frac{1}{\beta}}$, the next lemma is obvious.  

\begin{lemma}\label{lema.A2}
Let $\beta\in(0,1)$. If $b>a>0$, there exists $\theta\in(0,1)$ such that 
\[
b^{1-\frac{1}{\beta}}-a^{1-\frac{1}{\beta}}=\left(1-\tfrac{1}{\beta}\right)\left[\theta a+(1-\theta)b\right]^{-\frac{1}{\beta}}(b-a).
\]
\end{lemma}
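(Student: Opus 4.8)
The plan is to apply the Mean Value Theorem to the function $f(x) = x^{1-\frac1\beta}$ on the interval $[a,b]$. Since $\beta \in (0,1)$, we have $1 - \frac1\beta < 0$, so $f$ is defined and differentiable on $(0,\infty)$ with $f'(x) = \left(1-\frac1\beta\right)x^{-\frac1\beta}$. First I would note that $f$ is continuous on $[a,b]$ and differentiable on $(a,b)$, since $a>0$ ensures we stay away from the singularity at the origin.

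By the Mean Value Theorem, there exists a point $\xi \in (a,b)$ such that
\[
f(b) - f(a) = f'(\xi)(b-a), \qquad \text{i.e.} \qquad b^{1-\frac1\beta} - a^{1-\frac1\beta} = \left(1-\tfrac1\beta\right)\xi^{-\frac1\beta}(b-a).
\]
The final step is simply to re-express the intermediate point $\xi \in (a,b)$ in the convex-combination form demanded by the statement: since $\xi$ lies strictly between $a$ and $b$, there is a unique $\theta \in (0,1)$ with $\xi = \theta a + (1-\theta)b$, and substituting this into the displayed equation yields exactly the claimed identity.

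There is no real obstacle here; the only point requiring a moment's care is checking that the exponent $1 - \frac1\beta$ does not create a problem for applying the Mean Value Theorem, which it does not because the interval $[a,b]$ is bounded away from $0$. As the excerpt already observes, the result is essentially immediate once one identifies the correct function $f(t) = t^{1-\frac1\beta}$, so the "proof" is little more than a careful statement of this application of the MVT together with the reparametrization of the intermediate point.
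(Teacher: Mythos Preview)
Your proof is correct and follows exactly the approach indicated in the paper, which simply notes that the lemma is an immediate consequence of the Mean Value Theorem applied to $f(t)=t^{1-\frac{1}{\beta}}$.
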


With this we prove the following.

\begin{proposition}\label{functionsuv}
Consider the function $u\colon \mathbb{R}^+ \to \mathbb{R}^+$ given by $u(t)=(3C)^{-1/{\beta}}t^{1/{\beta}}+t$, where $C>0$ and $0<\beta<1$ are constants. Let $v\colon \mathbb{R}^+\to \mathbb{R}^+$ be its inverse function and fix a real number $t_0\geqslant  0$. Then the sequence $\left\{t_n\right\}$ defined by $t_n=v^n(t_0)$ for each $n\in \mathbb{N}$ satisfies
\begin{enumerate}[label={(\roman*)}]
\item \label{z1} $\left\{t_n\right\}$ is non-increasing;
\item \label{z2} $t_n-t_{n+1}=(3C)^{-1/{\beta}}(t_{n+1})^{1/{\beta}}$;
\item \label{z3} $t_n\to 0$ when $n\to \infty$;
\item \label{z4} there exists $n_0\in\mathbb{N}$ such that for all  $n\geqslant  n_0$ we have
\[
(t_n)^{1-1/{\beta}}\geqslant  \left(\tfrac{1}{{\beta}}-1\right)(1+3C)^{-1/{\beta}}+(t_{n-1})^{1-1/{\beta}};
\]
\item \label{z5} with $n_0$ as in \ref{z4} if $n\geqslant n_0$ and $k\in \mathbb{N}$ we have 
\[
(t_{n_0+k})^{1-1/{\beta}}\geqslant  k\left(\tfrac{1}{{\beta}}-1\right)(1+3C)^{-1/{\beta}}+(t_{n_0})^{1-1/{\beta}};
\]
\item \label{z6} with $n_0$ as in \ref{z4} if $n\geqslant n_0$ we have
\begin{align*} 
t_n=v^n(t_0)\leqslant  \left[(n-n_0)\left(\tfrac{1}{\beta}-1\right)(1+3C)^{-\frac{1}{\beta}}+(t_0)^{1-\frac{1}{\beta}}\right]^{\frac{\beta}{\beta-1}}.
\end{align*}
\end{enumerate}

\begin{proof}

(i) Since $u'(t)=\frac{1}{\beta (3C)^{1/{\beta}}}t^{\frac{1}{\beta}-1}+1>0$ for all $t\geqslant  0$, $u$ is an increasing function and, consequently, its inverse function $v$ is also increasing. Then, since $u(t)\geqslant  t$ for all $t\geqslant  0$, we have $t=v(u(t))\geqslant  v(t)$ for all $t\geqslant  0$. Now, it is clear that $t_0\geqslant  v(t_0)=t_1\geqslant  v(t_1)=t_2\geqslant  v(t_2)=t_3\geqslant  \cdots$

\par \medskip (ii) $t_n=u(t_{n+1})=(3C)^{-1/{\beta}}\left(t_{n+1}\right)^{1/{\beta}}+t_{n+1}$.

\par \medskip (iii) Since $\left\{t_n\right\}$ is non-increasing we have $t_n\to \alpha$. Assume that $\alpha>0$. We have $t_n-t_{n+1}\to 0$ and, from \ref{z2}, we have $t_n-t_{n+1}\to (3C)^{-1/{\beta}}\alpha^{1/{\beta}}>0$, which is a contradiction.

\par \medskip (iv) Since $\left\{t_n\right\}$ is non-increasing and $t_n\to 0$, there exists $n_0\in\mathbb{N}$ such that $t_{n-1}-t_n\in (0,1)$ for all $n\geqslant  n_0$. Furthermore, from \ref{z2}, we have $t_{n}=(3C)\left(t_{n-1}-t_n\right)^{\beta}$ for all $n\geqslant 1$. Now, if $n\geqslant  n_0$ we have
\begin{align*}
t_{n-1}&=t_n+\left(t_{n-1}-t_n\right)=(3C)\left(t_{n-1}-t_n\right)^{\beta}+\left(t_{n-1}-t_n\right)\\
& \leqslant  3C\left(t_{n-1}-t_n\right)^{\beta}+\left(t_{n-1}-t_n\right)^{\beta}=(1+3C)\left(t_{n-1}-t_n\right)^{\beta}.
\end{align*}
Then, using 
Lemma \ref{lema.A2}, for $n\geqslant n_0$ we have
\begin{align*}
(t_n)^{1-\frac{1}{\beta}}-(t_{n-1})^{1-\tfrac{1}{\beta}}&=\underbrace{\left(1-\tfrac{1}{\beta}\right)}_{<0}\underbrace{\left[\theta t_n+(1-\theta)t_{n-1}\right]^{-\frac{1}{\beta}}}_{\geqslant  (t_{n-1})^{-\frac{1}{\beta}}}\underbrace{\left(t_n-t_{n-1}\right)}_{<0}\\
&\geqslant  \left(1-\tfrac{1}{\beta}\right)(t_{n-1})^{-\frac{1}{\beta}}(t_n-t_{n-1})\geqslant  \left(\tfrac{1}{\beta}-1\right)(1+3C)^{-\frac{1}{\beta}},
\end{align*}
concluding the proof of item \ref{z4}.

\par \medskip (v) The cases $k=0$ and $k=1$ are trivial from \ref{z4}. Suppose that \ref{z5} is valid for a $k\in\mathbb{N}$. For $k+1$ we have
\begin{align*}
\left(t_{n_0+(k+1)}\right)^{1-\tfrac{1}{\beta}}&\geqslant  \left(\tfrac{1}{\beta}-1\right)(1+3C)^{-\frac{1}{\beta}}+\left(t_{n_0+k}\right)^{1-\frac{1}{\beta}}\\
&\geqslant  \left(\tfrac{1}{\beta}-1\right)(1+3C)^{-\frac{1}{\beta}}+k\left(\tfrac{1}{\beta}-1\right)(1+3C)^{-\frac{1}{\beta}}+(t_{n_0})^{1-\frac{1}{\beta}}\\
&=(k+1)\left(\tfrac{1}{\beta}-1\right)(1+3C)^{-\frac{1}{\beta}}+(t_{n_0})^{1-\frac{1}{\beta}}.
\end{align*}

\par \medskip (vi) Since $\frac{\beta}{\beta-1}<0$, it is clear from \ref{z5} that if $n\geqslant  n_0$ it follows that
\begin{align*}
t_n=v^n(t_0)&\leqslant  \left[(n-n_0)\left(\tfrac{1}{\beta}-1\right)(1+3C)^{-\frac{1}{\beta}}+(t_{n_0})^{1-\frac{1}{\beta}}\right]^{\frac{\beta}{\beta-1}}\\
&\leqslant  \left[(n-n_0)\left(\tfrac{1}{\beta}-1\right)(1+3C)^{-\frac{1}{\beta}}+(t_0)^{1-\frac{1}{\beta}}\right]^{\frac{\beta}{\beta-1}}.
\end{align*}
\end{proof}
\end{proposition}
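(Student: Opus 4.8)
The plan is to prove the six items in order, each building on its predecessors, with item \ref{z4} carrying the analytic weight. First I would check that $u$ is strictly increasing by computing $u'(t)=\tfrac{1}{\beta}(3C)^{-1/\beta}t^{1/\beta-1}+1>0$ for $t>0$, which ensures that the inverse $v$ exists and is increasing as well. Since $u(t)\geqslant t$ for all $t\geqslant 0$, applying the increasing map $v$ yields $t=v(u(t))\geqslant v(t)$, so $v$ moves points toward the origin; iterating gives $t_0\geqslant t_1\geqslant t_2\geqslant\cdots$, which is \ref{z1}. Item \ref{z2} is then immediate from the relation $t_n=u(t_{n+1})$: inserting the formula for $u$ and rearranging gives $t_n-t_{n+1}=(3C)^{-1/\beta}(t_{n+1})^{1/\beta}$.

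For \ref{z3} I would argue by contradiction. The sequence is non-increasing and bounded below by $0$, hence convergent to some $\alpha\geqslant 0$; the left side of \ref{z2} then tends to $0$, while the right side tends to $(3C)^{-1/\beta}\alpha^{1/\beta}$, which forces $\alpha=0$.

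The heart of the argument is \ref{z4}. Using \ref{z3} and monotonicity, I would choose $n_0$ so large that $t_{n-1}-t_n\in(0,1)$ for all $n\geqslant n_0$. Rewriting \ref{z2} as $t_n=3C\,(t_{n-1}-t_n)^{\beta}$ and using $t_{n-1}-t_n\leqslant(t_{n-1}-t_n)^{\beta}$ (valid since the base lies in $(0,1)$ and $0<\beta<1$), I would obtain $t_{n-1}=t_n+(t_{n-1}-t_n)\leqslant(1+3C)(t_{n-1}-t_n)^{\beta}$, equivalently $t_{n-1}-t_n\geqslant(1+3C)^{-1/\beta}(t_{n-1})^{1/\beta}$. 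I would then invoke Lemma \ref{lema.A2} for the map $s\mapsto s^{1-1/\beta}$ on the pair $t_n<t_{n-1}$, getting $(t_n)^{1-1/\beta}-(t_{n-1})^{1-1/\beta}=(1-\tfrac1\beta)[\theta t_n+(1-\theta)t_{n-1}]^{-1/\beta}(t_n-t_{n-1})$ for some $\theta\in(0,1)$. The delicate point, and the main obstacle, is the sign bookkeeping: because $1-\tfrac1\beta<0$ and $t_n-t_{n-1}<0$ their product is positive, while the bracketed factor satisfies $[\theta t_n+(1-\theta)t_{n-1}]^{-1/\beta}\geqslant(t_{n-1})^{-1/\beta}$; combining these with the lower bound on $t_{n-1}-t_n$ yields exactly $(t_n)^{1-1/\beta}-(t_{n-1})^{1-1/\beta}\geqslant(\tfrac1\beta-1)(1+3C)^{-1/\beta}$.

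The last two items are routine. Item \ref{z5} follows by induction on $k$: the cases $k=0,1$ come directly from \ref{z4}, and the inductive step adds one further copy of the constant $(\tfrac1\beta-1)(1+3C)^{-1/\beta}$. Finally, since $1-\tfrac1\beta=\tfrac{\beta-1}{\beta}$ and the exponent $\tfrac{\beta}{\beta-1}$ is negative, applying the decreasing map $s\mapsto s^{\beta/(\beta-1)}$ to the inequality in \ref{z5} (with $k=n-n_0$) reverses it and gives $t_n\leqslant[(n-n_0)(\tfrac1\beta-1)(1+3C)^{-1/\beta}+(t_{n_0})^{1-1/\beta}]^{\beta/(\beta-1)}$; weakening $(t_{n_0})^{1-1/\beta}$ to $(t_0)^{1-1/\beta}$ (legitimate because $t_{n_0}\leqslant t_0$ together with $1-\tfrac1\beta<0$ reverses the inequality a final time) produces \ref{z6}.
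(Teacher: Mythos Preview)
Your proposal is correct and follows essentially the same route as the paper's own proof: the same derivative check for \ref{z1}, the same rearrangement for \ref{z2}, the same contradiction for \ref{z3}, the same choice of $n_0$ via $t_{n-1}-t_n\in(0,1)$ combined with Lemma \ref{lema.A2} and the identical sign bookkeeping for \ref{z4}, induction for \ref{z5}, and the same two-step reversal (negative exponent, then $t_{n_0}\leqslant t_0$) for \ref{z6}. There is no meaningful difference in approach.
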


\begin{proposition}\label{upLipschitz}
    If $H$ is a real Hilbert space with inner product $\langle \cdot, \cdot\rangle$ and associate norm $\|\cdot\|$, then for $p\geqslant 0$ and $u,v\in \overline{B}_R^H$ we have
    \begin{equation}\label{prop.ineqcp}
        \left\|\|u\|^pu-\|v\|^pv\right\|\leqslant (p+1)R^p\|u-v\|.
    \end{equation}
\begin{proof}
    When $p=0$, \eqref{prop.ineqcp} is a trivial equality. When $v=0$ then \eqref{prop.ineqcp} is also straightforward. Thus, we consider $p>0$ and we can assume, without loss of generality, that $0<\|v\|\leqslant \|u\|$. Observe that \eqref{prop.ineqcp} is equivalent to
    \begin{equation}\label{prop.ineqcp2}       \|u\|^{2p+2}-2\|u\|^p\|v\|^p\langle u,v\rangle + \|v\|^{2p+2}\leqslant (p+1)^2R^{2p}(\|u\|^2-2\langle u,v\rangle + \|v\|^2).
    \end{equation}
    Also note that $\|u\|^2-2\langle u,v\rangle + \|v\|^2=0$ iff $\|u-v\|=0$ iff $u=v$, in which case the inequality is also trivial. Thus we can also assume that $u\neq v$. 
    Setting $x=\|u\|$, $y=\|v\|$ and $z=\langle u,v\rangle$, we have $0<y\leqslant x \leqslant R$, $-xy\leqslant z \leqslant xy$, $x^2-2z+y^2\neq 0$ and \eqref{prop.ineqcp2} becomes 
    \begin{equation}\label{prop.ineqcp3}
        x^{2p+2}-2x^py^pz+y^{2p+2}\leqslant (p+1)^2R^{2p}(x^2-2z+y^2).
    \end{equation}

\medskip \noindent \textsc{Case I.} $-xy\leqslant z \leqslant 0$. 

In this case, since $z\leqslant 0$, $x^p \leqslant (p+1)R^p$, and $y^p\leqslant (p+1)R^p$, we have
\begin{align*}
    &x^{2p+2}-2x^py^pz+y^{2p+2}-(p+1)^2R^{2p}(x^2-2z+y^2)\\
    &\quad = x^2[x^p-(p+1)R^{p}][x^p+(p+1)R^{p}]+y^2[y^p-(p+1)R^p][y^p+(p+1)R^p]\\
    &\qquad -2z(x^py^p-(p+1)^2R^{2p})\leqslant 0,
\end{align*}
which proves \eqref{prop.ineqcp3}.

\medskip \noindent \textsc{Case II.} $0< z \leqslant xy$.

Define 
\begin{equation*}    
f(x,y,z)=\frac{x^{2p+2}-2x^py^pz+y^{2p+2}}{x^2-2z+y^2}.
\end{equation*}
We have 
\begin{equation*}
    \frac{\partial f}{\partial z}(x,y,z)=\frac{(-2x^py^p)(x^2-2z+y^2)+2(x^{2p+2}-2x^py^pz+y^{2p+2})}{(x^2-2z+y^2)^2},
\end{equation*}
and since 
\begin{align*}
    &(-2x^py^p)(x^2-2z+y^2)+2(x^{2p+2}-2x^py^pz+y^{2p+2})\\
    &\quad =2x^{p+2}(x^p-y^p)-2y^{p+2}(x^p-y^p)=2(x^p-y^p)(x^{p+2}-y^{p+2})\geqslant 0,
\end{align*}
it follows that $\frac{\partial f}{\partial z}(x,y,z)\geqslant 0$. Thus, for each pair $(x,y)$ satisfying the conditions mentioned above, the function $(0,xy]\ni z \mapsto f(x,y,z)$ is increasing and, consequently, $f(x,y,z)\leqslant f(x,y,xy)$. Set 
\begin{equation*}
g(x,y):=f(x,y,xy)=\frac{x^{2p+2}-2x^{p+1}y^{p+1}+y^{2p+2}}{x^2-2xy+y^2}=\frac{(x^{p+1}-y^{p+1})^2}{(x-y)^2}.
\end{equation*}

A direct application of the Mean Value Theorem to the function $h(t)=t^{p+1}$ shows that there exists $\xi\in (y,x)$ such that
\begin{equation*}
    x^{p+1}-y^{p+1}=(p+1)\xi^p(x-y)\leqslant (p+1)x^p(x-y)\leqslant (p+1)R^p(x-y),
\end{equation*}
which implies $f(x,y,z)\leqslant g(x,y)\leqslant (p+1)^2R^{2p}$, and completes the proof.
\end{proof}    
\end{proposition}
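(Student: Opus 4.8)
The plan is to avoid reducing the statement to a three-variable algebraic inequality and instead exploit a simple telescoping decomposition together with the triangle inequality, reducing everything to a one-dimensional estimate on the scalar function $t\mapsto t^p$. Since the claimed inequality is symmetric under interchanging $u$ and $v$ (the left-hand side only changes by a sign inside the norm, and $\|u-v\|=\|v-u\|$), I may assume without loss of generality that $\|v\|\leqslant\|u\|$. The case $p=0$ is an identity, so I take $p>0$.

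First I would write the telescoping identity
\[
\|u\|^p u-\|v\|^p v=\|u\|^p(u-v)+\big(\|u\|^p-\|v\|^p\big)v,
\]
and apply the triangle inequality to obtain
\[
\big\|\,\|u\|^p u-\|v\|^p v\,\big\|\leqslant \|u\|^p\,\|u-v\|+\big(\|u\|^p-\|v\|^p\big)\|v\|,
\]
where I used $\|u\|\geqslant\|v\|$ to drop the absolute value on $\|u\|^p-\|v\|^p$. The first term is immediately bounded by $R^p\|u-v\|$ since $u\in\overline{B}_R^H$, and this accounts for the summand ``$1$'' in the constant $p+1$.

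The heart of the argument is the second term, which I would control by the scalar lemma: for $0\leqslant a\leqslant b\leqslant R$ and $p>0$,
\[
(b^p-a^p)\,a\leqslant p R^p(b-a).
\]
To prove it, I apply the Mean Value Theorem to $t\mapsto t^p$ to write $b^p-a^p=p\,\xi^{p-1}(b-a)$ for some $\xi\in(a,b)$, so that $(b^p-a^p)a=p\,\xi^{p-1}a\,(b-a)$, and it then suffices to show $\xi^{p-1}a\leqslant R^p$. Here I would split on the size of $p$: if $p\geqslant 1$ then $\xi^{p-1}\leqslant b^{p-1}\leqslant R^{p-1}$ and $a\leqslant R$, giving $\xi^{p-1}a\leqslant R^p$; if $0<p<1$ then $t\mapsto t^{p-1}$ is decreasing, so $\xi^{p-1}\leqslant a^{p-1}$ and hence $\xi^{p-1}a\leqslant a^p\leqslant R^p$. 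Applying this with $a=\|v\|$ and $b=\|u\|$, together with the reverse triangle inequality $b-a=\|u\|-\|v\|\leqslant\|u-v\|$, bounds the second term by $p R^p\|u-v\|$. Adding the two contributions yields the desired $(p+1)R^p\|u-v\|$.

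The only genuine subtlety, and the step I expect to require the most care, is the scalar estimate in the regime $0<p<1$, where $t\mapsto t^p$ fails to be Lipschitz near $0$ and $\xi^{p-1}$ blows up as $\xi\to 0^+$. The extra factor $a=\|v\|$ is exactly what absorbs this blow-up, and its presence is essential: it is the fact that the second summand carries a bare $\|v\|$ (rather than involving only a norm difference) that makes the whole range $p>0$ tractable by a single, uniform argument. Everything else is routine.
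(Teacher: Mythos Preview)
Your proof is correct and takes a genuinely different, more elementary route than the paper's. The paper squares both sides, introduces the three scalar variables $x=\|u\|$, $y=\|v\|$, $z=\langle u,v\rangle$, and then splits into two cases according to the sign of $z$; in the case $z>0$ it studies the monotonicity in $z$ of the ratio $f(x,y,z)=\dfrac{x^{2p+2}-2x^py^pz+y^{2p+2}}{x^2-2z+y^2}$, reduces to $z=xy$, and finally applies the Mean Value Theorem to $t\mapsto t^{p+1}$. Your telescoping identity $\|u\|^pu-\|v\|^pv=\|u\|^p(u-v)+(\|u\|^p-\|v\|^p)v$ plus the triangle inequality bypasses all of this, reducing directly to the one-variable estimate $(b^p-a^p)a\leqslant pR^p(b-a)$, which you handle cleanly by the Mean Value Theorem applied to $t\mapsto t^p$ with a split on $p\geqslant 1$ versus $0<p<1$. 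A notable dividend of your approach is that it never uses the inner product: only the triangle and reverse triangle inequalities appear, so your argument in fact proves the inequality in any normed space, whereas the paper's proof is tied to the Hilbert structure through the variable $z=\langle u,v\rangle$.
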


The next result is taken from \cite[Lemma 2.2]{zhao2020global}, and we present a slightly adapted proof.

\begin{proposition}\label{prop.Cp}
If $H$ is a real Hilbert space with inner product $\langle \cdot, \cdot\rangle$ and associate norm $\|\cdot\|$, then for $p\geqslant 0$ and $x,y\in H$ we have
\begin{equation}\label{prop.Cp.eq}
\langle \|x\|^px-\|y\|^py,x-y\rangle \geqslant 2^{-p} \|x-y\|^{p+2}.
\end{equation}
\end{proposition}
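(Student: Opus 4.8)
The plan is to establish the pointwise inequality
\[
\langle \|x\|^px-\|y\|^py,x-y\rangle \geqslant 2^{-p}\|x-y\|^{p+2}
\]
by first reducing it, via the expansion $\langle \|x\|^px-\|y\|^py,x-y\rangle = \|x\|^{p+2}+\|y\|^{p+2}-(\|x\|^p+\|y\|^p)\langle x,y\rangle$, to a claim involving only the three scalar quantities $\|x\|$, $\|y\|$ and $\langle x,y\rangle$. By symmetry in $x$ and $y$, I may assume $0\leqslant \|y\|\leqslant \|x\|$; the case $x=y$ is trivial, and the case $y=0$ reduces to $\|x\|^{p+2}\geqslant 2^{-p}\|x\|^{p+2}$, which is immediate. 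So from now on I take $\|x\|,\|y\|>0$ and $x\neq y$.

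The key step is to bound $\langle x,y\rangle$ from above by Cauchy–Schwarz, $\langle x,y\rangle \leqslant \|x\|\|y\|$, noting that the left-hand side of the desired inequality is a \emph{decreasing} linear function of $\langle x,y\rangle$ (its coefficient is $-(\|x\|^p+\|y\|^p)<0$) while the right-hand side $2^{-p}\|x-y\|^{p+2} = 2^{-p}(\|x\|^2-2\langle x,y\rangle+\|y\|^2)^{(p+2)/2}$ is also decreasing in $\langle x,y\rangle$. Hence I cannot simply substitute the extreme value; instead I set $a=\|x\|$, $b=\|y\|$ with $0<b\leqslant a$, and I must show
\[
a^{p+2}+b^{p+2}-(a^p+b^p)c \geqslant 2^{-p}(a^2-2c+b^2)^{\frac{p+2}{2}}
\]
for all $c\in[-ab,ab]$. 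I would verify this by studying both sides as functions of $c$ on $[-ab,ab]$: the difference $\Lambda(c) := a^{p+2}+b^{p+2}-(a^p+b^p)c - 2^{-p}(a^2-2c+b^2)^{(p+2)/2}$ has derivative $\Lambda'(c) = -(a^p+b^p)+2^{-p}(p+2)(a^2-2c+b^2)^{p/2}$, which is increasing in $c$; so $\Lambda$ is convex and its minimum on the interval occurs either at an interior critical point or at $c=ab$. I would show the bound holds at $c=ab$ directly — there $a^2-2c+b^2=(a-b)^2$ and the inequality becomes $(a^{p+1}-b^{p+1})\cdot\tfrac{a-b}{\ldots}$; more precisely it reduces to $a^p\cdot a+b^p\cdot b - (a^p+b^p)ab = a^{p+1}(a-b) - b^{p+1}(a-b) = (a^{p+1}-b^{p+1})(a-b) \geqslant 2^{-p}(a-b)^{p+2}$, i.e. $(a^{p+1}-b^{p+1}) \geqslant 2^{-p}(a-b)^{p+1}$, which follows from $a^{p+1}-b^{p+1}\geqslant (a-b)^{p+1}$ when $p+1\geqslant 1$ and the crude bound $1\geqslant 2^{-p}$ — and that at an interior critical point $c^\ast$ the value of $a^2-2c^\ast+b^2$ is pinned down by $\Lambda'(c^\ast)=0$, allowing a direct substitution and simplification.

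The main obstacle will be handling the interior-minimum case cleanly: at $c^\ast$ one gets $a^2-2c^\ast+b^2 = \big(2^p(a^p+b^p)/(p+2)\big)^{2/p}$, and plugging this back into $\Lambda(c^\ast)\geqslant 0$ produces an inequality purely in $a,b$ (after eliminating $c^\ast$ using $2c^\ast = a^2+b^2 - (a^2-2c^\ast+b^2)$) that must be checked for all $0<b\leqslant a$ and $p>0$. I expect this to follow from homogeneity — rescaling so that, say, $a=1$ — and a one-variable convexity or monotonicity argument in $t=b/a\in(0,1]$, possibly appealing to the power-mean inequality relating $\tfrac12(a^p+b^p)$ and $\big(\tfrac{a+b}{2}\big)^p$. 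Since the statement is quoted from \cite{zhao2020global}, an acceptable alternative — and the one I would actually write if the scalar optimization proves unwieldy — is to reduce to the known one-dimensional inequality $(|s|^p s - |r|^p r)(s-r)\geqslant 2^{-p}|s-r|^{p+2}$ for real $s,r$ (obtained by a similar but easier two-variable analysis, or by convexity of $u\mapsto |u|^{p}u$'s primitive), and then project: writing $x-y = \|x-y\|e$ with $\|e\|=1$ and using that the map $u\mapsto \|u\|^p u$ is monotone along the line through $x$ and $y$.
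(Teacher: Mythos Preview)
Your reduction to the three scalar quantities $a=\|x\|$, $b=\|y\|$, $c=\langle x,y\rangle$ and the definition of $\Lambda(c)$ are fine, but you have the convexity backwards. The derivative
\[
\Lambda'(c) = -(a^p+b^p) + 2^{-p}(p+2)(a^2-2c+b^2)^{p/2}
\]
is \emph{decreasing} in $c$ (since $a^2-2c+b^2$ decreases with $c$ and $p/2>0$), so $\Lambda$ is \emph{concave} on $[-ab,ab]$, not convex. Consequently the minimum of $\Lambda$ is attained at one of the endpoints $c=\pm ab$, and the interior-critical-point analysis you worry about is irrelevant --- an interior critical point would be a maximum. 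Your verification at $c=ab$ is correct; what is missing is the endpoint $c=-ab$. There $a^2-2c+b^2=(a+b)^2$ and the left side factors as $(a^{p+1}+b^{p+1})(a+b)$, so the required inequality is $a^{p+1}+b^{p+1}\geqslant 2^{-p}(a+b)^{p+1}$, which is exactly the power-mean (convexity) inequality you allude to later. With the concavity corrected and both endpoints checked, your argument is complete and short; the projection-to-a-line alternative you sketch at the end does not work as stated, since $\|x\|^p x$ need not lie on the line through $x$ and $y$.

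For comparison, the paper takes a different but related route after the same scalar reduction: it studies the \emph{ratio}
\[
h(t,s)=\frac{1+t^{p+2}-(t+t^{p+1})s}{(1+t^2-2ts)^{(p+2)/2}}, \qquad t=\tfrac{\|y\|}{\|x\|},\ s=\tfrac{\langle x,y\rangle}{\|x\|\|y\|},
\]
shows $\partial h/\partial s\geqslant 0$ so that $h$ is minimised at $s=-1$ (i.e.\ $c=-ab$), and then shows $g(t)=h(t,-1)$ is decreasing with $g(1)=2^{-p}$. Thus the paper reduces to the single endpoint $c=-ab$ via monotonicity of a ratio, whereas your (corrected) approach reduces to both endpoints via concavity of a difference; the $c=-ab$ endpoint is the binding one in both arguments.
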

\begin{proof}
When $p=0$, the inequality is trivial and $C_0=1$. Hence we assume $p>0$. Clearly \eqref{prop.Cp.eq} is trivial when $x=0$. We can assume, without loss of generality, that $\|y\|\leqslant \|x\|$ and $x\neq 0$. Note that \eqref{prop.Cp.eq} is equivalent to
\[
\|x\|^{p+2} + \|y\|^{p+2} - (\|x\|^p +\|y\|^p) \langle x,y\rangle \geqslant 2^{-p}(\|x\|^2 + \|y\|^2 - 2\langle x,y\rangle)^{\frac{p+2}{2}}.
\]
Dividing both sides by $\|x\|^{p+2}$ we obtain
\[
1+\frac{\|y\|^{p+2}}{\|x\|^{p+2}} - \left(1+\frac{\|y\|^p}{\|x\|^p}\right)\frac{\|y\|}{\|x\|}\frac{\langle x,y\rangle}{\|x\|\|y\|} \geqslant 2^{-p} \left( 1+\frac{\|y\|^2}{\|x\|^2}-2\frac{\|y\|}{\|x\|}\frac{\langle x,y\rangle}{\|x\|\|y\|}\right)^{\frac{p+2}{2}}
\]
Setting $t=\frac{\|y\|}{\|x\|}\in [0,1]$ and $s=\frac{\langle x,y\rangle}{\|x\|\|y\|} \in [-1,1]$ we obtain
\begin{equation}\label{prop.Cp.eq.2}
1+t^{p+2}-(t+t^{p+1})s \geqslant 2^{-p} (1+t^2-2ts)^{\frac{p+2}{2}}.
\end{equation}
Since $1+t^2-2ts = (t-s)^2 + 1-s^2 = 0$ if and only if $t=s=1$, we can prove \eqref{prop.Cp.eq.2} for $t\in [0,1)$ and $s\in [-1,1)$, where $1+t^2-2ts\neq 0$. Define $h\colon [0,1)\times [-1,1)\to \mathbb{R}$ by
\[
h(t,s) = \frac{1+t^{p+2}-(t+t^{p+1})s}{(1+t^2-2ts)^{\frac{p+2}{2}}}.
\]
We have
\[
\frac{\partial h}{\partial s}(t,s) = \frac{t[(p+2)(1+t^{p+2})-(1+t^p)(1+t^2) - p(t+t^{p+1})s]}{(1+t^2-2ts)^{\frac{p+4}{2}}},
\]
and, for $s<1$, 
\begin{align*}
(p+2)(1+t^{p+2}) & -(1+t^p)(1+t^2) - p(t+t^{p+1})s  \\
& \geqslant (p+2)(1+t^{p+2})-(1+t^p)(1+t^2) - p(t+t^{p+1}).
\end{align*}
For all $t\in [0,1]$ note that
\begin{align*}
(p+2)& (1+t^{p+2})-(1+t^p)(1+t^2) - p(t+t^{p+1})\\
& = p(1-t^{p+1})(1-t) + (1-t^p)(1-t^2) \geqslant 0
\end{align*}
thus $\frac{\partial h}{\partial s}(t,s)\geqslant 0$ for $s\in [-1,1)$ and $t\in [0,1)$. Hence, for each fixed $t\in [0,1)$, $s\mapsto h(t,s)$ is increasing, which implies that $h(t,s)\geqslant h(t,-1)$ for each $s\in [-1,1)$ and $t\in [0,1)$. Set
\[
g(t) = h(t,-1) = \frac{t^{p+2}+t^{p+1}+t+1}{(1+t)^{p+2}},
\]
and note that for $t\in [0,1]$ we have
\[
g'(t) = \frac{(p+1)(t+1)(t^p-1)}{(1+t)^{p+3}} \leqslant 0.
\]
Thus $g$ is decreasing and hence
\[
h(t,s) \geqslant h(t,-1) = g(t) \geqslant g(1) = 2^{-p},
\]
which proves \eqref{prop.Cp.eq.2} and completes the result.
\end{proof}

\medskip \textbf{Data Availability.} Data sharing is not applicable to this article as no new data were created or analyzed in
this study.

\section*{Declarations}

\textbf{Competing Interests.} The authors have not disclosed any competing interests.

\bibliographystyle{abbrv}

\begin{thebibliography}{10}

\bibitem{Bortolan2014}
M.~C. Bortolan, A.~N. Carvalho, and J.~A. Langa.
\newblock Structure of attractors for skew product semiflows.
\newblock {\em J. Differential Equations}, 257(2):490--522, 2014.

\bibitem{Caraballo2006}
T.~Caraballo, G.~{\L}ukaszewicz, and J.~Real.
\newblock Pullback attractors for asymptotically compact non-autonomous
  dynamical systems.
\newblock {\em Nonlinear Anal.}, 64(3):484--498, 2006.

\bibitem{CRLBook}
A.~N. Carvalho, J.~A. Langa, and J.~C. Robinson.
\newblock {\em Attractors for infinite-dimensional non-autonomous dynamical
  systems}, volume 182 of {\em Applied Mathematical Sciences}.
\newblock Springer, New York, 2013.

\bibitem{Sonner1}
A.~N. Carvalho and S.~Sonner.
\newblock Pullback exponential attractors for evolution processes in {B}anach
  spaces: theoretical results.
\newblock {\em Commun. Pure Appl. Anal.}, 12(6):3047--3071, 2013.

\bibitem{Sonner2}
A.~N. Carvalho and S.~Sonner.
\newblock Pullback exponential attractors for evolution processes in {B}anach
  spaces: properties and applications.
\newblock {\em Commun. Pure Appl. Anal.}, 13(3):1141--1165, 2014.

\bibitem{deimling2010nonlinear}
K.~Deimling.
\newblock {\em Nonlinear functional analysis}.
\newblock Courier Corporation, 2010.

\bibitem{Tuomas}
T.~Hyt{\"o}nen, J.~Van~Neerven, M.~Veraar, and L.~Weis.
\newblock {\em Analysis in Banach spaces}, volume~12.
\newblock Springer, 2016.

\bibitem{KloRas2011}
P.~E. Kloeden and M.~Rasmussen.
\newblock {\em Nonautonomous dynamical systems}, volume 176 of {\em
  Mathematical Surveys and Monographs}.
\newblock American Mathematical Society, Providence, RI, 2011.

\bibitem{Mane1980}
R.~Ma\~{n}\'{e}.
\newblock On the dimension of the compact invariant sets of certain nonlinear
  maps.
\newblock In {\em Dynamical systems and turbulence, {W}arwick 1980 ({C}oventry,
  1979/1980)}, volume 898 of {\em Lecture Notes in Math.}, pages 230--242.
  Springer, Berlin-New York, 1981.

\bibitem{Pazy}
A.~Pazy.
\newblock {\em Semigroups of linear operators and applications to partial
  differential equations}, volume~44.
\newblock Springer Science \& Business Media, 2012.

\bibitem{Sell1967a}
G.~R. Sell.
\newblock Nonautonomous differential equations and topological dynamics. {I}.
  {T}he basic theory.
\newblock {\em Trans. Amer. Math. Soc.}, 127:241--262, 1967.

\bibitem{Sell1967b}
G.~R. Sell.
\newblock Nonautonomous differential equations and topological dynamics. {II}.
  {L}imiting equations.
\newblock {\em Trans. Amer. Math. Soc.}, 127:263--283, 1967.

\bibitem{jsimon}
J.~Simon.
\newblock Compact sets in the space $l^p(0, t; b)$.
\newblock {\em Annali di Matematica pura ed applicata}, 146:65--96, 1986.

\bibitem{yan2023long}
S.~Yan, X.~Zhu, C.~Zhong, and Z.~Tang.
\newblock Long-time dynamics of the wave equation with nonlocal weak damping
  and super-cubic nonlinearity in 3-d domains, part ii: Nonautonomous case.
\newblock {\em Applied Mathematics \& Optimization}, 88(3):1--38, 2023.

\bibitem{Zhang2017}
J.~Zhang, P.~E. Kloeden, M.~Yang, and C.~Zhong.
\newblock Global exponential {$\kappa$}-dissipative semigroups and exponential
  attraction.
\newblock {\em Discrete Contin. Dyn. Syst.}, 37(6):3487--3502, 2017.

\bibitem{ZhaoZhao2020}
C.~Zhao, C.~Zhao, and C.~Zhong.
\newblock Asymptotic behaviour of the wave equation with nonlocal weak damping
  and anti-damping.
\newblock {\em J. Math. Anal. Appl.}, 490(1):124186, 16, 2020.

\bibitem{zhao2020global}
C.~Zhao, C.~Zhao, and C.~Zhong.
\newblock The global attractor for a class of extensible beams with nonlocal
  weak damping.
\newblock {\em Discrete \& Continuous Dynamical Systems-Series B}, 25(3), 2020.

\bibitem{ZhaoZhong2022}
C.~Zhao, C.~Zhong, and S.~Yan.
\newblock Existence of a generalized polynomial attractor for the wave equation
  with nonlocal weak damping, anti-damping and critical nonlinearity.
\newblock {\em Appl. Math. Lett.}, 128:Paper No. 107791, 9, 2022.

\bibitem{Zhao2022}
C.~Zhao, C.~Zhong, and X.~Zhu.
\newblock Existence of compact $ \varphi $-attracting sets and estimate of
  their attractive velocity for infinite-dimensional dynamical systems.
\newblock {\em Disc. Cont. Dynam. Sys. - B}, 1(1), 2022.

\end{thebibliography}

\end{document}